\numberwithin{equation}{section}
\newtheorem{theorem}{Theorem}[section]
\newtheorem{lemma}[theorem]{Lemma}
\newtheorem{proposition}[theorem]{Proposition}
\newtheorem{corollary}[theorem]{Corollary}
\newtheorem{definition}[theorem]{Definition}
\theoremstyle{remark}
\newtheorem{remark}[theorem]{Remark}
\newcommand{\C}{\mathbf{C}}
\newcommand{\D}{\mathbf{D}}
\newcommand{\E}{\mathbf{E}}
\newcommand{\F}{\mathbf{F}}
\newcommand{\h}{\mathbf{H}}
\newcommand{\N}{\mathbf{N}}
\newcommand{\Z}{\mathbf{Z}}
\newcommand{\p}{\mathbf{P}}
\newcommand{\Q}{\mathbf{Q}}
\newcommand{\R}{\mathbf{R}}
\newcommand{\Fh}{\mathfrak {h}}
\newcommand{\CC}{\mathcal {C}}
\newcommand{\CD}{\mathcal {D}}
\newcommand{\CF}{\mathcal {F}}
\newcommand{\CP}{\mathcal {P}}
\newcommand{\CS}{\mathcal {S}}
\newcommand{\CZ}{\mathcal {Z}}
\newcommand{\SLE}{{\rm SLE}}
\newcommand{\dist}{\mathrm{dist}}
\newcommand{\diam}{\mathrm{diam}}
\newcommand{\im}{\mathrm{Im}}
\newcommand{\re}{\mathrm{Re}}
\newcommand{\one}{{\bf 1}}
\newcommand{\wt}{\widetilde}
\newcommand{\wh}{\widehat}
\newcommand{\ol}{\overline}
\newcommand{\giv}{\,|\,}
\newcommand{\fan}{{\mathbf F}}
\newcommand{\eps}{\epsilon}
\newcommand{\BES}{\mathrm{BES}}
\newcommand{\BESQ}{\mathrm{BESQ}}
\newcommand{\strip} {\mathscr{S}}
\newcommand{\ee}{\varepsilon}
\newcommand{\dd}{\delta}
\newcommand{\aal}{\alpha}
\renewcommand{\gg}{\gamma}
\newcommand{\la}{\lambda}
\newcommand{\ff}{\varphi}
\newcommand{\tht}{\theta}
\newcommand{\DD}{\Delta}
\newcommand{\kk}{\kappa}
\renewcommand{\ss}{\sigma}
\newcommand{\zz}{\zeta}
\def\n#1{\left\lvert#1\right\rvert}
\newcommand{\nv}{^{-1}}
\newcommand{\gen}[1]{\left\langle #1 \right\rangle}
\newcommand{\del}{\partial}
\newcommand{\sm}{\setminus}
\newcommand{\slekr}{\SLE_\kk(\underline{\rho})}
\newcommand{\und}{\underline}
\newcommand{\ov}{\overline}
\newcommand{\comm}[1]{\textcolor{magenta}{[#1]}}
\renewcommand{\comm}[1]{}
\begin{document}

\title[Connectivity of the adjacency graph of the $\SLE$ fan]{Connectivity of the adjacency graph of\\ complementary components of the $\SLE$ Fan}

\author{Cillian Doherty, Konstantinos Kavvadias, and Jason Miller}

\begin{abstract}
Suppose that $h$ is an instance of the Gaussian free field (GFF) on a simply connected domain $D \subseteq \C$ and $x,y \in \partial D$ are distinct.  Fix $\kappa \in (0,4)$ and for each $\theta \in \R$ let $\eta_\theta$ be the flow line of $h$ from $x$ to $y$.  Recall that for $\theta_1 < \theta_2$ the \emph{fan} $\fan(\theta_1,\theta_2)$ of flow lines of $h$ from $x$ to $y$ is the closure of the union of $\eta_\theta$ as $\theta$ varies in any fixed countable dense subset of $[\theta_1,\theta_2]$.  We show that the adjacency graph of components of $D \setminus \fan(\theta_1,\theta_2)$ is a.s.\ connected, meaning it a.s.\ holds that for every pair $U,V$ of components there exist components $U_1,\ldots,U_n$ so that $U_1 = U$, $U_n = V$, and $\partial U_i \cap \partial U_{i+1} \neq \emptyset$ for each $1 \leq i \leq n-1$.  We further show that $\fan(\theta_1,\theta_2)$ a.s.\ determines the flow lines used in its construction.  That is,  for each $\theta \in [\theta_1,\theta_2]$ we prove that $\eta_\theta$ is a.s.\ determined by $\fan(\theta_1,\theta_2)$ as a set.
\end{abstract}

\date{\today}
\maketitle

\setcounter{tocdepth}{1}
\tableofcontents

\newcommand{\hcap}{{\mathrm {hcap}}}

\parindent 0 pt
\setlength{\parskip}{0.2cm plus1mm minus1mm}

\section{Introduction}
\label{sec:intro}

The Schramm-Loewner evolution ($\SLE_{\kappa}$, $\kappa > 0$) is a one parameter family of curves which connect two boundary points of a simply connected domain.  It was introduced by Schramm in \cite{s2000sle} as a candidate to describe the scaling limit of the interfaces of various two-dimensional discrete models from statistical mechanics at criticality.  A number of such convergence results have been proved in the case of planar lattices \cite{lsw2004lerw,s2001cardy,s2010ising,ss2009contours} and random planar maps \cite{s2016inventory,kmsw2019bipolar,lsw2017wood,gkmw2018active, gwynne2021convergence,gm2021saw}.  The parameter $\kappa \geq 0$ determines the roughness of an $\SLE_{\kappa}$ curve.  An $\SLE_0$ is a smooth curve and $\SLE_{\kappa}$ curves become more fractal as $\kappa$ increases.  There are three regimes of $\kappa$ values which exhibit rather different behavior: for $\kappa \in [0,4]$ an $\SLE_{\kappa}$ is a.s.\  simple, for $\kappa \geq 8$ it is a.s.\  space-filling, and  for $\kappa \in (4,8)$ it is a.s.\ self-intersecting but not space-filling \cite{rs2005basic}.  Furthermore,  the a.s.\ dimension of the range of an $\SLE_{\kappa}$ curve is given by $\min(1+\kappa/8 ,  2)$ \cite{rs2005basic,beffara2008dimension}.

Schramm's original definition of $\SLE_\kappa$ is in terms of the chordal Loewner equation driven by $\sqrt{\kappa} B$ where $B$ is a standard Brownian motion and in the intervening years since its introduction several other representations of $\SLE_\kappa$ have been discovered.  In this work, its coupling with the Gaussian free field (GFF) will be particularly important.  Recall that the GFF $h$ on a simply connected domain $D \subseteq \C$ is the Gaussian field with covariance given by the Green's function $G$ for $\Delta$ on $D$.  Since $G(x,y) \sim -\log|x-y|$ as $x \to y$, the GFF has infinite variance at points and so takes values in the space of distributions on $D$ rather than a space of functions.  Nevertheless, it is very close to being a function in the sense that $h$ takes values in the Sobolev space $H^{-\epsilon}(D)$ for every $\epsilon > 0$ but not~$L^2$.  Consequently, it exhibits many of the features of a function (but with an extra complication to reflect that it is only distribution valued).  For example, it was shown in \cite{she2016zipper} (building on \cite{schramm2013contour}) that it is possible to make sense of the flow lines of the formal vector field $e^{i (h(\eta(t))/\chi + \theta )}$,  i.e.,  solutions to the $\text{ODE}$
\begin{align*}
\eta'(t) = e^{i ( h(\eta(t))/\chi + \theta)} \quad \text{for} \quad t>0 \quad\text{where}\quad \chi = \frac{2}{\sqrt{\kappa}}-\frac{\sqrt{\kappa}}{2} \quad\text{for}\quad \kappa \in (0,4)
\end{align*}
and they are $\SLE_\kappa$ type curves.  The theory of how the GFF flow lines behave and interact with each other was developed in \cite{dub2009gff, ms2016imag1,ms2017ig4}.

\begin{figure}[ht!]
\includegraphics[width=0.48\textwidth]{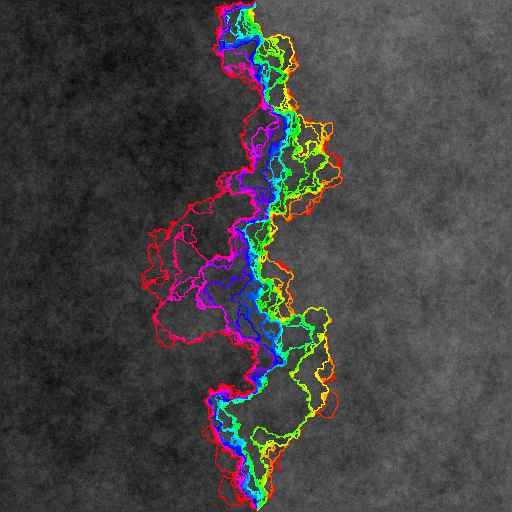} \hspace{0.01\textwidth} \includegraphics[width=0.48\textwidth]{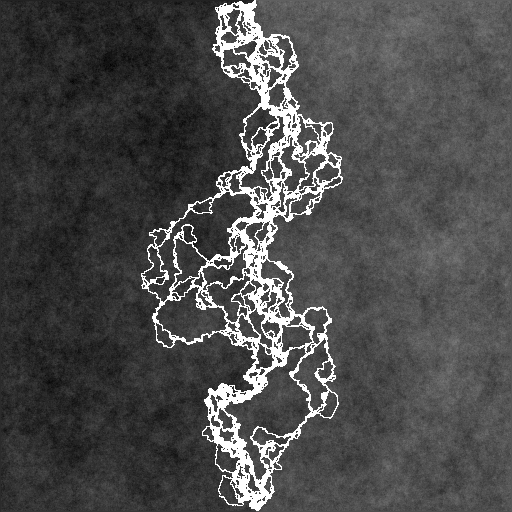}
\caption{\label{fig:fan_simulation} Simulation of $\fan$ for a GFF $h$ on $[-1,1]^2$ from $-i$ to $i$ and $\kappa = 1$.   The boundary conditions for $h$ are chosen so that the $0$-angle flow line is an $\SLE_1$ and the values of $h$ are shown in grayscale.  {\bf Left:}  Flow lines with different angles are shown with different colors.   {\bf Right:} All of the flow lines are shown in white.  In Theorem~\ref{thm:fan_determines_flow_lines},  we prove that one can recover the flow lines which make up $\fan$ from $\fan$ (as a closed set).  That is, the colors in the left picture are determined by the right picture.}
\end{figure}

Using the GFF, one can consider families of $\SLE_\kappa$-type curves in order to build various types of random fractals.  The main focus of the present work is on the so-called \emph{fan} considered in \cite{ms2016imag1}.  More precisely, fix $\kappa \in (0,4)$ and let $\lambda = \pi/\sqrt{\kappa}$.  Let $h$ be a $\text{GFF}$ on $\h$ with boundary values given by $-a$ (resp.\ $b$) on~$\R_-$ (resp.\ $\R_+$) where $a,b$ satisfy
\begin{equation}
\label{eqn:fan_bd}
\frac{a+\lambda}{\chi} > \frac{\pi}{2} \quad\text{and}\quad \frac{\lambda + b}{\chi} > \frac{\pi}{2}.
\end{equation}
For $\theta \in \R$,  we let $\eta_{\theta}$ be the flow line of $h$ of angle $\theta$ from $0$ to $\infty$.  The boundary conditions of $h$ ensure that $\eta_{\theta}$ is a.s.\ well-defined for $-\pi/2 \leq \theta \leq \pi/2$.  Let $\Theta$ be any countable dense subset of $[-\pi/2,\pi/2]$.  The fan $\fan$ of $h$ is defined to be the closure of $\cup_{\theta \in \Theta} \eta_{\theta}$; the definition turns out not to depend on the choice of $\Theta$ (see Proposition~\ref{prop:bounded_hausdorff}).  It was shown in \cite{ms2016imag1} that the Lebesgue measure of $\fan$ is a.s.\ equal to~$0$ and in \cite{miller2017dimension} that the a.s.\ Hausdorff dimension of $\fan$ is $1 + \kappa/8$,  the same as that of ordinary $\SLE_{\kappa}$.  More generally, if we fix $\theta_1 < \theta_2$ and assume that 
\begin{equation}
\label{eqn:fan_gen_bd}
\frac{a+\lambda}{\chi}  \geq \theta_2 \quad\text{and}\quad  \frac{\lambda + b}{\chi} \geq -\theta_1
\end{equation}
then we define $\fan(\theta_1,\theta_2)$ to be the closure of $\cup_{\theta \in \Theta} \eta_\theta$ where $\Theta$ is any fixed countable dense subset of $[\theta_1,\theta_2]$ (or of $(\tht_1, \tht_2)$ if we have equality in (\ref{eqn:fan_gen_bd}), see Section~\ref{subsec:sle_fan}).
As in the case of $\fan$, we have that $\fan(\theta_1,\theta_2)$ does not depend on the choice of $\Theta$ and has a.s.\ Hausdorff dimension $1+\kappa/8$.

The main focus of this work is on the so-called \emph{adjacency graph of complementary components} of~$\fan(\theta_1,\theta_2)$.   That is, we consider the graph whose vertices are the components of $\h \setminus \fan(\theta_1,\theta_2)$ and we say that components $U, V$ are connected by an edge if $\partial U \cap \partial V \neq \emptyset$.  We say that the adjacency graph of such components is connected if every pair of such components can be connected by a finite length path.  Our main result is the following theorem.

\begin{theorem}
\label{thm:fan_connectivity}
Fix $\kappa \in (0,4)$ and assume that we have the setup described above.  Then a.s.\ we have that the adjacency graph of components of $\h \setminus \fan(\theta_1,\theta_2)$ is connected.
\end{theorem}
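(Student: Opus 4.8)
\emph{Proof plan.} Write $\fan=\fan(\theta_1,\theta_2)$. The plan is to fix one reference component and show that a.s.\ every other complementary component is joined to it by a finite path in the adjacency graph; the natural choice is $R$, the component of $\h\setminus\fan$ lying to the right of $\eta_{\theta_1}$. Since every $\eta_\theta$ with $\theta\in[\theta_1,\theta_2]$ lies weakly to the left of $\eta_{\theta_1}$, the fan never enters the region to the right of $\eta_{\theta_1}$, so $R$ is genuinely a single component (and by the same argument applied to $\eta_{\theta_2}$ it would be equally good to aim for the leftmost component). I would repeatedly use that the family $\{\eta_\theta\}$ is monotone in the angle --- so each complementary component lies on a definite side of each $\eta_\theta$ --- that under \eqref{eqn:fan_gen_bd} each $\eta_\theta$, $\theta\in[\theta_1,\theta_2]$, is a simple curve from $0$ to $\infty$ which does not trace an arc of $\partial\h$, and that for $\theta\neq\theta'$ the set $\eta_\theta\cap\eta_{\theta'}$ is closed and nowhere dense in each of the two curves (distinct-angle flow lines bounce off one another but never run together), all facts from imaginary geometry.

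The core computation is the case of a finite subfan $F=\eta_{\alpha_1}\cup\cdots\cup\eta_{\alpha_k}$ with $\theta_1=\alpha_1<\cdots<\alpha_k$. By monotonicity and boundary avoidance, the components of $\h\setminus F$ sit in ``levels'': $R$ lies to the right of every $\eta_{\alpha_i}$, one component lies to the left of all of them, and every other component is wedged between two consecutive curves $\eta_{\alpha_i},\eta_{\alpha_{i+1}}$ and carries a nondegenerate boundary arc of each. Given such a wedged component $P$, I would pick a point $x$ in the relative interior of its $\eta_{\alpha_i}$-arc lying on no other $\eta_{\alpha_j}$ --- possible since, by the nowhere-density fact and the Baire category theorem, the traces of the remaining curves cannot cover a nondegenerate arc of $\eta_{\alpha_i}$. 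Near $x$ the set $F$ is just $\eta_{\alpha_i}$, so the side of $\eta_{\alpha_i}$ opposite $P$ at $x$ is a single component $P'$, necessarily one level closer to $R$, with $x\in\partial P\cap\partial P'$. Iterating reaches $R$ in at most $k$ steps, so the adjacency graph of $\h\setminus F$ is connected.

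To pass from finite subfans to $\fan$ I would approximate $\fan$ from within by an increasing sequence of finite subfans $F_n$ with $\overline{\bigcup_n F_n}=\fan$ (this and the independence of the construction of auxiliary choices come from Proposition~\ref{prop:bounded_hausdorff}). Given complementary components $U,V$ of $\fan$ with $z\in U$, $w\in V$, each lies in a component of $\h\setminus F_n$, and the descending chains above connect those to $R$ in the $F_n$-adjacency graph. The point would be that these chains can be made to \emph{stabilise} as $n\to\infty$: each crossing point was chosen on a single flow line and off all the others, so once a flow line and a crossing point enter the picture they persist --- provided the chain need not be lengthened indefinitely as finer flow lines subdivide the components it traverses. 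Equivalently, and more directly, one builds a descending chain $U\sim U_1\sim U_2\sim\cdots$ in $\h\setminus\fan$ itself by crossing, at a suitably generic point, a flow-line arc on the boundary of the current component on the side towards $R$; monotonicity forces the ``angles'' of the $U_i$ to decrease along the chain, and a component of angle $\theta_1$ must be $R$.

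The hard part --- and the place where one genuinely needs the fine structural description of the complementary components of $\fan(\theta_1,\theta_2)$, which is where the real work lies --- will be to prove that such a descending chain terminates. One must rule out a sequence of distinct, pairwise adjacent complementary components whose angles decrease to some $\theta_\infty>\theta_1$ while the components accumulate against a limiting flow line, and one must control the way components are ``pinched'' against individual flow lines (including limiting flow lines, which are not among the $\eta_\theta$) so that each such component really does carry a genuine flow-line arc on its boundary across which one can step towards $R$. Once this structural input is available --- for instance a bound, valid on compact subsets of $\h$, on the number of complementary components pinched against a fixed flow line on a fixed side with boundary arcs of angle bounded away from $\theta_\infty$ --- termination follows, and together with the reduction and the finite-subfan engine this would prove the theorem. (The same structural description is exactly what is needed to match up crossing points in the finite-subfan approximation, so the two routes coincide at their difficult step.)
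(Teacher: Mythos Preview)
Your finite-subfan argument is fine and close to the paper's Step~4, but the passage to the full fan has a genuine gap. The descending-chain idea --- step across a boundary flow line towards $R$, decrease the ``angle'' of the component, repeat --- is exactly the place where the difficulty concentrates, and you acknowledge this. The problem is that the angle $\theta(U)$ of a complementary component of $\fan$ can be \emph{any} real number in $[\theta_1,\theta_2]$, not just one of countably many, so there is no a priori reason the angles along your chain cannot decrease by smaller and smaller amounts and accumulate at some $\theta_\infty>\theta_1$. Your proposed structural input --- a uniform bound on the number of components pinched against a fixed flow line with boundary angles bounded away from $\theta_\infty$ --- is not something that follows from the flow-line interaction rules, and there is no obvious compactness to extract it from. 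Similarly, the ``stabilisation'' route fails for the same reason: adding a new flow line between $\eta_{\alpha_i}$ and $\eta_{\alpha_{i+1}}$ can subdivide a component of your chain into many pieces, and there is no bound on how many, so the chain length need not stabilise.

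The paper's approach is genuinely different and avoids this obstacle entirely. Rather than build a single chain, it first partitions $[\theta_1,\theta_2]$ into finitely many subintervals of width $\epsilon/4$ and reduces to showing connectivity within each slice $G$ between consecutive $\eta_{\theta_j},\eta_{\theta_{j+1}}$. The key input is a multi-scale \emph{annulus event} (Lemmas~\ref{lem:annulus_condition}--\ref{lem:annulus_condition_at_dense_scales}): for $\epsilon$ small enough, at a dense set of scales around every point one can find an annulus containing a finite chain of components $V_1,\ldots,V_k$ which disconnects the inner from the outer boundary and which no flow line of angle in $[\theta_{j+1},\theta_j]$ can enter. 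This is built from the Hausdorff continuity of $\SLE_\kappa(\rho)$ in $\rho$ (Section~\ref{sec:hausdorff_convergence}). The connectivity argument is then topological: let $K$ be the closure of the union of components of $G\setminus\fan(\theta_{j+1},\theta_j)$ reachable from a reference component; if $K\neq G$, pick $x\in\partial K\cap G$ and invoke an annulus event at small scale around $x$ --- the resulting shield loop lies in $K$, contradicting $x\in\partial K$. This open--closed argument sidesteps termination entirely, and the shield mechanism is precisely the missing idea in your approach.
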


Let us give some further context and background to motivate Theorem~\ref{thm:fan_connectivity}.  First, the corresponding result with an $\SLE_{\kappa'}$ process for $\kappa' \in (4,8)$ in place of $\fan(\theta_1,\theta_2)$ was posed in \cite[Question 11.2]{dms2021mating} and partially solved in \cite{gp2020adj}.  Namely, in \cite{gp2020adj} it was shown that there exists $\kappa_0' \in (4,8)$ such that for all $\kappa' \in (4,\kappa_0')$ the graph of components of $\h \setminus \eta'$ for $\eta' \sim \SLE_{\kappa'}$ is connected.   Determining whether the adjacency graph of complementary components of an $\SLE_{\kappa'}$ is connected for all $\kappa' \in (4,8)$ remains an open question.  This is in contrast with Theorem~\ref{thm:fan_connectivity}, which gives the connectivity of the adjacency graph of complementary components of $\fan(\theta_1,\theta_2)$ for all $\kappa \in (0,4)$.  Another long-standing open problem is to determine whether the adjacency graph of complementary components (in $\C$) of the range of a planar Brownian motion run for one unit of time is a.s.\ connected (see \cite[Open Problem 4]{mp2010} or \cite[Problem 2]{bur}).

The connectivity of the adjacency graph of complementary components is a useful topological property which has many other applications in the study of such random fractals.  Let us describe one such motivation, which comes from the relationship between $\SLE_\kappa$-type curves and Liouville quantum gravity (LQG) surfaces.   Recall that an $\text{LQG}$ surface is a random two-dimensional Riemannian manifold which is formally described by the metric tensor
\begin{equation}
\label{eqn:lqg_metric_tensor}
e^{\gamma h(z)}\left( dx^2 + dy^2\right) \quad\text{for}\quad z = x + iy
\end{equation}
where $dx^2 + dy^2$ is the Euclidean metric on a domain $D \subseteq \C$,  $h$ is (some form of) the $\text{GFF}$ on $D$,  and $\gamma \in (0,2]$ is a parameter.  Since the $\text{GFF}$ $h$ and its variants are random variables which live in the space of distributions rather than the space of functions,  some care is required in order to make sense of~\eqref{eqn:lqg_metric_tensor}  \cite{ds2011lqg}.  It was shown in \cite{she2016zipper} that for $\gamma \in (0,2)$ if certain types of $\text{LQG}$ surfaces are conformally welded according to quantum boundary length (i.e., the boundary length measure associated with~\eqref{eqn:lqg_metric_tensor}) then the conformal welding is well-defined and the welding interface is an $\SLE_{\kappa}$ curve for $\kappa = \gamma^2 \in (0,4)$.  The result of \cite{she2016zipper} was extended to the case $\gamma=2$ and $\kappa = 4$ in \cite{hp2021welding} and analogous welding results for $\kappa' > 4$ were established in \cite{dms2021mating}.

In general, it is non-trivial to determine whether a given conformal welding is unique.  The uniqueness is known to hold if the welding interface is conformally removable.  Recall that a set $K \subseteq \C$ is said to be \emph{conformally removable} if every homeomorphism $\varphi \colon 	\C \to \C$ which is conformal on $\C \setminus K$ is conformal on $\C$.  It is also known that the range of an $\SLE_{\kappa}$ curve for $\kappa \in (0,4)$ is a.s.\   conformally removable (see \cite{js2000removability,rs2005basic}).  The conformal removability of $\SLE_4$ was proved in \cite{kms2022sle4} and for $\SLE_{\kappa'}$ with $\kappa' \in (4,8)$ when the adjacency graph of complementary components is connected in \cite{kms2023nonsimpleremove}.  We believe that it is possible to combine Theorem~\ref{thm:fan_connectivity} with the methods used in \cite{kms2023nonsimpleremove} to show that the range of $\fan$ is a.s.\ conformally removable.

It is clear that $\fan(\theta_1,\theta_2)$ determines $\eta_{\theta_1}$ and $\eta_{\theta_2}$ since the latter (resp.\ former) is simply the left (resp.\ right) boundary of $\fan(\theta_1,\theta_2)$.  Is is therefore interesting to ask whether $\fan(\theta_1,\theta_2)$ determines~$\eta_\theta$ for $\theta \in (\theta_1,\theta_2)$ or if additional information is required in order to recover~$\eta_\theta$ from $\fan(\theta_1,\theta_2)$.  Concretely, the question is whether the colors in the left hand side of Figure~\ref{fig:fan_simulation} can be recovered from only observing the right hand side of Figure~\ref{fig:fan_simulation}.  We note that the analog of this question has been analyzed in the case of a $\SLE_{\kappa'}$ process with $\kappa' \in (4,8)$ in \cite{msw2020nonsimple}, in which it is shown that is \emph{not} possible to recover the curve by only observing its range.  The following theorem gives that, in contrast to \cite{msw2020nonsimple}, it is possible to recover the flow lines which make up $\fan(\theta_1,\theta_2)$ when only observing $\fan(\theta_1,\theta_2)$.

\begin{theorem}
\label{thm:fan_determines_flow_lines}
Fix $\kappa \in (0,4)$ and assume that we have the setup described above.  For each $\theta \in [\theta_1,\theta_2]$ we have that $\fan(\theta_1,\theta_2)$ a.s.\ determines $\eta_{\theta}$.
\end{theorem}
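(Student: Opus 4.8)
The plan is to prove the result by induction/approximation from the two extreme flow lines $\eta_{\theta_1}, \eta_{\theta_2}$ (which, as noted, are literally the left and right boundaries of $\fan(\theta_1,\theta_2)$ and hence trivially determined), and to recover $\eta_\theta$ for intermediate $\theta$ by a nested sequence of ``sub-fans.'' Concretely, fix $\theta \in (\theta_1,\theta_2)$ and pick rationals $\theta_1 < \alpha_n \uparrow \theta$ and $\theta_2 > \beta_n \downarrow \theta$. The key structural input from \cite{ms2016imag1, ms2017ig4} is the ordering and interaction rules for flow lines of different angles: $\eta_{\alpha_n}$ lies to the right of $\eta_\theta$, $\eta_{\beta_n}$ lies to its left, they may bounce off one another but cannot cross, and $\eta_\theta$ is squeezed between $\eta_{\alpha_n}$ and $\eta_{\beta_n}$. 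I claim that $\fan(\alpha_n,\beta_n)$ is itself determined by $\fan(\theta_1,\theta_2)$: indeed, $\fan(\alpha_n,\beta_n)$ should be recoverable as (the closure of) a union of those complementary components of $\fan(\theta_1,\theta_2)$ lying ``between'' $\eta_{\alpha_n}$ and $\eta_{\beta_n}$, together with the fan structure, where the relevant components are picked out by a boundary-touching / separation criterion relative to $\eta_{\alpha_n}$ and $\eta_{\beta_n}$. Since $\eta_\theta = \bigcap_n \fan(\alpha_n,\beta_n)$ (the fans shrink down to the single flow line as the angle gap closes—this is where one uses that, on any compact set, $\fan(\alpha_n,\beta_n) \to \eta_\theta$ in the Hausdorff metric, cf.\ Proposition~\ref{prop:bounded_hausdorff} and the continuity of flow lines in the angle parameter), we would conclude that $\eta_\theta$ is determined.

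The real work is therefore to show that $\fan(\alpha,\beta)$ is measurable with respect to $\fan(\theta_1,\theta_2)$ for rationals $\theta_1 \le \alpha < \beta \le \theta_2$. Here I would argue inductively on the ``angle depth'': it suffices to show that if $\fan(\theta_1,\theta_2)$ determines a sub-fan $\fan(\alpha,\beta)$ then it determines $\fan(\alpha,\gamma)$ and $\fan(\gamma, \beta)$ for any rational $\gamma \in (\alpha,\beta)$. The base case $\fan(\theta_1,\theta_2)$ is given. For the inductive step, note that $\fan(\alpha,\gamma)$ and $\fan(\gamma,\beta)$ overlap exactly along $\eta_\gamma$, whose double-sided nature is the crux. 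The strategy is: $\eta_\gamma$ is the ``rightmost'' flow line inside $\fan(\alpha,\beta)$ among those of angle $\ge \gamma$, equivalently the left boundary of $\fan(\gamma,\beta)$; but we also know $\eta_\gamma$ is the right boundary of $\fan(\alpha,\gamma)$. So I would try to characterize $\eta_\gamma$ intrinsically inside $\fan(\alpha,\beta)$ — for instance, a point $z \in \fan(\alpha,\beta)$ lies on $\eta_\gamma$ iff it is accessible ``from the left through $\fan(\alpha,\beta)$'' in a way that the angle-$\gamma$ flow line is the last one passing through it. Making this precise requires understanding which complementary components of $\fan(\alpha,\beta)$ are cut off on the left versus the right of $\eta_\gamma$, and this should follow from the flow-line interaction rules plus the fact that, for a generic component $U$ of $\h \setminus \fan(\alpha,\beta)$, exactly one of the countably many $\eta_{\theta'}$ with $\theta' \in (\alpha,\beta) \cap \mathbb{Q}$ forms part of $\partial U$ on each side, and the angles of these boundary arcs are monotone in a way visible from the set $\fan(\alpha,\beta)$ alone.

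I expect the main obstacle to be precisely this intrinsic characterization of $\eta_\gamma$ as a subset of $\fan(\alpha,\beta)$ — i.e., proving that the partition of $\h \setminus \fan(\alpha,\beta)$ into ``left of $\eta_\gamma$'' and ``right of $\eta_\gamma$'' is a measurable function of $\fan(\alpha,\beta)$ as a closed set. The subtlety is that $\fan(\alpha,\beta)$ has empty interior and Lebesgue measure zero, so one cannot simply look at connected components of a complement-of-a-thick-set type; instead one must exploit the fine topology of the fan: which flow lines bound which pockets, and how the angles stack. I anticipate this is where Theorem~\ref{thm:fan_connectivity} (connectivity of the adjacency graph) gets used — connectivity lets one ``propagate'' side-information across the family of complementary components, so that once $\eta_\alpha$ and $\eta_\beta$ (the two outer boundaries, which \emph{are} visible) pin down the orientation near the boundary, one can march inward through adjacent components and consistently label each pocket as lying to one side of each intermediate $\eta_\gamma$. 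A secondary technical point will be justifying the limit $\bigcap_n \fan(\alpha_n,\beta_n) = \eta_\theta$ rigorously, including at the endpoints $\theta = \theta_1, \theta_2$ where one side of the approximation is degenerate; this should reduce to Hausdorff-continuity of $\fan(\cdot,\cdot)$ in its arguments on compacts, which in turn follows from the continuity of $\theta \mapsto \eta_\theta$ established in \cite{ms2017ig4} together with Proposition~\ref{prop:bounded_hausdorff}.
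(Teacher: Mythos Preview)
Your high-level picture --- label each complementary component $U$ by an angle $\theta(U)$ and use the adjacency-graph connectivity (Theorem~\ref{thm:fan_connectivity}) to propagate labels inward from the boundary --- is exactly the skeleton of the paper's argument. But the proposal, as written, has genuine gaps at the two places where the actual work lives.

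First, a structural misconception. You write that ``exactly one of the countably many $\eta_{\theta'}$ with $\theta'\in(\alpha,\beta)\cap\Q$ forms part of $\partial U$ on each side.'' This is false: the boundary of a component $U$ of $\h\setminus\fan(\alpha,\beta)$ is \emph{not} a piece of any single rational-angle flow line. It is obtained as a limit of approximating flow-line boundaries, and carries GFF boundary data of a flow line of some (typically irrational) angle $\theta(U)$; establishing even this much takes real work (Lemmas~\ref{lem:form_of_the_boundary}--\ref{lem:component_jordan_domain} in the paper). Relatedly, your assertion $\eta_\theta=\bigcap_n\fan(\alpha_n,\beta_n)$ is not correct in general: that intersection is the set the paper calls $\fan(z)$, which for $z\in U$ with $\theta(U)=\theta$ is a curve that splits at $x(U)$, traces \emph{both} sides of $\partial U$, and re-merges at $y(U)$ --- strictly larger than $\eta_\theta$ (Lemma~\ref{lem:component_jordan_domain}). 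Also note that your ``induction'' does not reduce anything: the inductive step (extract $\eta_\gamma$ from $\fan(\alpha,\beta)$) is exactly the theorem applied to a sub-fan, so there is no well-founded descent.

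Second, and more importantly, you have not identified \emph{any} observable of the set $\fan(\theta_1,\theta_2)$ from which angle information can be read off. ``Marching inward through adjacent components'' requires, at each step, extracting a number from the picture. The paper supplies two such observables: (i) the Hausdorff dimension of $\partial U_1\cap\partial U_2$ determines $|\theta(U_1)-\theta(U_2)|$ via the explicit formula~\eqref{eqn:flow_line_intersection_dimension} from \cite{mw2017intersections} (Lemma~\ref{lem:component_boundary_intersection}); and (ii) the pair of marked points $\{x(U),y(U)\}$ on $\partial U$ is a.s.\ determined by $\fan(\theta_1,\theta_2)$ (Lemma~\ref{lem:two_points_measurable}), which upgrades $|\theta(U_1)-\theta(U_2)|$ to a signed difference by telling you which boundary arc is left and which is right. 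The second of these is the hardest single step in the paper --- it requires constructing, at every scale near $x(U)$ and $y(U)$, pairs of auxiliary components $U_L,U_R$ surrounding the marked point whose mutual intersection dimensions distinguish $x(U),y(U)$ from all other points of $\partial U$. Nothing in your proposal addresses either of these ingredients; ``visible from the set alone'' and ``propagate side-information'' are exactly the claims that need proof.
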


\subsection*{Outline.}  The remainder of this article is structured as follows.  In Section \ref{sec:prelim} we give brief introductions to $\SLE$, the GFF, and the flow line coupling.  In Section~\ref{sec:hausdorff_convergence} we will study the continuity properties of $\SLE_\kappa(\rho)$ processes as $\rho$ varies and these results will be important for the proof of Theorem~\ref{thm:fan_connectivity}, which we will complete in Section~\ref{sec:proofs}. 
We prove Theorem~\ref{thm:fan_determines_flow_lines} in Section~\ref{sec:fan_determines_flow_lines}.

\subsection*{Acknowledgements} C.D.\ was supported by EPSRC grant EP/W524633/1 and a studentship from Peterhouse, Cambridge.  K.K. and J.M.\ were supported by ERC starting grant 804116 (SPRS).

\section{Preliminaries}\label{sec:prelim}

We will first review the basics of Bessel processes in Section~\ref{subsec:bessel}.  We will next review the $\SLE_\kappa$ and $\SLE_\kappa(\rho)$ processes in Section~\ref{subsec:sle}.  The purpose of Section~\ref{subsec:gff} is to review the GFF and in Section~\ref{subsec:ig} the theory of the flow lines of the GFF.  Finally, in Section~\ref{subsec:sle_fan} we will recall some basic facts about the fan.

\subsection{Bessel processes}
\label{subsec:bessel}
We are now going to review some basic facts about Bessel processes.  We refer the reader to \cite{revuz2013continuous} for more details.  Fix $\delta \in \R$.  The starting point for the construction of the law of a Bessel process of dimension $\delta$ ($\BES^\delta$) is the construction of the law of the so-called square Bessel process of dimension $\delta$ $(\BESQ^{\delta})$.  The law of a $\BESQ^{\delta}$ is described by the SDE
\begin{equation}\label{eqn:bessel_square_sde}
dY_t = \delta dt + 2\sqrt{Y_t} dB_t,\quad Y_0 = y_0 >0,
\end{equation}
where $B$ is a standard Brownian motion.  Standard results for SDEs imply that there is a unique strong solution to~\eqref{eqn:bessel_square_sde},  at least up until the first time that the process hits $0$.  In the case that $\delta>0$,  there is a unique strong solution to~\eqref{eqn:bessel_square_sde} for all $t \geq 0$ and the solution remains non-negative for all times a.s.

A process $X$ has the law of a $\BES^{\delta}$ process if it can be expressed as $X = \sqrt{Y}$ where $Y$ is a $\BESQ^{\delta}$.  By applying It\^o's formula,  we obtain that $X$ solves the SDE
\begin{equation}
\label{eqn:bessel_sde}
dX_t = \frac{a}{X_t}dt + dB_t,\quad X_0 = x\geq 0 \quad\text{where}\quad a = \frac{\delta-1}{2},
\end{equation}
at least up until it hits $0$.  If $\delta \in (0,2)$,  then $X_t$ will a.s.\ hit and be instantaneously reflected at~$0$; if $\delta = 2$, then $X_t$ will a.s.\  not hit $0$ but will get arbitrarily close to $0$; and if $\delta > 2$,  then a.s.\  $\lim_{t \to \infty} X_t = \infty$.  An important property of $\BES^{\delta}$ processes is that they satisfy Brownian scaling, i.e.,  if $X$ is a $\BES^{\delta}$ then so is the process $t \mapsto r^{-1}X_{r^2 t}$ for each $r>0$ fixed.

When $\delta \in (1,2)$,  the process solves~\eqref{eqn:bessel_sde} in integrated form for all $t \geq 0$ hence is a semimartingale.  When $\delta = 1$,  the process is equal in distribution to $|B|$ where $B$ is a standard Brownian motion and solves~\eqref{eqn:bessel_sde} with an extra correction coming from the local time of $X$ at $0$ so is also a semimartingale.  When $\delta \in (0,1)$, the process does not solve~\eqref{eqn:bessel_sde} when hitting $0$ in integrated form and is not a semimartingale.  In order to make sense of it as a solution to~\eqref{eqn:bessel_sde},  we need to make a so-called principal value correction (see \cite{s2009cle} for more details).
Let
\[ \nu = \frac{\delta}{2}-1\]
and let $I_{\nu}$ be the Bessel function of parameter $\nu$.  Then the transition kernel of a $\BES^{\delta}$ process is given by
\begin{align*}
p_t(x,y) &= t^{-1}\left(\frac{y}{x}\right)^{\nu} y \exp(-(x^2 + y^2) / 2t) I_{\nu}\left(\frac{xy}{t}\right) \quad\text{and}\\
p_t(0,y) &= 2^{-\nu} t^{-(\nu + 1)} \Gamma(\nu + 1)^{-1} y^{2\nu + 1} \exp(-y^2 / 2t) \quad\text{for each}\quad x,y,t > 0.
\end{align*}

Fix $\delta \in (0,2)$ and suppose that $X$ is a $\BES^{\delta}$ starting from $0$.  Then we recall from the It\^o excursion decomposition of $X$ that we can sample from the law of $X$ in the following way:
\begin{itemize}
\item Pick a Poisson point process $\Lambda$ from the measure $c_\delta du \otimes t^{\delta/2-2} dt$ where both $du$ and $dt$ denote Lebesgue measure on $\R_+$ and $c_\delta = \delta/2$. 
\item For each $(u,t) \in \Lambda$, we sample a Bessel excursion $e_{u,t}$ of length $t$ from $0$ to $0$.
\item We concatenate together the $e_{u,t}$ ordered according to the associated $u$ value.
\end{itemize}
In this construction, we have that $u$ gives the local time for $X$ at $0$ at which $e$ occurs.  Also,  we recall that a $\BES^\delta$ excursion with $\delta \in (0,2)$ and length $t$ from $0$ to $0$ can be sampled as follows. First,  we start with a $\BES^{4-\delta}$ process $Y$ starting from $0$ and then we weight it locally by $J_s = \exp(- Y_t^2 / (2(t-s)))$.  It\^o's formula implies that if $Y$ satisfies
\[ dY_s = \frac{1-a}{Y_s}ds + dB_s \quad\text{with}\quad a = \frac{\delta - 1}{2},\]
then 
\begin{align*}
dJ_s = J_s \left(\frac{a - \frac{3}{2}}{t-s} ds -\frac{Y_s}{t-s} dB_s\right),
\end{align*}
which shows that
\[ M_s = \left( \frac{t}{t-s} \right)^{\frac{3}{2}-a} J_s\]
is a local martingale for $s<t$ satisfying
\[ dM_s = -\frac{Y_s}{t-s} M_s dB_s.\]
Then,  Girsanov's theorem implies that if we weight by the local martingale $M_s$,  we obtain that $Y$ solves the SDE
\begin{equation}\label{eqn:bessel_excursion_sde}
dY_s = \left( \frac{1-a}{Y_s} - \frac{Y_s}{t-s} \right) ds + dW_s,
\end{equation}
where $W$ is a standard Brownian motion under the new measure. Altogether, if we consider the paths up to time $t-\epsilon$,  then the Radon-Nikodym derivative of the excursion with respect to a $\BES^{4-\delta}$ process is proportional to $\exp(-Y_{t-\epsilon}^2 / (2(t-\epsilon)))$.

\subsection{Schramm--Loewner Evolution}
\label{subsec:sle}

$\SLE_{\kappa}$ is a one-parameter family of random curves indexed by $\kappa>0$ and introduced by Schramm in \cite{s2000sle} as a candidate to describe the scaling limit of the interfaces in discrete models in two dimensions at criticality.  $\SLE_{\kappa}$ is defined using the so-called Loewner equation.  More precisely,  let $W \colon \R_+ \to \R$ be a continuous function and for each $z \in \h$ we let $g_t(z)$ be the unique solution to the ODE
\begin{equation}
\label{eq:loewner}
\partial_t g_t(z) = \frac{2}{g_t(z)-W_t},\quad g_0(z)=z
\end{equation}
up to time $\tau_z = \sup\{t \geq 0 \colon \im (g_t(z)) > 0\}$ and $K_t = \{z \in \h \colon \tau_z \leq t\}$.  We let $\h_t = \h \setminus K_t$. The map $g_t$ is then the unique conformal transformation $\h_t \to \h$ satisfying $g_t(z) - z \to 0$ as $z \to \infty$.   To define $\SLE_{\kappa}$,  we fix $\kappa \geq 0$ and then take $W = \sqrt{\kappa}B$ where $B$ is a standard Brownian motion.  It was proved in \cite{rs2005basic,lsw2004lerw} (see also \cite{am2022sle8}) that there a.s. exists a random curve $\eta$ in $\h$ from~$0$ to~$\infty$ such that for each $t \geq 0$ we have that $\h_t$ is the unbounded component of $\h \setminus \eta([0,t])$.  Then we say that~$\eta$ has the law of an $\SLE_{\kappa}$ curve.

Almost surely, $\eta$ is a simple curve for $\kk \in [0,4]$, is self-intersecting but not space-filling when $\kk \in (4,8)$, and is space-filling when $\kk \geq 8$ \cite{rs2005basic}. Given a simply connected domain $D$ and two distinct prime ends $x, y \in \del D$ let $\ff \colon D \to \h$ be a conformal map sending $x$ to $0$ and $y$ to $\infty$. $\SLE_\kk$ in $D$ started at $x$ and targeted at $y$ is the random curve $\ff\nv(\eta)$ where $\eta$ is an $\SLE_\kk$ in $\h$ (started at $0$ and targeted at $\infty$) as defined above.

We will also need to consider the $\SLE_\kk(\underline{\rho})$ processes, which are a variant of SLE where the driving function $W$ in~\eqref{eq:loewner} can be more general than a multiple of Brownian motion.  In that case,  we need to keep track of extra marked points called force points.  More precisely,  we define vectors of \textit{force points} $\underline{x}^L = (x_1^L, \ldots, x_\ell^L)$ and $\und{x}^R = (x_1^R, \dots, x_r^R)$ in $\partial \h$ where $-\infty< x_\ell^L < \cdots < x_1^L \leq 0$ and $0 \leq x_1^R < \cdots < x_r^R <\infty$ and associated real-valued vectors of \textit{weights} $\und{\rho}^L = (\rho_1^L, \dots, \rho_\ell^L)$ and $\und{\rho}^R = (\rho_1^R, \dots, \rho_r^R)$ in $\R$. In this case, we let $W$ be the solution to the SDE
\begin{align}
\label{eqn:slekrsde}
    dW_t = \sqrt{\kk}dB_t + \sum_{q \in \{L,R\}} \sum_{i} \frac{\rho_i^q}{W_t - V_t^{i,q}}dt, \quad
    dV_t^{i,q} = \frac{2}{V_t^{i,q} - W_t}dt,\quad V_0^{i,q} = x_i^q.
\end{align}
As shown in \cite{ms2016imag1}, there exists a unique solution to~\eqref{eqn:slekrsde} up until the \textit{continuation threshold} is hit; this is the first time that either
\begin{align*}
\sum_{i : V_t^{i,L} = W_t}\rho^{i,L} \leq -2 \quad \text{or} \quad \sum_{i : V_t^{i,R} = W_t} \rho^{i,R} \leq -2.
\end{align*}
Note that $V_t^{i,q}$ encodes the evolution of the force points under the Loewner flow. Similarly to the previous case, the hulls $K_t$ up until the continuation threshold is hit are generated by a continuous random curve $\eta$ starting at $0$ \cite{ms2016imag1}. We say that a random curve $\eta$ generated in this way has the law of an $\SLE_\kk(\underline{\rho})$. We refer to \cite{ms2016imag1} for a more detailed description of $\slekr$ processes. $\slekr$ processes can also be defined in domains other than $\h$ as in the case of standard $\SLE_\kk$. In this case force points are located at prime ends of $\del D$ and are mapped to $\R$ by a conformal map $D \to \h$.

In the special case where we have a single force point located at $0^+$ (resp.\ $0^-$) of weight $\rho>-2$,  the process can be defined continuously for all times a.s.  Also,  if $\rho \in (-2,\kappa/2-2)$ then it hits $(0,\infty)$ (resp.\ $(-\infty,0)$) a.s. while if $\rho \geq \kappa/2-2$,  then the process intersects $\partial \h$ only at the origin a.s.  Moreover we have the following absolute continuity results for $\SLE_\kappa(\und{\rho})$ processes.  Suppose that $z^1,\ldots,z^n \in \partial \h$ and $\rho_1,\ldots,\rho_n \in \R$.  We also let $z_t^j = x_t^j + i y_t^j = g_t(z^j)$ where $(g_t)$ is the Loewner flow associated with an $\SLE_\kappa$ process.  We recall from \cite[Theorem~6]{sw2005coordinate} that the process
\begin{equation}
\label{eqn:sw_mg}
M_t = \prod_{j=1}^n \left( | g_t'(z_j)|^{(8-2\kappa+\rho_j)\rho_j/(8\kappa)} (y_t^j)^{\rho_j^2/8\kappa}| W_t - z_t^j|^{\rho_j/\kappa} \right) \prod_{1 \leq j < j' \leq n} (| z_t^j - z_t^{j'}| |\ol{z_t^j} - z_t^{j'}|)^{\rho_j \rho_{j'}/(4\kappa)}
\end{equation}
is a continuous local martingale for an $\SLE_\kappa$ process and that if we weight the law of an $\SLE_\kappa$ process by $M_t$ stopped at the first time that it disconnects (or hits) from $\infty$ one of the points $z^1,\ldots,z^n$ then we get an $\SLE_\kappa(\rho_1,\ldots,\rho_n)$ process with force points at $z^1,\ldots,z^n$ and stopped at the corresponding time.

\subsection{The Gaussian free field}
\label{subsec:gff}

Let $D \subseteq \C$ be a simply connected domain with harmonically non-trivial boundary (that is, a Brownian motion started at any point in $D$ a.s.\ exits $D$). Let $C^\infty_0(D)$ be the space of smooth functions on $D$ with compact support and let $H_0(D)$ be the closure of this space with respect to the Dirichlet inner product
\[ \gen{f,g}_\nabla = \frac{1}{2\pi} \int_D \nabla f(x) \cdot \nabla g(x) dx. \]
Let $(f_n)_{n \geq 1}$ be an orthonormal basis for $H_0(D)$ with respect to this norm and let $(\aal_n)_{n \geq 1}$ be a sequence of i.i.d. standard normal random variables. We define the zero-boundary GFF on $D$ to be the formal sum
\[ h = \sum_{n=1}^\infty \aal_n f_n. \]
This sum a.s.\ does not converge in $H_0(D)$ but does converge a.s.\ in the space of distributions on~$D$.   More precisely,  it converges a.s.  in the space $H^{-1}(D)$ (at least when $D$ is bounded) which is the dual space of $H_0(D)$ when the latter is endowed with the norm induced by the Dirichlet inner product. The law of $h$ does not depend on the choice of orthonormal basis. The GFF with non-zero boundary conditions is defined to be the sum of a zero-boundary GFF and a harmonic function with these boundary conditions. For a more detailed introduction to the GFF we refer the reader to \cite{she2007gff}.

We can also define the \textit{whole-plane GFF} on $\C$. To do so, let $H_s(\C)$ be the set of functions $f \in C_0^\infty(\C)$ where $\int_\C f(x)dx = 0$. We let $H(\C)$ be the Hilbert space closure of $H_s(\C)$ with respect to the Dirichlet inner product, let $(f_n)_{n \geq 1}$ be an orthonormal basis for this space, and let $(\aal_n)_{n \geq 1}$ be defined as before. As above, we define the whole-plane GFF as
\[h = \sum_{n=1}^\infty \aal_n f_n,\]
which is a.s.\ a well-defined distribution on $\C$ but is defined only up to an additive constant. We can fix this constant by requiring, for instance, that $(h,g_0) = 0$ for some fixed function $g_0 \in C_0^\infty(\C)$ with $\int_\C g(x)dx = 1$. See \cite{ms2017ig4} for more details.

\subsection{Imaginary geometry}
\label{subsec:ig}

Now we focus on the coupling between $\SLE$ and the $\text{GFF}$ \cite{ms2016imag1,ms2017ig4}.  We fix $\kappa \in (0,4)$ and let 
\begin{align*}
\lambda = \frac{\pi}{\sqrt{\kappa}},  \quad \chi = \frac{2}{\sqrt{\kappa}} - \frac{\sqrt{\kappa}}{2},\quad\text{and}\quad \kappa' = \frac{16}{\kappa}.
\end{align*}
We fix weights $\und{\rho}^L,\und{\rho}^R$ and force points $\und{x}^L,\und{x}^R$ in $\partial \h$ such that $|\und{\rho}^L| = |\und{x}^L| = \ell,  |\und{\rho}^R| = |\und{x}^R| = r$ and the $\und{x}^L$ (resp.\ $\und{x}^R$) are to the left (resp.\ right) of $0$ and given in decreasing (resp.\ increasing) order.  We also set $x_0^L = 0^-,x_0^R = 0^+, x_{\ell+1}^L = -\infty,x_{r+1}^R = \infty$,  and $\rho_0^L = \rho_0^R = 0$.  Let $h$ be a $\text{GFF}$ on $\h$ with boundary conditions given by
\begin{align*}
 -\lambda \left(1+\sum_{i=1}^j \rho_i^L\right) \quad&\text{in} \quad (x_{j+1}^L,x_j^L] \quad\text{for each}\quad 0 \leq j \leq \ell \quad\text{and}\\
 \lambda \left(1+\sum_{i=1}^j \rho_i^R\right) \quad&\text{in}\quad(x_j^R,x_{j+1}^R] \quad\text{for each}  \quad 0 \leq j \leq r.
 \end{align*}
 Then it is shown in \cite[Theorem~1.1]{ms2016imag1} that there exists a coupling between $h$ and an $\SLE_{\kappa}(\und{\rho}^L;\und{\rho}^R)$ process $\eta$ in $\h$ from $0$ to $\infty$ so that the following is true.  Let $(K_t)$ be the increasing family of hulls associated with $\eta$ and let $f_t = g_t - W_t$ be its centered Loewner flow.  Then for each stopping time $\tau$ that a.s.  occurs before the continuation threshold is hit,  $K_{\tau}$ is a local set of $h$ in the sense of \cite{schramm2013contour} and conditionally on $K_{\tau}$,  the field $h \circ f_{\tau}^{-1} - \chi \arg(f_{\tau}^{-1})'$ has the law of a $\text{GFF}$ on $\h$ with boundary conditions given by
 \begin{align*}
 -\lambda \quad&\text{in}\quad (f_{\tau}(x_0^L),0^-]\quad\text{and}\quad \lambda \quad\text{in}\quad (0^+,f_{\tau}(x_0^R)]\\
 -\lambda \left(1+\sum_{i=1}^j \rho_i^L\right) \quad&\text{in}\quad (f_{\tau}(x_{j+1}^L),f_{\tau}(x_j^L)] \quad\text{for each}\quad 0 \leq j \leq \ell \quad\text{and}\\
 \lambda \left(1+\sum_{i=1}^j \rho_i^R\right) \quad&\text{in}\quad  (f_{\tau}(x_j^R),f_{\tau}(x_{j+1}^R)] \quad\text{for each}\quad 0 \leq j \leq r. 
 \end{align*}
 Moreover,  under this coupling,  $\eta$ is a.s.  determined by $h$ and is referred to as a \emph{flow line} of the field.

For a given angle $\tht \in \R$, we define the flow line of angle $\tht$ of $h$ to be the flow line (as defined above) of the GFF $h + \tht \chi$. Thus the original definition of a flow line corresponds to a flow line of angle $0$. Flow lines of the GFF defined on domains other than $\h$ can be defined via conformal mapping, although we refer to \cite{ms2016imag1} for further details.

The results of \cite{ms2016imag1} were extended in \cite{ms2017ig4} to the case of a whole-plane $\text{GFF}$ $h$.  More precisely,  it is shown in \cite[Theorem~1.1]{ms2017ig4} that we can generate flow lines of $h$ starting from different points and with different angles and that adding $2\pi \chi$ to the field does not change its flow lines.  Also,  the marginal law of such a flow line is that of a whole-plane $\SLE_{\kappa}(2-\kappa)$ process from $0$ to $\infty$ (see \cite[Section~2]{ms2017ig4} for the whole-plane version of $\SLE$) and \cite[Theorem~1.2]{ms2017ig4} implies that these flow lines are a.s.  determined by the field.  Moreover,  if $h$ is a $\text{GFF}$ on a domain $D \subseteq \C$ viewed as a distribution modulo $2\pi \chi$,  then the law of $h$ is locally absolutely continuous with respect to the law of a whole-plane $\text{GFF}$ modulo $2\pi \chi$.  Therefore,  we can make sense of the flow lines of $h$ starting from interior points of $D$ and stopped at the first time that they hit $\partial D$. The interaction between flow lines started from different points and with different angles is characterized in \cite[Theorem 1.5, Proposition 7.4]{ms2016imag1} and \cite[Theorems 1.7, 1.11]{ms2017ig4}.

We note that we can view the collection of flow lines in the whole-plane case as a type of a planar space filling tree in the following sense.  We fix a countable dense subset $(z_n)$ of $\C$.  Then it is shown in \cite{ms2017ig4} that the collection of flow lines starting at these points with the same angle has the property that a.s.  each pair of the above flow lines eventually merges and no two flow lines ever cross each other.  Moreover,  it is shown in \cite{ms2017ig4} that there exists a space-filling path that traces through the above tree of flow lines which is the so-called space-filling $\SLE_{\kappa'}$.  Furthermore,  the path traces the tree in a natural order in the sense that $z_n$ is hit before $z_m$ for $n \neq m$ when the flow line with angle $\pi/2$ starting from $z_n$ merges into the right side of the flow line with angle $\pi/2$ starting from $z_m$.

\subsection{The $\SLE$ fan}
\label{subsec:sle_fan}
Fix $\kappa \in (0,4)$ and let $h$ be a GFF on $\h$ with boundary conditions given by $-a$ on $\R_-$ and $b$ on $\R_+$ satisfying~\eqref{eqn:fan_bd}.  This hypothesis on the boundary conditions of $h$ ensures that for every $\theta \in [-\pi/2,\pi/2]$, if we start the $\theta$-angle flow line $\eta_{\theta}$ of $h$ from $0$ to $\infty$,  then the values of the weights of the force points associated with $\eta_{\theta}$ exceed $-2$. The \textit{$\SLE$ fan} $\fan$ was introduced in \cite{ms2016imag1} and defined to be the closure of $\cup_{\tht \in \Theta}\eta_\tht$ where $\Theta$ is any countable dense subset of $[-\tfrac\pi2, \tfrac\pi2]$. Proposition~\ref{prop:bounded_hausdorff} shows that the resulting object does not depend on the choice of $\Theta$.  It is also shown in \cite{ms2016imag1} that $\fan$ has Lebesgue measure $0$ a.s.\ and in \cite{miller2017dimension} that its dimension is $1+\kappa/8$.  The same results hold if we fix $\theta_1 < \theta_2$, $a,b$ satisfy~\eqref{eqn:fan_gen_bd} and we take $\fan(\theta_1,\theta_2)$ to be the closure of $\cup_{\theta \in \Theta} \eta_\theta$ where $\Theta$ is any fixed countable dense subset of $[\theta_1,\theta_2]$. 
If $\tht_2 = (a+\la)/\chi$ (resp.\ $\tht_1 = -(b+\la)/\chi$) note that the flow line of angle $\tht_2$ (resp. $\tht_1$) hits the continuation threshold immediately, since this case corresponds to $\rho^R = -2$ (resp.\ $\rho^L = -2$). We solve this issue either by defining $\eta_{\tht_1} = \R_+$ (resp. $\eta_{\tht_2} = \R_-$) or simply by choosing $\Theta \subseteq (\tht_1, \tht_2)$. Proposition~\ref{prop:bounded_hausdorff} ensures these two approaches are equivalent. We remark that choosing $\tht_1, \tht_2$ so that we have equality in (\ref{eqn:fan_gen_bd}) corresponds to the largest fan $\F(\tht_1, \tht_2)$ we can define for a field $h$ with these boundary conditions.

\section{Continuity of $\SLE_\kappa(\rho)$ in $\rho$ in the Hausdorff topology}
\label{sec:hausdorff_convergence}

\subsection{Main statements}
\label{subsec:main_statements}

The purpose of this section is to prove Propositions~\ref{prop:bounded_hausdorff} and~\ref{prop:intersections_stopped}, which are formally stated just below.  The former gives that a flow line of a GFF on $\h$ with boundary conditions given by $-a$ (resp.\ $b$) on $\R_-$ (resp.\ $\R_+$) from $0$ to $\infty$ whose angle is sufficiently close to $-(\lambda+b)/\chi$ is likely to be very close to $\R_+$ and the latter gives that two flow lines of a whole-plane GFF from $0$ to $\infty$ whose angles are close are likely to be close to each other.  On a first reading, the proofs in this section can be skipped when reading the rest of this paper.

Let $\varphi \colon \ol{\h} \to \ol{\D}$ be the conformal map given by $z \mapsto (z-i)/(z+i)$.  The \emph{bounded metric} on $\ol{\h}$ is defined by $d(z,w) = |\varphi(z) - \varphi(w)|$.  The \emph{bounded Hausdorff metric} is the Hausdorff metric on the compact subsets of $\ol{\h}$ with respect to the bounded metric.

\begin{proposition}
\label{prop:bounded_hausdorff}
Fix $\kappa \in (0,4)$ and $a,b \in \R$ with $a+b > -2\lambda$.  Suppose that $h$ is a GFF on $\h$ with boundary conditions given by $-a$ on $\R_-$ and $b$ on $\R_+$.  For each $\theta \in \R$ we let $\eta_\theta$ be the flow line of $h$ from~$0$ to~$\infty$ with angle $\theta$.  Then~$\eta_\theta$ converges to~$\R_+$ in probability in the bounded Hausdorff metric as $\theta \downarrow -(\lambda+b)/\chi$. Furthermore, for any $\dd_0, p \in (0,1)$ and $R > 0$, there exists $\tht_0 > -(\la + b)/\chi$ (depending only on $\kk$, $R$, $\dd_0$, $p$, $a$, and $b$) such that for any $\tht \in (-(\la+b)/\chi, \tht_0)$ the following holds with probability at least $1-p$: for each $x \in [0,R]$, there exists $y \in \eta_\tht \cap [0,R]$ such that $\n{x-y} \leq \delta_0$.
\end{proposition}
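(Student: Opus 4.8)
The plan is to reduce everything to an analysis of the $\SLE_\kappa(\rho^L;\rho^R)$ process $\eta_\theta$ and, in particular, to the behavior of the force point on the right side as $\theta$ decreases to $-(\lambda+b)/\chi$. Recall that the flow line $\eta_\theta$ of $h$ (with boundary data $-a,b$) is an $\SLE_\kappa(\rho^L;\rho^R)$ from $0$ to $\infty$ with a single left force point at $0^-$ of weight $\rho^L = (a-\lambda)/\chi\cdot\text{(something)}$ — more precisely, $\rho^L+2 = (a+\lambda)/\chi - \theta$ (suitably normalized) and $\rho^R+2 = (\lambda+b)/\chi + \theta$, so that as $\theta\downarrow -(\lambda+b)/\chi$ we have $\rho^R\downarrow -2$, the continuation threshold on the right. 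The geometric intuition is that a flow line whose right weight is just above $-2$ is ``pushed'' extremely hard toward $\R_+$ and hence stays close to it.

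First I would make this precise using the Loewner/Bessel description: if $W_t$ is the driving function and $O_t = V_t^{1,R}$ is the image of the right force point, then $W_t - O_t$ (up to time parametrization) is a $\BES^\delta$-type process with $\delta = \delta(\rho^R) = 1 + 2(\rho^R+2)/\kappa$, which degenerates to $\delta\downarrow 1$ as $\rho^R\downarrow -2$; equivalently, in the excursion picture of Section~\ref{subsec:bessel}, the intensity measure $c_\delta\,du\otimes t^{\delta/2-2}dt$ governing excursions of $W-O$ away from $0$ has $c_\delta = \delta/2\to 1/2$ but, more importantly, the number of macroscopic excursions before the process has accumulated a given amount of local time at $0$ stays bounded while the total amount of time spent away from $0$ before swallowing a fixed interval $[0,R]$ can be made small. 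The point is that $\eta_\theta$ returns to $\R_+$ exactly at the local times of $W-O$ at $0$, and each excursion of $W-O$ away from $0$ corresponds to a piece of $\eta_\theta$ that makes an ``upward bubble'' off $\R_+$; I would quantify the diameter of such a bubble in terms of the half-plane capacity increment (equivalently, the length of the corresponding excursion) and the location along $\R_+$ where it is attached. A convenient route is to use the coordinate-change martingale \eqref{eqn:sw_mg} (or direct estimates on $\SLE_\kappa(\rho)$ for $\rho$ near $-2$, e.g.\ from \cite{ms2016imag1}) to show that, for $\rho^R$ close to $-2$, with high probability $\eta_\theta$ restricted to the region over $[0,R]$ consists of excursions each of bounded-metric diameter $\leq \delta_0$, and that these excursions' attachment points are $\delta_0$-dense in $[0,R]$.

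Concretely, the key steps in order are: (i) translate the boundary-data hypotheses \eqref{eqn:fan_bd}–\eqref{eqn:fan_gen_bd} into the statement $\rho^R = \rho^R(\theta)\downarrow -2$ and $\rho^L=\rho^L(\theta)$ bounded away from $-2$, so $\eta_\theta$ is well-defined and never hits $\R_-$ except possibly at $0$; (ii) use Brownian scaling of the underlying Bessel process together with a stopping-time decomposition at the first time $\eta_\theta$ has ``capacity-covered'' all of $[0,R]$ to reduce to controlling finitely many excursions; (iii) show each excursion has small bounded-metric diameter with high probability when $\rho^R$ is close to $-2$, via an $L^1$ or Chebyshev estimate on excursion lengths using the excursion intensity $t^{\delta/2-2}dt$ and the fact that a length-$t$ bubble attached at a point $x\in[0,R]$ has Euclidean diameter $O(\sqrt t)$ (hence bounded-metric diameter $O(\sqrt t/(1+x^2))$ or so); (iv) combine a union bound over a $\delta_0$-net of $[0,R]$ to get the ``for each $x\in[0,R]$ there is a nearby $y\in\eta_\theta$'' conclusion, and separately that $\eta_\theta$ does not stray more than $\delta_0$ above $\R_+$, giving convergence to $\R_+$ in the bounded Hausdorff metric. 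The quantitative statement (the existence of $\theta_0$ depending only on $\kappa,R,\delta_0,p,a,b$) then falls out of the explicit estimates, since all constants are uniform once $\rho^L$ is bounded away from $-2$ and $R$ is fixed.

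The main obstacle I anticipate is step (iii): controlling the spatial extent of an individual excursion uniformly. The subtlety is that while a single Bessel excursion of length $t$ contributes capacity $\sim t$ and a ``bubble'' of Euclidean size $\sim\sqrt t$ near where it is attached, one must rule out the scenario where the excursion, though short in capacity, nonetheless produces a flow-line segment that travels far in the bounded metric — this requires an a priori modulus-of-continuity estimate for $\eta_\theta$ that is uniform in $\rho^R$ near $-2$. I would handle this by absolute continuity: away from $\R_+$, the law of $\eta_\theta$ is locally absolutely continuous (with Radon–Nikodym derivative controlled uniformly for $\rho^R$ in a compact subset of $(-2,\infty)$, using \eqref{eqn:sw_mg}) with respect to that of an ordinary $\SLE_\kappa$, for which the needed continuity estimates are classical. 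Care is also needed near the origin, where the bounded metric is comparable to the Euclidean one and near $\infty$, where it contracts; mapping to $\ol{\D}$ via $\varphi$ from the start makes the ``near $\infty$'' part automatic and confines the real work to $[0,R]$.
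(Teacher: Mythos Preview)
Your excursion-by-excursion program has a real gap at step~(iii). The claim that a capacity-$t$ bubble has Euclidean diameter $O(\sqrt t)$ is false deterministically (small half-plane capacity only bounds the \emph{height}, not the horizontal extent), so you need a probabilistic modulus-of-continuity estimate. You propose absolute continuity with ordinary $\SLE_\kappa$, but the parenthetical ``controlled uniformly for $\rho^R$ in a compact subset of $(-2,\infty)$'' is exactly the wrong regime: you need $\rho^R\downarrow -2$, where the Radon--Nikodym derivative~\eqref{eqn:sw_mg} blows up. Worse, as $\rho^R\downarrow -2$ the number of excursions in any fixed capacity interval diverges (this is essentially the content of the paper's Lemma~\ref{lem:bessel_convergence}: $V_t\to\infty$), so even a per-excursion estimate with fixed failure probability would not survive a union bound.

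The paper avoids this entirely by a global argument. It first treats the single-force-point case: instead of bounding individual excursions, it shows that the right force point $V_t\to\infty$ and hence $W_t\to\infty$ (with $\inf_{s\le t}W_s\to 0$) for each fixed $t$, and then uses a harmonic-measure computation to deduce that $\eta$ exits the thin rectangle $[-\epsilon^{1/2},\epsilon^{-1}]\times[0,\epsilon]$ through its \emph{right} side with high probability (Lemmas~\ref{lem:bessel_convergence}--\ref{lem:exit_on_right}). The density of $\eta\cap[0,R]$ is obtained not from the excursion structure but from an auxiliary flow-line trick: one plants flow lines of a \emph{fixed} angle $\phi$ from grid points $x_j\in\R_+$, shows each one re-enters $(x_j,x_{j+1})$ at low height, and uses the interaction rules to force $\eta_\theta$ (which must cross each of them) to hit every $(x_j,x_{j+1})$ (Lemma~\ref{lem:intersect_boundary}). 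The Hausdorff convergence is then closed up by time-reversal. Finally, the second force point is added by absolute continuity (Lemma~\ref{lem:rn_bound}), but crucially comparing $\SLE_\kappa(\rho_1;\rho_2)$ to $\SLE_\kappa(\rho_2)$ --- removing only the \emph{left} weight, which stays bounded away from $-2$ --- so the Radon--Nikodym derivative is uniformly controlled.
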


We note that the condition $a+b > -2\lambda$ in the statement of Proposition~\ref{prop:bounded_hausdorff} is so that there exists $\theta \in \R$ so that $\eta_\theta$ is non-trivial, meaning that $\eta_\theta \sim \SLE_\kappa(\rho_1; \rho_2)$ with $\rho_1, \rho_2 > -2$.  In the limit $\theta \downarrow -(\lambda+b)/\chi$, we have that $\rho_2 \downarrow -2$.  The second statement in the proposition arises naturally in the proof of the first, and we will make use of both to prove the main results of the paper.  To state our result for flow lines of a whole-plane GFF we first introduce a definition.

\begin{definition}\label{def:delta_close}
Fix $\kk \in (0,4)$ and let $h$ be a whole-plane GFF with values modulo a global multiple of $2\pi\chi$. Let $D$ be a bounded open set containing $0$, let $\dd > 0$ and define $D_\dd = \{z \in \C \colon d(z, D) < \dd\}$. For $\theta \in \R$ let $\eta_{\theta}$ be the flow line of $h$ starting from $0$ with angle $\theta$ and for any set $A$ let $\tau_\tht(A) = \inf\{t \geq 0\colon \eta_\tht(t) \notin A\}$.
For $\tht \neq 0$, $\eta_0$ and $\eta_\tht$ are said to be \emph{$\dd$-close until $\eta_0$ exits $D$} if the following conditions hold. For all $t \in (0, \tau_0(D)]$ there exist $t_1, t_2, t'_1, t'_2$ where $0 < t_1 < t < t_2 < \tau_0(D_\dd)$ and $0 < t'_1 < t'_2 < \tau_\tht(D_\dd)$ such that for each $j = 1,2$, $\eta_0(t_j) = \eta_\tht(t'_j)$ and at this point $\eta_\tht$ hits $\eta_0$ on the \emph{left} side of $\eta_0$  (if $\tht > 0$) with angle gap $\tht$ (as defined in \cite{ms2017ig4}). Furthermore, for any $s \in [t_1, t_2]$ and $s' \in [t'_1, t'_2]$, we have $|\eta_0(s) - \eta_\tht(s')| < \dd$. If $\tht < 0$, we replace `left side' by `right side' above.
\end{definition}

\begin{proposition}\label{prop:intersections_stopped}
     Fix $\kk \in (0,4)$, let $h$ be a whole-plane GFF with values modulo a global multiple of $2\pi\chi$ and let $D$ be a bounded open set containing $0$. Fix $\dd, p \in (0,1)$. Then,  there exists $\tht_0 \in (0,1)$, depending only on $\kappa , \dd, p$ and $D$, such that the following holds with probability at least $1 - p$. For any fixed $\tht \in (0, \tht_0)$, $\eta_0$ and $\eta_\tht$ are $\dd$-close until $\eta_0$ exits $D$.
\end{proposition}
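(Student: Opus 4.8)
The plan is to transfer the problem to the $\SLE_\kappa(\rho)$ setting, where one can use the interaction rules for flow lines together with a near-continuity-in-$\rho$ statement of the type established in Proposition~\ref{prop:bounded_hausdorff}. The key structural fact from imaginary geometry is that, for $\theta>0$ small, the angle-$\theta$ flow line $\eta_\theta$ of a whole-plane GFF stays to the left of $\eta_0$ and repeatedly bounces off the left side of $\eta_0$; equivalently, if we condition on $\eta_0$ and map out its complement, then the conditional law of $\eta_\theta$ is (locally absolutely continuous with respect to) that of an $\SLE_\kappa(\rho)$ process in the left connected component, run along $\eta_0$, where the relevant weight converges to the ``immediately bounce off'' value $\rho = \tfrac\kappa2 - 2$ as $\theta \downarrow 0$. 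In that degenerate limit the $\SLE_\kappa(\rho)$ process degenerates to the boundary arc $\eta_0$ itself. So morally the statement says: an $\SLE_\kappa(\rho)$ process hugging a boundary arc, with weight close to the hugging threshold, is close to that arc in the appropriate (quotiented) topology; and the bounce points are dense enough that every point of $\eta_0$ in $D$ is sandwiched between two of them.

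\textbf{Step 1 (Reduce to a deterministic-$\eta_0$ statement).} Work on the event $G_\delta^0$ of positive probability that $\eta_0$ restricted to the time of exiting $D_\delta$ has some mild regularity (e.g.\ it does not come within distance $10\delta$ of $\partial D_{2\delta}$ until after it exits $D_\delta$, and it has positive ``thickness'' that is uniform at scale $\delta$); by taking $\delta$ small and then shrinking $D$ slightly we can assume we are always on such an event, up to adjusting $p$. Condition on $\eta_0$. By \cite[Theorem~1.1, Theorem~1.2]{ms2017ig4}, $\eta_0$ is determined by $h$, and the conditional law of $h$ given $\eta_0$ is a GFF in $\mathbb C\setminus\eta_0$ with flow-line boundary data along $\eta_0$. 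Hence the conditional law of $\eta_\theta$ is that of the angle-$\theta$ flow line in the left component $U$ of $\mathbb C\setminus\eta_0$ started from $0$, which by the boundary data is an $\SLE_\kappa(\rho^L(\theta);\rho^R(\theta))$-type process in $U$ from $0$ targeted appropriately, with $\rho^L(\theta)\to \tfrac\kappa2-2$ (the ``stay on the boundary'' weight, after accounting for the angle change $\theta\chi$) as $\theta\downarrow 0$, and $\rho^R(\theta)$ bounded away from $-2$.

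\textbf{Step 2 (Closeness to the arc via continuity in $\rho$).} Now apply the continuity-in-$\rho$ machinery. The key input is exactly of the flavour of Proposition~\ref{prop:bounded_hausdorff}: a flow line with boundary data forcing it to hug a boundary arc, whose relevant weight is $\rho$-close to the hugging threshold, lies within $\delta$ of that arc (in the bounded Hausdorff metric, transported to $U$ via a fixed conformal map $U\to\mathbb H$ which on $G_\delta^0$ has controlled distortion near $\eta_0\cap D$), with conditional probability $\geq 1-p$ once $\theta<\theta_0(\kappa,\delta,p,D)$. This gives the ``$|\eta_0(s)-\eta_\theta(s')|<\delta$'' part of Definition~\ref{def:delta_close} and also that $\eta_\theta$ does not exit $D_\delta$ before $\eta_0$ exits $D$.

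\textbf{Step 3 (Bounce points are dense).} It remains to produce, for every $t\in(0,\tau_0(D)]$, bounce times $t_1<t<t_2$ of $\eta_0$ at which $\eta_\theta$ touches the left side of $\eta_0$ with angle gap $\theta$, with both within distance $\delta$ of $\eta_0(t)$. By the $\SLE_\kappa(\rho)$ description with $\rho^L(\theta)\in(-2,\tfrac\kappa2-2)$, the process $\eta_\theta$ a.s.\ hits the left boundary arc $\eta_0$, and indeed the set of hitting points is a.s.\ dense in the portion of $\eta_0$ traced out up to any given time (this is the standard fact that $\SLE_\kappa(\rho)$ with $\rho<\tfrac\kappa2-2$ hits the boundary on a set which, run up to the disconnection/absorption time of a given boundary point, accumulates everywhere along the traced boundary; here it follows from the Bessel description in Section~\ref{subsec:bessel} of the boundary-intersection of $\SLE_\kappa(\rho)$, whose zero set is dense in its range). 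One then needs the refinement that these hitting points are dense \emph{at scale $\delta$ uniformly} along $\eta_0\cap D$ with conditional probability $\geq 1-p$ for $\theta<\theta_0$; this is obtained by covering $\eta_0\cap D$ by $O(\delta^{-2})$ balls of radius $\delta/2$, applying the Bessel estimate to each to get a uniform lower bound on the probability of a hit in that ball (uniform once $\rho^L(\theta)$ is close to $\tfrac\kappa2-2$, using Brownian scaling of Bessel), and a union bound, absorbing the failure probability into $p$ by the usual argument of iterating the estimate at a geometric sequence of scales. That each such hit is on the left side with angle gap $\theta$ is automatic from the interaction rules \cite[Theorem~1.5]{ms2016imag1}, \cite[Theorem~1.7]{ms2017ig4}. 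Finally, integrate out the conditioning on $\eta_0$ (and on $G_\delta^0$) to get the unconditional bound.

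\textbf{Main obstacle.} The delicate point is Step~3: controlling the hitting set of $\eta_\theta$ on $\eta_0$ \emph{uniformly} over all points of $\eta_0 \cap D$ and \emph{uniformly} as $\theta\downarrow 0$ --- precisely in the regime where $\rho^L(\theta)\uparrow \tfrac\kappa2-2$ and the hitting becomes sparser. One must quantify that the Bessel process with index approaching the non-hitting threshold still hits near any given boundary point with probability bounded below, and that this survives the conformal transport from $U$ to $\mathbb H$ whose distortion near $\eta_0$ we only control on the good event $G_\delta^0$. The interplay of these two limits ($\theta\to 0$ forcing hugging, which is what we want, but also forcing rarity of bounces, which fights the density claim) is what makes the argument non-routine, and is exactly where the quantitative form of Proposition~\ref{prop:bounded_hausdorff} --- which already packages a uniform ``within $\delta_0$'' statement along $[0,R]$ --- is designed to be applied.
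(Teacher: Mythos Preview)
Your overall architecture---condition on $\eta_0$, identify the conditional law of $\eta_\theta$ as an $\SLE_\kappa(\rho)$-type process along $\eta_0$, and feed this into Proposition~\ref{prop:bounded_hausdorff}---is the paper's approach (via Lemma~\ref{lem:intersections_times}). But there is a genuine error in the limiting weight, and two structural pieces are missing.

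\textbf{The wrong limit.} You assert that the relevant weight tends to $\tfrac{\kappa}{2}-2$ as $\theta\downarrow 0$. It does not. After mapping $\C\setminus\eta_0$ to $\h$ so that the left side of $\eta_0$ goes to $\R_+$, the conditional law of $\eta_\theta$ is $\SLE_\kappa(\rho^L(\theta);\rho^R(\theta))$ with $\rho^R(\theta)=-2+\theta\chi/\lambda$ (see~\eqref{eq:rhoL}); as $\theta\downarrow 0$ this tends to $-2$, the continuation threshold, not $\tfrac{\kappa}{2}-2$. This reverses your picture entirely: as $\theta\downarrow 0$ the boundary intersections become \emph{denser}, not sparser, which is exactly why the second clause of Proposition~\ref{prop:bounded_hausdorff} (intersections $\delta_0$-dense in $[0,R]$) applies directly. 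The ``main obstacle'' you describe---a competition between hugging and rarity of bounces---does not exist, and your proposed Bessel/union-bound workaround in Step~3 is unnecessary.

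\textbf{Two gaps.} First, for $\kappa\in(8/3,4)$ the curve $\eta_0$ self-intersects and $\C\setminus\eta_0$ is a countable union of pockets; one must handle the finitely many large pockets via Proposition~\ref{prop:bounded_hausdorff} and the small ones by diameter alone (the paper's Case~2 in Lemma~\ref{lem:intersections_times}). You do not address this. Second, Definition~\ref{def:delta_close} demands $|\eta_0(s)-\eta_\theta(s')|<\delta$ for \emph{all} $s\in[t_1,t_2]$ and $s'\in[t_1',t_2']$. Density of bounce points along $\eta_0$ controls the $\eta_0$-segment, but not the $\eta_\theta$-segment: Hausdorff closeness of ranges (your Step~2) does not prevent $\eta_\theta|_{[t_1',t_2']}$ from tracking a far-away portion of $\eta_0$. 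The paper closes this by proving a one-sided statement (Lemma~\ref{lem:intersections_times}) and then applying it a second time with the roles of $\eta_0$ and $\eta_\theta$ swapped, combining the two via a short triangle-inequality argument. Your outline lacks this symmetrization, and without it the conclusion does not follow.
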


We remark that Definition~\ref{def:delta_close} makes sense in the setting when $\wt{h}$ is instead a GFF on a domain $D_0 \subseteq \C$, $\ov{D}_\dd \subseteq D_0$, and where the flow lines are started from any point $z \in D$ and have angles $\tht_1$ and $\tht_2$.
Proposition~\ref{prop:intersections_stopped} also holds in this case since the laws of $h$ and $\wt{h}$, both restricted to $\ov{D}_\dd$ are mutually absolutely continuous by \cite[Lemma~4.1]{mq2020geodesics} and conformal invariance. We will in particular often apply this proposition in the case that $\wt{h}$ is a GFF on the unit disk.

In order to prove Proposition~\ref{prop:bounded_hausdorff}, we will first work in the setting of a single force point $\SLE_\kappa(\rho)$ process in Section~\ref{subsec:single_force_point} and then extend to the setting of Proposition~\ref{prop:bounded_hausdorff} in Section~\ref{subsec:two_force_point} using an absolute continuity argument. In Section~\ref{subsec:whole_plane} we will use Proposition~\ref{prop:bounded_hausdorff} to prove Proposition~\ref{prop:intersections_stopped}.

\subsection{The case of a single force point}
\label{subsec:single_force_point}

\begin{proposition}
\label{prop:single_force_point}
Fix $\rho > -2$, $\kappa \in (0,4)$, and suppose that $\eta$ is an $\SLE_\kappa(\rho)$ process in $\h$ from $0$ to $\infty$ with the force point located at $0^+$.  Then $\eta$ converges to $\R_+$ in probability as $\rho \downarrow -2$ in the bounded Hausdorff metric.
\end{proposition}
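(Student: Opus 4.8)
The plan is to reduce, using the scale invariance of $\SLE_\kappa(\rho)$, to a single‑scale statement, and then to prove that statement by decomposing $\eta$ into the bubbles it cuts off from $\R$ and showing that these become uniformly small as $\rho \downarrow -2$ via the It\^o excursion decomposition of the associated Bessel process. For the reduction, recall that the law of $\eta$ (with its single force point at $0^+$) is invariant under $z \mapsto rz$ for all $r>0$. In the bounded metric all but $O(\log(1/\eps))$ of the dyadic annuli $\{2^k \le |z| \le 2^{k+1}\}$ are mapped into the balls of radius $\eps$ about $\varphi(0)=-1$ and $\varphi(\infty)=1$; since $0,\infty \in \ol{\eta}$, the parts of $\eta$ and of $\R_+$ lying in those annuli contribute at most $\eps$ to the bounded Hausdorff distance, for free. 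Hence it is enough to prove that for every $\eps>0$ there is $R=R(\eps)$ such that, with probability tending to $1$ as $\rho\downarrow -2$: the trace of $\eta$ up to the first time at which $[0,R]$ is contained in the hull of $\eta$ lies within Euclidean distance $\eps$ of $[0,R]$, and $\eta$ passes within Euclidean distance $\eps$ of every point of $[0,R]$. (For $\rho<\kappa/2-2$ the curve a.s.\ intersects $(0,\infty)$ at points accumulating only at $0$ and $\infty$, so every point of $\R_+$ is a.s.\ eventually in the hull.) Both of these follow once we know that, with high probability, every bubble cut off by $\eta$ before $[0,R]$ is covered has diameter less than $\eps$: in that case the points where $\eta$ touches $\R_+$ form an $\eps$‑net of $[0,R]$, and apart from a single bubble attached at $0$ (which can protrude a little into $\R_-$) the curve stays in the $\eps$‑neighbourhood of $[0,R]$.

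It remains to establish the bubble estimate. Let $W$ be the driving function of $\eta$, let $V$ be the Loewner evolution of the force point, and put $Z_t = V_t - W_t$; then $Z_t/\sqrt{\kappa}$ is a Bessel process of dimension $\delta = 1 + 2(\rho+2)/\kappa$, which decreases to $1$ as $\rho\downarrow -2$. The curve touches $\R_+$ exactly along the zero set of $Z$, and its bubbles are in bijection with the excursions of $Z$ away from $0$: the bubble associated with an excursion of capacity‑length $\ell$ has half‑plane capacity $2\ell$, and by Loewner distortion estimates (this is where $\kappa<4$ enters) its Euclidean diameter and the length of the sub‑interval of $\R_+$ it swallows are both comparable to $\sqrt{\ell}$, up to a bounded conformal distortion factor. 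Sampling the excursions as a Poisson point process indexed by the local time $u$ of $Z$ at $0$, with intensity $\propto \ell^{\delta/2-2}\,d\ell$ per unit of local time as recalled in Section~\ref{subsec:bessel}, the total length of $\R_+$ covered up to local time $u$ is, modulo the distortion factors, a subordinator whose Lévy measure has tail index $2-\delta\uparrow 1$. The analytic heart of the matter is that the local time needed to cover $[0,R]$ tends to $0$ as $\delta\uparrow 1$ — the small‑jump contribution to the subordinator's mean diverges, so it moves ever faster — whereas for fixed $\eps$ the rate per unit of local time of excursions producing a bubble of diameter at least $\eps$ stays bounded. Consequently the expected number of $\ge\eps$‑bubbles produced before $[0,R]$ is covered tends to $0$, so with probability tending to $1$ there are none.

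The step I expect to be the main obstacle is making rigorous the comparison between the excursion sizes of $Z$ and the Euclidean sizes of the corresponding bubbles. A priori a long excursion of $Z$ could produce a bubble that is wide along $\R_+$ but shallow, so one cannot bound the bubble diameter by a power of the excursion length without controlling the maps $g_t$ near the right endpoint of the current hull. This requires combining Loewner distortion bounds with a stopping‑time argument that keeps those derivatives bounded up to the first appearance of a large bubble, together with the routine but fiddly bookkeeping in the scale‑invariance reduction — in particular, checking that after first covering $[0,R]$ (or first leaving a ball) the continuation of $\eta$ in the remaining domain may be compared, via absolute continuity, to a fresh $\SLE_\kappa(\rho)$ at the appropriate scale.
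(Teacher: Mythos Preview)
Your approach identifies the right analytic engine --- the It\^o excursion decomposition of the Bessel process of dimension $\delta = 1+2(\rho+2)/\kappa$ --- and your heuristic that the local time needed to cover $[0,R]$ vanishes as $\delta\downarrow 1$ while the rate of ``large'' excursions stays bounded is morally correct. But the step you flag as the main obstacle is indeed a genuine gap, and the sketch you give for closing it does not work as stated.

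The issue is the direction of the distortion. You write that a long excursion might produce a bubble that is ``wide along $\R_+$ but shallow''; the real danger is the opposite: a \emph{short} excursion can produce a \emph{large} bubble in the original domain, because the relevant scaling factor is $(g_\tau^{-1})'(V_\tau)$, where $\tau$ is the start of the excursion. This derivative is governed by $\exp\bigl(\int_0^\tau \tfrac{2}{\kappa X_s^2}\,ds\bigr)$, and as $\delta\downarrow 1$ the Bessel process $X$ spends more and more time near $0$, so there is no uniform control on this quantity. Your proposed ``stopping-time argument that keeps those derivatives bounded'' would have to bound an integral that is \emph{a priori} divergent in the limit, and you have not indicated how. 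Relatedly, the claim that the length of $\R_+$ swallowed is a subordinator in local time is false: the length swallowed by each bubble depends on the entire past through $g_\tau$, so the jumps are not i.i.d.\ given their timing.

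The paper takes a different route that sidesteps the distortion problem entirely. It first proves (as you would) that $V_t\to\infty$ in probability for each fixed $t$ via the excursion decomposition, and deduces $W_t\to\infty$, $\inf_{s\le t}W_s\to 0$. The key step is then to translate these statements about the driving function directly into geometric information using harmonic measure: the formula $W_t = \lim_{y\to\infty}\pi y\bigl(\tfrac12-\p_{iy}[B_\tau\in A_t]\bigr)$, where $A_t$ is the right side of the hull boundary, lets one show that $\eta$ exits a thin rectangle $[-\epsilon^{1/2},\epsilon^{-1}]\times[0,\epsilon]$ through its right side with probability $\to 1$. This argument never needs bounds on individual bubble diameters or on $g_t'$. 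Dense intersection with $[0,R]$ is then obtained not from excursion sizes but by planting auxiliary flow lines from boundary points and using the flow-line interaction rules to force $\eta$ to cross them. Finally, the tail near $\infty$ is handled not by an iterated scale-invariance argument (which, as you note, would require absolute continuity after a stopping time and is delicate) but in one stroke via the time-reversal symmetry of $\SLE_\kappa(\rho)$.
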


Fix $\kappa \in (0,4)$, $\rho > -2$, and suppose that $\eta$ is an $\SLE_\kappa(\rho)$ process in $\h$ from $0$ to $\infty$ with the force point located at $0^+$.  Then we recall that we can sample from the law of the driving pair $(W,V)$ for $\eta$ in the following way.  Let $X$ be a $\BES^{\delta}$ process with
\begin{equation}
\label{eqn:delta_rho}
\delta = 1 + \frac{2(\rho+2)}{\kappa}
\end{equation}
and then set
\begin{equation}
\label{eqn:v_w_x_formula}
V_t = \frac{2}{\sqrt{\kappa}} \int_0^t \frac{1}{X_s} ds \quad\text{and}\quad W_t = V_t - \sqrt{\kappa} X_t.
\end{equation}

We are going to prove Proposition~\ref{prop:single_force_point} by first showing in Lemmas~\ref{lem:bessel_convergence} and~\ref{lem:w_behavior} that $V_t, W_t \to \infty$ and $\inf_{0 \leq s \leq t} W_s \to 0$ in probability for each fixed $t > 0$ as $\rho \downarrow -2$.  We will then deduce from this in Lemma~\ref{lem:exit_on_right} that this implies that $\eta$ is very likely to exit a thin rectangle on its right side which will imply that at least part of $\eta$ is likely to be close to a segment of $\R_+$.  We will complete the proof of Proposition~\ref{prop:single_force_point} using a time-reversal argument to get that the remainder of $\eta$ must be close to $\R_+$ (since this corresponds to an initial part of the time-reversal). Lemma~\ref{lem:intersect_boundary}, which deals with points where the flow line intersects the boundary, arises naturally in the proof of this result.  Throughout, we let
\[ \rho_0 = \frac{\kappa}{4}-2\]
so that the value of $\delta$ from~\eqref{eqn:delta_rho} is equal to $3/2$ (the value $3/2$ is not special; any fixed value in $(1,2)$ would suffice for what follows).

\begin{lemma}
\label{lem:bessel_convergence}
For each $t > 0$ we have that $V_t \to \infty$ in probability as $\rho \downarrow -2$.
\end{lemma}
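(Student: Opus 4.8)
The plan is to use the explicit representation $V_t = \frac{2}{\sqrt\kappa}\int_0^t X_s^{-1}\,ds$ together with the fact that, as $\rho \downarrow -2$, the dimension $\delta = 1 + 2(\rho+2)/\kappa$ of the driving Bessel process $X$ decreases to $1$. The heuristic is that a $\BES^\delta$ process with $\delta$ near $1$ spends ``a lot of time'' near $0$ — in the limit $\delta = 1$ the process is $|B|$ — so the integral $\int_0^t X_s^{-1}\,ds$, which blows up precisely when $X$ is small, should become arbitrarily large with probability tending to $1$. The main technical point is that we must have a uniform-in-$\rho$ handle on this divergence, not merely the a.s.\ fact that the integral is finite for each fixed $\delta \in (1,2)$.

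First I would exploit Brownian scaling. Since $X$ is a $\BES^\delta$ from $0$, the process $s \mapsto r^{-1} X_{r^2 s}$ is again a $\BES^\delta$ from $0$, so the law of $\int_0^t X_s^{-1}\,ds$ scales like $t^{1/2}$ times a fixed ($t$-independent) random variable; hence it suffices to prove the statement for a single value of $t$, say $t = 1$. Next, rather than control the full integral, I would isolate the contribution of excursions of $X$ away from $0$ by fixing a small height $\eps > 0$ and writing $\int_0^1 X_s^{-1}\,ds \ge \eps^{-1}\,\mathrm{Leb}\{s \in [0,1] : X_s \le \eps\}$. So it is enough to show that $\mathrm{Leb}\{s \in [0,1] : X_s \le \eps\} \to 1$ in probability as $\delta \downarrow 1$, for each fixed $\eps$; equivalently, $\mathrm{Leb}\{s \in [0,1] : X_s > \eps\} \to 0$ in probability. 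Here I would use the Itô excursion decomposition recalled in Section~\ref{subsec:bessel}: the excursions of $X$ above $0$ form a Poisson point process with intensity $c_\delta\, du \otimes t^{\delta/2 - 2}\,dt$, and $c_\delta = \delta/2 \to 1/2$ while the excursion-length measure $t^{\delta/2-2}\,dt$ converges (on $(0,\infty)$) to $t^{-3/2}\,dt$, the excursion measure of $|B|$. The time spent by $X$ above level $\eps$ up to time $1$ is a functional of this point process, and one can either (i) compare directly with the $\delta = 1$ case via absolute continuity of the excursion laws on a truncated time interval — using the Radon–Nikodym computation at the end of Section~\ref{subsec:bessel}, which shows that an excursion of length $t$ is, up to time $t - \eps'$, absolutely continuous with respect to a $\BES^{4-\delta}$ path with an explicit density — or (ii) estimate directly: the number of excursions of length $\ge \ell$ occurring before local time $u$ is Poisson with mean proportional to $u \ell^{\delta/2 - 1}$, and each such excursion spends an amount of time above $\eps$ that is small when $\ell$ is small and bounded by the excursion length otherwise, so a first-moment bound controls $\mathrm{Leb}\{s \le 1 : X_s > \eps\}$ uniformly as $\delta \downarrow 1$. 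I would prefer route (ii) as it avoids delicate singularities of the Radon–Nikodym derivative.

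The hard part will be getting the estimate \emph{uniform} in $\rho$ near $-2$ while still keeping track of the constraint that we look at a fixed time horizon $t = 1$ rather than a fixed local-time horizon: one must argue that, as $\delta \downarrow 1$, accumulating one unit of clock time requires the process to make ``many'' small excursions (because the short excursions carry almost all the clock time in the $\delta \to 1$ limit), and hence the Lebesgue measure of $\{X_s \le \eps\}$ is forced up to $1$. Concretely I would fix the clock time via the inverse local time $\tau_u = \inf\{s : L_s > u\}$, note $\tau_u$ is a stable-type subordinator whose Laplace exponent converges as $\delta \downarrow 1$, choose $u$ so that $\mathbf P[\tau_u \le 1]$ is close to $1$ uniformly in $\delta$ near $1$, and then bound $\mathbf E[\,\mathrm{Leb}\{s \le \tau_u : X_s > \eps\}\,]$ by the Poissonian first-moment computation above, which tends to $0$ as $\delta \downarrow 1$ for fixed $u, \eps$. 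Combining these via Markov's inequality and the scaling reduction completes the proof. A cleaner alternative worth checking first: since $V_t$ is monotone in $\rho$ for the \emph{same} driving Brownian motion is not literally true (different $\delta$ give genuinely different processes), so the coupling-monotonicity shortcut is not available, and the excursion-theoretic route above seems to be the right level of generality.
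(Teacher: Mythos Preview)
Your reduction to $t=1$ via scaling is fine, and the instinct to pass through the It\^o excursion decomposition is right. But the central claim of your argument is false: it is \emph{not} true that $\mathrm{Leb}\{s\in[0,1]:X_s\le\eps\}\to 1$ in probability as $\delta\downarrow 1$. In the limit $\delta=1$ the process is $|B|$, and for small $\eps$ one has $\E[\mathrm{Leb}\{s\le 1:|B_s|\le\eps\}]=\int_0^1\p[|B_s|\le\eps]\,ds\asymp\eps$, so the time spent below $\eps$ is \emph{small}, not close to $1$. Your step~(ii)/step~4 sketch does not rescue this: for fixed local time $u$ and fixed $\eps$, the expected time above $\eps$ converges as $\delta\downarrow1$ to the (positive) corresponding quantity for $|B|$, rather than to $0$. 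The crude bound $\int_0^1X_s^{-1}\,ds\ge\eps^{-1}\mathrm{Leb}\{X_s\le\eps\}$ discards exactly the information that makes the result true.

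What actually forces $V_t\to\infty$ is a \emph{multi-scale} effect: an excursion of length $t$ has height of order $\sqrt{t}$, so its contribution to $\int X_s^{-1}\,ds$ is of order $t/\sqrt{t}=\sqrt{t}$, and per unit local time the expected number of excursions with length in $[2^{-k-1},2^{-k})$ is of order $2^{(1-\delta/2)k}$. The total contribution is therefore $\sum_k 2^{-k/2}\cdot 2^{(1-\delta/2)k}=\sum_k 2^{(1-\delta)k/2}$, which diverges as $\delta\downarrow1$. The paper implements precisely this: it fixes a small local time $\eps$, shows (uniformly in $\delta\in(1,3/2)$) that $\tau_\eps\le t$ with high probability, and then lower-bounds $V_{\tau_\eps}$ by counting, for each dyadic scale $k$, the Poisson number $N_k^\eps$ of excursions of length $\sim 2^{-k}$ and maximum $\le 2^{-k/2}$, using Poisson tail bounds to control $N_k^\eps$ from below. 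To repair your argument you need to replace the single-threshold lower bound by this dyadic one.
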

\begin{proof}
Fix $\epsilon, t, R > 0$.  Let $\ell$ be the local time process for $X$ at $0$ and let $\tau_\epsilon = \inf\{s \geq 0 : \ell_s = \epsilon\}$.  Then it suffices to show that we have both
\begin{align}
 \p[ \tau_\epsilon \geq t ] \to 0 \quad\text{as}\quad \epsilon \to 0 \quad\text{and} \label{eqn:tau_epsilon_ubd} \\
 \p[ V_{\tau_\epsilon} \geq R] \to 1 \quad\text{as}\quad \rho \downarrow -2. \label{eqn:v_tau_epsilon_lbd}
 \end{align}
 The rate of convergence in~\eqref{eqn:tau_epsilon_ubd} will be uniform in $\rho \in (-2,\rho_0]$.  We first note that the probability that $X$ has an excursion from $0$ of length at least $1$ before time $\tau_\epsilon$ is $O(\epsilon)$,  where the implicit constant is universal.  Indeed,  the Poissonian structure of the excursions that $X$ makes from $0$ (Section~\ref{subsec:bessel}) implies that the above probability is equal to
 \begin{align}
 1 - \exp\left(-\frac{\delta}{2} \epsilon \int_1^{\infty}t^{\frac{\delta}{2}-2}dt \right) \leq \frac{\delta}{2-\delta}\epsilon \leq 2 \epsilon \quad\text{for each}\quad  \delta \in (1 ,  3/2). \label{eqn:big_jumps_bound}
 \end{align}
Moreover the expected sum of the lengths of excursions that $X$ makes from $0$ of length at most $1$ by time $\tau_{\epsilon}$ is equal to 
 \begin{align}
 \E\left[ \sum_{(u,t) \in \Lambda,  u \leq \epsilon,  t \leq 1} t \right] = \frac{\delta}{2}\epsilon \int_0^1 t \cdot  t^{\frac{\delta}{2}-2}dt = \epsilon. \label{eqn:small_jumps_bound}
\end{align}
Combining~\eqref{eqn:big_jumps_bound} with~\eqref{eqn:small_jumps_bound} and applying Markov's inequality we thus have that
\begin{align*}
\p[\tau_{\epsilon} \geq t] \leq 2\epsilon + \frac{\epsilon}{t} = O(\epsilon),
\end{align*}
which proves~\eqref{eqn:tau_epsilon_ubd}.

We now turn to the second assertion.  For each $k \geq 0$ we let $N_k^\epsilon$ be the number of excursions that $X$ makes from $0$ of length in $[2^{-k-1},2^{-k})$ and with maximum in $[0,2^{-k/2}]$ by time $\tau_\epsilon$.  Then we have that
\begin{align*}
V_{\tau_\epsilon}
&= \frac{2}{\sqrt{\kappa}} \int_0^{\tau_\epsilon} \frac{1}{X_s} ds 
  \geq \frac{2}{\sqrt{\kappa}} \sum_{k=0}^\infty N_k^\epsilon \cdot 2^{-k-1} \cdot 2^{k/2}
 = \frac{1}{\sqrt{\kappa}} \sum_{k=0}^\infty 2^{-k/2} N_k^\epsilon. 
\end{align*}

Let $p$ be the transition kernel for a $\BES^{4-\delta}$ process starting from $0$.  We claim that the density for the law of a Bessel excursion of dimension $\delta$ from $0$ to $0$ of length $1$ at time $1/2$ is given by
\begin{equation}\label{eqn:density_of_excursion}
 f(x) = \lim_{\epsilon \to 0} \frac{p_{1/2}(0,x) p_{1/2}(x,\epsilon)}{p_1(0,\epsilon)}.
 \end{equation}
Indeed,  \cite[Chapter XI,  Exercise 3.6]{revuz2013continuous} implies that the law of a Bessel bridge of dimension $4-\delta$ from $0$ to $0$ of length $1$ is given by $Z_u = (1-u) Y_{u/(1-u)}$ for $0 \leq u <1$, where $Y$ is a $\BES^{4-\delta}$ process starting from $0$.  Then it is easy to see that $(Z_u)_{0 \leq u <1}$ is a solution to~\eqref{eqn:bessel_excursion_sde} for $t=1$ by e.g.,  applying It\^o's formula.  Moreover,  \cite[Chapter XI]{revuz2013continuous} also implies that the density of $Z_{1/2}$ is given by~\eqref{eqn:density_of_excursion} from which the claim follows.  Combining with the explicit form of the transition kernel of a $\BES^{4-\delta}$ process (Section~\ref{subsec:bessel}),  it follows that there exists a universal constant $p_0 \in (0,1)$ so that for all $\delta \in (1,3/2)$ the probability that a $\BES^\delta$ excursion from $0$ to $0$ of length $1$ exceeds $2$ is at least $p_0$. By Brownian scaling, it follows that for all $\delta \in (1,3/2)$ the probability that a $\BES^\delta$ excursion from $0$ to $0$ of length $t \in [2^{-k-1},2^{-k}]$ exceeds $2^{-k/2}$ is also at least $p_0$. Note that if $Z$ is a Poisson random variable with mean $\lambda$, we have that
\begin{align}
\p[ Z \geq \alpha \lambda ] &\leq \exp(\lambda(\alpha - \alpha \log \alpha -1)] \quad\text{for all}\quad \alpha > 1 \quad\text{and} \label{eqn:poisson1}\\
\p[ Z \leq \alpha \lambda ] &\leq \exp(\lambda(\alpha - \alpha \log \alpha -1)] \quad\text{for all}\quad \alpha \in (0,1). \label{eqn:poisson2}
\end{align}
Note also that $N_k^{\epsilon}$ is a Poisson random variable with mean $\lambda_\epsilon$ which is at least
\begin{align*}
p_0 \frac{\delta}{2} \epsilon \int_{2^{-k-1}}^{2^{-k}} t^{\frac{\delta}{2}-2}dt.
\end{align*}
It follows that there exists a universal constant $c_0 > 0$ so that $\lambda_\epsilon \geq c_0 \epsilon 2^{(1-\delta/2)k}$  for each $k \in \N$ and $\delta \in (1 , 3/2)$.  By~\eqref{eqn:poisson2}, this implies that there exist universal constants $c_1,c_2>0$ such that
\begin{align}
\label{eqn:n_k_epsilon_tail_bound}
\p[N_k^{\epsilon} \leq c_1 \epsilon 2^{(1-\delta/2) k}]\leq \p[N_k^{\epsilon} \leq \lambda_\epsilon/2] \leq 2 \exp(- c_2 \epsilon 2^{(1-\delta/2)k})
\end{align}
for each $k \in \N$ and $\delta \in (1 ,  3/2)$.  Let $K_0^\epsilon$ be the first $K$ so that $N_k^\epsilon \geq c_1 \epsilon 2^{(1-\delta/2) k}$ for every $k \geq K$.  By applying a union bound to~\eqref{eqn:n_k_epsilon_tail_bound}, we have that
\begin{equation}
\label{eqn:k_0_lbd}
\p[ K_0^\epsilon \geq k] \to 0 \quad\text{as}\quad k \to \infty
\end{equation}
faster than any negative power of $k$ and at a rate which is uniform in $\delta \in ( 1 , 3/2 )$.  Since we have that
\begin{align*}
 V_{\tau_\epsilon} \geq \frac{c_1}{\sqrt{\kappa}} \epsilon \sum_{k=K_0^\epsilon}^\infty 2^{ (1-\delta) k/2},
\end{align*}
it follows from~\eqref{eqn:k_0_lbd} that for fixed $q \in (0,1)$,  there exists $K \in \N$ uniformly in $\delta \in ( 1 ,  3/2 )$ such that with probability at least $1-q$ we have
\begin{align*}
V_{\tau_{\epsilon}} \geq \frac{c_1}{\sqrt{\kappa}} \epsilon \sum_{m=K}^{\infty} 2^{(1-\delta)m/2}.
\end{align*}
Also, there exists $\rho_1 \in (-2,\kappa/2-2)$ such that
\[ \frac{c_1}{\sqrt{\kappa}}\epsilon \sum_{m=K}^{\infty} 2^{(1-\delta)m/2} \geq R \quad\text{for each}\quad  \rho \in (-2,\rho_1)\]
(where $\delta$ and $\rho$ are related as in~\eqref{eqn:delta_rho}) and so~\eqref{eqn:v_tau_epsilon_lbd} follows.  This completes the proof of the lemma.
\end{proof}

\begin{lemma}
\label{lem:w_behavior}
We have for each fixed $t > 0$ that
\[ W_t \to \infty \quad\text{and}\quad \inf_{0 \leq s \leq t} W_s \to 0\]
in probability as $\rho \downarrow -2$.
\end{lemma}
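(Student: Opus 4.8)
The strategy is to read off the two claimed limits directly from the representation $W_t = V_t - \sqrt{\kappa}\,X_t$ in~\eqref{eqn:v_w_x_formula}, combined with Lemma~\ref{lem:bessel_convergence} and the excursion structure of $X$. For the first claim, $W_t \to \infty$: we already know from Lemma~\ref{lem:bessel_convergence} that $V_t \to \infty$ in probability as $\rho \downarrow -2$ for each fixed $t$. So it suffices to control $X_t$ from above uniformly. But as $\rho \downarrow -2$ we have $\delta \downarrow 1$ by~\eqref{eqn:delta_rho}, and a $\BES^\delta$ process with $\delta \in (1,3/2)$ is dominated in an appropriate sense (or at least tight) by comparison arguments; more concretely, by Brownian scaling and the explicit transition kernel $p_t(0,y)$ recorded in Section~\ref{subsec:bessel}, the law of $X_t$ started from $0$ has a density that is tight as $\delta$ ranges over $(1,3/2)$, so $\sqrt{\kappa}\,X_t$ is tight and hence $\p[X_t \leq R] \to 1$ for $R$ large, uniformly in $\rho$ near $-2$. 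Combining, $W_t = V_t - \sqrt\kappa X_t \to \infty$ in probability.

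For the second claim, $\inf_{0 \leq s \leq t} W_s \to 0$: observe that each time $X$ returns to $0$ — i.e., at the start of each excursion — the integral $\int_0^s X_u^{-1}\,du$ is well-defined and finite (since $\delta > 1$ the singularities are integrable), so at such times $W_s = V_s - \sqrt\kappa X_s = V_s \geq 0$, and in fact $W$ is nonnegative exactly at the countable set of excursion-endpoints of $X$. The point is that $W$ takes the value $V_{s}$ at the left endpoint of the first excursion, which we must show is small. The first excursion of $X$ from $0$ begins at time $0$, where $V_0 = 0$, so $W_0 = 0$ — but we need the infimum over $[0,t]$ to go to $0$, which is automatic since $W_0 = 0$. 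Wait: more carefully, $\inf_{0 \leq s \leq t} W_s \leq W_0 = 0$ deterministically (as $X_0 = 0$ forces... actually $V_0 = 0$ and $\sqrt\kappa X_0 = 0$, so $W_0 = 0$). The substance is the \emph{lower} bound: we need $\inf_{0 \leq s \leq t} W_s \to 0$ from below as well, i.e., we must rule out $W$ becoming very negative. Since $W_s = V_s - \sqrt\kappa X_s$ and $V$ is nondecreasing with $V_0 = 0$, we have $W_s \geq -\sqrt\kappa X_s \geq -\sqrt\kappa \sup_{0 \leq u \leq t} X_u$. By the excursion decomposition and~\eqref{eqn:big_jumps_bound}, with probability $1 - O(\epsilon)$ all excursions of $X$ completed by local time $\epsilon$ have length $< 1$ and hence (by tightness of rescaled Bessel excursion maxima, as in the proof of Lemma~\ref{lem:bessel_convergence}) bounded maximum, so $\sup_{0 \leq u \leq \tau_\epsilon} X_u$ is small with high probability; combined with $\p[\tau_\epsilon \geq t] \to 0$ uniformly in $\rho$, we get $\inf_{0 \leq s \leq t} W_s \geq -\eta$ with high probability for any $\eta > 0$. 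Since also $\inf_{0 \leq s \leq t} W_s \leq 0$ always, this gives the convergence to $0$.

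The main obstacle — and the step requiring the most care — is the uniformity in $\rho$ (equivalently in $\delta \in (1,3/2)$) of the tightness statements about $X$: both $\sup_{0 \leq u \leq \tau_\epsilon} X_u$ being small with high probability and $X_t$ being tight. These follow from the same Poissonian excursion analysis already deployed in Lemma~\ref{lem:bessel_convergence}, specifically from~\eqref{eqn:big_jumps_bound} controlling the number of long excursions and from the universal lower bound $p_0$ on rescaled excursion maxima (which there was used for a lower bound on $V$, and here is used analogously for an upper bound on the excursion size via an upper-tail version). I would organize the proof by first stating a lemma that $\sup_{0 \leq s \leq \tau_\epsilon} X_s \to 0$ in probability as $\epsilon \to 0$, uniformly in $\delta \in (1,3/2)$ — this is essentially immediate from~\eqref{eqn:big_jumps_bound} plus a union bound over the (finitely many, with high probability) excursions — and then assemble the two claims of the lemma from this together with Lemma~\ref{lem:bessel_convergence} and~\eqref{eqn:tau_epsilon_ubd}.
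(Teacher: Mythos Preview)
Your argument for the first claim $W_t \to \infty$ is fine and essentially matches the paper's, though the paper obtains uniform tightness of $X_t$ by coupling $X$ with an independent $\BES^{4-\delta}$ so that $Y = \sqrt{X^2 + (X')^2}$ is a $\BES^4$, giving $\p[X_t \geq R] \leq \p[Y_t \geq R]$ for a fixed (in $\delta$) process. Your direct appeal to the explicit kernel $p_t(0,y)$ works equally well.

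The argument for $\inf_{0 \leq s \leq t} W_s \to 0$ has a genuine gap. Your bound $W_s \geq -\sqrt\kappa\, X_s \geq -\sqrt\kappa \sup_{0 \leq u \leq t} X_u$ would require $\sup_{0 \leq u \leq t} X_u \to 0$ in probability as $\rho \downarrow -2$, and this is false: as $\delta \downarrow 1$ the process $X$ converges to $|B|$, whose supremum over $[0,t]$ is a fixed nondegenerate random variable. Your attempt to patch this via excursions confuses directions: you correctly note that $\sup_{[0,\tau_\epsilon]} X_u$ is small for small $\epsilon$, but then invoke $\p[\tau_\epsilon \geq t] \to 0$, which says precisely that $\tau_\epsilon < t$ with high probability and hence tells you nothing about $X$ on $[\tau_\epsilon, t]$.

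The missing idea, which the paper supplies, is to split $[0,t]$ at an intermediate time $s$ and use the growth of $V$ on the second piece:
\[
\inf_{0 \leq u \leq t} W_u \;\geq\; \min\!\Big( V_s - \sqrt\kappa \sup_{s \leq u \leq t} X_u, \; -\sqrt\kappa \sup_{0 \leq u \leq s} X_u \Big).
\]
For any target $a > 0$, first pick $s$ small so that $\sqrt\kappa \sup_{[0,s]} X_u \leq a$ with high probability uniformly in $\delta$ (via the $\BES^4$ comparison or the kernel). Then pick $R$ so that $\sqrt\kappa \sup_{[s,t]} X_u \leq R$ with high probability uniformly. Finally, by Lemma~\ref{lem:bessel_convergence}, $V_s \geq 2R$ with high probability for $\rho$ close to $-2$. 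The first term in the minimum is then $\geq R > 0$ and the second is $\geq -a$, so $\inf_{[0,t]} W_u \geq -a$. The crucial point your approach misses is that on $[s,t]$ one cannot control $X$ alone, but the explosion of $V_s$ from Lemma~\ref{lem:bessel_convergence} compensates.
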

\begin{proof}
First, we recall from~\eqref{eqn:v_w_x_formula} that $W_t = V_t - \sqrt{\kappa}X_t$.  We start by proving the first assertion of the lemma.  Fix $t,M>0$ and $q \in (0,1)$.  By \cite[Chapter XI,  Theorem~1.2]{revuz2013continuous},  we can couple~$X$ in the same probability space with an independent $\BES^{4-\delta}$ process~$X'$ starting from~$0$ so that $Y = \sqrt{X^2 + (X')^2}$ has the law of a $\BES^{4}$ starting from $0$.  Fix $R > 0$ large (to be chosen).  Then,  under the above coupling,  we have that $\p[\sqrt{\kappa}X_t \geq R] \leq \p[\sqrt{\kappa}Y_t \geq R]$ and so we pick $R>0$ sufficiently large and independent of $\delta$ such that $\p[\sqrt{\kappa}Y_t \geq R] \leq q/2$.  Then Lemma~\ref{lem:bessel_convergence} implies that there exists $\rho_1 \in (-2,\kappa/2-2)$ such that $\p[V_t \geq M+R] \geq 1 - q/2$ for every $\rho \in (-2,\rho_1)$ and so 
\begin{align*}
\p[W_t \geq M] \geq \p[V_t \geq M+\sqrt{\kappa}X_t] \geq 1-q
\end{align*}
for every $\rho \in (-2,\rho_1)$.  This proves the first assertion of the lemma.

As for the second assertion of the lemma,  we fix $a,t>0$ and $q \in (0,1)$.  Fix also $s \in (0,t)$ (its exact value will be chosen later).  Then it holds that
\begin{align}
\inf_{0 \leq u \leq t} W_u
&= \min\left( \inf_{s \leq u \leq t}(V_u - \sqrt{\kappa}X_u) ,  \inf_{0 \leq u \leq s}(V_u - \sqrt{\kappa}X_u)\right) \notag\\
&\geq \min\left( V_s - \sqrt{\kappa}\sup_{s \leq u \leq t} X_u ,  -\sqrt{\kappa} \sup_{0 \leq u \leq s} X_u \right). \label{eqn:inf_lbd}
\end{align}
We pick $s \in (0,t)$ small enough that 
\begin{align}
\p[\sqrt{\kappa}\sup_{0 \leq u \leq s} X_u \leq a] \geq \p[\sqrt{\kappa}\sup_{0 \leq u \leq s} Y_u \leq a] \geq 1 - \frac{q}{3} \label{eqn:inf_bound1}
\end{align}
for each $\delta \in (1,3/2)$.  Also,  there exists $R>a$ sufficiently large such that
\begin{align}
\p[\sqrt{\kappa}\sup_{s\leq u \leq t} X_u \leq R] \geq \p[\sqrt{\kappa}\sup_{s \leq u \leq t} Y_u \leq R] \geq 1 - \frac{q}{3} \label{eqn:inf_bound2}
\end{align}
for each $\delta \in (1,3/2)$.  Moreover Lemma~\ref{lem:bessel_convergence} implies that there exists $\rho_1 \in (-2,\kappa/2-2)$ such that
\begin{equation}
\label{eqn:inf_bound3}
\p[V_s \geq 2R] \geq 1 - q/3
\end{equation}
for each $\rho \in (-2,\rho_1)$.  Combining \eqref{eqn:inf_bound1}--\eqref{eqn:inf_bound3} with~\eqref{eqn:inf_lbd}, we obtain that $\p[\inf_{0\leq u\leq t} W_u \geq -a] \geq 1-q$ for each $\rho \in (-2,\rho_1)$.  This proves the second assertion and completes the proof of the lemma.
\end{proof}

For each $\epsilon > 0$ we let $R_\epsilon = [-\epsilon^{1/2},1/\epsilon] \times [0,\epsilon]$.  Let $\sigma_\epsilon = \inf\{t \geq 0 : \eta(t) \in \partial R_\epsilon \setminus \R\}$.  We note that $\eta(\sigma_\epsilon)$ can either be in the left, right, or top sides of $\partial R_\epsilon$.  We are now going to show that the probability that $\eta(\sigma_\epsilon)$ is in the right side of $\partial R_\epsilon$ tends to $1$ as $\rho \downarrow -2$.

\begin{lemma}
\label{lem:exit_on_right}
For each $\epsilon > 0$, the probability that $\eta(\sigma_\epsilon)$ is in the right side of $\partial R_\epsilon$ tends to $1$ as $\rho \downarrow -2$.
\end{lemma}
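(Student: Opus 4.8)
The plan is to transfer the information from Lemmas~\ref{lem:bessel_convergence} and~\ref{lem:w_behavior} about the driving pair $(W,V)$ into a statement about the geometry of $\eta$, using the Loewner flow together with explicit harmonic-measure/Beurling-type estimates in $\h$. Recall that $\eta$ exits the rectangle $R_\epsilon = [-\epsilon^{1/2},1/\epsilon]\times[0,\epsilon]$ either on the left side, the top side, or the right side, at time $\sigma_\epsilon$. The key observation is that $\eta(\sigma_\epsilon)$ is \emph{not} on the top or the left of $R_\epsilon$ precisely when $\eta$ stays inside the thin slab $\h \cap (\R \times [0,\epsilon])$ at least until its real part exceeds $1/\epsilon$ (and in particular never backs up to real part $-\epsilon^{1/2}$). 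So it suffices to bound, with probability tending to $1$ as $\rho \downarrow -2$, the event that $\eta$ leaves this slab or retreats too far to the left before escaping to the right.

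First I would fix a deterministic time $t>0$ and use the two lemmas: with probability tending to $1$ as $\rho\downarrow-2$ we have $V_t \geq 4/\epsilon$, $W_t \geq 2/\epsilon$, and $\inf_{0\leq s\leq t} W_s \geq -\epsilon^{1/2}/2$ (apply Lemma~\ref{lem:bessel_convergence} and Lemma~\ref{lem:w_behavior} with the thresholds $M$ and $a$ chosen in terms of $\epsilon$, which is fine since $\epsilon$ is fixed throughout). On this event I would argue that $\eta([0,t])$ cannot have hit the left side $\{-\epsilon^{1/2}\}\times(0,\epsilon]$ of $R_\epsilon$: the tip $\eta(s)$ projects under $g_s$ to $W_s$, the left force point stays below the tip, and the leftmost point of the hull $K_s$ has real part bounded below in terms of $\inf_{u\le s} W_u$ together with a capacity/half-plane-capacity bound on $K_s$ (one shows $\hcap(K_t)$ is small on this event, e.g. because $X_t$ and hence $\sqrt\kappa X_t = V_t-W_t$ is controlled, and $\mathrm{hcap}(K_t)\asymp$ the second moment spread of the hull). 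Similarly, the top of $R_\epsilon$ cannot have been hit: if $\eta$ reached height $\epsilon$ inside the strip $\Re z \in [-\epsilon^{1/2},1/\epsilon]$ then $K_s$ would contain a crosscut of a bounded-aspect-ratio rectangle of height $\epsilon$, forcing $\hcap(K_s) \gtrsim \epsilon^2$, contradicting the smallness of $\hcap(K_t)$ on our event — here is where the commented-out capacity computation $\hcap(R_\epsilon)\le c\epsilon^2$ is used. Finally, $W_t \geq 2/\epsilon$ while $\inf_{u\le t}W_u \ge -\epsilon^{1/2}/2$ forces the tip to have traveled to the right past real part $\approx 1/\epsilon$ before time $t$ (modulo the bounded displacement between $\eta(s)$ and $W_s=g_s(\eta(s))$, which is again controlled by $\hcap(K_s)$), so $\eta$ must already have exited $R_\epsilon$, and by the previous two points the exit was through the right side.

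The main obstacle I expect is making the last paragraph's hull-geometry estimates quantitative and uniform in $\rho\downarrow-2$: one needs that on the good event $\hcap(K_t)$ is genuinely small, that the conformal map $g_s$ distorts the relevant boundary segments in a controlled way, and that ``$W_s$ large and $\inf W$ not very negative'' really does pin the hull inside the slab and push its tip to the right. A clean way to organize this is to introduce the stopping time $T_\epsilon = \inf\{s : \eta(s)\notin \h\cap(\R\times[0,\epsilon])\text{ or }\Re\eta(s)=1/\epsilon\}$, show $T_\epsilon \le t$ on the good event via the capacity contradiction, and then read off on $\{T_\epsilon\le t\}$ whether the strip was exited through the top or whether $\Re\eta(T_\epsilon)=1/\epsilon$ was reached first, using monotonicity of $W$-excursions and the force-point constraint to rule out the former. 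An alternative, possibly shorter, route is an absolute-continuity comparison: on $[0,\sigma_\epsilon]$ the law of $\eta$ is mutually absolutely continuous with respect to chordal $\SLE_\kappa$ with a Radon--Nikodym derivative (of the form~\eqref{eqn:sw_mg}) whose size is controlled by the excursion structure of $X$, reducing the problem to the corresponding, easier, statement for plain $\SLE_\kappa$ weighted appropriately; but I would first try the direct Loewner-flow argument since the two preceding lemmas are tailored to it.
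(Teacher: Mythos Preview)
Your overall strategy --- fix a small time, use Lemmas~\ref{lem:bessel_convergence} and~\ref{lem:w_behavior} to force $W_t$ large and $\inf_{s\le t}W_s$ nearly nonnegative, then translate this into a geometric statement about where $\eta$ exits $R_\epsilon$ --- matches the paper's. The part about the top side is also essentially right: if $\eta$ reaches height $\epsilon$ then $\hcap(K_s)\ge \epsilon^2/2$ (this is \cite[Lemma~1]{lalley2009geometric}), so exiting through the top cannot happen before capacity time $\epsilon^2/4$.

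There are, however, two genuine problems in the remaining steps. First, your sentence ``one shows $\hcap(K_t)$ is small on this event, e.g.\ because $X_t$\dots is controlled, and $\hcap(K_t)\asymp$ the second-moment spread of the hull'' is a confusion: in the half-plane capacity parameterization one has $\hcap(K_t)=2t$ deterministically, independent of $X_t$ or the event. You can of course simply take $t<\epsilon^2/4$ from the start (and Lemmas~\ref{lem:bessel_convergence}, \ref{lem:w_behavior} still apply), but then the smallness is by choice, not by the Bessel estimates.

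Second, and more seriously, your mechanism for converting ``$W_t$ large'' into ``the tip has moved far to the right'' is circular. You invoke a displacement bound $|W_s-\eta(s)|\lesssim \hcap(K_s)$ (or $\lesssim\mathrm{rad}(K_s)$), but if the tip has indeed reached $\Re z\approx 1/\epsilon$ then $\mathrm{rad}(K_s)\gtrsim 1/\epsilon$, and the bound tells you nothing. The paper cuts this knot with a harmonic-measure identity you are missing: writing $A_t$ for the part of $\partial\h_t$ to the right of $\eta(t)$, one has
\[
W_t=\lim_{y\to\infty}\pi y\Bigl(\tfrac12-\p_{iy}[B_\tau\in A_t]\Bigr),
\]
which relates $W_t$ directly to the geometry of the hull via Brownian hitting probabilities. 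With this in hand the paper argues by contradiction: if $\eta([0,\zeta_\epsilon])\subseteq R_\epsilon$ then a Brownian motion from $iy$ sees at least $[1/\epsilon,\infty)$ to its right, giving $W_{\zeta_\epsilon}\le 2/\epsilon$, contradicting $W_{\zeta_\epsilon}=b_\epsilon>2/\epsilon$; and if $\eta(\sigma_\epsilon)$ is on the left side then the same identity forces $W_{\sigma_\epsilon}\le c_1\pi\epsilon-\epsilon^{1/2}$, contradicting $\inf_s W_s\ge -a_\epsilon$ for suitable $a_\epsilon$. This identity is exactly the quantitative link you flag as ``the main obstacle'', and it is what your capacity/displacement heuristics do not supply. (Your alternative absolute-continuity route also fails: the martingale~\eqref{eqn:sw_mg} degenerates as $\rho\downarrow-2$, so one cannot compare to ordinary $\SLE_\kappa$ uniformly.)
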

\begin{proof}
Let $\zeta_\epsilon = \inf\{t \geq 0 : \im(\eta(t)) = \epsilon\}$.  
First we note that \cite[Lemma~1]{lalley2009geometric} implies that $\hcap(\eta([0,\zeta_{\epsilon}])) \geq \epsilon^2 / 2$.  It therefore follows that if $\eta$ exits $R_\epsilon$ through the top of $\partial R_\epsilon$ then $\sigma_\epsilon = \zeta_\epsilon \geq  \epsilon^2 / 4$ (recall that $\hcap(\eta([0,t])) = 2t$ for all $t \geq 0$).

Fix $t \geq 0$ and let $\h_t$ be the unbounded component of $\h \setminus \eta([0,t])$.  Let us first write down a formula for $W_t$.  Suppose that we are on the event that $W_t \geq 0$.  Let $B$ be a Brownian motion in $\C$ which is independent of $\eta$ and let $\tau = \inf\{t \geq 0 : B_t \notin \h_t\}$ and $\tau_\h = \inf\{t \geq 0 : B_t \notin \h\}$.  Then we have that
\begin{align}\label{eq:bm_hitting_expression}
W_t = \lim_{y \to \infty} \pi y \int_0^{W_t/y} \frac{1}{\pi(1+s^2)} ds = \lim_{y \to \infty} \pi y \left( \frac{1}{2} - \p_{iy}[ B_{\tau_\h} \in [W_t,\infty) ] \right).
\end{align}
Let $A_t$ be the part of $\partial \h_t$ which is to the right of $\eta(t)$.  Let $(g_t)$ be the Loewner flow associated with $\eta$ and write $g_t = u_t + i v_t$.  Using the conformal invariance of Brownian motion, we have that
\begin{align}
\label{eqn:bm_conf_inv}
\p_{iy}[ B_\tau \in A_t]
&= \p_{g_t(i y)}[ B_{\tau_\h} \in [W_t,\infty) ]
 = \p_{i v_t(iy)}[ B_{\tau_\h} \in [W_t - u_t(iy),\infty) ].
\end{align}
As $y \to \infty$, we have both
\begin{equation}
\label{eqn:mapping_out_asymptotics}
v_t(iy) \sim y \quad\text{and}\quad y u_t(iy) \to 0.
\end{equation}
Combining~\eqref{eq:bm_hitting_expression} and~\eqref{eqn:bm_conf_inv} with~\eqref{eqn:mapping_out_asymptotics} proves that
\[ W_t = \lim_{y \to \infty} \pi y \left(\frac{1}{2}- \p_{iy}[ B_\tau \in A_t] \right).\]
A similar formula holds if $W_t \leq 0$.

Now we fix $a_\epsilon ,  b_\epsilon > 0$ depending only on $\epsilon$ (to be chosen) and let $\zeta_{\epsilon}$ be the first time that $W$ hits $\{-a_\epsilon,b_\epsilon\}$.  We claim that for an appropriate choice of $a_\epsilon$ and $b_\epsilon$,  we have that $\eta$ exits $R_{\epsilon}$ in the right side of $\partial R_{\epsilon}$ if $\zeta_{\epsilon} < \epsilon^2 / 4$ and $W_{\zeta_{\epsilon}} = b_\epsilon$.  Note that
\begin{equation}\label{eqn:first_exit_condition}
\p[W_{\zeta_{\epsilon}} = b_\epsilon ,  \zeta_{\epsilon} < \epsilon^2 / 4] \to 1 \quad \text{as}\quad \rho \downarrow -2
\end{equation}
by Lemma~\ref{lem:w_behavior} and so proving the claim would complete the proof of the lemma.

To prove the claim, suppose that $\zeta_\eps < \eps^2/4$ and $W_{\zeta_\eps} = b_\epsilon$. Suppose first that $\eta([0,\zeta_{\epsilon}]) \subseteq R_{\epsilon}$.  Let $g_{R_\epsilon} \colon \h \setminus R_\epsilon \to \h$ be the unique conformal map with $g_{R_\epsilon}(z) - z \to 0$ as $z \to \infty$.  Then for each $y>0$ sufficiently large we have that
\begin{align}
\p_{iy}[B_{\tau} \in A_{\zeta_{\epsilon}}]
&\geq \p_{iy}[B_{\tau} \in [\epsilon^{-1},\infty)] = \p_{g_{R_{\epsilon}}(iy)}[B_{\tau_\h} \in [g_{R_{\epsilon}}(\epsilon^{-1}),\infty)] \notag\\
&\geq \p_{g_{R_{\epsilon}}(iy)}[B_{\tau_\h} \in [2\epsilon^{-1},\infty)] \quad\text{(by \cite[Corollary~3.44]{lawler2008conformally})}. \label{eqn:bm_exit_lbd}
\end{align}
Moreover, by~\eqref{eqn:mapping_out_asymptotics} applied to $g_{R_\epsilon}$ we have that $v_{R_\eps}(iy) \sim y$ and $u_{R_\eps}(iy) \to 0$ as $y \to \infty$ where $g_{R_{\epsilon}} = u_{R_{\epsilon}} + iv_{R_{\epsilon}}$.  It follows that
\begin{align*}
\lim_{y \to \infty}\left( \pi y \n{\p_{g_{R_{\epsilon}}(iy)}[B_{\tau_{\h}} \in [2\epsilon^{-1},\infty)] - \p_{iy}[B_{\tau_{\h}} \in [2\epsilon^{-1},\infty)]}\right) = 0
\end{align*}
and hence (using~\eqref{eq:bm_hitting_expression} with $W_{\zeta_\eps}$ and later $2\eps\nv$ in place of $W_t$ as well as~\eqref{eqn:bm_exit_lbd})
\begin{align*}
W_{\zeta_{\epsilon}} = b_\epsilon \leq \limsup_{y \to \infty} \left( \pi y \left(\frac{1}{2} - \p_{iy}[B_{\tau_{\h}} \in [2\epsilon^{-1},\infty)]\right)\right) = 2\epsilon^{-1}.
\end{align*}
This leads to a contradiction if we choose $b_\epsilon$ so that $b_\epsilon > 2\epsilon^{-1}$.

Therefore,  we must have that $\eta([0,\zeta_{\epsilon}]) \nsubseteq R_{\epsilon}$ which implies that $\sigma_{\epsilon} < \zeta_{\epsilon}$.  Since $\zeta_{\epsilon} < \epsilon^2 / 4$,  it follows that $\eta(\sigma_{\epsilon})$ lies either on the left or the right side of $\partial R_{\epsilon} \setminus \R$.  If it lies on the right side,  then the claim of the lemma holds.  Suppose that it lies on the left side.  Note that there exists a universal constant $c_1>0$ such that for all $y>0$ sufficiently large,  with probability at least $1 - c_1 \epsilon/y$,  a Brownian motion in $\C$ starting from $iy$ exits $\h$ without hitting $B(-\epsilon^{1/2} ,  2\epsilon)$.  It follows that
\begin{align*}
\p_{iy}[B_{\tau} \in A_{\sigma_{\epsilon}} ] \geq -\frac{c_1 \epsilon}{y} + \p_{iy}[ B_{\tau_{\h}} \in [-\epsilon^{1/2} ,  \infty)]
\end{align*}
and so
\begin{align*}
W_{\ss_\eps} = \lim_{y \to \infty} \left(\pi y \left(\frac{1}{2} - \p_{iy}[B_{\tau} \in A_{\sigma_{\epsilon}}]\right)\right)&\leq c_1 \pi \epsilon + \lim_{y \to \infty}\left(\pi y \left( \frac{1}{2} - \p_{iy}[B_{\tau_{\h}} \in [-\epsilon^{1/2},\infty)]\right)\right)\\
&\leq c_1 \pi \epsilon -\epsilon^{1/2}.
\end{align*}
Therefore,  we obtain a contradiction if $a_\epsilon <- c_1 \pi \epsilon + \epsilon^{1/2}$ since $-a_\epsilon \leq W_{\sigma_{\epsilon}}$.  Thus $\eta(\sigma_{\epsilon})$ lies to the right side of $\partial R_{\epsilon} \setminus \R$ and so this completes the proof of the lemma.
\end{proof}

\begin{lemma}
\label{lem:intersect_boundary}
For each $\dd, p \in (0,1)$ and $R > 0$, there exists $\rho_1 > -2$ (depending only on $\kk$, $R$, $\dd$ and $p$) such that for any $\rho \in (-2, \rho_1)$ the following holds with probability at least $1-p$: for each $x \in [0,R]$, there exists $y \in \eta \cap [0,R]$ with $|x-y| \leq \delta$ and $\eta$ hits $y$ before hitting the vertical line $L_{R+1} = \{\re(z) = R+1\}$.
\end{lemma}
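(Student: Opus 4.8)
The plan is to combine Lemma~\ref{lem:exit_on_right} with a sub‑multiplicative crossing estimate that is uniform in $\rho$ near $-2$. Fix $\delta,p,R$ as in the statement, and take $\rho_1<\kappa/2-2$ throughout, so that $\eta$ a.s.\ intersects $(0,\infty)$ and $\overline{K_t}\cap\R$ is an interval $[\ell_t,r_t]$ with $r_t$ nondecreasing. I would introduce two events, each of which will be shown to have probability $\to 1$ as $\rho\downarrow-2$: the event $E_1$ that $\eta$ exits the rectangle $R_\epsilon=[-\epsilon^{1/2},1/\epsilon]\times[0,\epsilon]$ through its right side, for a small $\epsilon\in(0,1)$ (chosen below) with $1/\epsilon>R+2$; and the event $E_2$ that every point of $[0,2R+2]$ lies within distance $\delta/2$ of some $y\in\eta\cap\R_{\ge0}$ which $\eta$ reaches before the exit time $\sigma_\epsilon$. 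Granting $E_1\cap E_2$, the conclusion follows: on $E_1$ one has $\tau_{R+1}\le\sigma_\epsilon$ and $\eta([0,\tau_{R+1}])\subseteq R_\epsilon$, and a short topological argument (the sub‑arc of $\eta$ from its last touch of $\R$ before $\tau_{R+1}$, at $r_{\tau_{R+1}}$, to $\eta(\tau_{R+1})$ has real parts filling $[r_{\tau_{R+1}},R+1]$ and is contained in the boundary gap of $\eta$ completed just afterwards, which by $E_2$ has length at most $\delta$) forces $r_{\tau_{R+1}}\ge R+1-\delta>R$; then for $x\in[0,R]$ the largest $y\in\eta\cap\R_{\ge0}$ with $y\le x$ lies in $[0,R]$, satisfies $|x-y|\le\delta$ by $E_2$, and is reached before $\tau_{R+1}$ since $y\le x\le R<r_{\tau_{R+1}}$ and boundary points are touched in increasing order. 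Choosing $\rho_1$ so that $\p[E_1^c],\p[E_2^c]<p/2$ for $\rho\in(-2,\rho_1)$ then finishes the proof, and everything reduces to the two probability bounds.

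The bound for $E_1$ is exactly Lemma~\ref{lem:exit_on_right}. For $E_2$, recall from Section~\ref{subsec:bessel} that the excursions of the driving Bessel process $X$ away from $0$ correspond to the ``bumps'' of $\eta$ off $\R$, and that on $E_1$ the trace of every bump completed before $\sigma_\epsilon$ lies in $\{0\le\im z\le\epsilon\}$. Hence a point of $[0,2R+2]$ not within $\delta/2$ of such a touch point lies, on $E_1$, inside a boundary gap of width $\ge\delta$ whose bump crosses some rectangle $I\times[0,\epsilon]$, with $|I|=\delta/4$, lengthwise without touching $I\subseteq\R$. Covering a neighbourhood of $[0,2R+2]$ by $O(R/\delta)$ intervals $I_j$ of length $\delta/4$ one gets $\p[E_2^c]\le\p[E_1^c]+\sum_j\p[\eta\text{ crosses }I_j\times[0,\epsilon]\text{ lengthwise without touching }I_j]$; and subdividing each $I_j$ into $|I_j|/\epsilon$ squares of side $\epsilon$ and applying the strong Markov property at the successive hitting times of the vertical lines through their left edges, each summand is at most $q_*^{\,\delta/(4\epsilon)}$, where
\[
q_*=\sup\ \p\big[\,\SLE_\kappa(\rho)\text{ reaches the right side of a given unit square staying below height }1,\text{ without touching its bottom edge}\,\big],
\]
the supremum being over $\rho\in(-2,\rho_0]$ and over all slit domains containing the square and all admissible positions of the starting point and the force point. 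If $q_*<1$, choosing $\epsilon$ small enough that $O(R/\delta)\,q_*^{\,\delta/(4\epsilon)}<p/4$ (and $1/\epsilon>R+2$) gives $\p[E_2^c]\to0$ as $\rho\downarrow-2$.

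The heart of the matter, and what I expect to be the main obstacle, is the estimate $q_*<1$. Morally it holds because of a dichotomy: if the force point is far from the square, then near the tip $\eta$ is locally absolutely continuous with respect to an ordinary $\SLE_\kappa$ with $\kappa<4$, which is repelled from $\R$ and leaves the square through the top with probability bounded below; whereas if the force point is close, then $\eta$ behaves like an $\SLE_\kappa(\rho)$ with $\rho<\kappa/2-2$, which a.s.\ hits $\R$, and one needs a quantitative lower bound on hitting the bottom edge that is uniform for $\rho\in(-2,\rho_0]$ (as $\rho\downarrow-2$ the curve hugs $\R_+$, so this only helps). Making this dichotomy genuinely uniform over all the restart configurations, and handling the slit domains via the absolute‑continuity estimates already used in Section~\ref{subsec:main_statements}, is the work to be done. (An alternative route to $E_2$, avoiding a crossing estimate, uses the It\^o excursion decomposition of $X$ together with Lemma~\ref{lem:bessel_convergence}: since $V_{\tau_\epsilon}\to\infty$ for fixed local time $\epsilon$, and on $E_1$ the hull is thin so $r_{\tau_\epsilon}$ is comparable to $V_{\tau_\epsilon}$, every boundary gap in $[0,2R+2]$ is produced by an excursion occurring before local time $\epsilon$; a deterministic Loewner‑hull bound controls the width of the corresponding bump by a $\kappa$‑dependent multiple of the excursion's maximum plus the square root of its length; and once $\epsilon$ is small, all but a Poisson‑small number of the excursions before local time $\epsilon$ are small in both senses — this trades the uniform crossing estimate for a somewhat delicate control of atypical Bessel excursions.)
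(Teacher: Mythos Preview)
Your approach is genuinely different from the paper's, and you have correctly located its weak point: the uniform bound $q_*<1$ is asserted but not proved. The dichotomy you sketch (force point far $\Rightarrow$ locally absolutely continuous with respect to ordinary $\SLE_\kappa$, hence repelled upward; force point close $\Rightarrow$ attracted to $\R$) is morally right, but the supremum is over \emph{all} slit domains arising as Markov restarts and over all admissible tip/force-point positions, and neither the absolute-continuity estimate of Lemma~\ref{lem:rn_bound} (which needs the extra force point bounded away from the curve) nor the cited results from \cite{mw2017intersections} give this uniformity directly. There is also a subtlety in the sub-multiplicative step itself: the first hitting times of the vertical lines need not fall inside the single offending excursion, so you must argue more carefully that the relevant crossing events nest under the Markov property. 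Your alternative excursion route faces a parallel difficulty: relating the Euclidean width of a boundary gap to the Bessel excursion's length and maximum requires a Loewner-geometric estimate that is not available from Section~\ref{subsec:bessel} alone.

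The paper avoids all of this by using imaginary geometry. It views $\eta$ as the flow line of angle $\theta=\rho\lambda/\chi$ of a GFF $h$ on $\h$, and from each grid point $x_j=j\delta/2\in(0,R)$ launches an auxiliary flow line $\eta_j$ of a \emph{fixed} angle $\phi$ (chosen independently of $\rho$ so that $\eta_j$ a.s.\ hits $(x_j,\infty)$ but not $(0,x_j)$). The event $E(r)$ that every $\eta_j$ first reaches height $r$ and then returns to $(x_j,x_{j+1})$, all inside $B(x_j,\delta/8)$, depends only on $h$ restricted to a fixed rectangle bounded away from $\R_-$; since the boundary data of $h$ on $\R_+$ is fixed and on $\R_-$ stays bounded as $\rho$ varies, the laws of this restriction are mutually absolutely continuous with uniformly controlled density (\cite[Remark~3.5]{ms2016imag1}), so one may choose $r$ with $\p[E(r)]\ge 1-p/2$ uniformly in $\rho\in(-2,\kappa/2-2)$. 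On $E(r)$ intersected with the Lemma~\ref{lem:exit_on_right} event (with $\epsilon<r$), $\eta$ must meet each $\eta_j$ below height $r$; the flow-line interaction rules (\cite[Theorem~1.5]{ms2016imag1}) then force $\eta$ to cross $\eta_j$ once from left to right and never re-cross, so when $\eta_j$ subsequently lands in $(x_j,x_{j+1})$, $\eta$ is trapped to its right and must hit that interval too. The uniformity in $\rho$ thus comes for free from GFF absolute continuity, and the topological trapping replaces your crossing estimate entirely.
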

\begin{proof}

 Fix $\dd, p \in (0,1)$ and $R > 0$.
For any $\rho \in (-2, \kappa/2-2)$, the process $\eta$ can be viewed the flow line of angle $\tht = \rho\la/\chi$ of a $\text{GFF}$ $h$ on $\h$ with boundary conditions given by $-\lambda(1 + \rho)$ on $\R_-$ and $\lambda$ on $\R_+$.  Set $\xi = \dd/2$ and let $k = R/\xi$.  By decreasing the value of $\delta$ if necessary, we may assume that $k \in \N$.  For each $j = 1,\ldots,k$, set $x_j  = j\xi$ and for $\phi \in \R$ and $1 \leq j \leq k-1$, let $\eta_j^\phi$ be the flow line of $h$ starting from $x_j$ with angle $\phi$. First, we note that the law of $\eta_j^\phi$ is that of an $\SLE_{\kappa}(\rho_2^L,\rho_1^{L};\rho_1^R)$ process in $\h$ from $x_j$ to $\infty$ where
\[ \rho_2^L = \rho + 2,\quad \rho_1^L =  -2 - \frac{\phi \chi}{\la},\quad\text{and}\quad \rho_1^R = \frac{\phi \chi}{\la},\]
and the force points are located at $0$, $x_j^-$, and $x_j^+$ respectively. Fix $\phi > -2\la/\chi$ small enough that $\rho_1^L \geq \kappa/2 - 2$ and $\rho_1^R \in (-2, \kappa/2-2)$ and for brevity write $\eta_j$ for each $\eta_j^{\phi}$.
This implies that $\eta_j$ can be drawn up until reaching $\infty$, that it intersects $(x_j,\infty)$ a.s.\ and that it does not intersect $(0, x_j)$ a.s.  We choose $\phi$ in a way which does not depend on $\rho$ so that $\rho_1^L$ and $\rho_1^R$ do not depend on $\rho$.  Note that $\phi > \theta = \rho \lambda/\chi$ for $\rho$ sufficiently close to $-2$.

Now fix $\rho \in (-2, \kappa/2 - 2)$ and $q \in (0,1)$. For $r > 0$ and $1 \leq j \leq k-1$ let $E_j(r)$ be the event that $\eta_j$ hits the horizontal line $H_r := \{z \colon \im(z) = r\}$ and subsequently hits $\R_+ \cap (x_j, x_{j+1})$ before exiting $B(x_j, \xi/4$). Let $E(r)$ be the event that $E_j(r)$ holds for every $1 \leq j \leq k-1$. We first show that it is possible to choose $r > 0$ small enough that $E(r)$ occurs with probability at least $1 - q$.
 
Up until the first time $\eta_j$ gets within some fixed positive distance of $\R_-$, its law is absolutely continuous with respect to that of an $\SLE(\rho_1^L; \rho_1^R)$ process started at $x_j$ and targeted at $\infty$. In particular, by our choice of $\phi$, $\eta_j$ a.s.\ intersects $(x_j,x_j + \dd_0)$ for any $\dd_0 > 0$ and does so before exiting $B(x_j, 2\dd_0)$. Since $\eta_j$ is a.s.\ a continuous curve which does not trace $\R$, there must exist some $r_j > 0$ such that $\eta_j$ hits $H_{r_j}$ and then hits $(x_j, x_{j+1})$, all before it exits $B(x_j, \xi/4)$.  By the a.s.\ existence of these $r_j$, it follows that there exists some $r > 0$ such that $\p[E(r)] \geq 1-q$.

Notice that $E(r)$ depends only on $h$ restricted to $A = [\xi/4, R] \times [0,1]$.  As we vary $\rho \in (-2,  \kk/2 - 2)$, the boundary values of $h$ on $\R_+$ remain fixed and the boundary values of $h$ on $\R_-$ remain in some bounded interval. Therefore, by \cite[Remark 3.5]{ms2016imag1}, it follows that the laws of $h|_A$ as $\rho$ varies in $(-2, \kk/2 - 2)$ are mutually absolutely continuous with a uniformly controlled Radon--Nikodym derivative. Hence, we can choose $r > 0$ (not depending on $\rho$) in such a way that $\p[E(r)] \geq 1-q/2$ for all $\rho \in (-2, \kappa/2 - 2)$.

Now we complete the proof. Choose $\eps > 0$ such that $\eps\nv > R$ and $\eps < r$, which can be done independently of any choice of $\rho$. Using Lemma~\ref{lem:exit_on_right}, choose $\rho_1$ such that for all $\rho \in (-2, \rho_1)$, the probability that $\eta$ exits $R_\eps$ on the right side is at least $1 - q/2$. Also, ensure $\rho_1$ is small enough that $\theta < \phi$ for all $\rho \in (-2, \rho_1)$. In this case, with probability at least $1-q$, $E(r)$ occurs and $\eta$ exits $R_\eps$ on its right boundary, and hence exits the rectangle $[-\eps^{1/2}, R] \times [0, r]$ on its right boundary. In particular, this ensures that since $E_j(r)$ occurs, $\eta$ must intersect $\eta_j$ before the latter hits $H_r$. By the flow line interaction rules \cite{ms2016imag1}, $\eta$ crosses $\eta_j$  (from left to right) the first time they meet and subsequently does not cross it again. By the definition of $E_j(r)$, $\eta_j$ then hits $(x_j, x_{j+1})$, meaning that $\eta$ must also hit this interval.

It follows that with probability at least $1-q$, $\eta$ hits every interval $(x_j,x_{j+1})$ for $1 \leq j \leq k-1$ and does so before hitting $L_{R+1}$. In this case, for any $x \in [\xi, R]$, there exists $y \in \eta \cap [0,R]$ such that $|x-y| < \xi$, from which we can conclude that for every $x \in [0,R]$ there exists $y \in \eta \cap [0,R]$ such that $|x-y \leq {\delta}$ and where $\eta$ hits $y$ before $L_{R+1}$.
\end{proof}

\begin{proof}[Proof of Proposition~\ref{prop:single_force_point}]

Fix $q \in (0,1)$ and let $\wt{\eta}$ be the image of $\eta$ under the conformal map $\h \rightarrow \h$,  $z \mapsto -1/z$.  Then by the time-reversal symmetry of $\SLE_\kappa(\rho_1;\rho_2)$ processes for $\kappa \in (0,4)$ and $\rho_1,\rho_2 > -2$ established in \cite[Theorem~1.1]{ms2016imag2},  we have that $\wt{\eta}$ has the law of an $\SLE_{\kappa}(\rho)$ process in $\h$ from $0$ to $\infty$ with the force point located at $0^-$.  Set $\wt{R}_{\epsilon} = [-\epsilon^{-1},\epsilon^{1/2}] \times [0,\epsilon]$ for each $\epsilon \in (0,1)$.  Then, Lemmas~\ref{lem:exit_on_right} and \ref{lem:intersect_boundary} together imply that we can choose $\eps_0$, $R$, and $\dd$ in such a way as to show that there exists $\epsilon_0 \in (0,1)$ such that the following holds.  For each $\epsilon \in (0,\epsilon_0)$, there exists $\rho_1 \in (-2, \kappa/2-2)$ with the property that for each $\rho \in (-2,\rho_1)$,  with probability at least $1-q$ we have that $\eta$ (resp.\ $\wt{\eta}$) intersects $[1,\infty)$ (resp.\ $(-\infty,-1]$) before exiting $R_{\epsilon}$ (resp.\ $\wt{R}_{\epsilon}$) for the first time and exits $R_{\epsilon}$ (resp.\ $\wt{R}_{\epsilon}$) for the first time on the right (resp.\ left) side of $\partial R_{\epsilon}$ (resp.\ $\partial \wt{R}_{\epsilon}$).

Fix $\epsilon$ and $\rho$ as above and suppose that the above events occur.  Note that $\eta$ (resp.\ $\wt{\eta}$) does not hit $1$ (resp.\ $-1$) a.s.\  Let $U$ (resp.\ $\wt{U}$) be the component of $\h \setminus \eta$ (resp.\ $\h \setminus \wt{\eta}$) such that $1 \in \partial U$ (resp.\ $-1 \in \partial \wt{U}$).  Let $\tau$ (resp.\ $\wt{\tau}$) be the last (resp.\ first) time that $\eta$ (resp.\ $\wt{\eta}$) hits $\partial U$ (resp.\ $\partial \wt{U}$).  Equivalently, $\tau$ (resp.\ $\wt{\eta}$) is the first time that $\eta$ (resp.\ $\wt{\eta}$) disconnects $1$ (resp.\ $-1$) from $\infty$.  Fix $z = x + iy \in \eta$.  Suppose that $\eta$ hits $z$ before time $\tau$.  Then $z \in R_{\epsilon}$ and so $d(x,z) \leq 2y \leq 2\epsilon$ where we recall that $d$ is the bounded metric on $\h$.  Suppose that $\eta$ hits $z$ after time $\tau$.  Then since $\wt{\eta}([0,\wt{\tau}])$ is given by the image of $\eta([\tau,\infty))$ under $w \mapsto - 1/w$,  we have that $\wt{z} = -1/z \in \wt{R}_{\epsilon}$.  Also,
\[ \wt{z} =- \left(\frac{x}{x^2 + y^2}\right) + \left(\frac{y}{x^2 + y^2}\right)i\]
and hence $d(w,z) = d(\wt{w},\wt{z}) \leq 2\epsilon$ where $\wt{w} = -x/(x^2 + y^2)$ and $w = -1/\wt{w}$.  Therefore, each point on $\eta$ is within distance $2\eps$ of $\R_+$ with respect to the bounded metric. The same can be shown with the roles of $\eta$ and $\R_+$ exchanged, ensuring that the Hausdorff distance between $\eta$ and $\R_+$ with respect to $d$ is at most $2\epsilon$.  This completes the proof of the proposition.
\end{proof}

\subsection{The case two force points}
\label{subsec:two_force_point}

We now want to extract Proposition~\ref{prop:bounded_hausdorff} from Proposition~\ref{prop:single_force_point}.  We are going to make use of the absolute continuity results for $\SLE_\kappa(\rho)$ processes.   Fix $x_1 < 0$.  By taking ratios of processes of the form~\eqref{eqn:sw_mg}, it follows that if we weight the law of an $\SLE_\kappa(\rho_2)$ process with a single force point at $0^+$ (note that such a process does not disconnect $x_1$ from $\infty$ for $\kappa \in (0,4)$) by
\begin{equation}
\label{eqn:rn_formula}
 M_t = |g_t'(x_1)|^{(8-2\kk{+2\rho_1})\rho_1 / (8\kappa)}  |W_t - g_t(x_1)|^{\rho_1/\kappa} |g_t(x_1) - x_t^2|^{\rho_1 \rho_2 / (2\kappa)}
\end{equation}
where $x_t^2$ the location of the force point corresponding to $0^+$ then we get an $\SLE_\kappa(\rho_1; \rho_2)$ process with force points at $x_1$ and $0^+$.

Let us make the following observation using~\eqref{eqn:rn_formula}.

\begin{lemma}
\label{lem:rn_bound}
Fix $\delta_2 > \dd_1 > 0$ and $\epsilon \in (0,\delta_1^2/2)$.  Fix $R > 0$ and suppose that $\rho_1, \rho_2 \in [-R,R]$.  There exists a constant $C > 0$ depending only on $\kappa$,  $\delta_1, \dd_2$, $\epsilon$, and $R$ so that the following is true.  Suppose that $\eta$ is an $\SLE_\kappa(\rho_1; \rho_2)$ process in $\h$ from $0$ to $\infty$ with force points located at $x_1 = -\delta \in (-\dd_2, -\dd_1)$ and $x_2= 0^+$.  Let $R_\epsilon$, $\sigma_{\epsilon}$ be as in Section~\ref{subsec:single_force_point}.  Then the law of $\eta|_{[0,\sigma_{\epsilon}]}$ is absolutely continuous with respect to that of an $\SLE_\kappa(\rho_2)$ process stopped at the corresponding time with Radon-Nikodym derivative which is at most $C$.
\end{lemma}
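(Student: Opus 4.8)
By the weighting relation recalled just before the lemma, the Radon--Nikodym derivative in question equals $M_{\sigma_\epsilon}/M_0$, where $M_t$ is as in~\eqref{eqn:rn_formula} with $x_t^2 = g_t(0^+)$; this is legitimate since that relation holds up to the first time $\eta$ disconnects $x_1$ from $\infty$, which a.s.\ does not occur before $\sigma_\epsilon$. Since $\rho_1,\rho_2 \in [-R,R]$, all three exponents appearing in~\eqref{eqn:rn_formula} are bounded in absolute value in terms of $\kappa$ and $R$, so the plan is to show that each of the three factors of $M_{\sigma_\epsilon}$ lies in an interval $[c,C] \subseteq (0,\infty)$ with $c,C$ depending only on $\kappa,\delta_1,\delta_2,\epsilon,R$; for $M_0$ this is automatic because $g_0 = \mathrm{id}$ gives $M_0 = \delta^{\rho_1/\kappa + \rho_1\rho_2/(2\kappa)}$ with $\delta \in [\delta_1,\delta_2]$.

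For the geometric input, write $K_t$ for the hull generated by $\eta|_{[0,t]}$. Since $\epsilon < \delta_1^2/2$ we have $\sqrt\epsilon < \delta_1/\sqrt2 < \delta_1 \le \delta$, so the leftmost point $-\sqrt\epsilon$ of $R_\epsilon$ lies strictly to the right of $x_1 = -\delta$; consequently $\eta$ does not swallow $x_1$ before $\sigma_\epsilon$, and $K_t \subseteq \overline{R_\epsilon}$ for every $t \le \sigma_\epsilon$ because $\overline{R_\epsilon}$ is a hull containing $\eta([0,t])$. Fix $\mu \in (1/\sqrt2,1)$ and set $J = [-\delta_1,-\mu\delta_1]$; then $J \subseteq [x_1,0)$ for every $\delta \in [\delta_1,\delta_2]$ and $J$ is disjoint from $\overline{R_\epsilon}$, hence from every $K_t$ with $t \le \sigma_\epsilon$. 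I will use two standard facts: (a) a centered Loewner map has derivative in $(0,1]$ at each real point lying outside its hull; and (b) if $A \subseteq A'$ are hulls and $x$ is real and to the left of $A'$, then $g_A'(x) \ge g_{A'}'(x)$, which follows from (a) by writing $g_{A'} = g_{g_A(A'\setminus A)} \circ g_A$.

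Write $g = g_{\sigma_\epsilon}$ and $W = W_{\sigma_\epsilon}$. For the first factor, (a) gives $|g'(x_1)| \le 1$, while (b) and compactness give $|g'(x_1)| = g'(-\delta) \ge g_{\overline{R_\epsilon}}'(-\delta) \ge \inf_{s \in [\delta_1,\delta_2]} g_{\overline{R_\epsilon}}'(-s) > 0$. For the second factor, since $\kappa < 4$ the curve $\eta$ is simple and does not meet $\R_-$, so $g(x_1) < g(0^-) \le W$, and therefore
\[ W - g(x_1) \;\ge\; g(0^-) - g(x_1) \;=\; \int_{x_1}^{0} g'(u)\,du \;\ge\; \int_J g'(u)\,du \;\ge\; |J|\,\inf_{u \in J} g_{\overline{R_\epsilon}}'(u) \;>\; 0, \]
again by (b) and compactness; the upper bound $W - g(x_1) \le C$ is then immediate from the standard Loewner distortion estimate (see e.g.\ \cite{lawler2008conformally}), which makes $|W|$ and $|g(x_1)|$ bounded in terms of $\epsilon$ and $\delta_2$. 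For the third factor, $\rho_2 > -2$ means $0^+$ is never swallowed and $x_{\sigma_\epsilon}^2 = g(0^+) \ge W \ge g(0^-) > g(x_1)$, so $|g(x_1) - x_{\sigma_\epsilon}^2| = g(0^+) - g(x_1)$ is bounded below by the same integral $\int_J g'(u)\,du$ and above by the same distortion estimate.

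Multiplying these two-sided factor bounds, raised to the bounded exponents, gives $M_{\sigma_\epsilon} \in [c,C]$, and combined with the bounds on $M_0$ this yields $M_{\sigma_\epsilon}/M_0 \le C'$ with $C'$ depending only on $\kappa,\delta_1,\delta_2,\epsilon,R$, which is the assertion. I expect the only genuinely delicate point to be the lower bounds on the second and third factors, that is, ruling out that $W - g(x_1)$ or $g(0^+) - g(x_1)$ degenerates to $0$ before $\sigma_\epsilon$; the mechanism making this work is that $x_1$ is never swallowed, so there is a fixed free boundary interval $J$ just to the right of $x_1$ on which $g$ has its derivative bounded below uniformly over the possible hulls, by comparison with the fixed rectangle $\overline{R_\epsilon}$.
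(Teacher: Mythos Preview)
Your proof is correct and takes a genuinely different route from the paper for the crucial lower bounds. The paper argues via harmonic measure: it writes $W_{\sigma_\epsilon} - g_{\sigma_\epsilon}(-\delta) \ge g_{\sigma_\epsilon}(-\sqrt\epsilon) - g_{\sigma_\epsilon}(-\delta)$, interprets the latter as a limit of Brownian exit probabilities, and then compares with the explicit map $g_{R_\epsilon}$ using the estimate $|g_{R_\epsilon}(z)-z| \lesssim \epsilon^{3/4}$ from \cite[Lemma~3.52]{lawler2008conformally}; the derivative bounds are then extracted via Koebe's $\tfrac14$-theorem. You instead use the monotonicity $g_A'(x) \ge g_{A'}'(x)$ for nested hulls together with $g_K'(x)\le 1$, and obtain the lower bound on $W - g(x_1)$ by integrating $g'$ over the fixed interval $J$ lying safely to the left of $R_\epsilon$. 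This is more self-contained---it avoids the Brownian hitting computation and the somewhat delicate $\epsilon^{3/4}$ displacement estimate---and it makes transparent why the hypothesis $\epsilon < \delta_1^2/2$ enters (to guarantee $J$ sits to the left of $R_\epsilon$). The paper's approach, on the other hand, gives a more explicit quantitative lower bound.

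One small point: the identity $x_{\sigma_\epsilon}^2 = g(0^+)$ is only literally correct when $\eta$ has not yet returned to $\R_+$; in general the force point tracks the rightmost real point of the hull. This does not affect your argument, since all you use is the inequality $x_{\sigma_\epsilon}^2 \ge W_{\sigma_\epsilon}$, which holds by construction for any $\rho_2 > -2$, and the upper bound on $x_{\sigma_\epsilon}^2$ still follows from the same distortion estimate applied at the rightmost point of $K_{\sigma_\epsilon}\cap\R$.
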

\begin{proof}

Let $\wt{\eta}$ be an $\SLE_{\kappa}(\rho_2)$ process in $\h$ from $0$ to $\infty$ with the force point located at $0^+$.  We also set $\wt{\sigma}_{\epsilon} = \inf\{t \geq 0 : \wt{\eta}(t) \in \partial R_{\epsilon} \setminus \R\}$ and let $(g_t)$, $M$, and $(x_t^2)$ be as above.  Then the law of $\wt{\eta}|_{[0,\wt{\sigma}_{\epsilon}]}$ weighted by $M_{\wt{\sigma}_{\epsilon}}$ is that of an $\SLE_{\kappa}(\rho_1 ; \rho_2)$ process in $\h$ from $0$ to $\infty$ with the force points located at $-\delta$ and $0^+$ respectively,  and stopped at the first time that it hits $\partial R_{\epsilon} \setminus \R$. 

To prove the claim of the lemma,  it suffices to give upper and lower bounds depending only on $\delta_1$, $\dd_2$, and $\epsilon$ on $|g'_{\wt{\sigma}_{\epsilon}}(-\delta)|$,  $|W_{\wt{\sigma}_{\epsilon}} - g_{\wt{\sigma}_{\epsilon}}(-\delta)|$, and $|g_{\wt{\sigma}_{\epsilon}}(-\delta) - x_{\wt{\sigma}_{\epsilon}}^2|$.  First,  we note that $\text{rad}(\wt{\eta}([0,\wt{\sigma}_{\epsilon}])) \leq 2\epsilon^{-1}$ and so \cite[Corollary~3.44]{lawler2008conformally} implies that $|g_{\wt{\sigma}_{\epsilon}}(z)-z| \leq 6\epsilon^{-1}$ for all $z$ in the unbounded connected component of $\h \setminus \wt{\eta}([0,\wt{\sigma}_{\epsilon}])$.  It follows that there exists a constant $c_1$ depending only on $\dd_1$, $\dd_2$, and $\eps$ such that $\n{g_{\wt{\sigma}_{\epsilon}}(-\delta)},  |W_{\wt{\sigma}_{\epsilon}}|, |x_{\wt{\sigma}_{\epsilon}}^2|$ and $|g_{\wt{\sigma}_{\epsilon}}(-\delta)-W_{\wt{\sigma}_{\epsilon}}|,  |g_{\wt{\sigma}_{\epsilon}}(-\delta)-x_{\wt{\sigma}_{\epsilon}}^2|$ are all bounded by $c_1$.

It remains to bound $|g'_{\wt\ss_{\eps}}(-\dd)|$. If $\tau$ is the first time that a Brownian motion in $\C$ exits $\h \setminus \wt{\eta}([0,\wt{\sigma}_{\epsilon}])$,  we have that 
\begin{align*}
\p_{iy}[B_{\tau} \in [-\delta,-\epsilon^{1/2}]]\geq \p_{g_{R_{\epsilon}}(iy)}[B_{\tau_{\h}} \in [g_{R_{\epsilon}}(-\delta),g_{R_{\epsilon}}(-\epsilon^{1/2})]]
\end{align*}
for all $y>0$ sufficiently large.  
Also \cite[Lemma~3.52]{lawler2008conformally} implies that there exists a universal constant $c>0$ such that for all $\epsilon \in (0,1)$, we have $|g_{R_{\epsilon}}(z)-z| \leq c \epsilon^{3/4}$ for each $z \in \h \setminus R_{\epsilon}$ which implies that $[-\delta + c\epsilon^{3/4},   -\epsilon^{1/2} - c\epsilon^{3/4}] \subseteq [g_{R_{\epsilon}}(-\delta),g_{R_{\epsilon}}(-\epsilon^{1/2})]$ and hence
\begin{align*}
W_{\wt{\sigma}_{\epsilon}}-g_{\wt{\sigma}_{\epsilon}}(-\delta)&\geq g_{\wt{\sigma}_{\epsilon}}(-\epsilon^{1/2}) - g_{\wt{\sigma}_{\epsilon}}(-\delta) = \lim_{y \to \infty}\left(\pi y \p_{iy}[B_{\tau} \in [-\delta ,  -\epsilon^{1/2}]]\right)\\
&\geq \lim_{y \to \infty}\left(\pi y \p_{g_{R_{\epsilon}}(iy)}[B_{\tau_{\h}} \in [g_{R_{\epsilon}}(-\delta),g_{R_{\epsilon}}(-\epsilon^{1/2})]]\right) \geq \delta - \epsilon^{1/2} - 2c\epsilon^{3/4}>0
\end{align*}
for $\epsilon \in (0,1)$ sufficiently small.  Moreover we have that $x_{\wt{\sigma}_{\epsilon}}^2 - g_{\wt{\sigma}_{\epsilon}}(-\delta) \geq W_{\wt{\sigma}_{\epsilon}} - g_{\wt{\sigma}_{\epsilon}}(-\delta)$ and Koebe's $\frac{1}{4}$-theorem implies that
\begin{align*}
&|g_{\wt{\sigma}_{\epsilon}}'(-\delta)| \geq \frac{|g_{\wt{\sigma}_{\epsilon}}(-\delta)-g_{\wt{\sigma}_{\epsilon}}(-\epsilon^{1/2})|}{4\delta} \gtrsim 1 \quad\text{and}\quad |g_{\wt{\sigma}_{\epsilon}}'(-\delta)| \leq \frac{4|g_{\wt{\sigma}_{\epsilon}}(-\delta)-W_{\wt{\sigma}_{\epsilon}}|}{\delta-\epsilon^{1/2}}\lesssim 1
\end{align*}
where the implicit constants depend only on $\delta_1$, $\dd_2$ and $\epsilon$.  Combining everything,  we obtain that there exists a constant $C<\infty$ depending only on $\kappa$, $\delta_1$, $\dd_2$, $\epsilon$, and $R$ such that $M_{\wt{\sigma}_{\epsilon}} \leq C$ a.s.  This completes the proof.
\end{proof}

\begin{proof}[Proof of Proposition~\ref{prop:bounded_hausdorff}]

Fix $\eps_0, \dd_0, q \in (0,1)$ and $R > 0$. We will show that there exists $\tht_0 > -(\la + b)/\chi$ such that for all $\tht \in (-(\la + b)/\chi, \tht_0)$, with probability at least $1-q$, the Hausdorff distance between $\eta$ and $\R_+$ with respect to the bounded metric is at most $\eps_0$, and that for all $x \in [0,R]$ there exists $y \in \eta \cap [0,R]$ such that $\n{x-y} \leq \delta_0$.

Let $\eta_{\theta}$ be the flow line of $h$ from $0$ to $\infty$ with angle $\theta$. Then $\eta_\tht$ has the law of an $\SLE_\kk(\rho_1;\rho_2)$ process in $\h$ from $0$ to $\infty$ where
\[ \rho_1 = -1 + \frac{a - \tht \chi}{\lambda} \quad\text{and}\quad  \rho_2 = -1 + \frac{b + \theta \chi}{\lambda}\]
and where the force points are located at $0^-$ and $0^+$.
Fix $\phi \in \R$ such that
\[-\frac{\lambda + b}{\chi} < \phi < \min\left( \frac{\lambda+a}{\chi},\frac{\lambda-b}{\chi} - \pi\right)\]
and let $\eta_{\phi}$ be the flow line of $h$ from $0$ to $\infty$ with angle $\phi$ so that in this case $\rho_1 > -2$ and $\rho_2 \in (-2, \frac\kk2 - 2)$.
We note that the flow lines interaction rules \cite[Theorem~1.7]{ms2017ig4} imply that $\eta_{\theta}$ lies to the right of $\eta_{\phi}$ if $\theta < \phi$.

Fix $\eps_1, \dd > 0$ small enough (depending only on both $\ee_0$, $\dd_0$ and satisfying conditions we will mention later) and suppose that $-(\lambda+b)/\chi < \theta < \phi$. For $\delta \in (0,1)$ we set $x_{\delta} = \inf\{[\delta,\infty) \cap \eta_\phi\}$ and $y_{\delta} = \inf\{[\delta,\infty) \cap \eta_{\theta}\}$,  and $\tau_{\delta} = \inf\{ t \geq 0 : \eta_{\theta}(t) = y_{\delta}\}$. We note that $\delta \leq y_{\delta} \leq x_{\delta}$.
We can choose $\delta \in (0,1)$ such that with probability at least $1 - q/2$,  we have that the hull of $\eta_{\phi}$ stopped at the first time it hits $x_{\delta}$ is contained in $[-\eps_1,\eps_1] \times [0,\eps_1]$.  Hence,  for each $\theta$ as above we have that with probability at least $1 - q/2$,  the hull of $\eta_{\theta}$ stopped at the time it hits $y_{\delta}$ is contained in $[-\eps_1,\eps_1] \times [0,\eps_1]$.  From now on,  we assume that the latter event, which we call $E_1$, occurs.

In this case, define $\wt{\eta}_{\theta} = g_{\tau_{\delta}}(\eta_{\theta}) - W_{\tau_{\delta}}$ which has the law of an $\SLE_{\kappa}(\rho_1;\rho_2)$ process in $\h$ from $0$ to $\infty$ with the same values of $\rho_1, \rho_2$ as $\eta_\tht$, and with force points located at $x_1 < 0$ and at $0^+$. Let $\tau$ be the first time that a Brownian motion in $\C$ exits the complement in $\h$ of the hull of $\eta_\tht([0,\tau_\dd])$.  
Then
\begin{align*}
\lim_{y \to \infty}\left( \pi y \p_{iy}[B_{\tau}\,\,\text{lies on the left side of $\eta_\tht([0,\tau_\dd])$}\right) \geq \lim_{y \to \infty}\left( \pi y \p_{iy}[B_{\tau_{\h}} \in [0,\delta]]\right) = \delta.
\end{align*}
Therefore we can conclude that $x_1 \leq -\dd$. Note that $\text{rad}(\eta_{\theta}([0,\tau_{\delta}])) \leq 2\eps_1$ so \cite[Corollary~3.44]{lawler2008conformally} and
arguing as in the proof of Lemma~\ref{lem:rn_bound} implies that $\n{f_{\tau_\dd}(z) - z} \leq 13\eps_1$ for all $z$ in the unbounded component of $\h\sm\eta_\tht([0, \tau_\dd])$ where $f_{\tau_{\delta}}(z) = g_{\tau_{\delta}}(z) - W_{\tau_{\delta}}$.
Hence, $|f_{\tau_{\delta}}(z)| \leq 15\eps_1$ for all $z \in [-\eps_1,\eps_1] \times [0,\eps_1]$ such that $z$ lies in the unbounded connected component of $\h \setminus \eta_{\theta}([0,\tau_{\delta}])$ and, in particular, the force point of $\wt{\eta}_{\theta}$ with weight $\rho_1$ lies in $[-15\eps_1,-\delta]$.

Let $M$ be the Radon-Nikodym derivative of the law of $\wt{\eta}_{\theta}$ stopped at the first time that it exits $R_{\eps_1}$ (as defined in Section~\ref{subsec:single_force_point}) with respect to the law of an $\SLE_{\kappa}(\rho_2)$ process in $\h$ from $0$ to $\infty$ with the force point located at $0^+$ and stopped at the first time that it exits $R_{\delta^2/2}$. Then Lemma~\ref{lem:rn_bound} implies that there exists a constant $C<\infty$ depending only on $\kappa$, $\phi$, $\delta$, $\eps_0$, $\eps_1$, $a$, and $b$ such that $M \leq C$ a.s.
Therefore, combining with Lemmas~\ref{lem:exit_on_right} and \ref{lem:intersect_boundary} we obtain that there exists $\theta_0 \in (-(\lambda+b)/\chi,\phi)$ such that for every $\theta \in (-(\lambda+b)/\chi,\theta_0)$, with probability at least $1 - q/2$, the following event $E_2$ occurs.
Firstly, $\wt\eta_\tht$ exits $R_{\eps_1}$ through its right side, and secondly, for every $\wt x \in [0,R-20\eps_1]$, there exists $\wt y \in \wt\eta_\tht \cap [0,R]$ such that $\n{\wt x- \wt y} \leq \dd$ and $\wt\eta_\tht$ hits $\wt y$ before hitting the vertical line $L_{R - 20\eps_1 + 1}$.

Therefore, for each $\tht \in (-(\la+b)/\chi,\theta_0)$, with probability at least $1-q$, both $E_1$ and $E_2$ occur.
We can choose $\eps_1, \dd > 0$ small enough, both depending only on both $\eps_0$ and $\dd_0$ such that the following all hold.
First,  that $\wt\eta_\tht$ exiting $R_{\delta^2/2}$ on its right side guarantees that $\eta_\tht$ exits $R_{\eps_0}$ on its right side.
Also, if $\eps_1, \dd$ are small enough then we can conclude from $E_1$ and $E_2$ that for every $x \in [0, R]$ there exists $y \in \eta_\tht \cap [0,R]$ such that $\n{x - y} \leq \dd_0$ and that $\eta_\tht$ hits $y$ before $L_{R+1}$.

It follows that for each $\tht \in (-(\la+b)/\chi,\theta_0)$, with probability at least $1-q$, $\eta_\tht$ exits $R_{\eps_0}$ on its right side and for all $x \in [0,R]$, there exists $y \in \eta_\tht \cap [0,R]$ such that $\n{x - y} \leq \dd_0$, where $\eta_\tht$ hits each such $y$ before $L_{R+1}$. It follows for the correct choice of $R, \dd_0$ and $\eps_0$ that we can show that with probability at least $1-q$, $\eta_\tht$ exits $R_{\eps_0}$ on its right side and hits $[1, \infty)$ before doing so.

Finally,  to complete the proof,  we let $\wh{\eta}_{\theta}$ be the image of $\eta_{\theta}$ under the conformal map $\h \rightarrow \h$,  $z \mapsto -1/z$ and set $\wt{R}_{\eps_0} = [-\epsilon_0^{-1},\epsilon_0^{1/2}] \times [0,\epsilon_0]$.  Then possibly by taking $\theta > -(\lambda+b)/\chi$ to be smaller,  we can assume that with probability at least $1-q$,  the curve $\wh{\eta}_{\theta}$ exits $\wt R_\eps$ for the first time on its left side and intersects $[-\epsilon^{-1},-1]$ before doing so.  This follows by combining the results of the previous paragraph with the fact that $\wh{\eta}_{\theta}$ has the law of an $\SLE_{\kappa}(\rho_2;\rho_1)$ process in $\h$ from~$0$ to~$\infty$ with the force points located at $0^-$ and $0^+$ respectively by \cite[Theorem~1.1]{ms2016imag2}.  We then conclude the proof by arguing as at the end of the proof of Proposition~\ref{prop:single_force_point}.
\end{proof}

\subsection{The case of the whole-plane GFF}
\label{subsec:whole_plane}

In this section we will prove Proposition~\ref{prop:intersections_stopped}. The first step is proving the following weaker version of the proposition. We remark that the conclusions of following lemma, and of Proposition~\ref{prop:intersections_stopped}, are also valid when the roles of $\eta_0$ and $\eta_\tht$ are exchanged, or when we instead work with $\eta_{\tht_1}$ and $\eta_{\tht_2}$ provided $|\tht_1 - \tht_2| < \tht_0$.
We will now use Proposition~\ref{prop:bounded_hausdorff} to prove the following weaker version of lemma of Proposition~\ref{prop:intersections_stopped}.

\begin{lemma}\label{lem:intersections_times}
    Fix $\kk \in (0,4)$ and let $h$ be a whole-plane GFF with values modulo a global multiple of $2\pi\chi$. Fix $\dd, p \in (0,1)$. Then,  there exists $\tht_0 \in (0,1)$ depending only on $\kappa , \dd$ an $p$ such that the following holds with probability at least $1 - p$ for any fixed $\tht \in (0, \tht_0)$. For all $t \in (0, \tau_0(\D)]$ there exist $t_1, t_2, t'_1, t'_2$ such that $0 < t_1 < t < t_2 < \tau_0(\D_\dd)$ and $0 < t'_1 < t'_2$, and for each $j = 1,2$, $\eta_0(t_j) = \eta_\tht(t'_j)$ and at this point $\eta_\tht$ hits $\eta_0$ on the left side of $\eta_0$ with angle gap $\tht$. Furthermore, for any $s \in [t_1, t_2]$ we have $|\eta_0(s) - \eta_0(t)| < \dd$.
\end{lemma}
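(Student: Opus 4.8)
The plan is to transfer the statement from the whole-plane GFF to a GFF on the disk, then reduce it to a statement about two flow lines with nearby angles started from the same point, and finally use Proposition~\ref{prop:bounded_hausdorff} (applied in the conformal image where the angle-$\theta$ flow line looks like an $\SLE_\kappa(\rho)$ whose remaining force-point weight is close to the boundary of the admissible range) to force the two flow lines to remain uniformly close and to force the angle-$\theta$ flow line to bounce back and forth across the angle-$0$ flow line. More precisely, the first step is a localization: by the local absolute continuity of the whole-plane GFF modulo $2\pi\chi$ with respect to a GFF on a bounded domain (as recorded in Section~\ref{subsec:ig}, via \cite[Lemma~4.1]{mq2020geodesics} and conformal invariance), it suffices to prove the analogous statement for $h$ a GFF on a slightly larger disk, with the flow lines $\eta_0,\eta_\theta$ started from $0$ and stopped on exiting $\D$ and $\D_\dd$ respectively; the Radon--Nikodym derivative between the two settings restricted to $\ol\D_\dd$ is bounded so a probability-$\geq 1-p$ event transfers to a probability-$\geq 1-p'$ event with $p'\to 0$ as $p\to 0$.

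Second, I would set up the coupling picture using the flow line interaction rules \cite[Theorem~1.7]{ms2017ig4} (and \cite[Proposition~7.4]{ms2016imag1}): since $\theta>0$ is small, $\eta_\theta$ lies to the left of $\eta_0$ and, each time it meets $\eta_0$, it bounces off the left side with angle gap $\theta$ and does not cross. The goal is, for each deterministic $t\in(0,\tau_0(\D)]$, to produce bounce times $t_1<t<t_2$ of $\eta_0$ that are hit by $\eta_\theta$, with $\eta_0$ confined to a $\dd$-ball around $\eta_0(t)$ on $[t_1,t_2]$. To do this I would work in the domain obtained by running $\eta_0$ up to a stopping time just before $t$: conditionally on an initial segment of $\eta_0$, the pair formed by the future of $\eta_0$ and $\eta_\theta$ (mapped out by the relevant conformal map) is again a pair of flow lines of a GFF with angle gap $\theta$, and by continuity of $\eta_0$ one can choose a small scale on which the future of $\eta_0$ stays within $\dd/2$ of $\eta_0(t)$ with probability close to $1$. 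Within that small domain, Proposition~\ref{prop:bounded_hausdorff} (rescaled, and in the form of its second assertion about approximating every point of a boundary segment) says that once $\theta$ is close enough to the threshold, $\eta_\theta$ is forced to come within $\dd$ of a prescribed boundary interval — here the image of $\R_+$ — before leaving a thin rectangle, and in particular must hit $\eta_0$ at points straddling $\eta_0(t)$. Concatenating countably many dyadic scales and taking a union bound (using that the threshold $\theta_0$ from Proposition~\ref{prop:bounded_hausdorff} can be taken uniform in the relevant bounded range of boundary data) handles all $t$ simultaneously at the cost of shrinking $\theta_0$; the angle-gap and left-side assertions are then automatic from the interaction rules, and $t'_1<t'_2$ just records that $\eta_\theta$ visits the two straddling points in order.

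The main obstacle, I expect, is the last step: making uniform over all $t\in(0,\tau_0(\D)]$ the statement that $\eta_\theta$ hits $\eta_0$ near $\eta_0(t)$, while the natural estimate from Proposition~\ref{prop:bounded_hausdorff} is for a single fixed target scale and a single conditioning. The resolution is to prove a quantitative version at each dyadic scale $2^{-k}$ with a failure probability summable in $k$ — this requires checking that the Radon--Nikodym bound in Lemma~\ref{lem:rn_bound} and the exit estimates in Lemmas~\ref{lem:exit_on_right} and~\ref{lem:intersect_boundary} are uniform over the family of boundary data that arise when mapping out initial segments of $\eta_0$ (they are, since those boundary data live in a compact set and one uses \cite[Remark~3.5]{ms2016imag1}) — and then to argue that a single choice of $\theta$ small enough works simultaneously at every scale $k$, hence for every $t$. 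A secondary technical point is ensuring the straddling times $t_1,t_2$ lie strictly inside $\tau_0(\D_\dd)$ and that $\eta_0$ on $[t_1,t_2]$ genuinely stays within $\dd$ of $\eta_0(t)$; this is handled by choosing the small scale for the conditioning step strictly smaller than $\dd$ and invoking uniform continuity of $\eta_0$ on the compact time interval $[0,\tau_0(\ol\D_{\dd/2})]$.
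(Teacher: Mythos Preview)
Your plan overcomplicates the argument and has a genuine gap in the conditioning step. You propose to condition on an initial segment $\eta_0|_{[0,\sigma]}$ and then treat ``the future of $\eta_0$ and $\eta_\theta$'' as a pair of flow lines from the same point after mapping out; but $\eta_\theta$ starts at $0$, not at $\eta_0(\sigma)$, so after mapping out $\eta_0|_{[0,\sigma]}$ the two curves do not emanate from a common point and you are not back in the setting of Proposition~\ref{prop:bounded_hausdorff}. Your dyadic union bound to achieve uniformity in $t$ is also unsupported: Proposition~\ref{prop:bounded_hausdorff} gives existence of $\theta_0$ for each fixed failure probability, not a decay rate in $\theta$, so there is no summable estimate to feed into Borel--Cantelli across infinitely many scales.

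The paper's route avoids both issues by conditioning on \emph{all} of $\eta_0$ at once. Given $\eta_0$, the curve $\eta_\theta$ has the law of an $\SLE_\kappa(\rho^L(\theta);\rho^R(\theta))$ in $\C\setminus\eta_0$ with $\rho^R(\theta)\downarrow -2$ as $\theta\downarrow 0$; conformally mapping $\C\setminus\eta_0$ (or each pocket thereof) to $\h$ puts you exactly in the setting of Proposition~\ref{prop:bounded_hausdorff}, and a \emph{single} application of its second assertion yields intersection points of $\eta_\theta$ with $\eta_0$ that are $\delta_2$-dense along the image of $\eta_0$ up to a fixed scale $R$. Pulling back with a uniform modulus of continuity for the inverse map (chosen with high probability) gives the straddling times $t_1<t<t_2$ for every $t$ simultaneously --- no union bound over scales is needed. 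You have also missed the case split: for $\kappa\in(0,8/3]$ the curve $\eta_0$ is simple and one conformal map suffices, while for $\kappa\in(8/3,4)$ the complement $\C\setminus\eta_0$ is a chain of pockets; small pockets are handled trivially by intersections near their opening/closing points, and one applies the argument above in each of the (a.s.\ finitely many) large pockets intersecting~$\ol\D$.
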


The reason this is weaker than Proposition~\ref{prop:intersections_stopped} is that this lemma does not show that $t'_2 \leq \tau_\tht(\D_\dd)$ or tell us anything about the flow line $\eta_\tht$ on the intervals $[t'_1, t'_2]$. To obtain this information and to prove Proposition~\ref{prop:intersections_stopped}, we will apply this lemma twice, once as above, and the second time with the roles of $\eta_0$ and $\eta_\tht$ exchanged.

\begin{proof}
\textit{Case 1. $\kk \in (0, 8/3]$.} In this case, $\eta_0$ is a.s.\ simple and transient. Let $\phi \colon \C\setminus \eta_0 \to \h$ be a conformal map fixing $0$ and $\infty$ which is $\ss(\eta_0)$-measurable.
Then we can choose (deterministic) constants $R \in (0,\infty)$ large enough and $\dd_2 \in (0,1)$ small enough such that with probability at least $1-p/2$ we have $\phi(\eta_0(\tau_0(\D))) < R$ and $|\phi\nv(x) - \phi\nv(y)| < \dd$ for all $x,y \in [0,R+1]$ with $|x-y| < \dd_2$. We assume that these events hold.

By \cite{ms2017ig4}, conditional on $\eta_0$, the curve $\eta_\tht$ has the law of an $\SLE(\rho^L(\tht); \rho^R(\tht))$ in $\C \setminus \eta_0$ started from~$0$ and targeted at $\infty$, where 
\begin{equation}\label{eq:rhoL}
    \rho^L(\tht) = -2 + \frac{(2\pi - \tht)\chi}{\la}, \quad \rho^R(\tht) = -2 + \frac{\tht\chi}{\la}.
\end{equation}
Then $\wt\eta_\tht := \phi(\eta_{\tht})$ has the law of a flow line of angle $\tht$ of a GFF $h$ on $\h$ with $\la - 2\pi\chi$ boundary conditions on $\R_-$ and $-\la$ boundary conditions on $\R_+$.
By Proposition~\ref{prop:bounded_hausdorff} there exists $\tht_0 > 0$ such that if $\tht \in (0,\tht_0)$ is fixed, then with probability at least $1 - p/2$, for each $x \in [0,R+1]$ there exists $y \in \wt\eta_\tht \cap [0,R+1]$ such that $|x-y| \leq \delta_2/8$. Suppose this event also holds.

In this case, fix $t \in (0, \tau_0(\D)]$, set $x = \phi(\eta_0(t))$ (when viewed as a prime end on the left side of $\eta_0$) and note that $x \in (0,R]$. It can then be shown that we can find $y_1, y_2 \in \wt{\eta}_\tht \cap (0, R+1)$ with $y_1 < x < y_2$ and $|y_2 - y_1| < \dd_2$ (for $x$ close to $0$ we use the fact that $\wt{\eta}_\tht$ a.s.\ intersects $\R_+$ arbitrarily close to $0$, for other $x$ we use the conclusion on Proposition~\ref{prop:bounded_hausdorff}).
Define $0 < t_1 < t_2$ and $0 < t'_1 < t'_2$ by $\eta_0(t_j) = \eta_\tht(t'_j) = y_j$ for $j = 1,2$. Note that by our assumption on the behavior of $\phi\nv$ at the boundary, we have that $s \in [t_1, t_2]$ implies $|\eta_0(s) - \eta_0(t)| < \dd$ and similarly that $t_2 < \eta_0(\D_\dd)$.
By construction the angle gap at these intersection points is $\tht$ and $\eta_\tht$ hits $\eta_0$ on the left side of $\eta_0$. We must have $t'_1 < t'_2$ since if $z_1, z_2$ are two intersections with such an angle gap, and $\eta_0$ hits $z_1$ before $z_2$, then $\eta_\tht$ must also hit $z_1$ before $z_2$.

\textit{Case 2. $\kk \in (8/3, 4)$.}  By \cite{ms2017ig4}, $\eta_0$ a.s.\ divides $\C\sm\eta_0$ into countably many connected components $P$, which we refer to as pockets, each of which has an opening point $z$ and a closing point $w$. By \cite[Theorem 1.11, Proposition 3.28]{ms2017ig4}, $\eta_\tht$ visits each of these pockets in order, and in each pocket $P$ has the law of an $\SLE(\rho^L(\tht); \rho^R(\tht))$ process in $P$ started from $z$ and targeted at $w$, where $\rho^L(\tht)$ and $\rho^R(\tht)$ are given as in the previous case by \eqref{eq:rhoL}. In particular, a.s.\ in each pocket $P$, $\eta_\tht$ will intersect the counterclockwise boundary arc of $P$ from the opening to the closing point, which we denote by $\del^+ P$, arbitrarily close to both $w$ and $z$, and will necessary do so by hitting $\eta_0$ on the left side of $\eta_0$ with angle gap $\tht$.

Set $\dd_1 = \dd/8$.
By the almost sure continuity and transience of $\eta_0$, there a.s.\ exists a random integer $N \in \N$ such that there are at most $N$ components $P_1, \dots, P_N$ of $\C \setminus \eta_0$ that intersect $\overline\D$ and have diameter at least $\delta_1$, so if $\diam(P) \geq \delta_1$ then $P$ must be one of these $P_j$. 
For every $1 \leq j \leq N$,  we let $z_j$ (resp.  $w_j$) be the opening (resp.  closing) point of $P_j$ and let $\phi_j$ be a conformal transformation mapping $P_j$ onto $\h$ such that $\phi_j(z_j) = 0 ,  \phi_j(w_j) = \infty$ and such that $\phi_j$ is $\sigma(\eta_0)$-measurable.
Let $\partial P_j^+$ denote the right-hand boundary of $P_j$, by which we mean that part of $\partial P_j$ with $\phi_j(\partial P_j^+) = \R_+$.
Almost surely, there also exists $R \in (0,\infty)$ such that $\phi_j^{-1}([R,\infty)) \subseteq B(w_j , \delta_1)$ for every $1 \leq j \leq N$.  
Moreover,  there exists $\dd_2 > 0$ sufficiently small such that $|\phi_j^{-1}(x) - \phi_j^{-1}(y)| < \delta_1$ for every $x ,  y \in [0,R]$ such that $|x-y| < \delta_2$,  and every $1 \leq j \leq N$.
Let $E(N,R,\dd_2)$ be the event that the above statements hold for a fixed choice of constants $N,R$ and $\delta_2$. We can conclude that there exist some deterministic choice of these constants such that $E_1 := E(N,R,\dd_2)$ holds with probability at least $1 - p/2$.
We assume from now on that this event holds.

Fix $t \in (0, \tau_0(\D)]$. There are now three cases. First, suppose $\eta_0(t)$ is part of $\del^+ P$ where $\diam(P) \leq \dd_1$. Then by the above there will exist intersection points of $\eta_0$ and $\eta_\tht$ arbitrarily close to the opening and closing points of this pocket meaning we can define $t_1, t_2, t'_1, t'_2$ satisfying the conditions of the lemma. A second possibility is that $\eta_0(t)$ is not part of $\del^+ P$ for any pocket $P$. In this case, since $\eta_0$ does not trace itself when it intersects itself (by \cite{ms2017ig4} this a.s.\ does not happen simultaneously for all parts of the curve) there will exist times $t^- < t < t^+$ arbitrarily close to $t$ such that times $\eta_0(t^-)$ (resp. $\eta_0(t^+)$) is the closing (resp. opening) point of some pocket $P^-$ (resp. $P^+$). We can again find intersection times $t_1, t_2, t'_1, t'_2$ satisfying the conditions of the lemma.

The final possibility is that $\eta_0(t)$ is part of the clockwise boundary arc of one of the pockets $P_j$ where $\diam(P_j) \geq \dd_1$.
Fix $\tht > 0$ and work conditional on $E_1$ so that if $\diam(P) \geq \dd_1$ above then $P$ must be one of the components $P_j$.
Within each component $P_j$, $\eta_\tht$ has the law of an $\SLE(\rho^L(\tht); \rho^R(\tht))$ in $P_j$ started from $z_j$ and targeted at $w_j$, where $\rho^L(\tht)$ and $\rho^R(\tht)$ again by \eqref{eq:rhoL}. Equivalently, $\eta_j^{\tht} := \phi_j(\eta_\tht)$ has the law of a flow line of angle $\tht$ of a GFF $h$ on $\h$ with $\la - 2\pi\chi$ boundary conditions on $\R_-$ and $-\la$ boundary conditions on $\R_+$.
By Proposition~\ref{prop:bounded_hausdorff} there exists $\tht_0$ such that if $\tht \in (0,\tht_0)$ is fixed, then with probability at least $1 - p/2N$, for each $x \in [0,R+1]$ there exists $y \in \eta_j^{\tht} \cap [0,R+1]$ such that $|x-y| \leq \delta_2/8$.
We assume these events also hold for each component $P_j$ (of which we have assumed there are at most $N$).
Set $x = \phi_j(\eta_0(t)) \in (0,\infty)$. We can then argue as in Step 1 to show that there exist $t_1, t_2, t'_1, t'_2$ as in the lemma statement (if $x > R$ we use that $\phi\nv([R,\infty)) \subseteq B(w_j, \dd_1)$ and that $\eta_\tht$ will intersect $\del^+ P_j$ arbitrarily close to the closing point a.s.). We can check that $t_1, t_2, t'_1, t'_2$ satisfy the properties of the lemma as before.
\end{proof}

\begin{proof}[Proof of Proposition~\ref{prop:intersections_stopped}]
Since $D$ is bounded, by the scale invariance of the GFF and by possibly replacing $\dd$ with a smaller value it suffices to prove the lemma in the case that $D, D_\dd$ are contained in $B(0, 1/2)$. Fix $\dd_1 = \dd/8$. By two applications of Lemma~\ref{lem:intersections_times} for this value of $\dd_1$, where in the second application we exchange the roles of $\eta_0$ and $\eta_\tht$, there must exist $\tht_0 > 0$ such that if $\tht \in (0, \tht_0)$ then with probability at least $1 - p$, the event described in the lemma statement holds (with $\dd_1$ in place of $\dd$), and similarly the same event holds with the roles of $\eta_0$ and $\eta_\tht$ exchanged. We assume now that these two events hold and show in this case that $\eta_0$ and $\eta_\tht$ are $\dd$-close until $\eta_0$ exits $D$.

Fix $t \in (0, \tau_0(D)]$. Note that $\tau_0(D_\dd) < \tau_0(\D)$ so by our application of Lemma~\ref{lem:intersections_times} there exist times $t_1, t_2, t'_1, t'_2$ such that $\eta_0, \eta_\tht$ intersect at these times and where $0 < t_1 < t < t_2 < \tau_0(\D_{\dd_1})$ and $0 < t'_1 < t'_2$, and for any $t_* \in [t_1, t_2]$ we have $|\eta_0(t) - \eta_0(t_*)| < \dd_1$. The final condition ensures that $t_2 < \tau_0(D_{\dd_1}) < \tau_0(D_\dd)$. We will show $t'_2 < \tau_\tht(D_\dd)$ and for any $s' \in [t'_1, t'_2]$ that $|\eta_\tht(t') - \eta_\tht(t'_1)| < \dd$.
In the following we will repeatedly use the fact that if $\eta_0(r_1) = \eta_\tht(r'_1), \eta_0(r_2) = \eta_\tht(r'_2)$ and $r_1 < r_2$, then $r'_1 < r'_2$.

We will first prove that $t'_2 < \tau_\tht(D_\dd)$. Suppose this is not the case. By the continuity of $\eta_\tht$ and since $D_\dd \subseteq B(0,1/2)$ there exists some $s' \in (0, t'_2]$ such that $\eta_\tht(s') \notin D_\dd$ and $s' < \tau_\tht(\D)$. Since the conclusions of Lemma~\ref{lem:intersections_times} hold with the roles of $\eta_\tht$ and $\eta_0$ exchanged, there must exist times $s'_1, s'_2, s_1, s_2$ with intersections at $\eta_\tht(s'_1), \eta_\tht(s'_2)$ such that $0 < s'_1 < s' < s'_2 < \tau_\tht(\D_{\dd_1})$ and $0 < s_1 < s_2$, and for all $s'_* \in [s'_1, s'_2]$ we have $|\eta_\tht(s') - \eta_\tht(t')| < \dd_1$. 
In particular, since $s'_1 < s' < t'_2$ we must have $s_1 < t_2 < \tau_0(D_{\dd_1})$. Therefore $\eta_\tht(s'_1) = \eta_0(s_1) \in D_{\dd_1}$ which implies that $\eta_\tht(s') \in D_{2\dd_1} \subseteq D_\dd$, a contradiction.

Now fix $s' \in [t'_1, t'_2]$. Again there must exist times $s'_1, s'_2, s_1, s_2$ with intersections at $\eta_\tht(s'_1), \eta_\tht(s'_2)$ such that $0 < s'_1 < s' < s'_2 < \tau_\tht(\D_{\dd_1})$ and $0 < s_1 < s_2$, and for all $s \in [s'_1, s'_2]$ we have $|\eta_\tht(s') - \eta_\tht(t')| < \dd_1$.
Then $s'_1 < s' \leq t'_2$ meaning that $s_1 < t_2$, and similarly $t'_1 \leq s' < s'_2$ meaning $t_1 < s_2$. There are now two possibilities. Firstly, we could have $t'_1 \leq s'_1 < t'$. In this case, we would have $t_1 \leq s_1 < t_2$ meaning that $|\eta_0(t) - \eta_\tht(s'_1)| < \dd_1$. Otherwise, $s'_1 < t'_1 < s'_2$ and we have $|\eta_0(t_1) - \eta_0(s'_1)| < \dd_1$. In either case we can apply the triangle inequality using the other relations we have to conclude that $|\eta_\tht(s') - \eta_0(t_*)| < \dd$ for all $t_* \in [t_1, t_2]$, from which we can show that $\eta_0$ and $\eta_\tht$ are $\dd$-close until $\eta_0$ leaves $D$.
\end{proof}

\section{The adjacency graph is connected}\label{sec:proofs}

\subsection{Overview}\label{subsec:overview}
In this section, we prove Theorem~\ref{thm:fan_connectivity}. Our goal is to show that any two connected components $U, V$ of $\h \sm \F(\tht'_1, \tht'_2)$ can be connected by a finite chain of components $U = U_0, U_1, \dots, U_m = V$ such that $\del U_j \cap \del U_{j+1} \neq \varnothing$ for each $j$. We will first give an informal overview which should motivate the more detailed arguments of Sections~\ref{subsec:annulus_events} and~\ref{subsec:connectivity_proof}. First note that the interval $[\tht'_1, \tht'_2]$ can be partitioned using $\tht'_2 = \tht_0 > \tht_1 > \dots > \tht_n = \tht'_1$ in such a way that $|\tht_{j+1} - \tht_j| \leq \eps_0$ for all $j$, for some small $\eps_0 > 0$ to be chosen later. We then define the \emph{finite fan}, $\F_f = \cup_{j=0}^n \eta_{\tht_n}$, which is depicted in Figure~\ref{fig:finite_fan_and_smaller_fan}.

The strategy of the proof of Theorem~\ref{thm:fan_connectivity} is to first show that such a finite chain of components exist in the case that $U$ and $V$ are in the same connected component $G$ of $\h \sm \F_f$. Suppose that $G$ lies between $\tht_j$ and $\tht_{j+1}$. Then the only flow lines in $\F$ that enter $G$ will have angle $\tht \in (\tht_{j+1}, \tht_j)$, so in particular can be represented as flow lines of angle $-\epsilon$ where $\epsilon \in (0, \eps_0)$ of a GFF with boundary conditions changed by a constant. The focus of Section~\ref{subsec:annulus_events} is to use the results of Section~\ref{sec:hausdorff_convergence} to understand the behavior of these flow lines. After we have completed the proof in the case that $U, V$ are in $G$, we then prove that the adjacency graph of the finite fan is itself connected, and finally that a finite chain of components as described above exists also in the case that $U$ and $V$ are in different connected components of $\h \sm \F_f$. The proof of Theorem~\ref{thm:fan_connectivity} using the results of Section~\ref{subsec:annulus_events} is carried out in Section~\ref{subsec:connectivity_proof}.

\subsection{Annulus events}\label{subsec:annulus_events}
Let $h$ be a zero boundary $\text{GFF}$ on $\D$ and fix $\kappa \in (0,4)$. The following setup is depicted in Figure~\ref{fig:most_annuli}. Fix annuli $A_3 \subsetneq A_4 \subseteq \D \setminus B(0,1/2)$ centered at $0$.
Fix three annuli $A_1 \subsetneq A_1^a \subsetneq A_1^b \subsetneq A_3$. Finally fix annuli $A_0 \subsetneq A_0^a$ lying between $\del^{\text{in}} A_3$ (the inner boundary of $A_3$) and $\del^{\text{in}} A_1^b$, and annuli $A_2 \subsetneq A_2^a$ lying between $\del^{\text{out}}A_1^b$ (the outer boundary of $A_1^b$) and $\del^{\text{out}}A_3$, as shown in Figure~\ref{fig:most_annuli}. All of the above annuli are centered at $0$.

\begin{figure}[t]
    \centering
    \includegraphics[scale=0.8]{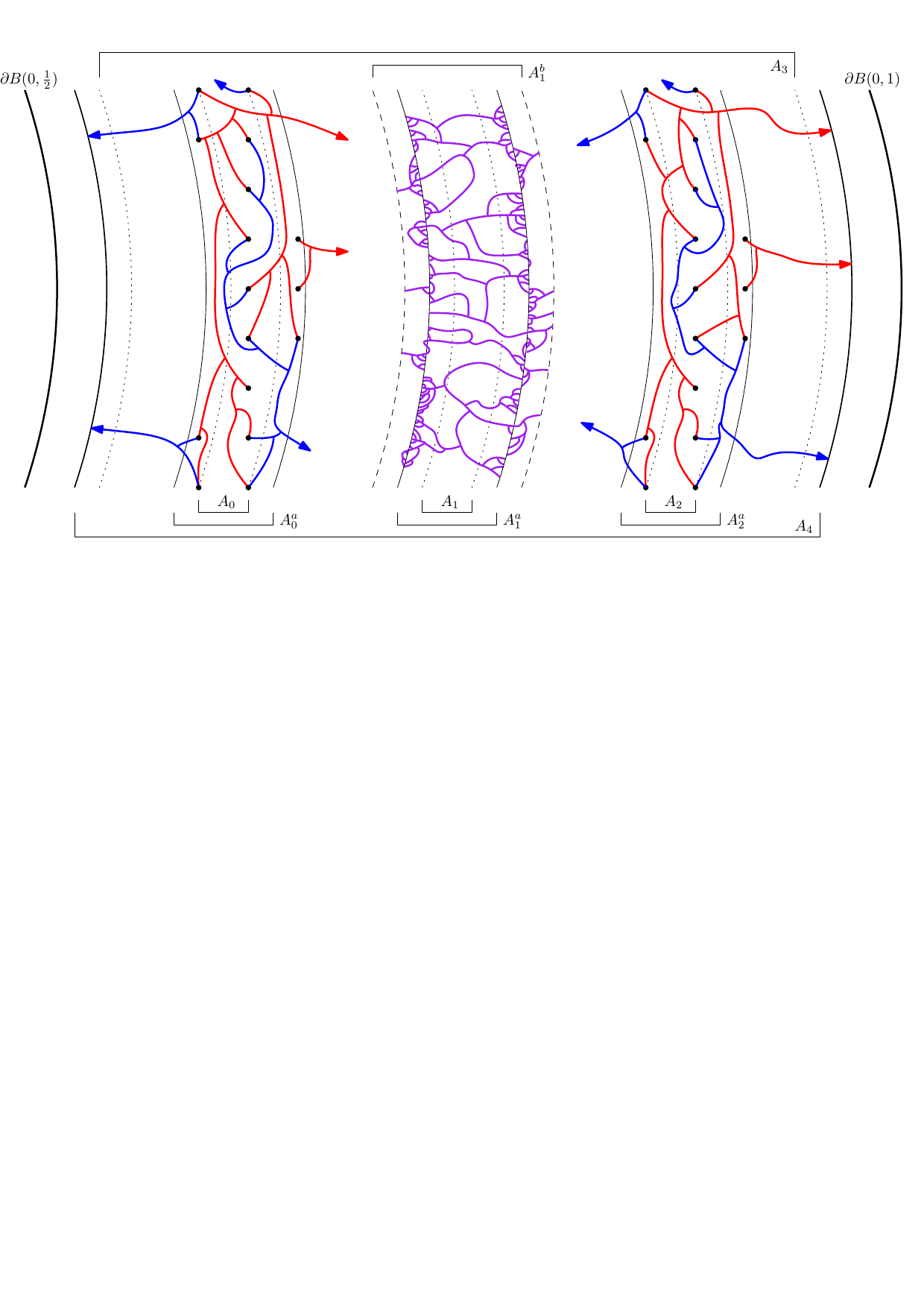}
    \caption{This figure summarizes the setup of Lemmas~\ref{lem:annulus_condition}, \ref{lem:chain_of_pockets} and~\ref{lem:middle_annulus}. The outermost annuli shown are $A_3 \subsetneq A_4 \subsetneq \D\sm B(0,1/2)$. The central annuli are $A_1, A_1^a$ and $A_1^b$. The components $V_j$ and path $\gg$ as described in (\ref{it :shield_of_flow_lines}) are contained in $A_1$. The existence of these components and path is discussed in Lemma~\ref{lem:middle_annulus}. In purple are the flow lines of angle $0$ started from the countable dense subset $(s_j)$ of $\del A_1^a$, which are stopped when they first exit $A_1^b$. The flow lines of angle $-\eps$ are omitted. Inside $A_0^a$ is the grid of points $a\Z^2 \cap A_0^a$. From each point we start flow lines of angle $0$ (red) and $-\pi$ (blue). In Lemma~\ref{lem:chain_of_pockets} we show they form a chain of pockets in $A_0$ which disconnect $A_0$ from $\del^{\text{in}} A_3$. The situation in $A_2^a$ is analogous to that in $A_1^a$. The flow lines $\eta^{\tht_j}_{a_k}$ started from $\del A_3$ are not shown.}
    \label{fig:most_annuli}
\end{figure}

For every $\theta \in \R,  x \in \D$,  we let $\eta_x^{\theta}$ be the flow line of $h$ starting from $x$ with angle $\theta$. Let $S = (s_j)$ (resp.\ $(a_j)$) be a fixed,  countable and dense subset of $\partial A_1^a$ (resp.\ $\del A_3$). 
Fix $\eps > 0$ and let $(\phi_j)$ be a fixed, countable and dense subset of $[-\eps, 0]$.
Let $W^{-\eps}$ be the closure of the union of the $\eta_{s_j}^0 \cup \eta_{s_j}^{-\epsilon}$'s when the flow lines are stopped at the first time that they exit $A_1^b$.
Let $E^h(\eps)$ be the event that the following hold.
\begin{enumerate}[(i)]
\item \label{it :shield_of_flow_lines}
There exist connected components $V_1, V_2, \dots, V_k$ of $A_1 \setminus W^{-\eps}$ and a point $z \in \partial V_1 \cap \partial V_k$ such that there exist simple paths $\gamma_1,\dots,\gamma_k$ parameterized by $[0,1]$ such that $\gamma_i((0,1)) \subseteq V_i$ for every $1 \leq i \leq k$,  $\gamma_{i-1}(1) = \gamma_i(0) \in \partial V_{i-1} \cap \partial V_i$ for every $2 \leq i \leq k$ and $\gamma_1(0) = \gamma_k(1) = z$.
Furthermore,  if $\gamma$ is the concatenation of the $\gamma_j$'s,  we have that $\gamma$ disconnects $\partial^{\text{in}}A_1$ from $\partial^{\text{out}}A_1$.

\item \label{it: flow_lines_do_not_enter} For every $\ell,m \in \N$,  we have that $\eta_{a_{\ell}}^{\phi_{m}}$ does not enter $\cup_{j=1}^k V_j$ before exiting $A_4$.
\end{enumerate}
Note that $E^h(\epsilon)$ is determined by $h|_{A_4}$. The main result of this section is the following lemma.

\begin{lemma}\label{lem:annulus_condition}

Fix $p \in (0,1)$.  Then,  there exists $\epsilon_0 \in (0,1)$ depending only on $\kappa$ and $p$ such that $\p[E^h(\epsilon)] \geq 1-p$ for every $\epsilon \in (0,\epsilon_0)$.
\end{lemma}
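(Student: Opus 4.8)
The plan is to verify the two conditions defining $E^h(\epsilon)$ — the existence of the separating chain of pockets in~\ref{it :shield_of_flow_lines} and the non‑penetration property in~\ref{it: flow_lines_do_not_enter} — separately, each on an event of probability $\geq 1-p/2$ valid for every $\epsilon$ below a threshold $\epsilon_0$ depending only on $\kappa$ and $p$, and then to intersect them. The workhorse is Proposition~\ref{prop:intersections_stopped} (and, through it, Proposition~\ref{prop:bounded_hausdorff}): once $\epsilon$ is small, with high probability the angle‑$0$ and angle‑$(-\epsilon)$ flow lines of $h$ from a common point $x$ are $\delta$‑close until they leave a fixed domain, so the pair $\eta^0_x,\eta^{-\epsilon}_x$ repeatedly re‑intersects and the closed region between them — the \emph{strip} from $x$ — is a chain of \emph{pockets} of diameter $\leq\delta$ separated by \emph{pinch points}. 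I also use the flow‑line interaction calculus of \cite[Theorems~1.7,~1.11]{ms2017ig4} and \cite[Theorem~1.5]{ms2016imag1}: flow lines of equal angle a.s.\ merge upon meeting and never cross, and if a flow line of angle $\theta$ meets one of angle $\theta'$ with $0<\theta'-\theta<\pi$ then the higher‑angle curve crosses the lower‑angle one at most once, from the lower curve's right side to its left side. I will call a component $V$ of $A_1\setminus W^{-\epsilon}$ \emph{safe} if $V$ lies to the left of every angle‑$0$ arc in $\del V$ and to the right of every angle‑$(-\epsilon)$ arc in $\del V$; note that a strip pocket is \emph{not} safe (it sits to the right of its angle‑$0$ wall), whereas the ``wedge'' region between two consecutive strips is safe.

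For condition~\ref{it :shield_of_flow_lines} I would first reduce to finitely many flow lines: fix points $s^{(1)},\dots,s^{(N)}$ cutting $\del A_1^a$ into short arcs and set $W_N^{-\epsilon}=\bigcup_i(\eta^0_{s^{(i)}}\cup\eta^{-\epsilon}_{s^{(i)}})$, stopped on exiting $A_1^b$; since $W_N^{-\epsilon}\subseteq W^{-\epsilon}$ and every further curve $\eta^0_{s_j}$ (resp.\ $\eta^{-\epsilon}_{s_j}$) a.s.\ merges into the angle‑$0$ (resp.\ angle‑$(-\epsilon)$) tree, the residual curves only subdivide complementary components, so a separating chain of safe components for $W_N^{-\epsilon}$ refines to one for $W^{-\epsilon}$. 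Applying Proposition~\ref{prop:intersections_stopped} once for each $s^{(i)}$ (with $\delta$ small relative to the moduli of the annuli, and $p/(4N)$ in place of $p$), together with a union bound over the elementary fact that each $\eta^0_{s^{(i)}}$ crosses $A_1$ before exiting $A_1^b$ (arrange the radii so that $\del^{\text{in}}A_1^a$ sits well inside $A_1^b$), puts us on an event of probability $\geq 1-p/2$ on which each strip $S^{(i)}$ crosses $A_1$ and is a $\delta$‑fine chain of pockets. Because the angle‑$0$ flow lines form a planar tree, the walls $\eta^0_{s^{(i)}}$ leave $\del^{\text{in}}A_1^a$ in cyclic order, hence so do the strips across $A_1$; the wedge component $\tilde W_i$ running along the right side of $\eta^{-\epsilon}_{s^{(i)}}$ and the left side of $\eta^0_{s^{(i+1)}}$ (or the corresponding sub‑chain of safe components if residual curves have cut it) meets $\tilde W_{i-1}$ at each pinch point of $S^{(i)}$, so these safe components form a cycle in the adjacency graph of $A_1\setminus W^{-\epsilon}$ winding once around $0$. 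Picking a point in each and joining consecutive ones through a shared pinch point gives the simple paths $\gamma_i$ whose concatenation $\gamma$ disconnects $\del^{\text{in}}A_1$ from $\del^{\text{out}}A_1$, which is~\ref{it :shield_of_flow_lines}; a further application of Proposition~\ref{prop:intersections_stopped} (and a suitable choice of $\epsilon$ and of the spacing of the $s^{(i)}$) is used to guarantee that consecutive walls $\eta^{-\epsilon}_{s^{(i)}}$ and $\eta^0_{s^{(i+1)}}$ do not cross, so that the $\tilde W_i$ have the expected form.

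For condition~\ref{it: flow_lines_do_not_enter}, fix $\ell,m$ and write $\phi=\phi_m\in[-\epsilon,0]$. Any component $\tilde W_i$ of the chain is safe, so I claim $\eta^\phi_{a_\ell}$ cannot enter it: to do so it would cross a boundary arc of $\tilde W_i$ into $\tilde W_i$, but by the interaction calculus a crossing of an angle‑$0$ arc by $\eta^\phi_{a_\ell}$ with $\phi<0$ goes from the left of that arc to its right, i.e.\ away from $\tilde W_i$ (which lies on the left), while a crossing of an angle‑$(-\epsilon)$ arc with $\phi>-\epsilon$ goes from the right to the left, again away from $\tilde W_i$ (which lies on the right); and for the boundary values $\phi=0$ or $\phi=-\epsilon$ the curve $\eta^\phi_{a_\ell}$ merges into the corresponding wall rather than crossing it, so it stays off $\tilde W_i$ as well. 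This is an almost‑sure statement for each fixed $(\ell,m)$, hence holds for all $\ell,m$ simultaneously on a full‑measure set; intersecting with the event from condition~\ref{it :shield_of_flow_lines} gives $\p[E^h(\epsilon)]\geq 1-p$ for all $\epsilon<\epsilon_0$, the threshold $\epsilon_0$ being independent of $\epsilon$ and depending only on $\kappa$ and $p$ (and the fixed annuli).

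The main obstacle I anticipate is the planar‑topology bookkeeping in condition~\ref{it :shield_of_flow_lines}: with only finitely many flow lines under control, and uniformly in small $\epsilon$, one must show that the complementary components really do organize into a separating cycle all of whose members are safe, which forces one to rule out (or route the path around) spurious pockets created by crossings among the finitely many chosen walls and between them and the residual curves of $W^{-\epsilon}$. This is where the precise form of the interaction rules — in particular which side of which wall a given flow line can reach — together with a careful choice of the annulus radii and of $N$, do the real work; the non‑penetration in~\ref{it: flow_lines_do_not_enter}, by contrast, is then almost a formality given the safety of the chain.
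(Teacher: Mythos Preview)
Your argument for condition~\eqref{it: flow_lines_do_not_enter} has a genuine gap, and the paper's own outline of the proof flags exactly this point. The components $V_j$ (your $\tilde W_i$) are components of $A_1\setminus W^{-\epsilon}$, so their boundaries in general contain arcs of $\partial A_1$, not just arcs of the stopped flow lines. Your crossing argument only rules out entering $\tilde W_i$ through a flow‑line arc. But the curve $\eta^{\phi_m}_{a_\ell}$ starts on $\partial A_3$, well outside $A_1^b$, and the walls $\eta^0_{s^{(i)}},\eta^{-\epsilon}_{s^{(i)}}$ are \emph{stopped} upon exiting $A_1^b$; hence $\eta^{\phi_m}_{a_\ell}$ can go around the endpoints of these stopped walls, enter $A_1^b$ through $\partial A_1^b$, and reach a $\partial A_1$--arc of $\partial\tilde W_i$ without ever crossing a wall. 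The interaction calculus says nothing about that last step. The paper explicitly remarks that the non‑penetration is ``not necessarily true however without some additional information,'' and this is precisely why the extra annuli $A_0\subsetneq A_0^a$ and $A_2\subsetneq A_2^a$ are introduced: Lemma~\ref{lem:chain_of_pockets} builds, from a grid in $A_0^a\cup A_2^a$, a barrier of pockets bounded by angle~$0$ and angle~$(-\pi)$ flow lines that any $\eta^{\phi_m}_{a_\ell}$ must enter \emph{before} reaching $A_1^b$. Once inside such a pocket, the interaction rules force $\eta^{\phi_m}_{a_\ell}$ to be trapped between $\eta^0_z$ and $\eta^{-\epsilon}_z$ for some grid point $z$; these then merge with the flow lines from $\partial A_1^a$, and the conclusion comes from a \emph{diameter comparison}: the trapped pocket has diameter $<\delta/50$, while Lemma~\ref{lem:middle_annulus} guarantees $\diam(V_j)\geq\delta$, so the trapped pocket cannot contain any $V_j$. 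Your assessment that \eqref{it: flow_lines_do_not_enter} is ``almost a formality'' inverts the true difficulty.

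There is also a looser issue with your approach to \eqref{it :shield_of_flow_lines}. You want the chain to consist of \emph{safe} components so that the crossing argument applies, and you claim that passing from the finite set $W_N^{-\epsilon}$ to the full $W^{-\epsilon}$ only ``refines'' the chain. But subdivision by an additional angle‑$0$ arc produces subcomponents on \emph{both} sides of that arc, and only those on its left remain safe; you would still need to show a separating cycle survives entirely within safe subcomponents. The paper's Lemma~\ref{lem:middle_annulus} avoids this by first building a separating chain $U_1,\dots,U_k$ using \emph{only} angle‑$0$ flow lines (where adjacencies are through genuine flow‑line segments), and then arguing via Proposition~\ref{prop:intersections_stopped} that the angle~$(-\epsilon)$ flow lines stay $\delta_4$--close to the angle‑$0$ ones and hit them only on the right, so each $U_j$ contains a single large subcomponent $V_j$ of $A_1\setminus Z^{-\epsilon}$ with $\diam(V_j)\geq\delta$, and the required path $\gamma$ is built segment by segment across the shared flow‑line arcs. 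This diameter lower bound is not incidental: it is what drives the comparison in \eqref{it: flow_lines_do_not_enter}.
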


\begin{remark}\label{rem:annulus_condition_different scales}
For $z \in \C,  r > 0,  \epsilon \in (0,1)$ and a field $h$ on $B(z,r)$,  we let $E_{z,r}^h(\epsilon)$ be the event defined in the same way as $E^h(\epsilon)$ except that in conditions \eqref{it :shield_of_flow_lines}-\eqref{it: flow_lines_do_not_enter} we replace the annulus $A_i$ by $\phi_{z,r}(A_i)$ for every $1 \leq i \leq 4$,  and the sequences of points $(s_j)$ and $(a_j)$ by $(\phi_{z,r}(s_j))$ and $(\phi_{z,r}(a_j))$ respectively,  where $\phi_{z,r} : w \mapsto z + rw$.  Then,  the conformal invariance of the $\text{GFF}$ implies that $\p[E_{z,r}^h(\epsilon)]$ does not depend on $z$ or $r$ if $h$ has the law of a zero boundary $\text{GFF}$ on $B(z,r)$.
\end{remark}

In a moment we will give a summary of the proof of this lemma, which will make use of Lemmas~\ref{lem:chain_of_pockets} and~\ref{lem:middle_annulus}, but first let us outline how the event $E^h(\eps)$ will be used in the proof of Theorem~\ref{thm:fan_connectivity}.
Let $h_1$ be a GFF on $\h$, fix $z \in \h$ and suppose that $E^{h_1}_{z,r}(\eps)$ holds. That is, the event described in (\ref{it :shield_of_flow_lines}) and (\ref{it: flow_lines_do_not_enter}) above (suitably translated and rescaled) holds, but where now all flow lines we consider are flow lines of the field $h_1$ rather than being flow lines of a zero boundary GFF on $B(z,r)$. Suppose $\phi_j \in [-\eps, 0]$ is one of the countable dense set of angles described earlier. If $\eta_{\phi_j}$ is a flow line of $h_1$ started from $0$, upon entering $\phi_{z,r}(A_3)$ for the first time, it will a.s.\ merge with one of the flow lines of angle $\phi_j$ started from some point $\phi_{z,r}(a_k)$. Therefore, it cannot enter the components $V_j$ which form a chain around $z$ which will allow us to deduce information about the connectivity of $\h\sm\F$. This is only a vague motivation, and the details of exactly how Lemma~\ref{lem:annulus_condition} is used to prove the theorem are shown in Lemma~\ref{lem:annulus_condition_at_dense_scales} and Section~\ref{subsec:connectivity_proof}.

Next, we give an outline of the proof of Lemma~\ref{lem:annulus_condition}. In Lemma~\ref{lem:middle_annulus}, we will show that for a fixed $p \in (0,1)$, we can choose $\eps > 0$ small enough that \eqref{it :shield_of_flow_lines} holds with probability at least $1-p$. We want to then show, conditional on this event, that any flow line $\eta_{a_j}^{\phi_k}$ cannot enter any of the components $V_i$ defined in \eqref{it :shield_of_flow_lines} and thus can intersect the loop $\gg$ at at most finitely many points.
This is not necessarily true however without some additional information, which leads us to define the pairs of annuli $A_0, A_0^a$ and $A_2, A_2^a$. We fix a small parameter $a > 0$ and start flow lines of angle $0$ and $-\pi$ from each point $z \in a\Z^2 \cap (A_0^a \cup A_2^a)$. In Lemma~\ref{lem:chain_of_pockets} we show that when $a$ is small then with high probability these flow lines create a ``chain of pockets" which cover $A_0$ and are contained in $A_0^a$ (similarly for $A_2$ and $A_2^a$). This is explained in more detail below and depicted in Figure~\ref{fig:most_annuli}. If this event occurs, every flow line $\eta_{a_j}^{\phi_k}$ started from $\del^{\text{out}}A_3$ must pass through one of the pockets in $A_2^a$. Then, we show that it must be trapped between flow lines of angles $0$ and $-\eps$ started from a single point $z \in a\Z^2 \cap A_2^a$ until it exits $A_4$ (an analogous result holds for $\del^{\text{in}}A_3$ and $A_0^a$). We can use the results of Section~\ref{sec:hausdorff_convergence} to ensure these flow lines stay close together, which finally allows us to show that $\eta_{a_j}^{\phi_k}$ does not enter any of the $V_i$ defined in \eqref{it :shield_of_flow_lines}.

Next, we set up the event we want to consider in Lemma~\ref{lem:chain_of_pockets}. For every point $x$ in $(a \Z)^2 \cap A_0^a$, we say that the pocket with opening point $x$ exists if there exists a distinct point $z \in (a \Z)^2 \cap A_0^a$ such that $\eta_z^0$ merges with $\eta_x^0$ on the left side of $\eta_x^0$, and if $\eta_z^{-\pi}$ merges with $\eta_0^{-\pi}$ on the right side of $\eta_x^{-\pi}$, and if this occurs in both cases before any of these flow lines leave $A_0^a$.
We define $A_{x,z}$ to be the closure of the union of the connected components of $\C \setminus (\eta_x^0 \cup \eta_x^{-\pi})$ which are traced entirely by $\eta_x^0 \cup \eta_x^{-\pi}$ before $\eta_x^0$ (resp.\ $\eta_x^{-\pi}$) merges with $\eta_z^0$ (resp.\ $\eta_z^{-\pi}$) together with the connected components of $\C \setminus (\eta_z^0 \cup \eta_z^{-\pi})$ which are traced entirely by $\eta_z^0 \cup \eta_z^{-\pi}$ before $\eta_z^0$ (resp.\ $\eta_z^{-\pi}$) merges with $\eta_x^0$ (resp.\ $\eta_x^{-\pi}$),  and with the connected component of $\C \setminus (\eta_x^0 \cup \eta_z^0 \cup \eta_x^{-\pi} \cup \eta_z^{-\pi})$ formed at the time that the flow lines merge.
If the above event occurs for some point $z \in (a \Z)^2 \cap A_0^a$, then a.s.\ we can find such a point $z$ such that furthermore, $A_{x,z}$ contains no other points in $(a \Z)^2 \cap A_0^a$ in its interior (this follows from an inductive argument and the flow line interaction rules in \cite{ms2017ig4}).
If there exists such a point $z$, the pocket with opening point $x$ is $A_{x,z}$.  
The pocket with opening point $x$ in $(a\Z)^2 \cap A_2^a$ is defined analogously, if it exists.

Let $A_4^a$ be an annulus slightly bigger than $A_4$ so that $A_4 \subsetneq A_4^a \subsetneq \D$.
In this section, all parameters $\dd,\dd_j$ will be chosen to be much smaller than the distance between the boundaries of any two of the distinct annuli.

\begin{figure}[t]
    \centering
    \includegraphics[scale=0.8]{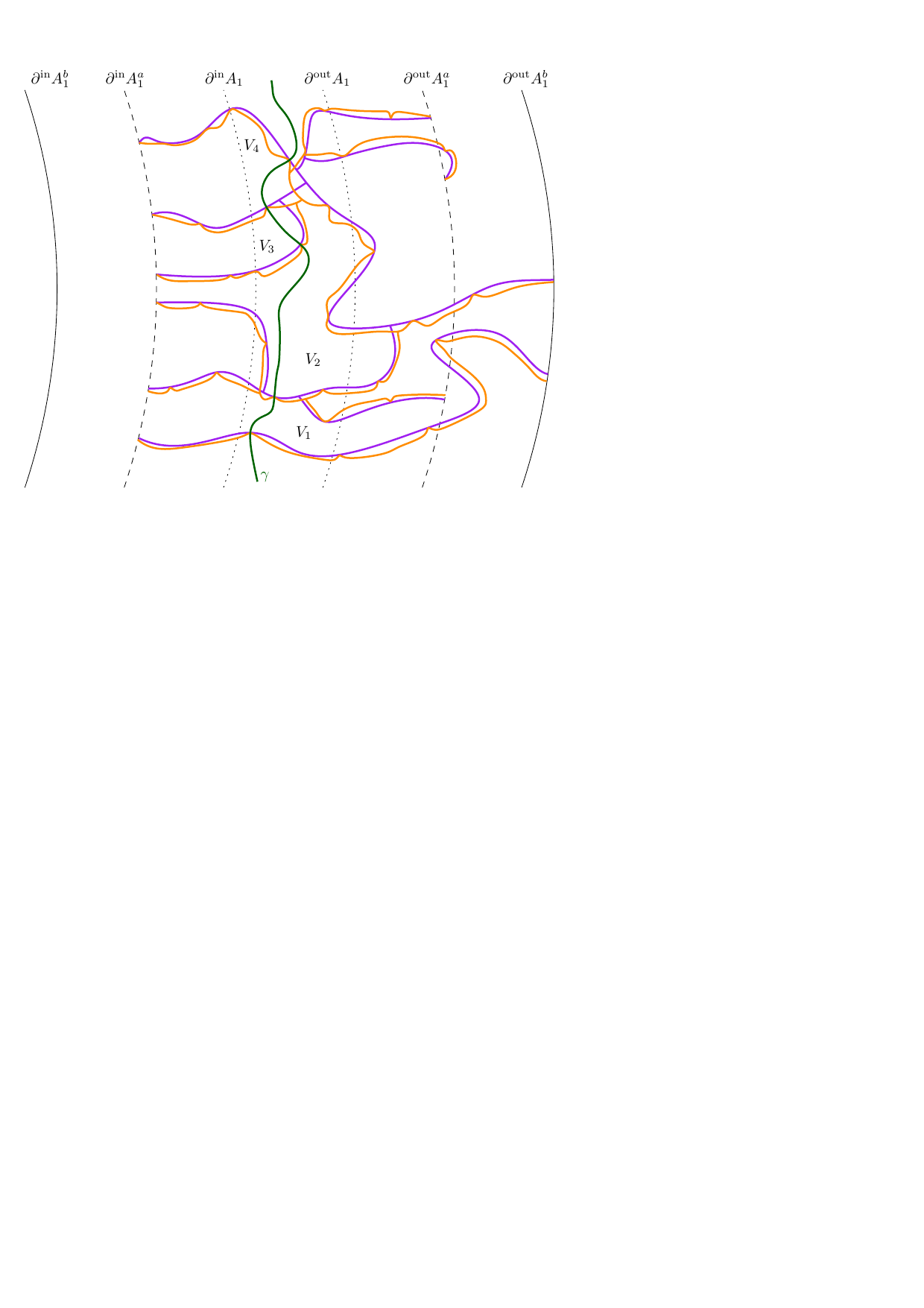}
    \caption{The components $V_i$ and the path $\gg$. Flow lines of angle $-\eps$ are shown in orange. We omit the additional annulus $A_1^c$.}
    \label{fig:annuli_pockets_v_components}
\end{figure}

\begin{lemma}\label{lem:chain_of_pockets}
For any $p,\dd \in (0,1)$ there exists $a > 0, \eps_0 \in (0,1)$ (depending only on $\kk, \dd$ and $p$) such that for every $\eps \in (0, \eps_0)$ the following events hold with probability at least $1 - p$. 
    \begin{enumerate}[(i)]
        \item There exist $x_1,\dots,x_n \in a \Z^2 \cap A_0^a$ and $y_1,\dots,y_m \in a \Z^2 \cap A_2^a$ such that $A_0 \subseteq \cup_{j=1}^n F_j \subseteq A_0^a$ where $F_j$ is the pocket with opening point $x_j$ for every $1 \leq j \leq n$, and $A_2 \subseteq \cup_{i=1}^m L_i \subseteq A_2^a$,  where $L_i$ is the pocket with opening point $y_i$ for every $1 \leq i \leq m$, and where all such pockets exist. 
        \item For each $z \in (a\Z^2 \cap A_0^a) \cup (a\Z^2 \cap A_2^a)$, the flow lines $\eta_z^0$ and $\eta_z^{-\eps}$ are $\dd$-close until $\eta_z^0$ exits $A_4^a$, and the flow lines $\eta_z^{-\pi}$ and $\eta_z^{-\pi-\eps}$ are $\dd$-close until $\eta_z^{-\pi}$ exits $A_4^a$.
    \end{enumerate}
\end{lemma}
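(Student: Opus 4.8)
The plan is to prove (ii) first, which is the easier of the two parts, and then (i). For (ii) the idea is to invoke Proposition~\ref{prop:intersections_stopped} in the form of the remark following it, where the field is a GFF on a domain and the two flow lines are started from an interior point with angles $\tht_1,\tht_2$ satisfying $|\tht_1-\tht_2|<\tht_0$. Concretely I would first fix $a>0$ as required by part (i) — which, crucially, does not involve $\eps$, so there is no circularity — so that the grid $a\Z^2\cap(A_0^a\cup A_2^a)$ becomes a fixed finite set. Then, for each grid point $z$ and each of the two angle pairs $(0,-\eps)$ and $(-\pi,-\pi-\eps)$ (both with angle gap $\eps$), I would apply Proposition~\ref{prop:intersections_stopped} with the domain $D=A_4^a$, shrinking $\dd$ if necessary so that the $\dd$-neighbourhood of $A_4^a$ has closure inside $\D$; a union bound over the finitely many grid points and the two angle pairs then produces $\eps_0>0$, depending only on $\kappa,\dd,p$ and the fixed $a$, such that for all $\eps\in(0,\eps_0)$ the event in (ii) holds with probability at least $1-p/2$.

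For (i) I would argue via the space-filling SLE structure of \cite{ms2017ig4}. After replacing $h$ by $h-\tfrac{\pi}{2}\chi$ (still a GFF on $\D$ with constant boundary data), the flow lines $\eta_w^0$ and $\eta_w^{-\pi}$ become the angle $\pi/2$ and $-\pi/2$ flow lines of the new field, i.e.\ the left and right boundary flow lines of the space-filling $\SLE_{16/\kappa}$ coupled with it (using that a zero-boundary GFF on $\D$ is locally absolutely continuous with respect to a whole-plane GFF modulo $2\pi\chi$, so that the results of \cite{ms2017ig4} apply on any fixed compact subset of $\D$). For a fixed scale $a=2^{-k}$, the sets $A_{x,z}$ attached to grid points $x\in a\Z^2\cap A_0^a$ are (unions of) the pockets that this space-filling curve opens at grid level $k$; there are finitely many of them, and, modulo the grid flow lines, their closures cover $A_0^a$, so every point of $A_0$ lies in the closure of at least one. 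The qualitative inputs I would extract from \cite{ms2017ig4} are: the grids $2^{-k}\Z^2$ are nested and their union is dense, so the associated pocket partitions refine as $k\to\infty$; and, since the space-filling curve fills the plane (equivalently, the flow-line tree of a fixed angle from a dense set together with its opposite-angle tree separates points), the level-$k$ pocket containing any given point $w$ shrinks to $\{w\}$ as $k\to\infty$.

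Granting these, I would upgrade the pointwise shrinking to a uniform statement by a standard compactness argument over the compact set $\ov{A_0}$: for every $\rho>0$, a.s.\ there is $k_0$ so that for all $k\ge k_0$ every level-$k$ pocket meeting $A_0$ has diameter $<\rho$. Since the probability that some level-$k$ pocket meeting $A_0$ has diameter at least $\rho$ is decreasing in $k$ (a finer-level pocket is contained in a coarser-level one), it tends to $0$, so one may fix a single deterministic $a=2^{-k}$, depending only on $\kappa,\rho,p$, for which this probability is at most $p/2$. I would then choose $\rho>0$ small enough that any connected subset of $A_0^a$ of diameter less than $\rho$ which meets $A_0$ is contained in $A_0^a$, which is possible since $\dist(A_0,\partial A_0^a)>0$; a pocket contained in $A_0^a$ automatically has its bounding flow lines merge before leaving $A_0^a$, hence exists, and its opening point lies in $a\Z^2\cap A_0^a$. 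On the above event, for Lebesgue-almost every $w\in A_0$ — in particular for a dense subset of $A_0$ — the level-$k$ pocket whose interior contains $w$ has diameter $<\rho$ and is therefore contained in $A_0^a$ and exists; letting $F_1,\dots,F_n$ be the finitely many such pockets, and using that they are closed to pass from a dense subset of $A_0$ to all of $A_0$, gives $A_0\subseteq\bigcup_j F_j\subseteq A_0^a$. Repeating the argument with $A_2,A_2^a$ in place of $A_0,A_0^a$ produces $L_1,\dots,L_m$, which establishes (i) with probability at least $1-p/2$; combining with (ii) and a union bound yields the lemma.

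The main obstacle is the part of (i) just described: converting the pointwise, almost sure statement that the grid pockets around a point contract to that point into a statement which is uniform over $A_0$ and, moreover, quantitative at a fixed small scale $a$ with an explicit probability bound. The monotonicity in $k$ of the "large pocket" event reduces the quantitative bound to the a.s.\ uniform bound, and compactness reduces that to the pointwise contraction; what remains, and where one must appeal carefully to \cite{ms2017ig4}, is the qualitative statement that the nested sequence of pocket partitions generated by the flow-line trees of angles $0$ and $-\pi$ separates points of the plane.
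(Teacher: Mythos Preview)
Your proposal is essentially correct and follows the same overall strategy as the paper's proof. Both arguments handle (ii) by applying Proposition~\ref{prop:intersections_stopped} (in the extended form of the remark following it) together with a union bound over the finitely many grid points, after having first fixed $a$; and both reduce (i) to the statement that for $a$ small the pockets meeting $A_0$ are uniformly small, established in the whole-plane setting and transferred to the zero-boundary GFF on $\D$ by absolute continuity (\cite[Lemma~4.1]{mq2020geodesics}).

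The one substantive difference is in how (i) is obtained. The paper passes explicitly to a whole-plane GFF $\wt h$ (so that pockets always exist), and then simply invokes \cite[Proposition~4.14]{ms2017ig4} to get directly that, with probability at least $1-p/4$, every pocket with opening point in $a\Z^2\cap A_0^a$ which meets $A_0$ is contained in $A_0^a$; the transfer to $h$ is done once at the end. You instead try to reprove this uniform smallness from the pointwise fact that pockets around a fixed point contract, via ``compactness plus monotonicity in $k$''. Your identification of the monotonicity (dyadic refinement makes pockets shrink) and of the reduction to a single deterministic scale is correct, but the passage from pointwise to uniform is not quite a standard compactness argument: a limit point of large-pocket witnesses may land on a flow line, where ``the pocket containing $w^*$'' is ambiguous. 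A clean way to close this is a K\"onig's-lemma argument on the finite tree of large pockets across dyadic levels, producing a nested sequence of pockets each of diameter $\ge\rho$, whose intersection is then a continuum of diameter $\ge\rho$ not separated by any flow line from the dense dyadic grid --- contradicting the separation-of-points property of the space-filling tree in \cite{ms2017ig4}. Since you flagged exactly this step as the main obstacle and correctly identified what it reduces to, this is a minor gap; the quickest fix is simply to cite \cite[Proposition~4.14]{ms2017ig4}, as the paper does.
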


\begin{proof}
To prove the this result, we will consider the analogous setup where $\wt{h}$ is instead a whole-plane GFF with values modulo a global multiple of $2\pi\chi$. In this case, two flow lines of the same angle started from two distinct points $x,z \in (a\Z)^2 \cap A_0^a$ merge a.s.
This allows us to to redefine the pocket $A_{x,z}$ as above, but we now remove the requirement that the flow lines merge before leaving $A_0^a$ (the reason we included this condition above was to avoid complications when these flow lines hit $\del D$, but such issues are avoided in this case).

Let $X_a$ be the union of all the flow lines $\eta_x^{0}, \eta_x^{-\pi}$ (of $\wt{h}$) for $x \in (a \Z)^2 \cap A_0^a$.  Then, every connected component of $A_4 \setminus X_a$ whose boundary is contained in $X_a$ has to be contained in a pocket with opening point $x$ for some $x \in (a \Z)^2 \cap A_0^a$.
By \cite[Proposition~4.14]{ms2017ig4},  we obtain that we can choose $a \in (0,1)$ sufficiently small such that with probability at least $1 - p/4$,  the following holds.  For every $x \in (a \Z)^2 \cap A_0^a$,  if the pocket with opening point $x$ intersects $A_0$,  then it is contained in $A_0^a$.

In this case,  there exist $x_1,\dots,x_n \in (a \Z)^2 \cap A_0^a$ such that $A_0 \subseteq \cup_{j=1}^n F_j \subseteq A_0^a$,  where $F_j$ is the pocket with opening point $x_j$ for every $1 \leq j \leq n$.  Similarly,  by possibly taking $a>0$ to be smaller and arguing as above,  we obtain that with probability at least $1 - p/4$,  there exist $y_1,\dots,y_m \in (a \Z)^2 \cap A_2^a$ such that $A_2 \subseteq \cup_{j=1}^m L_j \subseteq A_2^a$,  where $L_j$ is the pocket with opening point $y_j$ for every $1 \leq j \leq m$.  Therefore,  we obtain that we can choose $a \in (0,1)$ sufficiently small such that with probability at least $1 - p/2$,  we have that the first part of the statement holds.

For this $a$, there are a finite number of points in $a\Z^2 \cap A_0^a$ and $a\Z^2 \cap A_2^a$, so we can apply Proposition~\ref{prop:intersections_stopped} to conclude the proof of this result for $\wt{h}$.
To convert this result to the zero-boundary GFF $h$, we note that the event described in the lemma depends only on $h|_{A_0^a}$, so by the mutual absolute continuity of the laws of $h|_{A_0^a}$ and $\wt{h}|_{A_0^a}$ (see \cite[Lemma~4.1]{mq2020geodesics}), it follows that the statement holds for as written for $h$.
(Note that on the event that $F_j \subseteq A_0^a$, then the flow lines which determine it cannot leave $A_0^a$ without merging. This means that this is the pocket with opening point $x_j$ for both $h$ and $\wt{h}$, and the possible discrepancy in our definition of the pockets for $h$ and $\wt{h}$ does not cause any issues.)
\end{proof}

In the next lemma, we deal with the construction of the components $V_j$ and the path $\gg$ as described in (\ref{it :shield_of_flow_lines}).
For technical reasons we introduce a new annulus $A_1^c$ slightly larger than $A_1^b$ but which does not intersect $A_0^a$ or $A_2^a$. Our strategy to prove the lemma will be as follows.
Let $Z$ be the closure of the union of the $\eta_{s_j}^0$'s when they are stopped at the first time that they exit $A_1^c$ and let $Z^{-\eps}$ be the union of $Z$ and the closure of the union of the $\eta_{s_j}^{-\eps}$ when these flow lines are stopped at $A_1^b$. Notice that the $0$-angle and $-\eps$-angle flow lines are stopped at different points; this is merely to fix a technicality and is not important to the core idea of the argument. Note that $W^{-\eps} \subseteq Z^{-\eps}$, and that the flow lines of angle zero in $W^{-\eps}$ are stopped slightly earlier.  In the following and during the proof of the lemma, whenever we refer to a flow line of angle $0$ (resp.\ $-\eps$), it will be understood to be stopped upon hitting $A_1^c$ (resp.\ $A_1^b$) unless stated otherwise.

We will show that with high probability there exist connected components $U_1,\dots,U_k$ of $A_1 \setminus Z$ such that $k \geq 2,  U_1 = U_k$ and $\partial U_i \cap \partial U_{i+1} \neq \emptyset$ for every $1 \leq i \leq k-1$.
Furthermore, $\partial U_i \cap \del U_{i+1}$ contains a segment (greater than a single point) of some flow line $\eta^0_{s_j}$, and $\cup_{j = 1}^n \overline{U}_j$ disconnects $\partial^{\text{in}}A_1$ from $\partial^{\text{out}}A_1$.
We will then show that (with large probability) each $U_j$ contains a unique ``largest" (in a sense we will describe shortly) connected component $V_j$ of $A_1 \sm Z^{-\eps}$. We will show that these $V_j$ fulfill the requirements of (\ref{it :shield_of_flow_lines}) and furthermore that their diameters are large, which will be used to verify (\ref{it :shield_of_flow_lines}) when we complete the proof of Lemma~\ref{lem:annulus_condition}.

\begin{lemma}\label{lem:middle_annulus}
    
Fix $p \in (0,1)$.  There exist $\dd, \eps_0 > 0$ such that for any $\eps \in (0, \eps_0)$, with probability at least $1 - p$, there exist components $V_1, \dots, V_k$ and a path $\gg$ as in (\ref{it :shield_of_flow_lines}) and such that $\diam(V_j) \geq \dd$ for each $1 \leq j \leq k$.
\end{lemma}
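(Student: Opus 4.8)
\emph{Proof strategy.} We follow the two-stage scheme described in the paragraph preceding the lemma. First we produce, on an event of probability at least $1-p/2$, a cyclic chain of components $U_1,\dots,U_k$ of $A_1\setminus Z$, of diameter bounded below by a deterministic constant $\dd'>0$, cyclically ordered so that consecutive ones share a nondegenerate arc of some $\eta^0_{s_j}$ and so that $\bigcup_i\overline{U_i}$ separates $\del^{\mathrm{in}}A_1$ from $\del^{\mathrm{out}}A_1$. Second, for $\eps$ small we refine each $U_j$ to a large component $V_j$ of $A_1\setminus Z^{-\eps}$ and join the $V_j$ by the path $\gg$ of \eqref{it :shield_of_flow_lines}. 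Note that $Z$ does not depend on $\eps$, so the first stage is a statement about $0$-angle flow lines alone, and the two stages can be carried out on independent-looking but ultimately intersected events.

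For the first stage, recall that the flow lines $\eta^0_{s_j}$, all of angle $0$, obey the interaction rules of \cite{ms2016imag1,ms2017ig4}: any two merge upon meeting and never cross, so $Z$ is (the closure of) a planar tree of simple curves, and each $\eta^0_{s_j}$ depends continuously on its starting point in the topology of Proposition~\ref{prop:intersections_stopped}. Using this, together with the facts that $\del^{\mathrm{out}}A_1^a$ surrounds $A_1$ while $\del^{\mathrm{in}}A_1^a$ lies in its hole, and using Propositions~\ref{prop:bounded_hausdorff} and~\ref{prop:intersections_stopped} to control the behaviour of flow lines started near finitely many well-separated points of $\del A_1^a$, one shows that with probability at least $1-p/2$ one can find finitely many components $U_1,\dots,U_k$ of $A_1\setminus Z$ with the properties above. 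The role of the estimates from Section~\ref{sec:hausdorff_convergence} here is twofold: to ensure that enough of the tree $Z$ reaches $A_1$ to let the $U_j$ close up into a loop winding once around $A_1$, and to ensure that $Z$ does not cut the relevant part of $A_1$ into arbitrarily small components, which is what yields the uniform lower bound $\diam(U_j)\ge\dd'$.

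For the second stage, set $\dd=\dd'/4$. Apply Proposition~\ref{prop:intersections_stopped}, in the form valid for a GFF on a subdomain via the absolute-continuity remark following it (and \cite[Lemma~4.1]{mq2020geodesics}), to each of the finitely many flow lines $\eta^0_{s_j}$ occurring on the boundaries of $U_1,\dots,U_k$: for $\eps$ small enough the companion flow line $\eta^{-\eps}_{s_j}$ is $\dd$-close to $\eta^0_{s_j}$ until it exits $A_1^b$. Combining this with the continuity of flow lines in the starting point and a finite covering of $\del A_1^a$, one deduces that \emph{every} $\eta^{-\eps}_{s_\ell}$ lies within $O(\dd)$ of $Z$, hence $Z^{-\eps}$ is contained in an $O(\dd)$-neighbourhood of $Z$. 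Consequently, inside each $U_j$ the set $A_1\setminus Z^{-\eps}$ still contains a component $V_j$ with $\diam(V_j)\ge \dd'-O(\dd)\ge\dd$, adjacent $U_j,U_{j+1}$ still contain adjacent such components (the shared flow-line arc survives outside the thin neighbourhood), and choosing base points in each $V_j$ and in each $\del V_j\cap\del V_{j+1}$ and joining them inside the open connected $V_j$ produces the simple paths $\gg_j$ and their concatenation $\gg$; since $\bigcup_i\overline{U_i}$ separated the two boundary circles of $A_1$ and the $V_j$ are $O(\dd)$-dense in the $U_j$, $\gg$ still does so. Taking $\eps_0$ small enough that all of this holds with probability at least $1-p/2$ and intersecting with the first-stage event gives the lemma.

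The main obstacle is the first stage: proving, with probability close to $1$ rather than merely almost surely, the existence of a non-contractible adjacency cycle of components of $A_1\setminus Z$ of uniformly bounded-below diameter. This requires genuinely controlling how the (possibly very rough) flow-line tree $Z$ sits inside $A_1$ — in particular excluding the scenario in which $Z$ chops every non-contractible loop in $A_1$ into arbitrarily small pieces — and is where the continuity results of Section~\ref{sec:hausdorff_convergence} and the flow-line interaction rules do the real work. A related technical point is reconciling the finite-flow-line analysis used there with the full set $W^{-\eps}$, equivalently with the closure $Z$ of all the $0$-angle flow lines.
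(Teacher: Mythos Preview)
Your two-stage outline matches the paper's overall architecture, but both stages have genuine gaps that the paper fills with arguments you do not supply.

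\textbf{Stage 1.} Your invocation of Propositions~\ref{prop:bounded_hausdorff} and~\ref{prop:intersections_stopped} here is misplaced: those results compare flow lines of \emph{different} angles, whereas Stage~1 concerns only the $0$-angle tree $Z$. The paper instead proves that the adjacency graph of components of $A_1^q\setminus Z$ (for $q\in\{T,B\}$, the upper and lower halves of $A_1$) is connected \emph{almost surely}, by a local argument at a hypothetical boundary point $w$ of the reachable set: finitely many flow lines pass through $w$, each is a simple arc in a small ball $B(w,\zeta)$ (using the tail decomposition from \cite{ms2017ig4}), and the components of $B(w,\zeta)\setminus Z$ between consecutive arcs $I_i,I_{i+1}$ are manifestly adjacent along segments of $I_i$, forcing $w$ to be interior after all. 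Once this connectivity holds a.s., the diameter lower bound follows simply because finitely many components $U_1^q,\dots,U_{n_q}^q$ are open, so each contains a ball; a deterministic radius working with probability $1-p/10$ is then extracted afterwards. Section~\ref{sec:hausdorff_convergence} plays no role in this step.

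\textbf{Stage 2.} The assertion ``adjacent $U_j,U_{j+1}$ still contain adjacent such components (the shared flow-line arc survives outside the thin neighbourhood)'' is exactly the nontrivial point, and the $O(\dd)$-neighbourhood bound alone does not deliver it. If the $-\eps$ flow lines could run along \emph{both} sides of the shared arc $I_j^q\subset\eta^0_{s_\ell}$, they might insulate $V_j$ and $V_{j+1}$ from $I_j^q$ and from each other. The paper's key observation is that, by the interaction rules, $\eta^{-\eps}_{s_\ell}$ hits $\eta^0_{s_\ell}$ only on the \emph{right} side, so inside $B(x_j^q,\dd_1/2)$ the $-\eps$ flow lines enter $U_{j+1}^q$ but not $U_j^q$. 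Hence $\partial V_j^q$ contains the whole of $I_j^q$ near $x_j^q$, while $\partial V_{j+1}^q$ meets $I_j^q$ at the many intersection points guaranteed by Proposition~\ref{prop:intersections_stopped}; this yields a common boundary point $w_j^q$ through which the path $\gg_j^q$ is threaded. Without this sidedness argument your adjacency claim, and hence the construction of $\gg$, does not go through. (The paper also needs the finiteness reduction to $M$ flow lines, uniform in $\eps$, which you gesture at but do not establish.)
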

\begin{proof}
\noindent{\it Step 1. Existence of the $U_j$.}
We set $A_1^T = A_1 \cap \h$ and $A_1^B = A_1 \cap (\C \setminus \overline{\h})$ and let $x$ (resp.\ $y$) be the midpoint of $\R_- \cap \partial A_1^T$ (resp.\ $\R_+ \cap \partial A_1^T$).  Note that a.s.\ there exists $N \in \N$ such that for every $k \in \N$,  the curve $\eta_{s_k}^0$ merges with $\eta_{s_j}^0$ before exiting $A_1^b$ for some $1 \leq j \leq N$.  It follows that $\{x,y\} \cap Z = \emptyset$ a.s.\  (since flow lines do not hit fixed points a.s.) and let $U^x$ (resp.\ $U^y$) be the connected component of $A_1 \setminus Z$ containing $x$ (resp.\ $y$).  Let also $U^{x,q}$ (resp.\ $U^{y,q}$) be the connected component of $A_1^q \setminus Z$ such that $x \in \partial U^{x,q}$ (resp.\ $y \in \partial U^{y,q}$) for $q \in \{T,B\}$.  Note that $U^{x,q} \subseteq U^x$ and $U^{y,q} \subseteq U^y$ for $q \in \{T,B\}$.  
Let $V^q$ be the union of the connected components of $A_1^q \setminus Z$ which can be reached using a finite chain of connected components of $A_1^q \setminus Z$ starting from $U^{x,q}$ where each boundary between consecutive components in the chain contains a segment of one of the flow lines $\eta^0_{s_j}$. 

We claim that $V^q = A_1^q \setminus Z$ a.s.  Indeed,  suppose that $V^q \neq A_1^q \setminus Z$.  Then,  there exists a connected component $U$ of $A_1^q \setminus Z$ such that $U \cap V^q = \emptyset$ which implies that there exists a point $w \in A_1^q \cap \partial{\overline{V}^q}$.
By combining \cite[Lemma~4.1]{mq2020geodesics} with \cite{dms2021mating},  we obtain that it is a.s.\  the case that finitely many flow lines can intersect at $w$ and there exist a leftmost and a rightmost flow line of the above flow lines.  Let $U_w$ (resp.\ $U_w'$) be the connected component of $A_1^q \setminus Z$ lying to the left (resp.\ right) of the leftmost (resp.\ rightmost) flow line intersecting $w$.  Fix $\zeta > 0$ and suppose that $\dist(w ,  \partial A_1^q) \geq 2 \zeta$.  Then,  \cite[Lemma~4.1]{mq2020geodesics} combined with \cite[Lemma~A.3]{kms2023nonsimpleremove} imply that a.s.\  there exists $N \in \N$ such that $(\cup_{j=1}^N \eta_{s_j}^0 ) \cap B(w,\zeta) = Z \cap B(w,\zeta)$.  Let $I_1,\dots,I_m$ be the arcs of $B(w,\zeta) \cap \eta_{s_j}^0$ for $1 \leq j \leq N$ which contain $w$.  We assume that the arcs are ordered from left to right.  Note that \cite[Lemma~4.1]{mq2020geodesics} combined with the tail decomposition of the flow lines of the whole-plane $\text{GFF}$ \cite[Proposition~3.6]{ms2017ig4} imply that it is a.s.\  the case that the following holds.  For every $j \in \N$,  every time that $\eta_{s_j}^0$ hits $w$,  it has to make a simple loop around $s_j$ and by doing so it exits $A_1$ and hence $A_1^q$.  It follows that $I_1,\dots,I_m$ are simple curves a.s.  
By decreasing $\zeta > 0$ if necessary,  we can assume that $Z \cap B(w,\zeta) = \cup_{j=1}^m I_j$.  Since the arcs $I_1,\dots,I_m$ are simple curves,  the connected component $\wt{U}_w$ of $B(w,\zeta) \setminus I_1$ which is to the left of $I_1$ has $w$ on its boundary and likewise the connected component $\wt{U}_w'$ of $B(w,\zeta) \setminus I_m$ which is to the right of $I_m$ also has $w$ on its boundary. This setup is depicted in Figure~\ref{fig:constructing_gamma_proof}.

Now,  every connected component of $B(w,\zeta) \setminus (\cup_{j=1}^m I_j)$ which is between $I_1$ and $I_2$ is adjacent to $\wt{U}_w$ and the intersection of the boundaries of each component with $\wt{U}_w$ all contain a segment of $I_1$.  
More generally,  every connected component of $B(w,\zeta) \setminus (\cup_{j=1}^m I_j)$ which is between $I_i$ and $I_{i+1}$ is adjacent to a connected component which is between $I_{i-1}$ and $I_i$ (with each pair of adjacent components again containing a segment of $I_i$ in the intersection of their boundaries).  So we obtain that all of the connected components of $B(w,\zeta) \setminus (\cup_{j=1}^m I_j)$ are connected to $\wt{U}_w$ in this way and likewise with $\wt{U}_w'$ in place of $\wt{U}_w$.  Thus, the graph of connected components of $B(w,\zeta) \setminus (\cup_{j=1}^m I_j)$ is connected and the closure of its union is equal to $B(w,\zeta)$.
Note that $\wt{U}_w \subseteq U_w$ (resp.\ $\wt{U}_w' \subseteq U_w'$) and $B(w,\zeta) \cap \partial \wt{U}_w = B(w,\zeta) \cap \partial U_w$ (resp.\ $B(w,\zeta) \cap \partial \wt{U}_w' = B(w,\zeta) \cap \partial U_w'$) for $\zeta > 0$ sufficiently small.  Note also that $U_w$ (resp.\ $U_w'$) can be connected to $U^{x,q}$ through a finite chain of connected components of $A_1^q \setminus Z$.  It follows that $B(w,\zeta) \subseteq \overline{V}^q$ but that is a contradiction since $w \in \partial \overline{V}^q$.  Therefore,  we obtain that $V^q = A_1^q \setminus Z$ a.s.\  for $q \in \{T,B\}$.  In particular,  for every $q \in \{T,B\}$,  there exists $n_q \in \N$ and connected components $U_1^q,\dots,U_{n_q}^q$ of $A_1^q \setminus Z$ such that $U^{x,q} = U_1^q,  U^{y,q} = U_{n_q}^q$ and $\partial U_j^q \cap \partial U_{j+1}^q$ contains a segment of some flow line $\eta_{s_k}^0$ for every $1 \leq j \leq n_q - 1$.

\begin{figure}[t]
    \centering
    \includegraphics[scale=0.8]{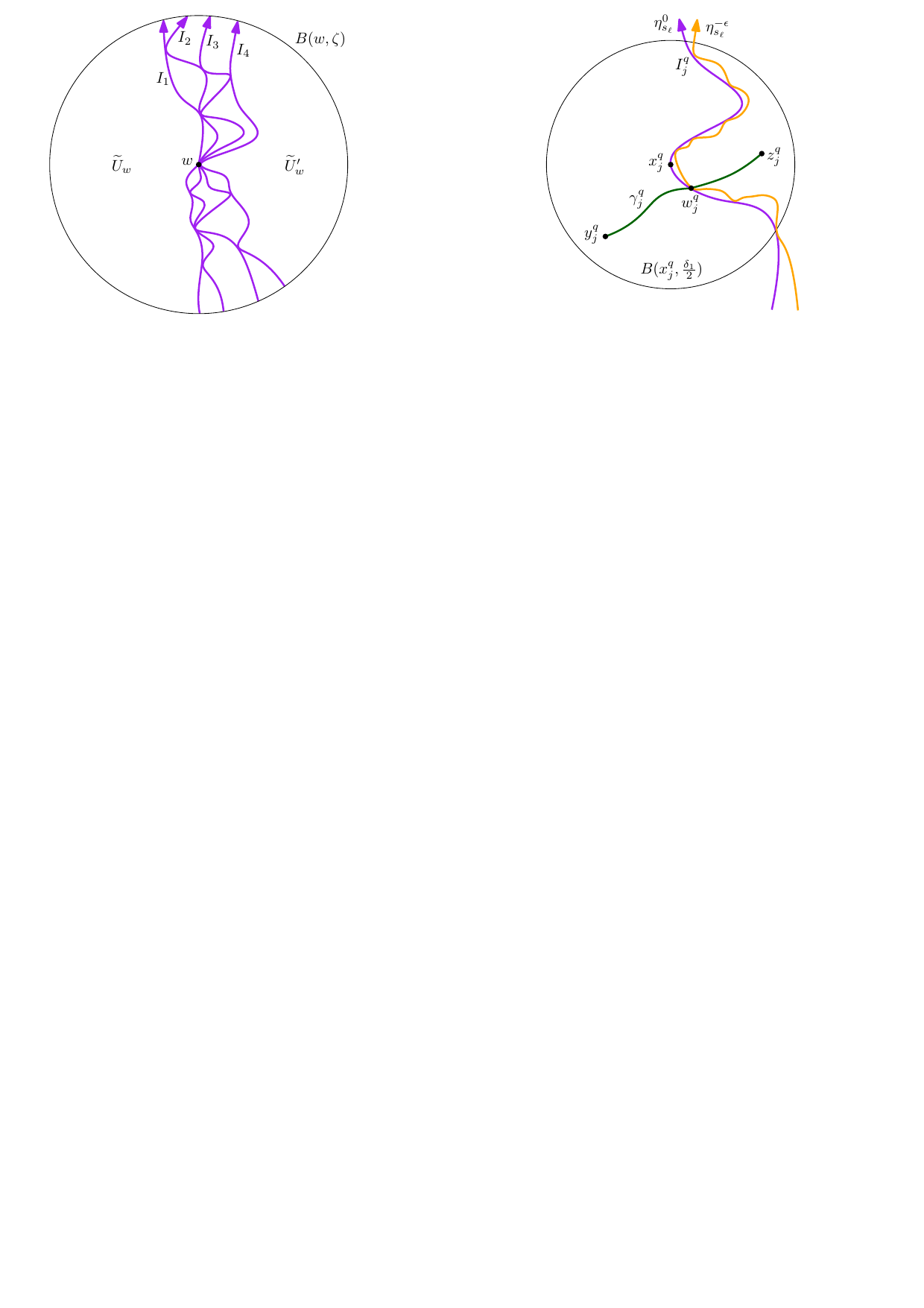}
    \caption{\textbf{Left:} A point $w \in A_1^q$ as described in Step 1 of the proof of Lemma~\ref{lem:middle_annulus}. $I_1, \dots, I_m$ are the segments of those flow lines which intersect $w$, ordered from left to right. Every connected component of $B(w, \zz) \sm (\cup_{j=1}^m I_j)$ can be connected to both $\wt{U}_w$ and $\wt{U}'_w$ via a finite chain of components. \textbf{Right:} Part of the flow-line segment $I_j^q$ which is contained in $\del U_j^q \cap \del U_{j+1}^q$. We assume without loss of generality that $U_j$ lies to the left of $I_j^q$ and that $U_{j+1}^q$ lies to the right. Here, $I_j^q$ is part of the flow line $\eta_{s_\ell}^0$. Not shown are the sets $W_j^q$ and $W_{j+1}^q$, but $y_j^q, z_j^q$ are chosen such that $y_j^q \in W_j^q$ and $z_j^q \in W_{j+1}^q$. We also show the simple curve $\gg_j^q$.}
    \label{fig:constructing_gamma_proof}
\end{figure}

\noindent{\it Step 2.  Existence of the $V_j$ and path $\gg$.}
We will first describe a collection of ``good" events that depend on the realization of the flow lines, each of which happen with high probability. Then, we will assume that these events all occur, and show in this case that the $V_j$ and path $\gg$ exist.

\noindent{\it Step 2.1. Setup.}
First, we fix $\epsilon \in (0,1)$ sufficiently small (to be chosen later).  Note that if $(\eta_{s_j}^{-\epsilon})$ were flow lines of angle $-\epsilon$ of the whole-plane $\text{GFF}$,  then their joint law would be independent of $\epsilon$.
Hence,  it follows from the proof of \cite[Lemma~A.3]{kms2023nonsimpleremove} and \cite[Lemma~4.1]{mq2020geodesics} that there exists $M \in \N$ independent of $\epsilon$ such that with probability at least $1 - p/10$,  we have that $A_1 \setminus \wt{Z}^{-\epsilon} = A_1 \setminus (\cup_{j=1}^M \eta_{s_j}^{-\epsilon})$,  where $\wt{Z}^{-\epsilon}$ is the closure of the union of the $\eta_{s_j}^{-\epsilon}$'s when the latter are stopped at the first time that they exit $A_1^b$. 
Applying the same argument also to the flow lines $(\eta_{s_j}^0)_{s_j \in S}$, we conclude that there exists a possibly larger value of $M$ such that with probability at least $1 - p/10$, we have $A_1 \setminus Z^{-\eps} = A_1 \setminus \cup_{j=1}^M (\eta_{s_j}^0 \cup \eta_{s_j}^{-\eps})$.

For each $q \in \{T, B\}$ and every $1 \leq j \leq n_q - 1$, $\del U_j^q \cap \del U_{j+1}^q$ contains a segment $I_j^q$ of some flow line $\eta_{s_k}^0$, and there must exist $x_j^q \in I_j^q$ (that can be chosen in a measurable way) and $\dd_j^q > 0$ such that $B(x_j^q, \dd_x^q) \cap Z \subseteq I_j^q$. In particular, there exists $\dd_1 > 0$ such that with probability at least $1 - p/10$ each $\dd_j^q$ can be chosen to be greater than $\dd_1$.
Furthermore, there exists $\dd_2 > 0$ such that with probability $1-p/10$ there exist $y_j^q \in B(x_j^q, \dd_1/2) \cap U_j^q$ and $z_j^q \in B(x_j^q, \dd_1/2) \cap U_{j+1}^q$ such that $\dist(y_j^q, Z), \dist(z_j^q, Z) > \dd_2$ for all $j, q$.
By possibly making $\dd_2$ smaller we can assume that with probability at least $1-p/10$ each $U_j^q$ contains a ball of radius $2\dd_2$. We assume from now on that all of these events also occur. The segment $I_j^q$ and points $x_j^q, y_j^q, z_j^q$ are shown in Figure~\ref{fig:constructing_gamma_proof}.

For each $q \in \{T,B\}$ and $1 \leq j \leq n_q$, we can choose a set $W_j^q \subsetneq U_j^q$ such that $\{z \in U_j^q \colon \dist(z,\del U_j^q) > \dd_2\} \subseteq W_j^q$ and $\dist(\del W_j^q, \del U_j^q) > 0$. One method of doing so in a measurable way is to pick a conformal transformation $\phi_{j,q} : U_j^q \rightarrow \D$ which is measurable with respect to $U_j^q$ and then let $W_j^q = \phi_{j,q}\nv({B}(0,r))$ where $r \in (0,1)$ is the infimum over all $s$ such that $\{z \in U_j^q \colon \dist(z,\del U_j^q) > \dd_2\} \subseteq \phi_{j,q}\nv(B(0,s))$.
We can now choose $\dd_3 > 0$ such that with probability at least $1 - p/10$ we have $\dist(W_j^q, \del U_j^q) > \dd_3$ for all $j,q$ (this is because for any given configuration of sets $U_j^q$, this will hold for some random $\dd'_3 > 0$, so we can choose a deterministic value $\dd_3$ such that $\dd'_3 > \dd_3$ with high probability).
Perhaps by making $\dd_2, \dd_3$ smaller, we can similarly assume that with probability $1-p/10$ that $\dist(x, Z) > \dd_2$ (recall that $x$ is the midpoint of $\R_- \cap \del A_1^T$), that there exist a set $W^x \subseteq U^x$ such that $\{z \in U^x \colon \dist(z, \del U_j^q) > \dd_2\} \subseteq W^x$, and that $\dist(W^x, \del U^x) > \dd_3$. We do the same for $y$ (the midpoint of $\R_+ \cap A_1^T$) and we assume further that these events hold.

Finally, let $\dd_4 < \min(\dd_1, \dd_2, \dd_3)/100$ and using Proposition~\ref{prop:intersections_stopped} and \cite[Lemma~4.1]{mq2020geodesics} choose $\eps_0 > 0$ small enough that for any fixed $\eps \in (0,\eps_0)$ the following holds with probability at least $1-p/10$.
For all $1 \leq j \leq M$, $\eta_z^{-\eps}$ and $\eta_z^0$ are $\dd_4$-close until $\eta_z^{-\eps}$ exits $A_1^b$. Note that at each intersection point, $\eta_z^{-\epsilon}$ hits $\eta_z^0$ on the \emph{right} side of $\eta_z^0$ since $-\eps < 0$. We emphasize that $\eta_z^{-\eps}$ plays the role of $\eta_0$, and is considered until it hits $A_1^b$, whereas $\eta_z^0$ may leave $A_1^b$, but not $(A_1^b)_\dd$ and hence will not leave $A_1^c$ either until after the $\dd_4$-close condition loses validity.

\noindent{\it Step 2.2. Conclusion.}
Let us now discuss what happens when these events all occur, which with the correct choice of parameters happens with probability at least $1-p$. Suppose that for $1 \leq \ell \leq M$ a flow line $\eta_{s_\ell}^{-\eps}$ enters $W_j^q$, so that there exists $w \in \eta_{s_\ell}^{-\eps} \cap W_j^q$. By our application of Proposition~\ref{prop:intersections_stopped} there must exist a point $z \in \eta_{s_\ell}^0$ (which is hit before $\eta_{s_\ell}^0$ exits $A_1^c$) such that $z \in B(w,\dd_4)$.
But then,  $\dist(z , \partial U_j^q) \geq \dist(W_j^q, \partial U_j^q) - \dist(w,z) \geq \dd_3 - \dd_4 > 0$ meaning that $z \in U_j^q$, a contradiction.
Therefore, none of the flow lines of angle $-\eps$ can enter $W_j^q$ so we can identify a unique connected component $V_j^q$ of $A_1^q \sm Z^{-\eps}$ such that $\{z \in U_j^q \colon \dist(z, \del U_j^q) \geq \dd_2\} \subseteq W_j^q \subseteq V_j^q \subseteq U_j^q$. Note that since $U_j^q$ contains a ball of radius $2\dd_2$, it follows that $\diam(V_j^q) \geq \dd_2$.

Fix $j,q$ such that $1 \leq j \leq n_q - 1$ and consider the segment $I_j^q$ of a flow line contained in $\del U_j^q \cap \del U_{j+1}^q$, and assume without loss of generality that $U_j^q$ (resp.\ $U_{j+1}^q$) is on the left (resp.\ right) side of $I_j^q$ when it is viewed as a flow line (the other scenario occurring will not change the argument). Let $x_j^q, y_j^q, z_j^q$ be points as described above and shown in Figure~\ref{fig:constructing_gamma_proof}. Note that any flow line $\eta_{s_\ell}^{-\eps}$ with $1 \leq \ell \leq M$ entering $B(x_j^q, \dd_1/2)$ must at all times stay within distance $\dd_4$ of $\eta_{s_\ell}^0$ by our previous application of Proposition~\ref{prop:intersections_stopped}.
Since $B(x_j^q, \dd_1) \cap Z \subseteq I_j^q$, the only possibility is that $\eta_{s_\ell}^0$ is a flow line of which $I_j^q$ is a segment. Furthermore, since $\eta_{s_\ell}^{-\eps}$ must intersect $\eta_{s_\ell}^0$ on the right side of the latter, $-\eps$-angle flow lines can only enter $B(x_j^q,\dd_1/2) \cap U_{j+1}^q$, not $B(x_j^q, \dd_1/2) \cap U_j^q$.
Note that it could be the case that $I_j^q$ is a segment of multiple flow lines $\eta_{s_k}^0$ for $1 \leq k \leq M$, where these flow lines have merged before they trace $I_j^q$, so in particular it is possible that multiple $-\eps$-angle flow lines enter $B(x_j^q, \dd_1/2)$.
However, by Proposition~\ref{prop:intersections_stopped}, each of these flow lines must intersect $I_j^q$ at many points in $B(x_j^q, \dd_1/2)$, and since they cannot cross each other, they are forced to merge. After this possible merging point, there again must be an intersection point where $\eta_{s_\ell}^{-\eps}$ intersects $I_j^q$; call this point $w_j^q$.
Since the flow lines, when restricted to $B(x_j^q, \dd_1/2)$, are all simple curves, we can draw a curve $\gg'$ from $w_j^q$ to $z_j^q$ which intersects these flow lines only at $w_j^q$. As we have previously observed that no flow lines of angle $-\eps$ enter $B(x_j^q,\dd_1/2) \cap U_j^q$, we can draw $\gg''$ connecting $y_j^q$ to $w_j^q$. By concatenating these two curves, we obtain a curve $\gg_j^q$ from $y_j^q$ to $z_j^q$, which intersects $Z^{-\eps}$ at exactly one point, $w_j^q$. The point $w_j^q$ and curve $\gg_j^q$ are shown in Figure~\ref{fig:constructing_gamma_proof}.
Finally, we notice that since $\dist(y_j^q, \del U_j^q), \dist(z_j^q, \del U_j^q) > \dd_3$ we must in fact have that $y_j^q \in V_j^q, z_j^q \in V_{j+1}^q$ from which we conclude that $\del V_j^q \cap \del V_{j+1}^q \neq \varnothing$, and that $\gg_j^q$ is a path starting in $V_j^q$, ending in $V_{j+1}^q$ and passing through the intersection of their boundaries exactly once.

It remains to go from the above components $V_j^q$ and paths $\gg_j^q$ to the components $V_j$ and path $\gg$ as described in (\ref{it :shield_of_flow_lines}), which is slightly technical but not difficult. First we deal with the situation in $U^x$, the connected component of $A_1 \sm Z$. We have assumed that $x, y_1^T, y_1^B$ are all distance at least $\dd_3$ from $\del U^x$, meaning we can argue as above to show that they lie in the same component $V^x$ of $A_1 \sm W^{-\eps}$ (recall that $W^{-\eps} \subseteq Z^{-\eps}$ and in $W^{-\eps}$ the flow lines of angle $0$ are stopped slightly earlier). It follows that we can draw paths $\gg'$ from $x$ to $y_1^T$ such that $\gg'((0,1])$ is in $V^x \cap A_1^T$ and $\gg''$ from $y_1^B$ to $x$ such that $\gg''([0,1))$ is in $V^x \cap A_1^B$. Note that $V^x$ must contain $V_1^T, V_1^B$ meaning in particular that $\diam(V_x) \geq \dd_2$. We perform an analogous procedure in $U^y$. We set $V_1 = V^x$ and $V_{n^T} = V^y$.

Consider now $V_j^T$ where $2 \leq j \leq n_T - 1$. We can choose (in an arbitrary measurable way) a simple curve $\wt{\gg}_j^T$ from $z_{j-1}^T$ to $y_j^T$ in $V_j^T$. let $V_j$ be the connected component of $A_1 \sm W^{-\eps}$ containing $V_j^T$ (this may be $V_j^T$ itself or some larger set).  By concatenating these curves with the curves $\gg_j^T$ we end up with a curve $\gg^T$ in $A_1^T$ which starts at $y_1^T$ and ends at $z_{n_T - 1}^T$. If we have $V_j = V_{j+k}$ for $k>0$, we can modify the path $\gg^T$ accordingly to skip those components $V_{j+1}, \dots, V_{j+k-1}$. We can also modify $\gg^T$ to be simple, if necessary. We perform a similar procedure in $A_1^B$, and for $n_T + 1 \leq j \leq k$ we let $V_j$ be the component of $A_1 \sm W^{-\eps}$ containing $V_{j - n_T + 1}^B$, where $k = n_T + n_B$.

In conclusion, we will have $V_1 = V_k$, $\del V_j \cap V_{j+1} \neq \varnothing$ for all $j$, and we can form the path $\gg$ by concatenating $\gg^T, \gg^B$ and the paths $\gg', \gg''$ as described above. As before, we can modify $\gg$ if necessary to ensure it is simple. Therefore, the $V_j$ and $\gg$ satisfy the conditions described in (\ref{it :shield_of_flow_lines}). Note finally that $\diam(V_j) \geq \dd_2$ for all $j$ by construction, so to conclude the proof we simply identify $\dd = \dd_2$.
\end{proof}

\begin{proof}[Proof of Lemma \ref{lem:annulus_condition}]
By Lemma~\ref{lem:middle_annulus}, there exist $\dd, \eps_0 > 0$ such that for $\eps \in (0,\eps_0)$, with probability at least $1-p/2$ there exist components $U_1, \dots, U_k$ and $V_1, \dots, V_k$ and a path $\gg$ satisfying the properties in \eqref{it :shield_of_flow_lines} with $\diam(V_j) \geq \dd$ for all $1 \leq j \leq k$. By Lemma~\ref{lem:chain_of_pockets}, there exist (a possibly smaller) $\eps_0 > 0$ and $a > 0$ such that with probability at least $1 - p/2$ the conclusions of this lemma hold with $\dd/100$ in place of $\dd$. Therefore, for a certain choice of the parameters $\eps_0, a$ and $\dd$, we have that the above events all hold with probability at least $1 - p$.
We will show that in the case that these events hold, any flow line started from $\del A_3$ with angle in $[-\epsilon,0]$ a.s.\ does not enter $\cup_{j=1}^k V_j$ when it is stopped at the first time that it exits $A_4$.  This will be shown into two main steps.  In the first step,  we will show that if $\eta_w^{\theta}$ enters either $\cup_{j=1}^n F_j$ or $\cup_{j=1}^m L_j$,  then it has to stay in the connected components lying between a flow line of angle $0$ and a flow line of angle $-\epsilon$ up until the first time that it exits $A_4$ after it has entered the above family of pockets.  Note that $\eta_w^{\theta}$ has to enter one of the above pockets in order to enter $\cup_{j=1}^k V_j$.  Hence,  in the second step,  we will show that the choice of  the $V_j's$ implies that $\eta_w^{\theta}$ cannot enter $\cup_{j=1}^k V_j$ and so this will complete the proof of \eqref{it: flow_lines_do_not_enter}.  

\begin{figure}[t]
    \centering
    \includegraphics[scale=0.8]{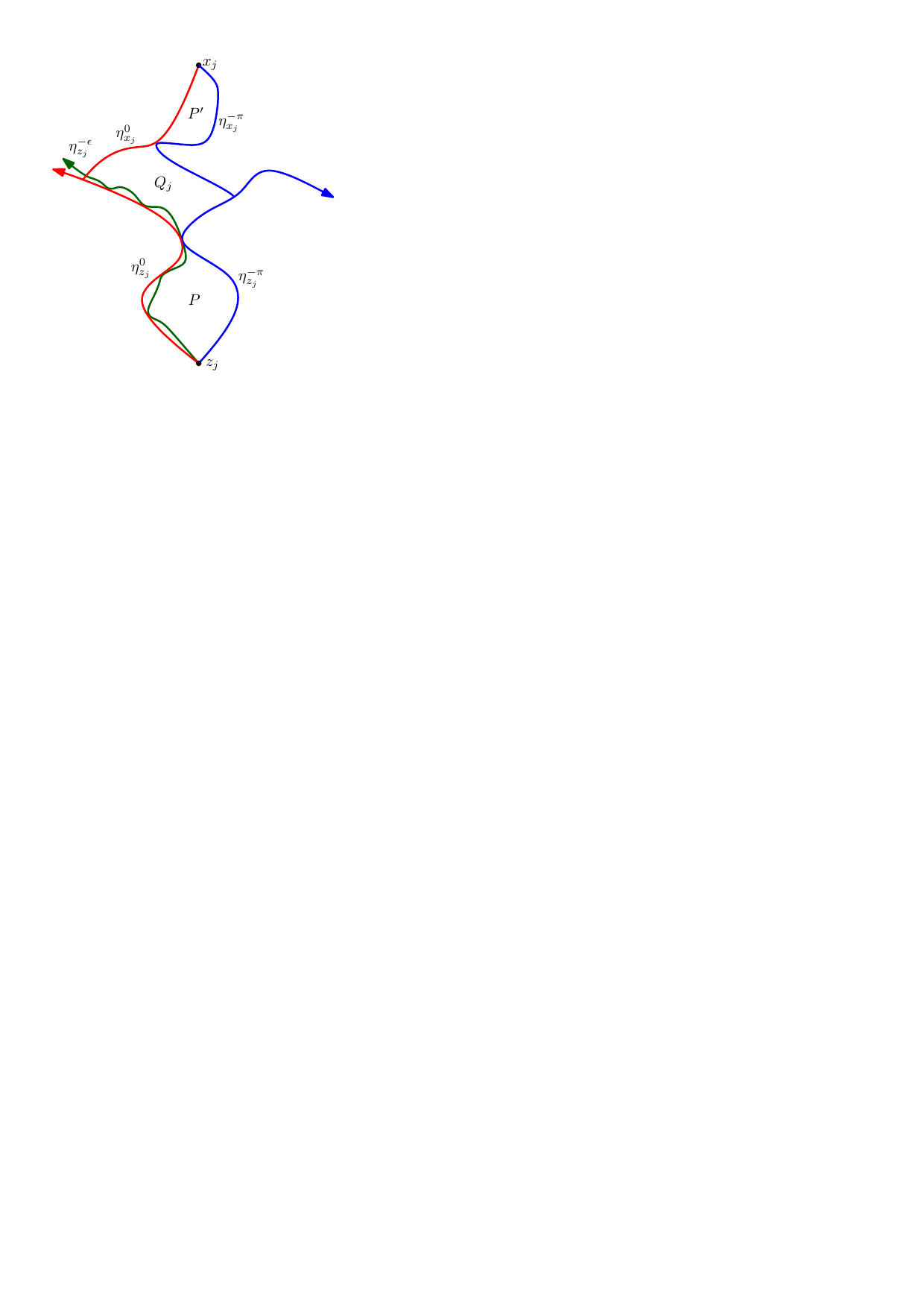}
    \caption{$L_j$ is the pocket with opening point $x_j$ and closing point $z_j$, where $x_j$ comes before $z_j$ in our ordering. If $\eta^\tht$ enters $L_j$, it can only do so by crossing $\eta_{z_j}^0$ to enter $Q$ or $P$, or by crossing $\eta_{z_j}^{-\pi}$ to enter $P$. It cannot enter a pocket of the form $P'$. In any case, $\eta^\tht$ is trapped between $\eta_{z_j}^0$ and $\eta_{z_j}^{-\eps}$.}
    \label{fig:entering_pockets}
\end{figure}

\noindent{\it Step 1.  $\eta_w^{\theta}$ gets trapped between a flow line with angle $0$ and a flow line with angle $-\epsilon$.}
We can assume that $w \in \partial^{\text{out}} A_3$ since a similar argument works when $w \in \partial^{\text{in}} A_3$.  Suppose that $\eta_w^{\theta}$ intersects $\cup_{j=1}^k V_j$ before exiting $A_4$.  Then,  there exists $1 \leq j \leq m$ such that $\eta_w^{\theta}$ enters the interior of $L_j$.  Let $x_j$ (resp.\ $z_j$) be the opening (resp.\ closing) point of $L_j$.  Let also $Q_j$ be the connected component formed when $\eta_{x_j}^0$ (resp.\ $\eta_{x_j}^{-\pi}$) merges with $\eta_{z_j}^0$ (resp.\ $\eta_{_j}^{-\pi}$). 
The flow line interaction rules \cite[Theorem~1.7]{ms2017ig4} imply that $\eta_w^{\theta}$ cannot enter the interior of $Q_j$ by crossing the arc of $\partial Q_j$ corresponding to $\eta_{x_j}^0$ or the arc corresponding to $\eta_{x_j}^{-\pi}$.  Also,  in order to enter the interior of $Q_j$ by crossing the arc corresponding to $\eta_{z_j}^{-\pi}$,  it has to cross $\eta_{z_j}^{-\pi-\epsilon}$ from right to left but this does not occur due to the flow line interaction rules.  Hence,  it can only enter the interior of $Q_j$ by crossing from left to right the arc corresponding to $\eta_{z_j}^0$.
By taking $\dd > 0$ sufficiently small,  we can assume that there are intersection points between $\eta_{z_j}^0$ and $\eta_{z_j}^{-\epsilon}$ in the interior of $Q_j$.  Thus,  the flow line interaction rules imply that after $\eta_w^{\theta}$ enters the interior of $Q_j$,  it has to lie in the connected components whose boundaries consist of an arc of the right side of $\eta_{z_j}^0$ and the left side of $\eta_{z_j}^{-\epsilon}$ up until exiting $A_4$.  Suppose that $\eta_w^{\theta}$ enters the interior of $L_j$ by entering a connected component $P$ lying between $\eta_{x_j}^0$ and $\eta_{x_j}^{-\pi}$ or between $\eta_{z_j}^0$ and $\eta_{z_j}^{-\pi}$,  and which is traced entirely before $\eta_{x_j}^0$ merges with $\eta_{z_j}^0$ and $\eta_{x_j}^{-\pi}$ merges with $\eta_{z_j}^{-\pi}$.
Without loss of generality,  we can assume that $P$ lies between $\eta_{z_j}^0$ and $\eta_{z_j}^{-\pi}$.  Then,  $P$ has two marked points and $\partial P$ consists of either one arc of the right side of $\eta_{z_j}^0$ and one arc of the left side of $\eta_{z_j}^{-\pi}$,  or one arc of the right side of $\eta_{z_j}^{-\pi}$ and one arc of the left side of $\eta_{z_j}^0$.  We note that the flow line interaction rules imply that the second case cannot hold and so necessarily we are in the first case. 
If $\eta_w^{\theta}$ enters $P$ by crossing the arc contained in $\eta_{z_j}^0$,  then the flow line interaction rules imply that after entering $P$,  the curve $\eta_w^{\theta}$ has to lie in the connected components whose boundaries consist of an arc of the right side of $\eta_{z_j}^0$ and an arc of the left side of $\eta_{z_j}^{-\epsilon}$ up until it exits $A_4$.  If $\eta_w^{\theta}$ enters $P$ by crossing the arc contained in $\eta_{z_j}^{-\pi}$,  then it has to exit $P$ by crossing $\eta_{z_j}^{-\epsilon}$ since it cannot cross $\eta_{z_j}^{-\pi}$ again.  Similarly,  it cannot cross $\eta_{z_j}^{-\epsilon}$ for a second time and so combining with the flow line interaction rules,  we obtain that after it crosses $\eta_{z_j}^{-\epsilon}$,  the curve $\eta_w^{\theta}$ has to lie in the connected components whose boundaries consist of an arc of the right side of $\eta_{z_j}^0$ and an arc of the left side of $\eta_{z_j}^{-\epsilon}$ up until it exits $A_4$.
In either case, note that by choosing $\dd$ small enough, we can ensure that $\eta_{z_j}^0, \eta_{z_j}^{-\epsilon}$ do not leave $A_4^a$ before this point.

\noindent{\it Step 2.  Conclusion of the proof of \eqref{it: flow_lines_do_not_enter}.} Suppose that there exists $1 \leq j \leq k$ and $x \in \eta_w^{\theta} \cap V_j$ such that $\eta_w^{\theta}$ hits $x$ before it exits $A_4$.  Then,  there exists $y \in (a \Z)^2 \cap A_2^a$ such that $x \in \overline{V}$,  where $V$ is a connected component with two marked points whose boundary consists of an arc of the right side of $\eta_y^0$ and an arc of the left side of $\eta_y^{-\epsilon}$. 
Since we have assumed the conclusion of Lemma~\ref{lem:chain_of_pockets} with parameter $\dd/100$ and since $\dd \ll \dist(\del A_4, \del A_4^a)$, we have that neither $\eta_y^0$ nor $\eta_y^{-\epsilon}$ exit $A_4^a$ before finishing tracing $\partial V$ and $\diam(V) \leq \dd/50$.
Also,  a.s  there exist $j_1 ,  j_2 \in \N$ such that $\eta_{s_{j_1}}^0$ (resp.\ $\eta_{s_{j_2}}^{-\epsilon}$) merges with $\eta_y^0$ (resp.\ $\eta_y^{-\epsilon}$) before entering $A_1'$,  where $A_1'$ is a fixed annulus centered at $0$ such that $A_1 \subsetneq A_1' \subsetneq A_1^a$. 
Then,  if $\dd > 0$ is sufficiently small,  we have that $V$ is a connected component of the complement in $A_1'$ of the union of $\eta_{s_{j_1}}^0$ and $\eta_{s_{j_2}}^{-\epsilon}$ when they are both stopped at the first time that they exit $A_1^b$.  Note that there exists a connected component $\wh{V}$ of the above set such that $V_j \subseteq \wh{V}$.  Then,  $x \in \wh{V}$ and so we must have that $V_j \subseteq \wh{V} = V$ which implies that $\diam(V_j) \leq \diam(V)$.  
But $\diam(V_j) \geq \dd$ by the construction of the $V_j$ and $\diam(V) < \dd/50$ so we obtain a contradiction.  It follows that $\eta_w^{\theta}$ does not enter $\cup_{j=1}^k V_j$ before exiting $A_4$.  By replacing the pockets $L_j$'s with the pockets $F_j$'s and using a similar argument,  we obtain that $\eta_w^{\theta}$ does not enter $\cup_{j=1}^k V_j$ before exiting $A_4$ when $\theta \in [-\epsilon,0]$ and $w \in \partial^{\text{in}}A_3$.  This completes the proof of the lemma. 
\end{proof}

\begin{lemma}\label{lem:annulus_condition_at_dense_scales}

Fix $\kappa \in (0,4),  b \in (0,1), c,M>0$.  Then,  there exist $a , \epsilon_0 \in (0,1)$ depending only on $\kappa ,  b ,  c$ and $M$ such that for every $\epsilon \in (0,\epsilon_0)$ the following is true.  Let $h^0$ be a zero-boundary $\text{GFF}$ on $\h$ and let $f$ be a deterministic harmonic function on $\h$ whose boundary values are piecewise constant,  they change only finitely many times and $||f||_{\infty} \leq M$.  Set $h = h^0 + f$.  Then,  a.s.\  for every compact set $K \subseteq \h$,  there exists $n_0 \in \N$ such that for every $n \geq n_0,  z \in (e^{-c n} \Z)^2 \cap K$,  we have that $E_{z,2^{-m}}^h(\epsilon)$ occurs for some $bn \leq m \leq n$.
\end{lemma}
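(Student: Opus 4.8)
The plan is to combine Lemma~\ref{lem:annulus_condition} (together with its translation/scaling-invariant form in Remark~\ref{rem:annulus_condition_different scales}) with a Borel--Cantelli argument over a suitable collection of scales and locations, using absolute continuity to pass from the zero-boundary GFF on a ball to the field $h = h^0 + f$. First I would reduce to the case of a fixed compact set $K \subseteq \h$: since $\h$ is a countable union of such sets it suffices to prove the statement for each one, and by enlarging $K$ slightly we may assume $\dist(K,\partial\h) > 0$. Fix $p \in (0,1)$ and let $\epsilon_0 \in (0,1)$ be the threshold produced by Lemma~\ref{lem:annulus_condition} for this value of $p$; we will see below that $p$ must be chosen sufficiently small (depending on $b$ and $c$) and that $a$ will also be chosen at the end, but $\epsilon_0$ can be fixed now. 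Throughout, fix $\epsilon \in (0,\epsilon_0)$.

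The key step is a local estimate. For $z \in \h$ and $m \in \N$, let $E_{z,2^{-m}}^h(\epsilon)$ be the event of Remark~\ref{rem:annulus_condition_different scales}; recall that it is measurable with respect to $h|_{B(z,2^{-m} r_0)}$ for the fixed radius $r_0$ with $A_4 \subseteq B(0,r_0) \subseteq \D$. The event $E_{z,2^{-m}}^h(\epsilon)$ differs from the corresponding event $E_{z,2^{-m}}^{h^0_B}(\epsilon)$ for a zero-boundary GFF $h^0_B$ on $B(z,2^{-m}r_0)$ only through the law of the field restricted to that ball. By \cite[Remark~3.5]{ms2016imag1} (or by a Cameron--Martin computation using $\|f\|_\infty \le M$ and that $f$ is harmonic), the law of $h|_{B(z,2^{-m}r_0)}$ is absolutely continuous with respect to that of a zero-boundary GFF on the same ball plus the (deterministic, uniformly bounded) restriction of $f$, with a Radon--Nikodym derivative whose $L^2$ norm is bounded by a constant depending only on $M$ and $r_0$; since shifting by the deterministic harmonic function $f$ only relabels boundary conditions and the flow-line events in question are insensitive to a uniformly bounded additive shift of the field up to absolute continuity, we get from Lemma~\ref{lem:annulus_condition} and Cauchy--Schwarz that $\p[(E_{z,2^{-m}}^h(\epsilon))^c] \le C_0 \sqrt{p}$ for a constant $C_0 = C_0(\kappa,M)$ uniform in $z \in K$ and in $m \ge 1$. (Here I am using that $E^h_{z,2^{-m}}(\epsilon)$ is decreasing in the ``bad'' direction; the precise bookkeeping of which shift to apply to $f$ so that the flow-line angles match up is routine.)

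Next comes the Borel--Cantelli argument across scales, which I expect to be the main obstacle, because a fixed lattice point $z \in (e^{-cn}\Z)^2 \cap K$ must be shown to have \emph{at least one} good scale $m \in [bn,n]$, and the events at different scales $m$ are highly dependent (the smaller balls sit inside the larger ones). To handle the dependence I would \emph{sparsify} the range of scales: fix an integer gap $g = g(r_0) \ge 1$ large enough that the balls $B(z, 2^{-m}r_0)$ and $B(z,2^{-m'}r_0)$ are disjoint whenever $|m-m'| \ge g$ — actually for our purposes it suffices that $B(z,2^{-m'}r_0) \subseteq B(z,2^{-m-1}r_0/r_0) $... more simply, that the events become independent, which holds once $2^{-m'} r_0 < \dist$ between the relevant annuli at scale $m$; since $A_3 \subsetneq A_4$ are fixed with a definite gap, there is a fixed $g$ so that $E^h_{z,2^{-m}}(\epsilon)$ and $E^h_{z,2^{-m'}}(\epsilon)$ are independent whenever $m' \ge m+g$ — wait, they are not literally independent because both depend on $h$ near $z$, but $E^h_{z,2^{-m'}}(\epsilon)$ is measurable w.r.t.\ $h$ inside $B(z,2^{-m'}r_0)$ and by the Markov property of the GFF, conditioning on $h$ outside this small ball leaves, inside it, a GFF plus a random harmonic function; a standard argument (as in the proof of \cite[Proposition~4.14]{ms2017ig4} or similar locality arguments in the flow-line literature) shows these conditional probabilities are still $\le C_1\sqrt{p}$ uniformly. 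Granting this, along the sparsified subsequence $m \in \{bn, bn+g, bn+2g, \dots\} \cap [bn,n]$, which has $\asymp (1-b)n/g$ terms, the conditional-on-the-past probability that \emph{every} scale is bad is at most $(C_1\sqrt p)^{(1-b)n/(2g)}$, say. Choosing $p$ small enough that $C_1\sqrt p < 1/2$, this is at most $2^{-c_2 n}$ for some $c_2 = c_2(b,g) > 0$. Finally, a union bound over the $\asymp e^{2cn}$ lattice points $z \in (e^{-cn}\Z)^2 \cap K$ gives $\p[\exists z \in (e^{-cn}\Z)^2 \cap K \text{ with all scales } m \in [bn,n] \text{ bad}] \le C_3 e^{2cn} 2^{-c_2 n}$; this is \emph{not} summable for arbitrary $c$, so one must take $p$ smaller still — concretely, choose $p$ (and hence the constant $C_1$) small enough, depending on $b$ and $c$, that $c_2(b,g) > 3c$ (possible since $c_2$ grows without bound as $p \downarrow 0$). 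Then the probabilities are summable in $n$, Borel--Cantelli gives that a.s.\ only finitely many $n$ are bad, i.e.\ a.s.\ there exists $n_0$ such that for all $n \ge n_0$ and all $z \in (e^{-cn}\Z)^2 \cap K$ some scale $m \in [bn,n]$ is good. Finally choose $a \in (0,1)$: the statement as written quantifies over all compact $K$ simultaneously, so after proving it for an exhaustion $K_1 \subseteq K_2 \subseteq \cdots$ of $\h$ with $\cup_j K_j = \h$ one takes a diagonal/countable-intersection argument over $j$ to get the single a.s.\ event, and the parameters $a,\epsilon_0$ produced above depend only on $\kappa, b, c, M$ as required. The main technical point to nail down carefully is the independence/conditional-probability step for the sparsified scales, which rests on the domain-Markov property of the GFF and the locality of the events $E^h_{z,2^{-m}}(\epsilon)$ noted after Remark~\ref{rem:annulus_condition_different scales}.
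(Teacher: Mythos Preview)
Your overall strategy---absolute continuity to reduce to Lemma~\ref{lem:annulus_condition}, an iteration over scales, then a union bound over lattice points and Borel--Cantelli---matches the paper's. But the execution of the scale iteration has a real gap, and the paper handles it differently and more cleanly.

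The gap is the conditional-probability step. You condition on the field outside $B(z,2^{-m'}r_0)$ and assert that ``a standard argument \dots\ shows these conditional probabilities are still $\le C_1\sqrt p$ uniformly.'' But the Markov property produces a \emph{random} harmonic function in the interior, and that function is not uniformly bounded; the Radon--Nikodym / Cauchy--Schwarz bound you invoked at a single scale used $\|f\|_\infty\le M$ with $f$ deterministic, and it does not carry over to a random conditional harmonic shift. Without an additional argument (for example, controlling the Dirichlet energy of the conditional harmonic part with high probability, or a genuine renewal/stopping-time scheme), the claimed product bound $(C_1\sqrt p)^{(1-b)n/(2g)}$ is unjustified. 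Your sparsification also does not give independence here: the events $E^h_{z,2^{-m}}(\epsilon)$ live on \emph{nested} annuli, not disjoint ones, so no choice of gap $g$ makes them independent.

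The paper avoids this entirely. It fixes a single outer scale $r=2^{-m_0}$ and compares $h|_{B(z,\wt a r)}$ to a zero-boundary GFF $\wt h_{z,r}$ on $B(z,r)$ \emph{once}, obtaining a uniform $L^p$ bound on the Radon--Nikodym derivative $\CZ_{z,r}$ via \cite[Lemma~4.1]{mq2020geodesics}. The entire multi-scale iteration is then carried out for $\wt h_{z,r}$, where one can cite \cite[Proposition~4.3]{mq2020geodesics} directly (combined with Lemma~\ref{lem:annulus_condition} and Remark~\ref{rem:annulus_condition_different scales}) to get
\[
\p\big[N_{z,r}^{\wt h_{z,r}}(N,\epsilon)\le bN\big]\le c_0 e^{-\wt c N}
\]
with $\wt c$ as large as one likes (at the cost of shrinking $\epsilon_0$). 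A single H\"older inequality against $\CZ_{z,r}$ transfers this to $\p[N_{z,r}^{h}(N,\epsilon)\le bN]\lesssim e^{-\wt cN/q}$, and then one chooses $\wt c$ large enough to beat the $e^{2cn}$ lattice points and applies Borel--Cantelli. The point is that the delicate across-scale dependence is packaged inside the cited proposition for the clean model $\wt h_{z,r}$, and the passage to $h$ is a one-shot change of measure rather than a scale-by-scale one.
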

\begin{proof}

Let $K \subseteq \h$ be a fixed compact set and set $d = \dist(K ,  \partial \h) > 0$.  Let $m_0 \in \N$ be such that $r = 2^{-m_0} \in (0,d/2)$.  For every $z \in K$ we let $\wt{h}_{z,r}$ be a zero-boundary $\text{GFF}$ on $B(z,r)$ and let $\wt{a} \in (\frac{1}{2} , 1)$ be such that $A_4 \subseteq B(0,\wt{a})$.  Then,  arguing as in the proof of \cite[Lemma~4.1]{mq2020geodesics},  we obtain that if $\CZ_{z,r}$ is the Radon-Nikodym derivative of the law of $h|_{B(z,\wt{a}r)}$ with respect to the law of $\wt{h}_{z,r}|_{B(z,\wt{a}r)}$,  then there exists $p \in (1,\infty)$ depending only on $r$,  $K$ and $M$ such that $\E[\CZ_{z,r}^p] \lesssim 1$,  where the implicit constant depends only on $r$,  $K$ and $M$.

Next we fix $\wt{c} > 0$ and for $N \in \N$ we let $N_{z,r}^h(N,\epsilon)$ (resp.\ $N_{z,r}^{\wt{h}_{z,r}}(N,\epsilon)$) be the number of $1 \leq k \leq N$ for which $E_{z,r_k}^h(\epsilon)$ (resp.\ $E_{z,r_k}^{\wt{h}_{z,r}}(\epsilon)$) occurs where $r_k = r 2^{-k}$ for every $k \in \N$.  It follows by combining Lemma~\ref{lem:annulus_condition} and Remark~\ref{rem:annulus_condition_different scales} with the proof of \cite[Proposition~4.3]{mq2020geodesics} that there exist constant $c_0 > 0,  \epsilon_0 \in (0,1)$ depending only on $\wt{c}$ and $b$ such that $\p[N_{z,r}^{\wt{h}_{z,r}}(N,\epsilon) \leq b N] \leq c_0 e^{-\wt{c}N}$ for every $z \in K,  \epsilon \in (0,\epsilon_0)$ and $N \in \N$.   Therefore,  we obtain that
\begin{align*}
\p[N_{z,r}^h(N,\epsilon) \leq b N]  \leq \E[\CZ_{z,r}^p]^{1/p} \p[N_{z,r}^{\wt{h}_{z,r}}(N,\epsilon) \leq bN]^{1/q} \lesssim e^{-\wt{c}N/q}
\end{align*}
for every $N \in \N,  \epsilon \in (0,\epsilon_0)$ and $z \in K$,  where the implicit constant depends only on $r,K,M,\wt{c}$ and $b$,  and $q$ is such that $q >1$ and $1/p + 1/q = 1$.  Hence,  by taking $\wt{c}$ sufficiently large,  we obtain that for every $\epsilon \in (0,\epsilon_0)$,  we have a.s.\  that there exists $n_0 \in \N$ such that $N_{z,r}^h(n,\epsilon) > bn$ for every $n \geq n_0$ and $z \in (e^{-c n} \Z)^2 \cap K$.  Then,  the claim of the statement of the lemma follows since if $N_{z,r}^h(n,\epsilon) > bn$,  then there exists $bn \leq m \leq n$ such that $E_{z,2^{-m}}^h(\epsilon)$ occurs.
\end{proof}

\subsection{Completing the proof of Theorem~\ref{thm:fan_connectivity}}\label{subsec:connectivity_proof}

\begin{proof}[Proof of Theorem~\ref{thm:fan_connectivity}]
\noindent{\it Step 1.  Overview and setup.} 
Let $h$ be a GFF on $\h$ with boundary values given by $-a$ (resp.\ $b$) on $\R_-$ (resp.\ $\R_+$) and let $\tht'_1 < \tht'_2$ satisfy \eqref{eqn:fan_gen_bd}. We will prove that the adjacency graph of $\h \setminus \fan(\theta'_1,\theta'_2)$ is connected.
We pick $\epsilon \in (0,1)$ sufficiently small (to be chosen and depending only on $a,b, \kk, \tht'_1$ and $\tht'_2$) and such that $n = 4(\tht'_2 - \tht'_1)/{\epsilon} \in \N$.  For $j \in \{0,\dots,n\}$,  we set 
\[\theta_j = \tht'_2 - j \frac{\epsilon}{4}\]
and $h_j = h + \theta_j\chi$,  and note that the boundary conditions of $h_j$ are given by $-a+\theta_j\chi$ on $\R_-$ and $b+\theta_j\chi$ on $\R_+$.  Note also that the boundary conditions of $h_j$ lie in $[-M,M]$ where $M = \max\{|a|,|b|\} + \chi\max\{\tht'_1, \tht'_2\}$ depends only on $a,b, \kk, \tht_1$ and $\tht_2$.  
In the case that we have equality in (\ref{eqn:fan_gen_bd}), if $\tht_1' = -(\la + b)/\chi$ (resp.\ $\tht_2' = (\la + a)/\chi$) then we identify the flow line $\eta_{\tht'_1} \equiv \eta_{\tht_n}$ (resp.\ $\eta_{\tht'_2} \equiv \eta_{\tht_0}$), which is not defined a priori, with $\R_+$ (resp.\ $\R_-$).

Our goal is to show that the graph of connected components of $\h \setminus \F(\tht_1', \tht_2')$ is connected a.s.  This will be achieved through several steps.
In Step 2,  we use the events introduced in Remark~\ref{rem:annulus_condition_different scales} and use Lemma~\ref{lem:annulus_condition_at_dense_scales} to deduce that at a sufficiently dense set of scales we have that we can find annuli such that in every such annulus,  there exists a finite chain of connected components of $\h \setminus \F(\tht_1', \tht_2')$ disconnecting the inner from the outer boundary of the annulus.  In Step 3,  we use the above property to deduce that a.s.\ the following is true.
For every $j \in \{0,\dots,n-1\}$,   any two distinct connected components of $\h \setminus \fan(\theta_{j+1},\theta_j)$ lying between $\eta_{\theta_j}$ and $\eta_{\theta_{j+1}}$ and in the same connected component $G$ of $\h \setminus (\eta_{\theta_j} \cup \eta_{\theta_{j+1}})$,  can be connected via a finite chain of connected components of $\h \setminus \fan(\theta_{j+1},\theta_j)$ which are all contained in $G$. 
Next,  in Step 4,  we show that the graph of connected components of $\h \setminus \cup_{j=0}^n \eta_{\theta_j}$ is connected a.s.  Finally,  combining Steps 3 and 4,  we show in Step 5 that any two distinct connected components of $\h \setminus \F(\tht_1', \tht_2')$ lying in different connected components of $\h \setminus \cup_{j=0}^n \eta_{\theta_j}$ can be connected via a finite chain of connected components of $\h \setminus \F(\tht_1', \tht_2')$,  and thus completing the proof of the lemma.

\begin{figure}[t]
    \centering
    \includegraphics[scale=0.8]{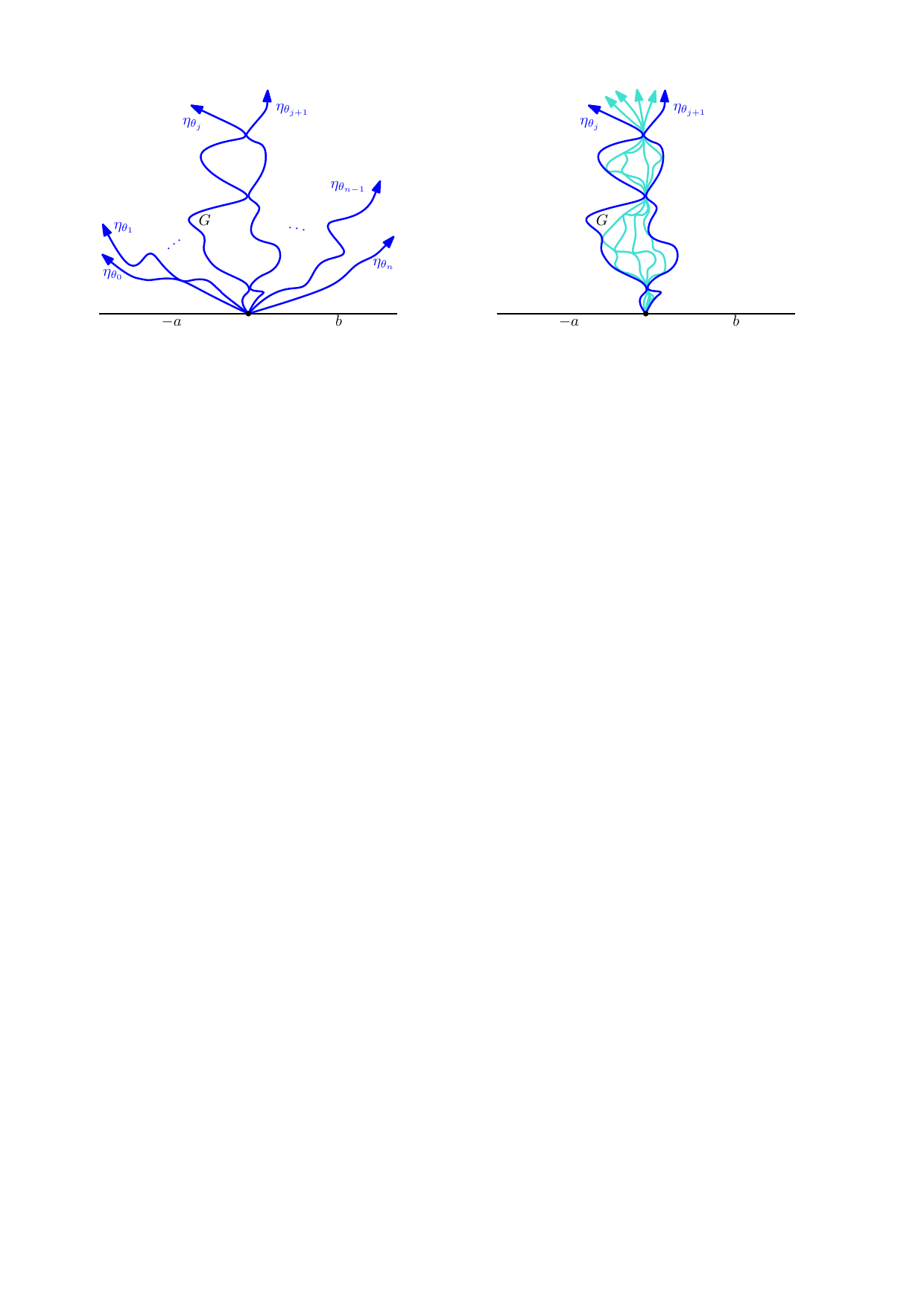}
    \caption{\textbf{Left:} The finite collection of flow lines $\eta_{\theta_j}$. \textbf{Right:} The fan $F(\tht_{j+1}, \tht_j)$.}
    \label{fig:finite_fan_and_smaller_fan}
\end{figure}

\noindent{\it Step 2.  Annulus events.} 
Suppose that we have the setup of Lemma~\ref{lem:annulus_condition} and Remark~\ref{rem:annulus_condition_different scales}.  Since $M$ depends only on $a,b$ and $\kappa$,  combining with Lemma~\ref{lem:annulus_condition_at_dense_scales},  we obtain that if we fix $\wt{a} \in (0,\infty) ,  \wt{b} \in (0,1)$ and choose $\epsilon \in (0,1)$ sufficiently small (depending only on $a,b,\wt{a},\wt{b}$ and $\kappa$),  then for every $1 \leq j \leq n$,  a.s.\ the following holds.
For every compact set $K \subseteq \h$ there exists $n_0 \in \N$ such that for every $n \geq n_0$ and every $z \in (e^{-\wt{a}n}\Z^2) \cap K$,  there exists $m \geq n_0$ with $(1-\wt{b}) n \leq m \leq n$ such that $E_{z,2^{-m}}^{h_j}(\epsilon)$ occurs.  Let $E$ be the event that the above event holds for every $1 \leq j \leq n$ with the choice of $\wt a = 1, \wt b = 1/2$ and from now on,  we assume that $E$ occurs.

\noindent{\it Step 3.  The graph of connected components of $\h \setminus \fan(\theta_{j+1},\theta_j)$ restricted to any connected component $G$ of $\h \setminus (\eta_{\theta_j} \cup \eta_{\theta_{j+1}})$ is connected for every $0 \leq j \leq n-1$.} Fix $0 \leq j \leq n-1$.  We will show that the following is true a.s.  Let $U_1, U_2$ be two distinct connected components of $\h \setminus \fan(\theta_{j+1},\theta_j)$ lying between $\eta_{\theta_j}$ and $\eta_{\theta_{j+1}}$,  and in the same connected component $G$ of $\h \setminus (\eta_{\theta_j} \cup \eta_{\theta_{j+1}})$.  Then,  there exists a finite chain of connected components of $\h \setminus \fan(\theta_{j+1},\theta_j)$ connecting $U_1$ to $U_2$ and contained in $G$. 

Note that by \cite{ms2016imag1}, $\F$ does not hit fixed points a.s. (see also Lemma~\ref{lem:fan_does_not_contain_point}). Now,  to prove the claim,  we fix $z \in \h \cap \Q^2$ and let $U$ be the connected component of $\h \setminus \fan(\theta_{j+1},\theta_j)$ containing $z$.  Suppose that we are working on the event that $U$ is contained in a connected component $G$ of $\h \setminus (\eta_{\theta_{j+1}} \cup \eta_{\theta_j})$ which lies between $\eta_{\theta_{j+1}}$ and $\eta_{\theta_j}$.
Let $K$ be the closure of the union of connected components of $\h \setminus \fan(\theta_{j+1},\theta_j)$ which are contained in $G$ and can be connected to $U$ via a finite chain of connected components of $\h \setminus \fan(\theta_{j+1},\theta_j)$ contained in $G$.  Suppose that $G \nsubseteq K$ and let $V$ be a connected component of $G \setminus K$.
Note that $\partial V \setminus \partial G \neq \emptyset$ since otherwise we would have that $G \subseteq \overline{V}$ but that is a contradiction since $z \notin \overline{V}$.  Thus,  we fix $x \in \partial V \cap G$.  

Since $E$ occurs,  there exist $y \in \h,  r>0$ such that $x \in B(y,r/2),  \overline{B(y,r)} \subseteq G$, $V \setminus B(y, r) \neq \varnothing$ and $E_{y,r}^{h_j}(\epsilon)$ occurs.  Let $V_1,\dots,V_m$ be the components as in \eqref{it :shield_of_flow_lines} in the definition of $E_{y,r}^{h_j}(\epsilon)$ and let $\gamma$ be the corresponding path.  We claim that $\fan(\theta_{j+1},\theta_j)$ does not enter any of the $V_i$'s. 
Indeed,  first we note that a flow line of $h$ with angle in $[\theta_{j+1},\theta_j]$ corresponds to a flow line of $h_j$ with angle in $[-\epsilon/4,0]$.  Fix $\theta \in [\theta_{j+1} ,  \theta_j]$ and suppose that $\eta_{\theta}$ enters $\cup_{i=1}^m V_i$ and let $t$ be such that $\eta_{\theta}(t) \in \cup_{i=1}^m V_i$.  Set $\sigma = \sup\{s \leq t : \eta_{\theta}(s) \in \partial \phi_{z,r}(A_3)\}$.
By the continuity of $\eta_{\theta}$,  we have that $\sigma < \infty$ a.s.  Fix $\mu > 0$ deterministic and small enough that $\dist(\del A_3, \del A_4) > 100\mu$ which ensures that $B(\eta_{\theta}(\sigma) ,  100 \mu r) \cap \partial \phi_{z,r}(A_4) = \emptyset$.  Note that $\eta_{\theta}$ corresponds to a flow line of $h_j$ with angle $\theta - \theta_j \in [-\epsilon/2 ,  0] \subseteq [-\epsilon ,  0]$.
Then,  a.s.\  there exists $j_1 \in \N$ such that $\eta_{\phi_{z,r}(a_{j_1})}^{\theta}$ merges with $\eta_{\theta}$ before it exits $B(\eta_{\theta}(\sigma) ,  \mu r)$ and such that $\phi_{z,r}(a_{j_1}) \in B(\eta_{\theta}(\sigma) ,  \mu r /100)$,  where $\eta_{\phi_{z,r}(a_{j_1})}^{\theta}$ is the flow line of $h$ with angle $\theta$ and starting from $\phi_{z,r}(a_{j_1})$.
Hence,  $\eta_{\theta}(t)$ lies in the range of $\eta_{\phi_{z,r}(a_{j_1})}^{\theta}$ when the latter is stopped at the first time that it exits $\phi_{z,r}(A_4)$.  This implies that the latter set intersects $\cup_{i=1}^m V_i$ but that is a contradiction due to \eqref{it: flow_lines_do_not_enter}.

Therefore,  $\eta_{\theta}$ does not enter $\cup_{i=1}^m V_i$.  Since $\theta \in [\theta_{j+1} ,  \theta_j]$ was arbitrary,  we obtain that $\fan(\theta_{j+1} ,  \theta_j)$ does not enter $\cup_{i=1}^m V_i$ a.s. Therefore,  for every $1 \leq i \leq m$,  there exists a unique connected component $W_i$ of $\h \setminus \fan(\theta_{j+1},\theta_j)$ such that $V_i \subseteq W_i$.  Since $W_i \cap G \neq \emptyset$,  we have that $W_i \subseteq G$ for every $1 \leq i \leq m$.  Also, since $x \in \del V$,  there exists a sequence $(G_n)$ of connected components of $\h \setminus \fan(\theta_{j+1},\theta_j)$ such that $G_n \subseteq G$ and $U$ is connected to $G_n$ via a finite chain of connected components of $\h \setminus \fan(\theta_{j+1},\theta_j)$ contained in $G$ for every $n \in \N$,  and $\dist(x,G_n) \to 0$ as $n \to \infty$.  
Thus,  $W_i$ is connected to $U$ via such a chain for every $1 \leq i \leq m$ and so $\gamma \subseteq K$.  Fix $w \in V \setminus B(y,r)$ and $\wt{w} \in V \cap B(y,r/2)$.  Then there exists a continuous path $\wt{\gamma}$ in $V$ connecting $w$ to $\wt{w}$ and so $\wt{\gamma} \cap \gamma \neq \emptyset$ which implies that $K \cap V \neq \emptyset$.  This contradicts our initial assumption and so $G \subseteq K$. Let $\wt{U}$ be another connected component of $\h \setminus \fan(\theta_{j+1},\theta_j)$ contained in $G$ and fix $w \in \wt{U}$.  Then,  $w \in K$ and so we must have that $\wt{U}$ is connected to $U$ via such a finite chain.  The claim then follows since $z \in \h \cap \Q^2$ was arbitrary.

\begin{figure}[t]
    \centering
    \includegraphics[scale=0.8]{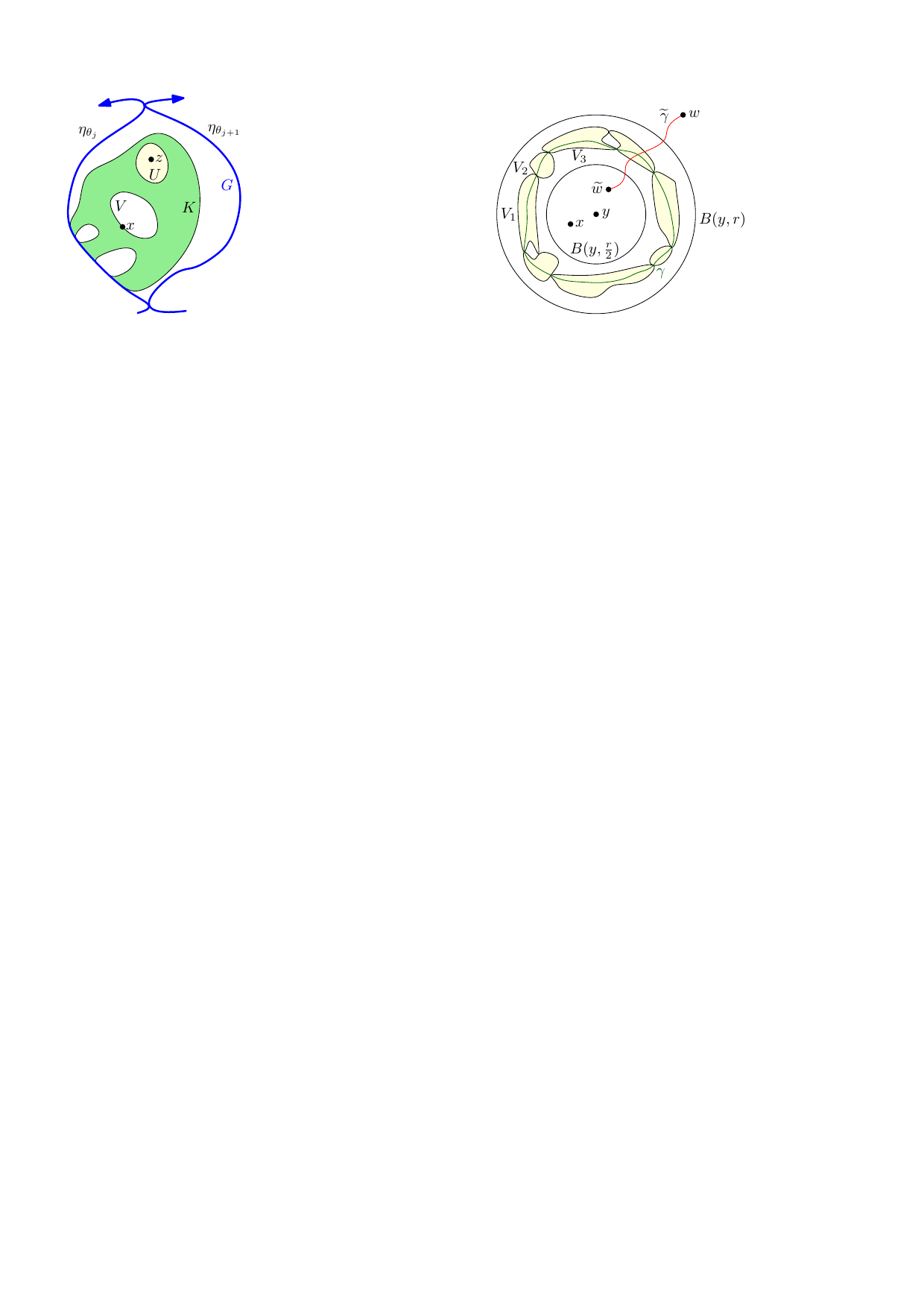}
    \caption{\textbf{Left:} In the setting of Step 3, $G$ is a connected component of $\h \setminus \cup_{j=0}^n \eta_{\theta_j}$ lying between $\eta_{\theta_j}$ and $\eta_{\theta_{j+1}}$ and $U$ is the connected component of $\h \setminus \F$ containing the reference point $z \in \h \cap \Q^2$. $K$ is the closure of the union of connected components of $\h \setminus \fan(\theta_{j+1},\theta_j)$ which are contained in $G$ and can be connected to $U$ via a finite chain of connected components of $\h \setminus \fan(\theta_{j+1},\theta_j)$ contained in $G$. $V$ is a connected component of $G\setminus K$ and $x \in \del V$. \textbf{Right:} $V_1,\dots,V_m$ are the components as in \eqref{it :shield_of_flow_lines} in the definition of $E_{y,r}^{h_j}(\epsilon)$. Since $x \in \del V$, each $V_i$ is contained in a connected component $W_i$ of $\h \setminus F(\tht_{j+1}, \tht_j)$ which is in $K$ ($V$ and the components $G_n$ are not shown). Since $\gg \subseteq K$ and $\wt\gg \subseteq V$, and $\gg$ and $\wt\gg$ must intersect, $V \cap K \neq \emptyset$, which is a contradiction.}
    \label{fig:connectivity_V}
\end{figure}

\noindent{\it Step 4.  The graph of connected components of $\h \setminus \cup_{j=0}^n \eta_{\theta_j}$ is connected.} We will show that the following is true a.s.  Let $U,V$ be two distinct connected components of $\h \setminus \cup_{j=0}^n \eta_{\theta_j}$.
Then there exist distinct connected components of $\h \setminus \cup_{j=0}^n \eta_{\theta_j}$,  $U_1,\dots,U_m$ such that $U = U_1,  V = U_m$ and for every $1 \leq j \leq m-1$,  the following holds. 
There exists $\theta \in \{\theta_0,\dots,\theta_n\}$ such that $U_j$ and $U_{j+1}$ both lie on opposite sides of $\eta_{\theta}$ and $\partial U_j \cap \partial U_{j+1}$ contains a segment of $\eta_{\theta}$ whose distance from $\partial \h$ is positive.

We will prove the claim for $\h \setminus \cup_{i=0}^j \eta_{\theta_i}$ using induction on $0 \leq j \leq n$.  Note that the claim is true a.s.\ for the set $\h \setminus \eta$,  where $\eta$ is an $\SLE_{\kappa}(\rho^{\text{L}};\rho^{\text{R}})$ process in $\h$ from $0$ to $\infty$ with $\rho^{\text{L}},\rho^{\text{R}}>-2$ and the force points located at $0^-$ and $0^+$.
If we have equality in either equation in (\ref{eqn:fan_gen_bd}), note that we can ignore the flow line $\eta_{\tht_0}$ or $\eta_{\tht_n}$ (or both) which corresponds to $\R_-$ or $\R_+$, since this flow line will not effect $\h\setminus \cup_{j=0}^n \eta_{\theta_j}$, so we may assume that we do not have equality in (\ref{eqn:fan_gen_bd}).
Hence the $j=0$ case follows since $\eta_{\tht_0} = \eta_{\tht'_2}$ has the law of such a process with $\rho^L = -1 + (a - \tht'_2 \chi)/\la$ and $\rho^R = -1 + (b+\tht'_2 \chi)/\la,$ each of which is greater than $-2$ since we have a strict inequality in \eqref{eqn:fan_gen_bd}.
Next,  suppose that the claim holds for $j$ where $0 \leq j \leq n-1$.  Note that $\eta_{\theta_j}$ has the law of an $\SLE_{\kappa}(\rho^{\text{L}};\rho^{\text{R}})$ with $\rho^{\text{L}} = -1 + (a - \tht_j \chi)/\la$ and $\rho^{\text{R}} = -1 + (b+\tht'_2\chi)/\lambda$.  Suppose first that $\rho^{\text{R}} \geq \kk/2 - 2$ and so $\eta_{\theta_j}$ does not hit $(0,\infty)$ a.s.
Let $U,V$ be two distinct connected components of $\h \setminus \cup_{i=0}^{j+1} \eta_{\theta_i}$.  Suppose that both of $U$ and $V$ lie to the left of $\eta_{\theta_j}$.
Then,  $U$ and $V$ are both connected components of $\h \setminus \cup_{i=0}^j \eta_{\theta_j}$.  Let $U_1,\dots,U_m$ be the chain of connected components of $\h \setminus \cup_{i=0}^j \eta_{\theta_j}$ connecting $U$ to $V$ and satisfying the properties of the induction hypothesis.
If none of the $U_i$'s is the connected component of $\h \setminus \cup_{i=0}^j \eta_{\theta_i}$ lying to the right of $\eta_{\theta_j}$,  then all of the $U_i$'s are connected components of $\h \setminus \cup_{i=0}^{j+1} \eta_{\theta_i}$ and so the claim holds. 
Otherwise, let $i \in \{1,\dots,m\}$ be such that $U_i$ is the connected component of $\h \setminus \cup_{i=0}^j \eta_{\theta_i}$ lying to the right of $\eta_{\theta_j}$. 
Let also $I_i$ and $I_{i+1}$ be segments of $\eta_{\theta_j}$ such that $\dist(I_i \cup I_{i+1} ,  \partial \h) > 0$,  $I_i \subseteq \partial U_{i-1} \cap \partial U_i$ and $I_{i+1} \subseteq \partial U_i \cap \partial U_{i+1}$.
Note that conditionally on $\eta_{\theta_j}$,  the curve $\eta_{\theta_{j+1}}$ is an  $\SLE_{\kappa}(\rho^L_{j+1}, \rho^R_{j+1})$ from $0$ to $\infty$ in the connected component of $\h \setminus \eta_{\theta_j}$ lying to the right of $\eta_{\theta_j}$, where
\begin{equation}\label{eq:rhoLj}
    \rho_{j+1}^L = \frac{(\theta_j-\theta_{j+1})\chi}{\lambda} - 2, \quad \rho^R_{j+1} = \frac{b+\theta_{j+1}\chi}{\lambda} - 1.
\end{equation}
Note also that we can pick points $x_{i} \in I_i$ and $x_{i+1} \in I_{i+1}$ in a way which is measurable with respect to $\sigma(\eta_{\theta_0},\dots,\eta_{\theta_j})$ and such that $x_{i}$ (resp.\ $x_{i+1}$) lies in the interior of $I_{i}$ (resp.\ $I_{i+1}$).
Since $\SLE_{\kappa}(\rho^{\text{L}};\rho^{\text{R}})$ processes with $\rho^{\text{L}},\rho^{\text{R}} > -2$ do not hit fixed points a.s.,  we obtain that there exist connected components $U^i,V^i$ of the complement of $\eta_{\theta_{j+1}}$ in the connected component of $\h \setminus \eta_{\theta_j}$ lying to the right of $\eta_{\theta_j}$ and segments $J_{i}$ and $J_{i+1}$ of $\eta_{\theta_j}$ such that $x_{i} \in J_{i} \subseteq I_{i} \cap \partial U^i,  x_{i+1} \in J_{i+1} \subseteq I_{i+1} \cap \partial V^i$ and both of $U^i$ and $V^i$ lie to the left of $\eta_{\theta_{j+1}}$.
Then,  the $j=0$ case implies that there exist connected components $U_1^i,\dots,U_{k_i}^i$ of the complement of $\eta_{\theta_{j+1}}$ in the connected component of $\h \setminus \eta_{\theta_j}$ lying to the right of $\eta_{\theta_j}$ such that the following hold.  $U^i = U_1^i,  V^i = U_{k_i}^i$ and for every $1 \leq \ell \leq k_i-1$ the following holds.  $U_{\ell}^i$ and $U_{\ell+1}^i$ lie on opposite sides of $\eta_{\theta_{j+1}}$ and $\partial U_{\ell}^i \cap \partial U_{\ell+1}^i$ contains a segment of $\eta_{\theta_{j+1}}$ whose distance from $\eta_{\theta_j} \cup \partial \h$ is positive.  Note that all of the $U_{\ell}$'s are connected components of $\h \setminus \cup_{i=0}^{j+1} \eta_{\theta_i}$.  Thus,  by repeating the above procedure for every $1 \leq i \leq m$ such that $U_i$ is the connected component of $\h \setminus \cup_{\ell=0}^j \eta_{\theta_{\ell}}$ lying to the right of $\eta_{\theta_j}$,  we obtain the claim.  

Suppose now that $U$ lies to the left of $\eta_{\theta_j}$ and $V$ lies to the right of $\eta_{\theta_j}$.  Let $G$ be the connected component of $\h \setminus \cup_{i=0}^j \eta_{\theta_i}$ lying to the right of $\eta_{\theta_j}$.  Then there exist connected components $U_1,\dots,U_m$ of $\h \setminus \cup_{i=0}^j \eta_{\theta_i}$ lying to the left of $\eta_{\theta_j}$ with the same properties as in the induction hypothesis such that $U = U_1$ and $U_m = G$.  Also,  $\partial U_{m-1} \cap G$ contains a segment $I$ of $\eta_{\theta_j}$ such that $\dist(I,\partial \h) > 0$ and choose $x$ in the interior of $I$ in a way which is measurable with respect to $\sigma(\eta_{\theta_0},\dots,\eta_{\theta_j})$.  Again,  a.s.\ there exists a connected component $F$ of $G \setminus \eta_{\theta_{j+1}}$ lying to the left of $\eta_{\theta_{j+1}}$ and a segment $J$ of $\eta_{\theta_j}$ such that $x \in J \subseteq I \cap \partial F$.  Note that $U_1,\dots,U_{m-1}$ are all connected components of $\h \setminus \cup_{i=0}^{j+1} \eta_{\theta_i}$.  If $F=V$,  then the chain $U_1,\dots,U_{m-1},F$ satisfies the conditions of the claim.  If $F \neq V$,  then the $j=0$ case implies that there exist connected components $V_1,\dots,V_{\ell}$ of $G \setminus \eta_{\theta_{j+1}}$ such that $F=V_1,  V=V_{\ell}$ and for every $1 \leq i \leq \ell-1$,  the following holds.  $\partial V_i \cap \partial V_{i+1}$ contains a segment of $\eta_{\theta_{j+1}}$ with positive distance from $\eta_{\theta_j} \cup \partial \h$ and $V_i,V_{i+1}$ lie on opposite sides of $\eta_{\theta_{j+1}}$.  The claim then follows since all of the $V_i$'s are connected components of $\h \setminus \cup_{i=0}^{j+1} \eta_{\theta_i}$. Finally,  if $U,V$ both lie to the right of $\eta_{\theta_j}$,  then the claim follows similarly since all of the connected components of $G \setminus \eta_{\theta_{j+1}}$ are also connected components of $\h \setminus \cup_{i=0}^{j+1} \eta_{\theta_i}$.

It remains to treat the case that $\rho^{\text{R}} \in (-2,\kappa/2-2)$ and so $\eta_{\theta_j}$ hits $(0,\infty)$ a.s.  This follows by arguing as in the previous paragraphs and noting that conditionally on $\eta_{\theta_j}$,  the curve $\eta_{\theta_{j+1}}$ has the law of an $\SLE_{\kappa}(\rho^L_{j+1}, \rho^R_{j+1})$ independently in each of the connected components of $\h \setminus \eta_{\theta_j}$ which lie to the right of $ \eta_{\theta_j}$ and between their two marked points. Here, $\rho^L_{j+1}$ and $\rho^R_{j+1}$ are as in \eqref{eq:rhoLj}.

\noindent{\it Step 5.  Conclusion of the proof.} Now we complete the proof of the theorem.  Let $U,V$ be two distinct connected components of $\h \setminus \F(\tht_1', \tht_2')$.  Note that the connected components of $\h \setminus (\eta_{\theta_0} \cup \eta_{\theta_n})$ lying to the left (resp.\ right) of $\eta_{\theta_0}$ (resp.\ $\eta_{\theta_n}$) are also connected components of $\h \setminus \F(\tht_1', \tht_2')$.  Also,  for every $0 \leq j \leq n-1$,  we have that every connected component of $\h \setminus \fan(\theta_{j+1},\theta_j)$ lying between $\eta_{\theta_j}$ and $\eta_{\theta_{j+1}}$ is a connected component of $\h \setminus \F(\tht_1', \tht_2')$ due to the flow line interaction rules.  Hence,  it follows from Step 3 that if $U$ and $V$ are both contained in the same connected component of $\h \setminus \cup_{i=0}^n \eta_{\theta_i}$,  then $U$ and $V$ can be connected via a finite chain of components as desired.  Let $\wt{U}$ (resp.\ $\wt{V}$) be the connected component of $\h \setminus \cup_{i=0}^n \eta_{\theta_i}$ containing $U$ (resp.\ $V$),  and suppose that $\wt{U} \neq \wt{V}$.  Then,  there exist connected components of $\h \setminus \cup_{i=0}^n \eta_{\theta_i}$,  $\wt{U}_1,\dots,\wt{U}_m$ such that $\wt{U} = \wt{U}_1,  \wt{V} = \wt{U}_m$ and for every $1 \leq i \leq m-1$ the following is true. 
There exist $j_i \in \{0,\dots,n\}$ and a segment $J_i$ of $\eta_{\theta_{j_i}}$ such that $J_i \subseteq \partial \wt{U}_i \cap \partial \wt{U}_{i+1}$,  $\dist(J_i,\partial \h) > 0$ and $\wt{U}_i,\wt{U}_{i+1}$ lie on opposite sides of $\eta_{\theta_{j_i}}$.  Fix $i \in \{2,\dots,m\}$ and suppose that both of $\wt{U}_{i-1}$ and $\wt{U}_i$ lie between $\eta_{\theta_0}$ and $\eta_{\theta_n}$.  Then,  we have that $2 \leq j_{i-1} \leq m-1$ and we can assume that $\wt{U}_{i-1}$ (resp.\ $\wt{U}_i$) lies to the left (resp.\ right) of $\eta_{\theta_{j_{i-1}}}$.  Note that a flow line of $h$ with angle in $[\theta_{j_{i-1}-1},\theta_{j_{i-1}+1}]$ corresponds to a flow line of $h_{j_{i-1}}$ with angle in $[-\epsilon/2,0]$.  Fix a point $x_{i-1}$ in the interior of $J_{i-1}$ and let $y_{i-1} \in \h,  r>0$ be such that $x_{i-1} \in B(y_{i-1},r/2),  B(y_{i-1},r) \cap \eta_{\theta_{j_{i-1}}} \subseteq J_{i-1}$ and $E_{y_{i-1},r}^{h_{j_{i-1}}}(\epsilon)$ occurs.
Let $\wt{U}_1^i,\dots,\wt{U}_{k_i}^i$ be the connected components as in the definition of $E_{y_{i-1},r}^{h_{j_{i-1}}}(\epsilon)$ and let $\gamma_i$ be the corresponding path.  Arguing as in Step 3,  we obtain that $\cup_{j=1}^{k_i} \wt{U}_j^i \subseteq \h \setminus (\fan(\theta_{j_{i-1}},\theta_{j_{i-1}-1}) \cup \fan(\theta_{j_{i-1}+1},\theta_{j_{i-1}}))$ and since $\gamma_i$ crosses $J_{i-1}$,  it follows that there exist $i_1,i_2 \in \{1,\dots,k_i\}$ such that $\wt{U}_{i_1}^i \subseteq \wt{U}_{i-1}$ and $\wt{U}_{i_2}^i \subseteq \wt{U}_i$,  and $\partial \wt{U}_{i_1}^i \cap \partial \wt{U}_{i_2}^i \subseteq J_{i-1}$.  Let $\wt{V}_{i_1}^i$ (resp.\ $\wt{V}_{i_2}^i$) be the connected component of $\h \setminus \fan(\theta_{j_{i-1}},\theta_{j_{i-1}-1})$ (resp.\ $\h \setminus \fan(\theta_{j_{i-1}+1},\theta_{j_{i-1}})$) containing $\wt{U}_{i_1}^i$ (resp.\ $\wt{U}_{i_2}^i$).
Then $\wt{V}_{i_1}^i \subseteq \wt{U}_{i-1},  \wt{V}_{i_2}^i \subseteq \wt{U}_i$ and $\partial \wt{V}_{i_1}^i \cap \partial V_{i_2}^i \neq \emptyset$.  Moreover,  by Step 3,  any connected component of $\h \setminus \fan(\theta_{j_{i-1}},\theta_{j_{i-1}-1})$ (resp.\ $\h \setminus \fan(\theta_{j_{i-1}+1},\theta_{j_{i-1}})$) contained in $\wt{U}_{i-1}$ (resp.\ $\wt{U}_i$) can be connected to $\wt{V}_{i_1}^i$ (resp.\ $\wt{V}_{i_2}^i$) via a finite chain of connected components of $\h \setminus \fan(\theta_{j_{i-1}},\theta_{j_{i-1}-1})$ (resp.\ $\h \setminus \fan(\theta_{j_{i-1}+1},\theta_{j_{i-1}})$) contained in $\wt{U}_{i-1}$ (resp.\ $\wt{U}_i$).  But these sets are also connected components of $\h \setminus \F(\tht_1', \tht_2')$ and so any two connected components of $\h \setminus \F(\tht_1', \tht_2')$ contained in $\wt{U}_{i-1}$ and $\wt{U}_i$ respectively can be connected via a finite chain of connected components in $\h \setminus \F(\tht_1', \tht_2')$.  A similar argument shows that the same is true when either $\wt{U}_{i-1}$ or $\wt{U}_i$ does not lie between $\eta_{\theta_0}$ and $\eta_{\theta_n}$.  The claim then follows by fixing a connected component $\wh{U}_i$ of $\h \setminus \F(\tht_1', \tht_2')$ contained in $\wt{U}_i$ for every $2 \leq i \leq m-1$.
\end{proof}

\section{The fan determines flow lines}
\label{sec:fan_determines_flow_lines}
In this section we will prove Theorem~\ref{thm:fan_determines_flow_lines}. As we will explain in Lemma~\ref{lem:boundary_conditions}, each connected component $U$ of $\h \sm \F(\tht_1, \tht_2)$ a.s.\ has an associated angle $\tht(U)$ which satisfies the property that if $\phi > \tht(U)$ (resp.\ $\phi < \tht(U)$) then $\eta_\phi$ passes to the left (resp.\ right) of $U$.
The core part of our argument is to show that $\tht(U)$ is a.s.\ measurable with respect to the fan $\F(\tht_1, \tht_2)$ as a set.
Let us first explain why this suffices to prove that for each $\tht \in [\tht_1, \tht_2]$, the flow line $\eta_\tht$ is a.s.\ measurable with respect to the fan.
Fix $\tht \in [\tht_1, \tht_2]$ and define $L_\tht \subseteq \h$ to be the closure of the union of all components $U$ with $\tht(U) > \tht$. This set is also determined by $\F(\tht_1, \tht_2)$.
By Lemma~\ref{lem:fan_does_not_contain_point},  $\F(\tht_1, \tht_2)$ has measure $0$ a.s. Therefore,  for any point $z$ lying to the left of $\eta_\tht$, there exist points $z' \notin \F(\tht_1, \tht_2)$ arbitrarily close to $z$ and also lying to the left of $\eta_\tht$.
If $U = U(z')$ is the connected component of such a point, then $\tht(U) > \tht$, meaning that $z' \in L_\tht$. It follows that $z \in L_\tht$. Similarly, if $z$ is to the right of $\eta_\tht$, then $z \notin L_\tht$ a.s.
Furthermore, notice that the above holds for all points $z \in \h \sm \eta_\tht$ simultaneously on the almost sure events that $\F(\tht_1, \tht_2)$ has measure $0$, $\eta_\tht$ is a well-defined simple curve, and $\tht(U)$ exists as above for every component $U$ of the complement of the fan.
On this event, it follows that $L_\tht$ is exactly that set of points lying to the left of $\eta_\tht$, along with $\eta_\tht$ itself, meaning that $\eta_\tht$ is the right boundary of this set. Therefore, $\eta_\tht$ is a.s.\ determined by $\F(\tht_1, \tht_2).$ 

It remains to show that $\tht(U)$ exists and is determined by $\F(\tht_1, \tht_2)$ a.s. We will carry this out in the following steps.  Throughout, we let $\CF$ be the $\sigma$-algebra generated by the flow lines of $h$ used to generate $\F(\theta_1,\theta_2)$.
\begin{enumerate}[(i)]
\item In Lemma~\ref{lem:fan_does_not_contain_point}, we will record the fact that the probability that any fixed interior point is contained in $\fan(\theta_1,\theta_2)$ is equal to zero.
\item In Proposition~\ref{prop:locally_connected} we show that $\F(\tht_1, \tht_2)$ is locally connected as a set a.s. We will use this to show that the boundaries of certain domains depending on the fan are (not necessarily simple) curves.
\item In Lemma~\ref{lem:form_of_the_boundary}, we will show that for each connected component $U$ of $\h \setminus \fan(\theta_1,\theta_2)$ there exists $\theta = \theta(U) \in [\theta_1,\theta_2]$ and $x = x(U), y = y(U) \in \partial U$ so that the boundary conditions for the conditional law of $h$ given $\CF$ along $\partial U$ are given by those of the right (resp.\ left) side of a flow line of angle $\theta$ along the clockwise (resp.\ counterclockwise) arc of $\partial U$ from $x$ to $y$.   We will not rule out the possibility that one of these two sides is degenerate.  Here, we view the points in $\partial U$ as prime ends in $U$.
\item In Lemmas~\ref{lem:flow_line_representation}--\ref{lem:discover_component}, we will show that the complementary component boundaries can be represented as flow lines of a conditional GFF.  This will allow us to deduce that neither of the two boundary segments is degenerate and deduce in Lemma~\ref{lem:component_boundary_intersection} the manner that the component boundaries interact with each other is the same as for flow lines with the corresponding angles.
\item We will then show in Lemma~\ref{lem:two_points_measurable} that for all connected components $U$ of $\h \setminus \fan(\theta_1,\theta_2)$ the pair of marked boundary points $\{x(U), y(U)\}$ is measurable with respect to $\fan(\theta_1,\theta_2)$ and then use this to complete the proof.
\end{enumerate}

\subsection{General properties of the fan}\label{subsec:general_properties}

\begin{lemma}
\label{lem:fan_does_not_contain_point}
For each $z \in \h$ we have that $\p[ z \in \fan(\theta_1,\theta_2) ] = 0$.
\end{lemma}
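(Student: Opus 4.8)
The plan is to show that for each fixed $z \in \h$, almost surely $z$ avoids every flow line $\eta_\theta$ with $\theta \in [\theta_1,\theta_2]$, and then upgrade this to the statement that $z$ avoids the closure $\fan(\theta_1,\theta_2)$. First, recall that $\fan(\theta_1,\theta_2)$ is the closure of $\cup_{\theta \in \Theta} \eta_\theta$ for a countable dense set $\Theta \subseteq [\theta_1,\theta_2]$, so it is \emph{not} immediate from the fact that each individual $\eta_\theta$ misses $z$ that $\fan(\theta_1,\theta_2)$ misses $z$ --- we must control the limit points. The key observation is that $z \in \fan(\theta_1,\theta_2)$ if and only if there exists a sequence $\theta_k \in \Theta$ with $\mathrm{dist}(z, \eta_{\theta_k}) \to 0$. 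So it suffices to show that for each fixed $z$ and each $\varepsilon > 0$, with probability $1$ there are only finitely many $k$ (equivalently, the set of $\theta \in [\theta_1,\theta_2]$ with $\mathrm{dist}(z,\eta_\theta) < \varepsilon$ is bounded away from having $z$ as an accumulation point of the corresponding curves in a suitable sense). In fact the cleanest route is: show that $\theta \mapsto \eta_\theta$ has a version continuous (in a strong enough sense, e.g.\ locally uniformly, away from $z$) so that $\mathrm{dist}(z,\eta_\theta)$ depends continuously on $\theta$; if $z \in \fan(\theta_1,\theta_2)$ then by a compactness argument there is an actual $\theta_* \in [\theta_1,\theta_2]$ with $z \in \eta_{\theta_*}$; but for each fixed $\theta$, $\mathbf{P}[z \in \eta_\theta] = 0$, and we then need to rule out the uncountable union.

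To handle the uncountable union, I would use the monotonicity/ordering of the flow lines: for $\theta < \theta'$, $\eta_\theta$ lies to the right of $\eta_{\theta'}$ (flow line interaction rules, \cite{ms2017ig4, ms2016imag1}). Fix $z$. Define $\Theta_z^- = \{\theta : z \text{ lies strictly to the right of } \eta_\theta\}$ and $\Theta_z^+ = \{\theta : z \text{ lies strictly to the left of } \eta_\theta\}$; by the ordering these are (relatively open) sub-intervals of $[\theta_1,\theta_2]$ with $\Theta_z^-$ below $\Theta_z^+$, and their complement in $[\theta_1,\theta_2]$ is a single closed interval $[\alpha_z, \beta_z]$ (possibly empty or a point) consisting of those $\theta$ for which $z \in \eta_\theta$ or $z$ is a boundary prime end. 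Now the crucial point: if $\alpha_z < \beta_z$ with positive probability, then there would be a nondegenerate interval of angles $\theta$ with $z \in \overline{\eta_\theta}$; but by \cite{miller2017dimension} (or directly from local absolute continuity of consecutive flow lines and the fact that two distinct flow lines of nearby angles are not identical), distinct flow lines cannot all pass through the same interior point for a full interval of angles --- more simply, the flow lines with rational angles through a neighborhood of $z$ are countably many distinct curves, each missing $z$ a.s., and the Lebesgue measure of $\fan$ being $0$ (\cite{ms2016imag1}) together with a Fubini argument over $(z, \text{angle})$ pairs forces $\alpha_z = \beta_z$ a.s.\ and moreover $z \notin \eta_{\alpha_z}$ a.s. Thus a.s.\ $z$ lies strictly to one side of every flow line except possibly $\eta_{\alpha_z}$, and a.s.\ $z \notin \eta_{\alpha_z}$, so $z \notin \cup_\theta \eta_\theta$; continuity of $\theta \mapsto \eta_\theta$ then gives $z \notin \fan(\theta_1,\theta_2)$.

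Alternatively --- and this is probably the shortest argument --- I would invoke the already-cited fact that $\fan(\theta_1,\theta_2)$ has Lebesgue measure $0$ almost surely (\cite{ms2016imag1}, extended to general $\theta_1 < \theta_2$ as recalled in Section~\ref{subsec:sle_fan}). By Fubini's theorem,
\[
\mathbf{E}\!\left[ \mathrm{Leb}\big(\fan(\theta_1,\theta_2) \cap B\big) \right] = \int_B \mathbf{P}[z \in \fan(\theta_1,\theta_2)]\, dz
\]
for any bounded Borel $B \subseteq \h$; the left side is $0$, so $\mathbf{P}[z \in \fan(\theta_1,\theta_2)] = 0$ for Lebesgue-a.e.\ $z$. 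To promote this from a.e.\ $z$ to \emph{every} fixed $z$, use translation/scaling considerations together with the local absolute continuity of the GFF (the law of $h$ restricted to a small ball around $z$ is absolutely continuous with respect to that around any other interior point, by \cite{ms2016imag1} and the fact that the boundary data is bounded), and the event $\{z \in \fan(\theta_1,\theta_2)\}$ is a local event near $z$ up to the behavior of the far-away parts of the flow lines, which can be controlled. The main obstacle is precisely this last upgrade --- ensuring the null event holds for \emph{every} fixed $z$, not just a.e.\ $z$ --- and I expect to resolve it by the absolute continuity argument rather than by the more delicate direct analysis of the ordering of flow lines. I would present the Fubini argument as the main line and the absolute continuity step as the technical completion.
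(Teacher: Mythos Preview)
Both routes you sketch founder at the same point: passing from ``for each fixed angle'' or ``for a.e.\ $z$'' to the statement for \emph{every} fixed $z$.

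In your ordering argument, the reduction to a single random critical angle $\alpha_z=\beta_z$ is fine (if $\alpha_z<\beta_z$ with positive probability then some rational angle lies in the interval, contradicting that each $\eta_\theta$ misses $z$ a.s.). But the next step --- concluding $z\notin\eta_{\alpha_z}$ --- is precisely the heart of the matter, and your justification is circular: you invoke the Lebesgue--measure--zero result plus Fubini, which only yields the conclusion for a.e.\ $z$, not the fixed $z$ you started with. Since $\alpha_z$ depends on the field, the fact that $\p[z\in\eta_\theta]=0$ for each \emph{deterministic} $\theta$ does not help. The subsequent appeal to ``continuity of $\theta\mapsto\eta_\theta$'' is also unavailable at this stage of the paper --- that is essentially what Section~\ref{sec:hausdorff_convergence} establishes, and only in the Hausdorff sense.

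Your Fubini route has the same gap repackaged: you correctly get $\p[z\in\fan]=0$ for a.e.\ $z$, and scale invariance reduces this to a.e.\ $\arg z$, but the ``absolute continuity'' upgrade to every $z$ is not a local argument. The event $\{z\in\fan(\theta_1,\theta_2)\}$ depends on flow lines running from $0$ to $\infty$, and there is no symmetry of the law moving one interior point to another while preserving the fan. Note also that $z\mapsto\p[z\in\fan]$ is only \emph{upper} semicontinuous (since $\fan$ is closed), and USC plus a.e.\ zero does not force identically zero.

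The paper's proof takes a completely different route. After reducing to an angle range of width at most $\pi$, it realizes the fan inside the range of a single \emph{counterflow line} $\eta'$ --- an $\SLE_{\kappa'}(\rho^L;\rho^R)$ from the target point back to $0$ --- using that every flow line with angle in $[-\tfrac\pi2,\tfrac\pi2]$ of the shifted field is contained in $\eta'$. If $\eta'$ never hits $z$ there is nothing to prove; if it does, one invokes the argument of \cite[Proposition~7.33]{ms2016imag1} at the first hitting time $\tau'$ to show that $z=\eta'(\tau')$ is a.s.\ not in the fan. That argument exploits the specific structural relationship between the flow lines and the counterflow line near its tip --- an input your approach does not have access to.
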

\begin{proof}
It will be more convenient to consider the setup in the real strip $\strip = \R \times (0,\pi)$,  where $\fan(\theta_1,\theta_2)$ is the $\SLE$ fan from $0$ to $i\pi$ with angle range in $[\theta_1,\theta_2]$ of the $\text{GFF}$ $h$ on $\strip$,  whose boundary conditions are given by $-a$ (resp.\ $-a-\pi\chi$) on $\R_-$ (resp.\ $\R_- + i\pi$) and $b$ (resp.\ $b+\pi\chi$) on $\R_+$ (resp.\ $\R_+ + i\pi$).  Fix $z \in \strip$.  First,  we assume that the claim of the lemma holds in the case that $\theta_2 - \theta_1 \leq  \pi$
and  $\tht_1 > (-b+\la)/\chi$ and $\tht_2 < (a+\la)/\chi$.
Suppose that $\theta_2 - \theta_1 > \pi$ and  $\tht_1 > (-b+\la)/\chi$ and $\tht_2 < (a+\la)/\chi$.  We fix $n \in \N$ and $\wt{\theta}_1<\cdots<\wt{\theta}_n$ such that $\theta_1 = \wt{\theta}_1,  \theta_2 = \wt{\theta}_n$,  and $\wt{\theta}_j - \wt{\theta}_{j-1} < \pi$ for each $2 \leq j \leq n$.  Then we have that $\fan(\theta_1,\theta_2) = \cup_{j=2}^n \fan(\wt{\theta}_{j-1},\wt{\theta}_j)$ and $\p[z \in \fan(\wt{\theta}_{j-1},\wt{\theta}_j)] = 0$ for all $2 \leq j \leq n$.  It follows that $\p[z \in \fan(\theta_1,\theta_2)]  = 0$.  
To extend the result to the case that $\tht_1 = (-b+\la)/\chi$ and $\tht_2 = (a+\la)/\chi$, let $\tht_1^n \downarrow \tht_1$ and $\tht_2^n \uparrow \tht_2$. Then for a fixed point $z \in \CS$ there a.s.\ exist a random index $n$ such that $z$ lies to the left of $\eta_{\tht_1^n}$ and to the right of $\eta_{\tht_2^n}$.
By the flow line interaction rules, in this case if $z \notin \F(\tht_1^n, \tht_2^n)$ then $z \notin \F(\tht_1, \tht_2)$. Combining the above with the fact that $z \notin \F(\tht_1^n, \tht_2^n)$ a.s.\ for any fixed choice of $n$ shows that $z \notin \F(\tht_1, \tht_2)$ a.s.
Hence,  in order to complete the proof,  it suffices to show the claim in the case that the angle gap of the fan is at most $\pi$ and $\tht_1 > (-b+\la)/\chi$ and $\tht_2 < (a+\la)/\chi$.

Suppose that $\theta_2-\theta_1 \leq \pi$.  Set $\wt{h} = h + \left(\theta_1+\frac{\pi}{2}\right) \chi$ and note that $\fan(\theta_1,\theta_2)$ is the $\SLE$ fan of $\wt{h}$ from $0$ to $i\pi$ with angle range in $[-\pi/2,\theta_2-\theta_1 - \pi/2] \subseteq [-\pi/2,\pi/2]$.  Let $\eta'$ be the counterflow line of $\wt{h}$ from $i\pi$ to $0$.  Then,  $\eta'$ has the law of an $\SLE_{\kappa'}(\rho^L;\rho^R)$ process in $\strip$ from $i\pi$ to $0$ with the force points located at $(i\pi)^-$ and $(i\pi)^+$ respectively and such that 
\begin{align*}
\rho^L = -1+\frac{b + (\tht_1 + \frac{3\pi}{2})\chi}{\lambda'},\quad 
\rho^R = -1+ \frac{a - (\tht_1 - \frac{\pi}{2})\chi}{\lambda'}.
\end{align*}
By assumption, $\tht_1 > (-b+\la)/\chi$ and $\tht_1 < \tht_2 < (a+\la)/\chi$, from which we can deduce that $\rho^L, \rho^R > -2$ meaning that $\eta'$ is well-defined a.s. Furthermore, every flow line of $\wt{h}$ with angle in $[-\frac\pi2, \tht_2 - \tht_1 -\frac\pi2]$ is a.s.\ well-defined and
contained in the range of $\eta'$,  which implies that $\fan(\theta_1,\theta_2) \subseteq \eta'$. 
For $z \in \h$, let $\tau'$ be the first time that $\eta'$ hits $z$.  If $\p[\tau'<\infty] = 0$,  then it follows that $\p[z \in \fan(\theta_1,\theta_2)] =0$.  
If $\p[\tau' < \infty]>0$,  then by applying the same argument as in the one given in the proof of \cite[Proposition~7.33]{ms2016imag1},  we obtain that $\p[\eta'(\tau') \in \fan(\theta_1,\theta_2) \giv \tau'<\infty] = 0$.  
Combining,  we obtain that $\p[z \in \fan(\theta_1,\theta_2)] = 0$ in every case.  This completes the proof of the lemma.
\end{proof}

First we state and prove a reversal symmetry result of the $\SLE$ fan based on the results of \cite{ms2016imag2}.

\begin{lemma}\label{lem:fan_symmetry}
Let $\phi \colon \h \to \h$ be the conformal transformation defined by $\phi(z) = -1/z$.  Then we have that $\phi(\fan(\theta_1,\theta_2))$ when viewed as a set in $\overline{\h}$ has the same law with the $\SLE$ fan with angle range given by $[\wt{\theta}_1,\wt{\theta}_2]$ of the $\text{GFF}$ in $\h$ with boundary conditions given by $0$ (resp.\ $a+b$) in $\R_-$ (resp.\ $\R_+$),  where $\wt{\theta}_1 = -b/\chi-\theta_2$ and $\wt{\theta}_2 = -b/\chi-\theta_1$.
\end{lemma}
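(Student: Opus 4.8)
The plan is to reduce the reversal symmetry of the fan to the reversal symmetry of individual $\SLE_\kappa(\underline\rho)$ flow lines established in \cite{ms2016imag2}, together with the fact that the fan is the closure of a countable union of such flow lines. First I would recall the setup: $\fan(\theta_1,\theta_2)$ is the closure of $\cup_{\theta\in\Theta}\eta_\theta$ where $\Theta$ is a countable dense subset of $[\theta_1,\theta_2]$ (or of the open interval when~\eqref{eqn:fan_gen_bd} holds with equality), and by Proposition~\ref{prop:bounded_hausdorff} the result does not depend on $\Theta$. Since $\phi(z)=-1/z$ is a homeomorphism of $\overline\h$ and commutes with taking closures, $\phi(\fan(\theta_1,\theta_2))$ is the closure of $\cup_{\theta\in\Theta}\phi(\eta_\theta)$. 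So it suffices to identify the joint law of the curves $(\phi(\eta_\theta))_{\theta\in\Theta}$ with the joint law of the corresponding collection of flow lines of a GFF $\wt h$ on $\h$ with boundary data $0$ on $\R_-$ and $a+b$ on $\R_+$, with angles in $[\wt\theta_1,\wt\theta_2]$.

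The key computational step is the bookkeeping of boundary data under $\phi$. For each fixed $\theta$, $\eta_\theta$ is the flow line of $h$ of angle $\theta$, i.e.\ the (angle-$0$) flow line of $h+\theta\chi$, and it has the law of an $\SLE_\kappa(\rho_1(\theta);\rho_2(\theta))$ process from $0$ to $\infty$ with $\rho_1(\theta)=-1+(a-\theta\chi)/\lambda$ and $\rho_2(\theta)=-1+(b+\theta\chi)/\lambda$ (these are recorded in Section~\ref{subsec:connectivity_proof}). By \cite[Theorem~1.1]{ms2016imag2}, the image of such a process under $z\mapsto-1/z$ (which swaps $0$ and $\infty$ and fixes the orientation of $\R_\pm$) has the law of an $\SLE_\kappa(\rho_2(\theta);\rho_1(\theta))$ process from $0$ to $\infty$ with the force points at $0^-$ and $0^+$. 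Now I want to match this to a flow line of angle $\wt\theta$ of a GFF $\wt h$ with boundary data $\alpha$ on $\R_-$ and $\beta$ on $\R_+$: such a flow line is an $\SLE_\kappa(-1+(\alpha-\wt\theta\chi)/\lambda;\, -1+(\beta+\wt\theta\chi)/\lambda)$. Solving $-1+(\alpha-\wt\theta\chi)/\lambda=\rho_2(\theta)=-1+(b+\theta\chi)/\lambda$ and $-1+(\beta+\wt\theta\chi)/\lambda=\rho_1(\theta)=-1+(a-\theta\chi)/\lambda$ simultaneously for all $\theta$ in an interval forces $\alpha=0$, $\beta=a+b$, and $\wt\theta=-b/\chi-\theta$; as $\theta$ ranges over $[\theta_1,\theta_2]$, $\wt\theta$ ranges over $[-b/\chi-\theta_2,\,-b/\chi-\theta_1]=[\wt\theta_1,\wt\theta_2]$, consistent with the reversal (which, being orientation-reversing on angles, flips the order). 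I would verify that~\eqref{eqn:fan_gen_bd} for $(h;\theta_1,\theta_2)$ is equivalent to the analogous inequalities for $(\wt h;\wt\theta_1,\wt\theta_2)$, so the $\wt h$-fan is well-defined.

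To upgrade from marginal identifications to an identification of the entire collection $(\phi(\eta_\theta))_{\theta\in\Theta}$ in law, I would invoke that the joint law of a finite collection of flow lines $\eta_{\theta_1'},\dots,\eta_{\theta_m'}$ is determined by $h$ (they are a.s.\ determined by the field, \cite[Theorem~1.1]{ms2016imag1}), hence by conditioning successively: given $\eta_{\theta_1'},\dots,\eta_{\theta_{k}'}$, the law of $\eta_{\theta_{k+1}'}$ in the appropriate complementary component is again an $\SLE_\kappa(\underline\rho)$ with explicit weights (as used throughout Step~4 of the proof of Theorem~\ref{thm:fan_connectivity}). The reversal symmetry of \cite{ms2016imag2} applies not just to the single curve but compatibly with this conditional structure — one applies $\phi$ and uses that $\phi$ maps the relevant complementary components of $h$-flow lines to the relevant complementary components of $\wt h$-flow lines. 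Thus, by induction on $m$, the joint law of $(\phi(\eta_{\theta}))_{\theta\in F}$ for any finite $F\subseteq\Theta$ agrees with the joint law of the flow lines of $\wt h$ of angles $(-b/\chi-\theta)_{\theta\in F}$; since this holds for all finite $F$ and the fan is a deterministic (closure-of-union) functional of the countable collection, the laws of the two fans agree.

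The main obstacle is making the conditional/inductive matching of joint laws under $\phi$ fully rigorous: one must check that $\phi$ intertwines the "which complementary component, between which marked points" data on the $h$ side with that on the $\wt h$ side, so that the conditional $\SLE_\kappa(\underline\rho)$ descriptions line up at every stage. A clean way to finesse this is to appeal instead, as the statement's phrasing ("based on the results of \cite{ms2016imag2}") suggests, to a reversal statement for the whole flow-line tree/family rather than a single curve: the time-reversal of the collection of flow lines of $h$ from $0$ to $\infty$ is the collection of flow lines of a suitably transformed field from $\infty$ to $0$, which on applying $\phi$ becomes a field on $\h$ with the stated boundary data. If such a family-level reversal is available (or follows from \cite{ms2016imag2} plus absolute continuity away from the endpoints), then the fan identity is immediate by taking closures. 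I would also take care with the degenerate boundary cases (equality in~\eqref{eqn:fan_gen_bd}), where $\eta_{\theta_1}$ or $\eta_{\theta_2}$ is identified with $\R_+$ or $\R_-$; under $\phi$ these half-lines are preserved, and one checks the corresponding endpoint angles $\wt\theta_1,\wt\theta_2$ again sit at the boundary of the admissible range for $\wt h$, so the two conventions for defining the largest fan match up as in the discussion following~\eqref{eqn:fan_gen_bd}.
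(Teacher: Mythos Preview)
Your proposal is correct and follows essentially the same route as the paper: reduce to joint laws of finite collections of flow lines, identify each marginal via the reversibility theorem of \cite{ms2016imag2}, and upgrade to joint laws by induction using the conditional $\SLE_\kappa(\underline\rho)$ description in the pockets. The paper resolves exactly the ``main obstacle'' you flag by ordering the finitely many angles and peeling from the outside in: it observes that $\phi$ sends the union of right-of-$\eta_{i(n-j+1)}$ pockets $D_j$ to the corresponding $\wt D_j$, swapping opening and closing points, and then applies \cite[Proposition~7.4]{ms2016imag1} together with \cite[Theorem~1.1]{ms2016imag2} pocket-by-pocket to match the conditional law of the next curve; this makes the intertwining of the complementary-component data explicit rather than leaving it as a check.
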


\begin{proof}
Let $(\phi_n)$ be an enumeration of $\Q \cap  [\theta_1,\theta_2]$ and let $\eta_n$ be the flow line of $h$ from $0$ to $\infty$ with angle $\phi_n$,  for all $n \in \N$.  We order $\{\phi_1,\cdots,\phi_n\}$ in an increasing way such that $\phi_{i(1)}<\phi_{i(2)}<\cdots<\phi_{i(n)}$,  and we set $\wt{\eta}_j = \phi(\eta_{i(n-j+1)})$,  $\wt{\phi}_j = -b/\chi-\phi_{i(n-j+1)}$,  and we view $\wt{\eta}_j$ as a curve in $\overline{\h}$ from $0$ to $\infty$, for all $1 \leq j \leq n$. Note that $\eta_{i(n)}$ has the law of an $\SLE_{\kappa}(\rho_1;\rho_2)$ process in $\h$ from $0$ to $\infty$ with force points located at $0^-$ and $0^+$ respectively, and where
\[\rho_1 = -1+\frac{a - \phi_{i(n)} \chi}{\la},\quad \rho_2 = -1 + \frac{b + \phi_{i(n)} \chi}{\la}.\]  
It follows from \cite[Theorem~1.1]{ms2016imag2} that $\wt{\eta}_1$ has the law of an $\SLE_{\kappa}(\rho_2;\rho_1)$ process in $\h$ from $0$ to $\infty$ with the force points located at $0^-$ and $0^+$ respectively.  Moreover,  for each $1 \leq j \leq n-1$,  we let $\wt{D}_j$ be the union of the connected components of $\h \setminus \cup_{i=1}^j \wt{\eta}_i$ lying to the left of $\wt{\eta}_j$.  Then we have that $\wt{D}_j = \phi(D_j)$,  where $D_j$ is the union of the connected components of $\h \setminus \cup_{k=1}^j \eta_{i(n-k+1)}$ lying to the right of $\eta_{i(n-j+1)}$.  Also,  the opening (resp.\ closing) point of a component in $D_j$ is mapped via $\phi$ to the closing (resp.\ opening) point of a component in $\wt{D}_j$.  
Furthermore,  it follows from \cite[Proposition~7.4]{ms2016imag1} that the conditional law of $\eta_{i(n-j)}$ restricted to $D_j$ given $\ss(\eta_{i(n)}, \eta_{i(n-1)}, \dots, \eta_{i(n-j+1)})$ (which is equal to $\sigma(\wt{\eta}_1,\cdots,\wt{\eta}_j)$) is that of an $\SLE_{\kappa}(\rho_1;\rho_2)$ process independently in each connected component in $D_j$ from the opening to the closing point of the component,  and with the force points located immediately to left and right of the opening point respectively,  where here
\[\rho_1 = -2+\frac{(\phi_{i(n-j+1)} - \phi_{i(n-j)})\chi}{\la}, \quad \rho_2 = -1 + \frac{b + \phi_{i(n-j)}\chi}{\la}.\]
Next,  \cite[Proposition~7.4]{ms2016imag1} and \cite[Theorem~1.1]{ms2016imag2} together imply that the conditional law of the restriction of $\wt{\eta}_{j+1}$ to $\wt{D}_j$ given $\sigma(\wt{\eta}_1,\cdots,\wt{\eta}_j)$ is that of an $\SLE_{\kappa}(\rho_2;\rho_1)$ process independently in each of the components in $\wt{D}_j$ from the opening to its closing point,  and with the force points located immediately to the left and right of its starting point.  Therefore,  we obtain that the joint law of $(\wt{\eta}_1,\cdots,\wt{\eta}_n)$ can be sampled as follows.  Let $\wt{h}$ be a $\text{GFF}$ on $\h$ with boundary conditions given by $0$ (resp.\ $a+b$) on $\R_-$ (resp.\ $\R_+$).  Then the curve $\wt{\eta}_j$ is the flow line of $\wt{h}$ from $0$ to $\infty$ of angle $\wt{\phi}_j$,  for all $1 \leq j \leq n$.  The proof is then complete by taking $n \to \infty$.
\end{proof}

Next, we prove that the fan $\F(\tht_1, \tht_2)$ is locally connected (with respect to the subspace topology).
A topological space $X$ is \emph{locally connected at $x \in X$} if every neighborhood (in $X$) of $x$ contains a connected open neighborhood of $x$ (see e.g.\ \cite[\S25]{munkres} or \cite[\S27]{willard}).
$X$ is \emph{connected im kleinen at $x$} (or \emph{weakly locally connected at $x$}) if every neighborhood of $x$ contains a connected (not necessarily open) neighborhood of $x$. The space $X$ itself is called \emph{locally connected} or \emph{connected im kleinen} if it is locally connected at every point $x \in X$ or connected im kleinen at every point $x \in X$, respectively.
While local connectedness and connectedness im kleinen at a single point $x$ are not equivalent, a space $X$ is locally connected if and only if it is connected im kleinen \cite[Theorem~27.16]{willard}.
Before showing the fan is locally connected, we prove two auxiliary lemmas. The proof of the first is adapted from \cite{nd-stackexchange}.

\begin{lemma}\label{lem:connected_components_agree}
Let $F \subseteq \C$ be closed and let $K \subseteq F$ be compact. If $C$ is a connected component of $K$ which does not intersect $\del_F K$, the boundary of $K$ in $F$, then $C$ is a connected component of $F$.
\end{lemma}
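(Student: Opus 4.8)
The statement is purely topological, so the plan is to work entirely within the closed set $F$ with its subspace topology. Let $C$ be a connected component of $K$ with $C \cap \partial_F K = \varnothing$. Since $K$ is compact, $C$ is closed in $K$ and hence closed in $F$. The key point is that $C$ is also \emph{open} in $F$: I would first show $C \subseteq \mathrm{int}_F(K)$, the interior of $K$ relative to $F$. Indeed, by hypothesis $C$ misses $\partial_F K = \overline{K}^F \setminus \mathrm{int}_F(K)$; but $C \subseteq K \subseteq \overline{K}^F$ (as $K$ is closed in $F$, in fact $\overline{K}^F = K$), so $C \subseteq K \setminus \partial_F K = \mathrm{int}_F(K)$. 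Thus $C$ is a connected component of $K$ that lies inside the $F$-open set $\mathrm{int}_F(K)$.

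The main obstacle — and really the only nontrivial point — is upgrading ``$C$ is a component of $K$ contained in $\mathrm{int}_F(K)$'' to ``$C$ is a component of $\mathrm{int}_F(K)$'', equivalently to ``$C$ is open in $F$''. Here I would invoke the standard fact that the connected components of a compact Hausdorff space coincide with its quasicomponents, and more to the point use a separation argument: since $C$ is a component of the compact space $K$, for any point $w \in \mathrm{int}_F(K) \setminus C$ there is a clopen (in $K$) subset of $K$ containing $C$ but not $w$. A cleaner route: let $C'$ be the connected component of the open set $U := \mathrm{int}_F(K)$ containing $C$. Then $C \subseteq C' \subseteq U \subseteq K$, and since $C$ is a \emph{maximal} connected subset of $K$ while $C'$ is connected and contains $C$, we get $C' = C$. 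Now $U$ is open in $F$, and $F$ — being a subspace of $\C$, hence locally connected at every point (as $\C$ is, and local connectedness passes to open subsets; for a general closed $F$ one uses that components of an open subset of a locally connected space are open) — wait: $F$ need not be locally connected. So instead I argue directly: $U$ is open in $F$, but that does not make its components open unless $F$ is locally connected.

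To avoid that gap I would instead prove $C$ is open in $F$ by a direct compactness argument. Suppose not; then there is a sequence $x_n \in F \setminus C$ with $x_n \to x$ for some $x \in C$. For $n$ large, $x_n \in U = \mathrm{int}_F(K)$ (since $U$ is an $F$-neighborhood of $x$), so $x_n \in K \setminus C$. Let $C_n$ be the component of $K$ containing $x_n$; then $C_n \neq C$. Using that in a compact Hausdorff space components equal quasicomponents, choose for each $n$ a clopen-in-$K$ set $A_n$ with $C \subseteq A_n$, $x_n \notin A_n$. The sets $\{K \setminus A_n\}$ together with any clopen separation family separating $C$ from $K \setminus U$... at this point the bookkeeping gets delicate, so in the write-up I would instead adopt the following clean classical statement and cite it: if $K$ is compact, $C$ a component of $K$, and $W$ an open set (in the ambient, or in $F$) with $C \subseteq W$, then there is a clopen-in-$K$ set $A$ with $C \subseteq A \subseteq W$ (this is the standard lemma used to prove components = quasicomponents for compacta). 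Apply it with $W = U = \mathrm{int}_F(K)$, which is open in $F$: we get a set $A$, clopen in $K$, with $C \subseteq A \subseteq U$. Then $A$ is closed in $K$ hence in $F$ (as $K$ is closed in $F$), and $A$ is open in $K$; since $A \subseteq U \subseteq K$ and $U$ is open in $F$, openness of $A$ in $K$ gives openness of $A$ in $F$ (an $F$-open-in-$K$ subset of the $F$-open set $U$ is $F$-open). So $A$ is clopen in $F$ and contains the connected set $C$; thus $A$ is a union of components of $F$, and the component of $F$ containing $C$ is contained in $A \subseteq K$. But a component of $F$ containing $C$ is a connected subset of $K$ containing $C$, hence equals $C$ by maximality of $C$ as a component of $K$. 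Therefore $C$ is a connected component of $F$, completing the proof. In the final text I would streamline this to: (1) $C \subseteq \mathrm{int}_F K =: U$, open in $F$; (2) the standard compacta lemma yields $A$ clopen in $K$ with $C \subseteq A \subseteq U$, hence $A$ clopen in $F$; (3) maximality of $C$ in $K$ forces the $F$-component of any point of $C$ to coincide with $C$. $\qed$
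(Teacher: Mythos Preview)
Your proposal is correct and follows essentially the same route as the paper: both produce a set that is clopen in $K$, squeezed between $C$ and $\mathrm{int}_F K$, observe it is then clopen in $F$, and conclude by maximality of $C$ in $K$. The only difference is cosmetic: the paper builds this clopen set explicitly by covering the compact set $\partial_F K$ with finitely many clopen-in-$K$ sets $U_b$ missing $C$ and taking $K\setminus \bigcup U_b$, whereas you invoke this construction as the ``standard compacta lemma'' (which is exactly the components $=$ quasicomponents fact the paper also cites).
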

\begin{proof}
The quasicomponent of a point $x$ in $K$ is the intersection of all sets $D$ which are clopen in $K$ and contain $x$ \cite[\S6.1]{engelking}. Since $K$ is a compact Hausdorff space, its connected components and quasicomponents agree \cite[Theorem~6.1.23]{engelking}. This means that $C$ is the intersection of all sets $D$ which are clopen in $K$ and which contain $C$. Any point $b \in \del_F K$ is not in $C$, so there must exist a set $U_b$, which is clopen in $K$, contains $b$, and is disjoint from $C$.
The sets $U_b$ for $b \in \del_F K$ form an open cover of $\del_F K$, and since $K$ is compact, we can find a finite subcover whose union $U$ is clopen (in $K$), contains $\del_F K$ and is disjoint from $C$. Then $C \subseteq K\sm U \subseteq K \sm \del_F K \subseteq K \subseteq F$. Also, $K \sm U$ is open in $K$ and contained in $K \sm \del_F K$, hence is open in $F$. Similarly, $K \sm U$ is closed in $K$ and hence closed in $F$. Therefore any connected superset $D$ of $C$ in $F$ must be contained in $K \sm U$ (since this set is clopen) and hence must be equal to $C$ since $C$ is a connected component of $K$.
\end{proof}

\begin{lemma}\label{lem:union_of_locally_connected_sets}
Suppose $A, B \subseteq \C$ are two closed locally connected sets (with respect to the subspace topology). Then $A \cup B$ is locally connected.
\end{lemma}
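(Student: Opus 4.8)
<br>

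The plan is to prove that the union of two closed, locally connected subsets of $\C$ is locally connected by verifying the equivalent property of being connected im kleinen at every point, using Lemma~\ref{lem:connected_components_agree} as the main engine. Let $F = A \cup B$, which is closed in $\C$, and fix $x \in F$. Without loss of generality assume $x \in A$ (the case $x \in B$ is symmetric, and the case $x \in A \cap B$ is covered by either). Let $N$ be any neighborhood of $x$ in $F$; we must produce a connected neighborhood of $x$ contained in $N$. First I would pass to a compact neighborhood: choose $r > 0$ small enough that $\ov{B(x,r)} \cap F \subseteq N$, and set $K = \ov{B(x,r)} \cap F$, a compact subset of $F$.

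The key step is to locate a connected neighborhood of $x$ inside $K$ whose interior (in $F$) contains $x$. Since $A$ is locally connected and hence connected im kleinen at $x$, and since $\ov{B(x,r/2)} \cap A$ is a neighborhood of $x$ in $A$, there is a connected set $C_A \subseteq \ov{B(x,r/2)} \cap A$ which is a neighborhood of $x$ in $A$; in particular there is $\delta_A \in (0, r/2)$ with $B(x,\delta_A) \cap A \subseteq C_A$. Now there are two cases. If $x \notin B$, then since $B$ is closed there is $\delta_B > 0$ with $B(x,\delta_B) \cap B = \emptyset$, so $B(x, \min(\delta_A,\delta_B)) \cap F = B(x, \min(\delta_A,\delta_B)) \cap A \subseteq C_A$, and $C_A$ is the desired connected neighborhood of $x$ in $F$. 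If $x \in B$, then by connectedness im kleinen of $B$ at $x$ there is likewise a connected set $C_B \subseteq \ov{B(x,r/2)} \cap B$ which is a neighborhood of $x$ in $B$, with $B(x,\delta_B) \cap B \subseteq C_B$ for some $\delta_B \in (0,r/2)$. Then $C := C_A \cup C_B$ is connected (the two pieces share the point $x$), is contained in $\ov{B(x,r/2)} \cap F \subseteq K$, and for $\delta = \min(\delta_A, \delta_B)$ we have $B(x,\delta) \cap F = (B(x,\delta) \cap A) \cup (B(x,\delta) \cap B) \subseteq C_A \cup C_B = C$, so $C$ is a neighborhood of $x$ in $F$.

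In either case we have found a connected neighborhood $C$ of $x$ in $F$ contained in $K \subseteq N$. Since $x \in N$ was an arbitrary point and $N$ an arbitrary neighborhood, $F$ is connected im kleinen at every point, hence locally connected by \cite[Theorem~27.16]{willard}. I should remark that Lemma~\ref{lem:connected_components_agree} is not actually needed for this direct argument; the honest obstacle is purely the bookkeeping of shrinking radii so that the connected witness sets for $A$ and for $B$ simultaneously become neighborhoods in $F$, and the gluing at the shared point $x$ when $x \in A \cap B$. Alternatively, one can phrase the whole thing through components: given $N \supseteq K$ as above, let $C$ be the connected component of $x$ in $K$; one shows $C$ does not meet $\del_F K$ by the radius argument above, so $C$ is a component of $F$ by Lemma~\ref{lem:connected_components_agree}, but that route is heavier than necessary here, so I would present the elementary version.
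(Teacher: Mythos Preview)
Your proof is correct and follows essentially the same approach as the paper's: verify connectedness im kleinen at every point by splitting into the cases $x \in A \setminus B$, $x \in B \setminus A$, and $x \in A \cap B$, and in the last case glue the connected neighborhoods $C_A$ and $C_B$ at the shared point $x$. The only remark is a presentational one: you announce Lemma~\ref{lem:connected_components_agree} as the ``main engine'' but then (correctly) never use it and acknowledge as much---you should drop that framing from the opening sentence.
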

\begin{proof}
It suffices to prove that $A \cup B$ is connected im kleinen at every point. If $x$ is in $A\sm B$ or $B \sm A$ the claim is immediate. If $x \in A \cap B$, a given neighborhood $N$ of $x$ in $A \cup B$ contains the ball $B(x, \eps) \cap (A \cup B)$ for some $\eps > 0$. Since $A$ and $B$ are each connected im kleinen at $x$, it follows that there exist connected neighborhoods $N_A \subseteq B(x,\eps) \cap A$ and $N_B \subseteq B(x, \eps) \cap B$ of $x$ in $A$ and $B$ respectively. $N_A \cup N_B$ is a union of connected sets with nonempty intersection, so is connected, and is furthermore seen to be a neighborhood of $x$ in $A \cup B$. It follows that $A \cup B$ is connected im kleinen at $x$, completing the proof.
\end{proof}

\begin{proposition}\label{prop:locally_connected}
$\F(\tht_1, \tht_2)$ is locally connected a.s.
\end{proposition}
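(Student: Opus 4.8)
The plan is to prove that $\F := \F(\theta_1,\theta_2)$ is a.s.\ connected im kleinen at every one of its points, which by \cite[Theorem~27.16]{willard} is equivalent to local connectedness. The first step is a reduction. Since the closure of a finite union is the union of the closures, $\F(\theta_1,\theta_2) = \bigcup_{i=1}^{m}\F(\phi_{i-1},\phi_i)$ for any partition $\theta_1 = \phi_0 < \cdots < \phi_m = \theta_2$, and each $\F(\phi_{i-1},\phi_i)$ is closed; hence by Lemma~\ref{lem:union_of_locally_connected_sets} it is enough to prove local connectedness of each $\F(\phi_{i-1},\phi_i)$. We choose the $\phi_i$ so that each sub-fan is non-degenerate (the weights $\rho^L,\rho^R$ of its two extreme flow lines are $>-2$), so that every $\eta_\theta$, $\theta\in[\theta_1,\theta_2]$, is a simple curve from $0$ to $\infty$; the degenerate cases where an extreme flow line equals $\R_+$ or $\R_-$ are handled afterwards using Proposition~\ref{prop:bounded_hausdorff}, which shows the relevant boundary ray is a bounded-Hausdorff limit of the $\eta_\theta$'s and hence already lies in the fan, so it can be treated as a separate closed, locally connected piece via another application of Lemma~\ref{lem:union_of_locally_connected_sets}. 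Combining Propositions~\ref{prop:bounded_hausdorff} and~\ref{prop:intersections_stopped} with local absolute continuity (\cite[Lemma~4.1]{mq2020geodesics}, and reversibility \cite[Theorem~1.1]{ms2016imag2} to handle a neighbourhood of $\infty$ via $z\mapsto -1/z$) one obtains that $\theta\mapsto\eta_\theta$ is continuous from $[\theta_1,\theta_2]$ into the compact subsets of $\ol{\h}$ with the bounded Hausdorff metric; in particular $\F = \bigcup_{\theta\in[\theta_1,\theta_2]}\eta_\theta$.

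Now fix $z\in\F$, so that $z\in\eta_{\theta^*}$ for some $\theta^*\in[\theta_1,\theta_2]$, and suppose for contradiction that $\F$ is not connected im kleinen at $z$. Applying Lemma~\ref{lem:connected_components_agree} with $F = \F$ and $K = \F\cap\ol{B(z,\eps)}$: since $\partial_\F K\subseteq\partial B(z,\eps)$, any component of $K$ missing $\partial B(z,\eps)$ would be a component of $\F$, but $\F$ is connected and not contained in $K$ (for $\eps<\diam(\F)/2$), so in fact every component of $K$ meets $\partial B(z,\eps)$. Failure of connectedness im kleinen then yields, in the standard way, an $\eps>0$, points $y_n\to z$ in $\F$, the component $C_z\ni z$ of $K$, and components $C_n\ni y_n$ of $K$ with $C_n\cap C_z=\emptyset$ and $\diam(C_z),\diam(C_n)\ge\eps/2$ for $n$ large. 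Passing to a subsequence $C_n\to C_\infty$ in the Hausdorff metric gives $z\in C_\infty\subseteq C_z$ with $\diam(C_\infty)\ge\eps/2$: a continuum of convergence sitting inside $C_z$. Writing $y_n\in\eta_{\theta_n}$ and passing to a further subsequence $\theta_n\to\theta_\infty$, continuity of $\theta\mapsto\eta_\theta$ together with $y_n\to z$ forces $z\in\eta_{\theta_\infty}$. For $n$ large $|\theta_n-\theta_\infty|$ is arbitrarily small, so by Proposition~\ref{prop:intersections_stopped} (transported to this setting by absolute continuity) $\eta_{\theta_n}$ and $\eta_{\theta_\infty}$ are $\delta_n$-close with $\delta_n\to0$; in particular there is an intersection point $u_n$ of the two curves with $|u_n-z|\le 2\delta_n$, together with sub-arcs of $\eta_{\theta_\infty}$ and of $\eta_{\theta_n}$ of diameter $O(\delta_n)$ joining $u_n$ respectively to a point of $\eta_{\theta_\infty}$ near $z$ and to a point of $\eta_{\theta_n}$ near $y_n$. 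Provided these sub-arcs connect $u_n$ to $z$ and to $y_n$ \emph{within $B(z,\eps)$}, this places $u_n$ in $C_z\cap C_n$, contradicting $C_z\cap C_n=\emptyset$.

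The main difficulty is exactly this proviso: a simple curve can return arbitrarily close to one of its own points along a later sub-arc making a large excursion out of $B(z,\eps)$, in which case $u_n$ might sit on a different excursion of $\eta_{\theta_\infty}$ (or of $\eta_{\theta_n}$) into $B(z,\eps)$ than $z$ (or $y_n$), and the sub-arcs above would leave $B(z,\eps)$. To overcome this I would argue that such bad returns are themselves bridged by the rest of the fan: the returning sub-arc of $\eta_{\theta_\infty}$ cuts off a pocket on whose appropriate side $z$ lies, so that pocket is entered by flow lines of angles close to $\theta_\infty$, and chaining the $\delta$-close/pocket structure used in Lemma~\ref{lem:intersections_times} and Section~\ref{subsec:annulus_events} shows that the two excursions actually lie in the same connected component of $\F\cap B(z,\eps)$, restoring the contradiction. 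Once the interior case is settled the remaining cases are routine: if $z\in\R$ then $z$ lies on a boundary ray contained in $\F$ and flow lines of nearby angles meet that ray arbitrarily close to $z$ by the second assertion of Proposition~\ref{prop:bounded_hausdorff}, so a small neighbourhood of $z$ in $\F$ is connected through that ray, and $\theta_\infty$ equal to a degenerate extreme reduces, again via Proposition~\ref{prop:bounded_hausdorff}, to this boundary-ray situation.
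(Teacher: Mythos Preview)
Your reduction via Lemma~\ref{lem:union_of_locally_connected_sets} and your use of Lemma~\ref{lem:connected_components_agree} to see that every component of $\F\cap\ol{B(z,\eps)}$ must reach $\partial B(z,\eps)$ are both fine, and they match what the paper does. But the heart of your argument has a genuine gap, namely the paragraph beginning ``The main difficulty is exactly this proviso''. You correctly identify the obstruction: a simple flow line can revisit a neighbourhood of $z$ many times via long excursions, so the intersection point $u_n$ produced by $\delta$-closeness need not lie on the same excursion into $B(z,\eps)$ as $z$ (for $\eta_{\theta_\infty}$) or as $y_n$ (for $\eta_{\theta_n}$). Your proposed fix --- that ``such bad returns are themselves bridged by the rest of the fan'' and that ``chaining the $\delta$-close/pocket structure \ldots\ shows that the two excursions actually lie in the same connected component'' --- is not an argument but a restatement of the conclusion. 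Showing that two pieces of $\F$ near $z$ lie in the same component of $\F\cap B(z,\eps)$ \emph{is} local connectedness; nothing in Proposition~\ref{prop:intersections_stopped} or Lemma~\ref{lem:intersections_times} controls what happens to the bridging flow lines once they leave the ball. A secondary issue is that the a.s.\ continuity of $\theta\mapsto\eta_\theta$ over \emph{all} $\theta$ simultaneously is not what Propositions~\ref{prop:bounded_hausdorff} and~\ref{prop:intersections_stopped} give you: those are statements in probability for a fixed pair of angles, and upgrading to a.s.\ joint continuity needs a separate argument.

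The paper avoids the excursion problem entirely by extracting a stronger structural fact from the proof of Theorem~\ref{thm:fan_connectivity}: when $\theta_2-\theta_1$ is small enough, for every $x\in\F$ and every $\eps>0$ there is a simple loop $\gamma\subseteq B(x,\eps)$ separating $x$ from $\infty$ and meeting $\F$ in only \emph{finitely many} points (this comes from the loop $\gamma$ in condition~(\ref{it :shield_of_flow_lines}) of the annulus event $E^h(\eps)$, which touches only finitely many complementary components). With $E$ the closed region bounded by $\gamma$, Lemma~\ref{lem:connected_components_agree} forces every component of $\F\cap E$ to meet $\gamma$, hence there are only finitely many such components, hence the component through $x$ is a connected neighbourhood of $x$ inside any prescribed $N$. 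This finite-intersection loop is exactly the missing ingredient that lets you localise without tracking individual flow-line excursions.
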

\begin{proof}
By the proof of Theorem~\ref{thm:fan_connectivity} there exists $\eps_0 > 0$ sufficiently small (depending only on $a,b$ and $\kk$) such that if $\tht_2 - \tht_1 < \eps_0$, then a.s.\ for every $x \in \F(\tht_1, \tht_2)$ and $\ee > 0$ there exists a simple loop $\gamma$ contained in $B(x, \eps)$ which disconnects $x$ from $\infty$ and which intersects $\F(\tht_1, \tht_2)$ finitely many times.
If we can prove that the fan is locally connected in this case, then since the union of finitely many closed locally connected sets is locally connected by Lemma~\ref{lem:union_of_locally_connected_sets}, we see that $\F(\tht_1, \tht_2)$ is locally connected for any choice of $\tht_1, \tht_2$. Therefore we may assume for the remainder of the proof that $\tht_2 - \tht_1 < \eps_0$.

We will prove that for all $x \in \F(\tht_1, \tht_2)$, $\F(\tht_1, \tht_2)$ is connected im kleinen at $x$.
Let $N$ be a neighborhood of $x$ in $\F(\tht_1, \tht_2)$ and choose a simple loop $\gg$ as above with the property that the intersection of $\F(\tht_1, \tht_2)$ with $E$, the closure of the bounded component of $\h \sm \gg$, is contained in $N$ (this is possible since $N$ is a neighborhood of $x$ and $\gg$ can be chosen to be arbitrarily small). Let $E$ be the closure of the bounded component of $\h \sm \gg$. We claim that $\F(\theta_1,\theta_2) \cap E$ has finitely many connected components, and call the set of these components $\CC$.
Since $\F(\tht_1, \tht_2)$ intersects $\gg$ at most finitely many times, there are at most finitely many connected components in $\CC$ which intersect $\gg$. Next, we show that these are the only components in $\CC$. 
Suppose $C \in \CC$ is a connected component which does not intersect $\gg$. Then by Lemma~\ref{lem:connected_components_agree} with $K = \F(\tht_1, \tht_2) \cap E$, we have that $C$ is a connected component of $\F(\tht_1, \tht_2)$, which is a contradiction since $\F(\tht_1, \tht_2)$ is connected (and larger than $C$ itself). This proves the claim that the collection $\CC$ is finite.

Let $C_x \in \CC$ be the connected component containing $x$. Since there are only finitely many other components in $\CC$, it must be the case that the union of these components is at a positive distance from $x$, meaning that $C_x$ itself is a neighborhood of $x$.
Furthermore, $C_x \subseteq N$, meaning that $C_x$ is exactly the connected neighborhood of $x$ we are looking for. It follows that $\F(\tht_1, \tht_2)$ is connected im kleinen at $x$ simultaneously for all points $x \in \F(\tht_1, \tht_2)$ a.s. As stated above, this shows that $\F(\tht_1, \tht_2)$ is locally connected a.s.
\end{proof}

\begin{remark}\label{rem:locally_connected}
This remark relates to objects defined later in this section ($\del U, \F(t;\gg), \F(z)$) and can be skipped for now and read when needed. In the case that $\tht_2 - \tht_1 < \eps_0$, the only facts about $\F(\tht_1, \tht_2)$ we have used to prove local connectedness are the connectedness of the fan, and the property that for every $x \in \F(\tht_1, \tht_2)$ and $\ee > 0$ there exists a simple loop $\gamma$ contained in $B(x, \eps)$ which disconnects $x$ from $\infty$ and which intersects $\F(\tht_1, \tht_2)$ finitely many times.
Therefore, assuming $\tht_2 - \tht_1 < \eps_0$, any connected subset of $\F(\tht_1, \tht_2)$ will also satisfy these two properties, and thus be locally connected. In particular, the sets $\del U, \F(t;\gg)$ and $\F(z)$ are all connected subsets of the fan and thus locally connected.
Furthermore, this still holds even if we do not require $\tht_2 - \tht_1 < \eps_0$, since each of the sets $\del U, \F(t;\gg)$ and $\F(z)$ is actually a subset of a smaller fan $\F(\tht_1', \tht_2')$ where $\tht_2' - \tht_1' < \eps_0$ (in fact here we can replace $\eps_0$ by an arbitrarily small quantity), and $\tht_1', \tht_2'$ are random, but can be chosen from a finite set. We will make use of the local connectedness of these sets throughout this section, particularly in Lemma~\ref{lem:discover_component}, by using \cite[Theorem~2.1]{pommerenke1992boundary} to show that the boundaries of certain domains are (not necessarily simple) curves.

\end{remark}

\subsection{Structure of subsets of the fan}\label{subsec:structure}

Let $C$ be a connected component of $\h \sm (\eta_{\tht_1} \cup \eta_{\tht_2})$ and let $\ff$ be a conformal map from $C$ to $\h$ that is measurable with respect to $\ss(\eta_{\tht_1}, \eta_{\tht_2})$ and which sends the opening (resp.\ closing) point of $C$ to $0$ (resp.\ $\infty$).
Then the conditional law of $\wt{h} := h \circ \ff\nv - \chi \arg(\ff\nv)'$ given $\eta_{\tht_1}, \eta_{\tht_2}$ is that of a GFF with boundary conditions $\la - \tht_2\chi$ on $\R_-$ and $-\la - \tht_1\chi$ on $\R_+$.  By \cite[Proposition~7.4]{ms2016imag1}, for $\tht \in (\tht_1, \tht_2)$ the pair $(\ff_C(\F(\tht_1, \tht_2)), \ff_C(\eta_\tht))$ has the law of $(\wt\F(\tht_1, \tht_2), \wt\eta_\tht)$, where $\wt\F(\tht_1, \tht_2)$ is the fan corresponding to the GFF $\wt{h}$ and $\wt\eta_\tht$ is the flow line of $\wt{h}$ with angle $\tht$.
Suppose that $\wt\eta_\tht$ is determined by $\wt\F(\tht_1, \tht_2)$. Since $\eta_{\tht_1}$ and $\eta_{\tht_2}$ are measurable with respect to $\F(\tht_1, \tht_2)$ (as its left and right boundary in $\h$), we obtain that $\eta_\tht|_C$ is determined by $\F(\tht_1, \tht_2)$. Hence, by repeating this argument in each connected component $C$ of $\h \sm (\eta_{\tht_1} \cup \eta_{\tht_2})$, and noting that each component $C$ is itself determined by $\F(\tht_1, \tht_2)$, we find that $\eta_\tht$ is determined by $\F(\tht_1, \tht_2)$.

Therefore, we only need to show that $\wt\F(\tht_1, \tht_2)$ determines $\wt\eta_\tht$ where $\wt h$ is a GFF with boundary conditions $\la - \tht_2\chi$ on $\R_-$ and $-\la - \tht_1\chi$ on $\R_+$, where $\wt\F(\tht_1, \tht_2)$ is the corresponding fan, and where $\wt\eta_\tht$ is the flow line of $\wt{h}$ with angle $\tht$. In particular, it suffices to prove Theorem~\ref{thm:fan_determines_flow_lines} in the case that $\tht_1 = -(b+\la)/\chi, \tht_2 = (a+\la)/\chi$.
Note that by \cite{ms2016imag1}, the set $(\tht_1,\tht_2)$ is then the maximal range of angles $\tht$ where $\eta_\tht$ does not a.s.\ immediately hit the continuation threshold, meaning that the flow line $\eta_\tht$ exists. By Proposition~\ref{prop:bounded_hausdorff}, we have that $\eta_\tht$ a.s.\ converges in the Hausdorff sense to $\R_+$ (resp.\ $\R_-$) as $\tht \downarrow \tht_1$ (resp.\ $\tht \uparrow \tht_2$).
Using this fact and Lemma~\ref{lem:fan_does_not_contain_point}, for any fixed point $z \in \h$, a.s.\ there exists a unique connected component $U$ of $\h \sm \F(\tht_1, \tht_2)$ which contains $z$, and $\del U$ does not contain any interval of $\R$. If instead we considered $\F(\tht_1', \tht_2')$ with $\tht_1 < \tht_1' < \tht_2' < \tht_2$ this would not be the case, since there would exist components $U$ of $\h \sm \F(\tht_1', \tht_2')$ to the right of $\eta_{\tht_1}$. This would complicate some of our proofs, so it is easier to rule this out from the offset.

In the following, we will assume that $\tht_1 = -(b+\la)/\chi, \tht_2 = (a+\la)/\chi$, and we will fix an enumeration $(\phi_n)$ of $(\Q \cap [\tht_1, \tht_2]) \cup \{\tht_1, \tht_2\}$, where $\phi_1 = \tht_1$ and $\phi_2 = \tht_2$. The fan $\F(\tht_1, \tht_2)$ is a.s.\ unchanged by changing the choice of countable dense subset of $[\tht_1, \tht_2]$, but using this particular enumeration to generate the fan simplifies our proofs.  As mentioned in Section~\ref{subsec:sle_fan}, we will define the flow line $\eta_{\tht_1}$ to be $\R_+$, and the flow line $\eta_{\tht_2}$ to be $\R_-$, which is natural from the discussion above.

\begin{lemma}
\label{lem:form_of_the_boundary}
Fix a point $z \in \h$ with rational coordinates and let $U$ be the connected component of $\h \setminus \fan(\theta_1,\theta_2)$ containing $z$.  We view $\partial U$ as a set of prime ends in $U$. Then there exists $\theta = \theta(U) \in \R$ and $x = x(U), y = y(U) \in \partial U$ so that the conditional law of $h$ given $\CF$ in $U$ is that of a \text{GFF} with boundary conditions given by those of the right (resp.\ left) side of a flow line of angle $\theta$ on the clockwise (resp.\ counterclockwise) arc of $\partial U$ from $x$ to $y$ (we define what this means more precisely below).
\end{lemma}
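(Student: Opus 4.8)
The plan is to exhaust the fan by the finite fans generated by the first $n$ angles in the enumeration $(\phi_k)$ and track how the boundary conditions of the conditional GFF evolve along the boundary of the component of $z$. Let $\F_n$ denote the closure of $\cup_{k=1}^n \eta_{\phi_k}$ and let $U_n = U_n(z)$ be the connected component of $\h \setminus \F_n$ containing $z$; by Lemma~\ref{lem:fan_does_not_contain_point} (applied to the countable collection of relevant points) we may work on the a.s.\ event that $z$ is never on any flow line, so $U_n$ is well-defined, $U_n \downarrow$, and $\cap_n U_n$ has $U = U(z)$ as the component of $z$ in its interior. First I would establish the statement for each $U_n$ by induction on $n$: the base case $n = 2$ is exactly the component of $z$ in $\h \setminus (\eta_{\theta_1} \cup \eta_{\theta_2})$, whose conditional boundary data (given $\eta_{\theta_1}, \eta_{\theta_2}$) is, by the imaginary geometry coupling of \cite{ms2016imag1}, that of the right side of $\eta_{\theta_1} = \R_+$ on one arc and the left side of $\eta_{\theta_2} = \R_-$ on the complementary arc — so it has the claimed form with $\theta = \theta_1$ on the clockwise arc (there is a jump at the two marked points, one of which is $0$). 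For the inductive step, conditionally on $\eta_{\phi_1}, \ldots, \eta_{\phi_{n-1}}$ the next flow line $\eta_{\phi_n}$ restricted to $U_{n-1}$ is, by \cite[Proposition~7.4]{ms2016imag1}, an $\SLE_\kappa(\underline\rho)$-type curve run in the relevant sub-component of $U_{n-1}$ between its two marked boundary points, and it subdivides $U_{n-1}$ into finitely many pieces. One then checks that the conditional boundary data along the new boundary arcs created by $\eta_{\phi_n}$ is again of the right/left-side-of-a-flow-line form: on the portion of $\partial U_n$ coming from $\partial U_{n-1}$ the data is inherited (it is the right or left side of the old angle-$\theta_{n-1}$ flow line there), and on the portion coming from $\eta_{\phi_n}$ itself the data is the right (resp.\ left) side of an angle-$\phi_n$ flow line. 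Using the ordering of the angles and the flow-line interaction rules \cite[Theorem~1.7]{ms2017ig4}, $U_n$ lies between $\eta_{\phi_{j}}$ and $\eta_{\phi_{j'}}$ for consecutive (in value) angles among $\phi_1,\ldots,\phi_n$ that bound it, and the boundary data is exactly that produced by a single flow line of the angle $\theta_n(U_n)$ equal to the larger of these two bounding angles along the clockwise arc, with a single pair of marked points $\{x_n, y_n\}$ where the value jumps. This is the genuinely bookkeeping-heavy step and I would spell out the additive constants $\lambda(1 + \sum \rho_i)$ carefully, but it is the same computation done in \cite{ms2016imag1} for successive flow lines.

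Next I would pass to the limit. Since $U_n \downarrow$ and $\cap_n U_n$ has $U$ as the component of $z$ in its interior, and since $\F_n \uparrow \fan(\theta_1,\theta_2)$ in the Hausdorff sense (using Proposition~\ref{prop:bounded_hausdorff} to control the behavior near $\R$, which guarantees $\partial U$ contains no interval of $\R$, as noted in the discussion preceding the lemma), the angles $\theta_n = \theta_n(U_n)$ are monotone (they can only move toward the interval determined by the two flow lines ultimately bounding $U$) and bounded, hence converge to some $\theta(U) \in [\theta_1,\theta_2]$, and the marked point pairs $\{x_n, y_n\}$ converge (as prime ends, using that $\partial U_n \to \partial U$ in an appropriate sense — here local connectedness of the fan from Proposition~\ref{prop:locally_connected}, or just of the relevant sub-fan, is what makes $\partial U$ a curve and makes "prime end" unambiguous). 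Finally, the conditional law of $h$ given $\CF$ in $U$ is the limit (in the sense of local absolute continuity / the local-set formalism of \cite{schramm2013contour}) of the conditional laws given $\sigma(\eta_{\phi_1},\ldots,\eta_{\phi_n})$ in $U_n$, so the boundary data passes to the limit and has the claimed form with $\theta(U)$, $x(U) = \lim x_n$, $y(U) = \lim y_n$; the precise meaning of "boundary conditions of the right/left side of an angle-$\theta$ flow line" — namely harmonic extension of the piecewise-constant function that is $-\lambda + \theta\chi - \chi\cdot(\text{winding})$ on the clockwise arc and $\lambda + \theta\chi - \chi\cdot(\text{winding})$ on the counterclockwise arc, with the winding measured via a conformal map — is then stated as the definition promised in the lemma.

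The main obstacle I expect is the limiting step for the marked points and for the identification of $\theta(U)$: a priori the sequence $\theta_n$ could oscillate if a new flow line enters $U_{n-1}$ at an angle between the current bounding angles, so one must argue that once $U_n$ has stabilized "around" $z$ (which happens because $z \notin \fan$ and the fan is closed, so a small ball around $z$ is eventually untouched by new flow lines up to their first exit of that ball) the bounding angles and marked points stop changing in a neighborhood of $z$, and more globally that $\theta_n$ converges because the set of angles of flow lines actually forming $\partial U$ is, by the interaction rules, squeezed into shrinking intervals. Care is also needed because $U$ itself may have complicated (non-Jordan) boundary, so "$x(U), y(U) \in \partial U$" must be read in terms of prime ends, which is exactly where Proposition~\ref{prop:locally_connected} and Remark~\ref{rem:locally_connected} are invoked to guarantee $\partial U$ is a (possibly non-simple) closed curve via \cite[Theorem~2.1]{pommerenke1992boundary}. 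I do not expect to need anything beyond Propositions~\ref{prop:bounded_hausdorff} and~\ref{prop:locally_connected}, Lemma~\ref{lem:fan_does_not_contain_point}, and the imaginary geometry results \cite[Proposition~7.4, Theorem~1.7]{ms2016imag1} (and its \cite{ms2017ig4} counterpart) already available in the excerpt.
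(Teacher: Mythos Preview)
Your overall plan---approximate by finite fans $\fan_n$, identify the boundary data for each $U_n$, and pass to the limit---is exactly the paper's strategy, and the identification of $\theta(U)$ as the common limit of the bounding angles $\phi_{i(n)}\uparrow\theta$ and $\phi_{j(n)}\downarrow\theta$ is also what the paper does (these are genuinely monotone, so the oscillation worry you raise does not materialize).

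The genuine gap is in the convergence of the marked points. You propose to obtain $x_n \to x(U)$, $y_n \to y(U)$ from local connectedness of $\partial U$ (Proposition~\ref{prop:locally_connected}) together with a ``stabilization near $z$'' argument. Neither works: the marked points are global features of $\partial U_n$, nowhere near $z$, so local stabilization is irrelevant; and local connectedness of the \emph{limiting} boundary $\partial U$ says nothing about convergence of a sequence of points living on the \emph{varying} boundaries $\partial U_n$. The paper does not use Proposition~\ref{prop:locally_connected} anywhere in this proof (prime ends are well-defined for any simply connected domain via the Riemann map, with or without local connectedness).

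What the paper actually does is the following. The key analytic input you are missing is \cite[Proposition~6.5]{ms2016imag1}: the harmonic part $\Fh_n$ of $h$ in $\h\setminus\fan_n$ converges locally uniformly to the harmonic part $\Fh$ in $\h\setminus\fan(\theta_1,\theta_2)$. Next one uniformizes: normalized Riemann maps $\varphi_n\colon U_n\to\D$ converge locally uniformly to $\varphi\colon U\to\D$ by Carath\'eodory kernel convergence, so the transformed harmonic functions $\wt\Fh_n$ on $\D$ converge locally uniformly to $\wt\Fh$. Now the images $\wt x_n=\varphi_n(x_n)$, $\wt y_n=\varphi_n(y_n)\in\partial\D$ are precisely the points where the boundary data of $\wt\Fh_n$ has its $2\lambda$ jump. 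The paper proves $(\wt x_n)$ converges by contradiction: if two subsequential limits differ, then along the two subsequences the boundary data of $\wt\Fh_n$ disagree by a definite amount on a fixed arc of $\partial\D$, and a Beurling estimate forces the harmonic extensions to disagree on a compact subset of $\D$, contradicting local uniform convergence. This requires a somewhat delicate case analysis because the boundary data are only known up to multiples of $2\pi\chi$ (from the branch of the argument) and one must also handle the degenerate possibility $\wt x=\wt y$. For $(\wt y_n)$ the paper does not repeat the argument; it invokes the reversal symmetry of the fan (Lemma~\ref{lem:fan_symmetry}) to swap the roles of opening and closing points. Once $\wt x_n\to\wt x$ and $\wt y_n\to\wt y$, the boundary data of $\wt\Fh$ are read off directly from those of $\wt\Fh_n$, and one sets $x(U)=\varphi^{-1}(\wt x)$, $y(U)=\varphi^{-1}(\wt y)$ as prime ends.

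In short: replace your local-connectedness/stabilization mechanism for marked-point convergence with the harmonic-function argument via \cite[Proposition~6.5]{ms2016imag1}, and note that Lemma~\ref{lem:fan_symmetry} is needed for the second marked point.
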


What we mean by the boundary conditions of the right or left side of a flow line is as follows. Suppose first that $x(U)$ and $y(U)$ are distinct prime ends of $\del U$, and let $g \colon U \to \h$ be a conformal map sending $x(U)$ and $y(U)$ to $0$ and $\infty$ respectively. The choice of $g$ will not be important for what we are about to do, but it can be fixed in some measurable way.
Then a harmonic function $\Fh$ on $U$ is said to have boundary values given by the right (resp.\ left) side of a flow line of angle $\tht$ on the clockwise (resp.\ counterclockwise) arc of $\del U$ from $x$ to $y$, if $\Fh$ has boundary conditions $\la - \tht\chi - \chi\arg g'$ (resp.\ $-\la - \tht \chi - \chi \arg g'$) on the clockwise (resp.\ counterclockwise) arc of $\del U$ from $x$ to $y$.
Note that the branch of the argument is not fixed here, so there are many different choices of such a function $\Fh$, each differing from the others by a global multiple of $2\pi\chi$. Clearly, $\arg g' \pmod{2\pi\chi}$ is measurable with respect to $\ss(U, x, y)$, but the choice of branch of $\arg$ is not measurable with respect to $\ss(U, x, y)$ since it depends on the realization of $\F(\tht_1, \tht_2)$ on $\h \sm U$.

If $x = y$, choose a prime end $v \in \del U$ not equal to $y$, and let $g \colon U \to \h$ be a conformal map sending $v$ to $0$ and $y$ to $\infty$. Then the harmonic function $\Fh$ has the boundary values of the left (resp.\ right) side of a flow line of angle $\tht$ if the boundary values are given by $-\la - \tht \chi - \chi\arg g'$ (resp.\ $\la - \tht\chi - \chi\arg g'$) on all of $\del U$.
Note that the choice of $v$ does not affect the boundary conditions, and that, as before, the choice of branch of the argument means that there are many different choices for such a function~$\Fh$.

\begin{proof}
\noindent{\it Step 1.  Overview and setup.}
Fix $z \in \h$ with rational coordinates and let $(\phi_n)$ be our enumeration of $(\Q \cap [\tht_1, \tht_2]) \cup \{\tht_1, \tht_2\}$.  For each $n \in \N$, let $\eta_n$ be the flow line of $h$ from $0$ to $\infty$ with angle $\phi_n$ and work on the a.s.\ event that  $z \notin \F(\tht_1, \tht_2)$.
Recall that $\eta_1 = \R_+$ and $\eta_2 = \R_-$.
For each $n \in \N$, $z$ must either be on the right side of $\eta_n$ (by which we mean that $z$ is in a connected component of $\h\sm\eta_n$ whose boundary contains part of $\R_+$) or on the left side.
By the monotonicity property of flow lines, if $\phi_n < \phi_m$ and $z$ lies to the right of $\phi_n$, then it necessarily lies to the right of $\phi_m$. It follows that we can define the following quantities,
\[\tht = \sup_n\{\phi_n \colon \text{$z$ lies on the left of $\eta_n$}\}, \quad \tht' = \inf_n\{\phi_n \colon \text{$z$ lies on the right of $\eta_n$}\},\]
and that $\tht \leq \tht'$. If this inequality is strict, we can find $\phi_n \in (\tht, \tht')$ and then $z$ must lie either on the left or right side of $\eta_n$, yielding a contradiction (to the definition of either $\tht$ or $\tht'$) in each case. We can conclude therefore that $\tht = \tht'$.

\begin{figure}[t]
\includegraphics[scale=0.8]{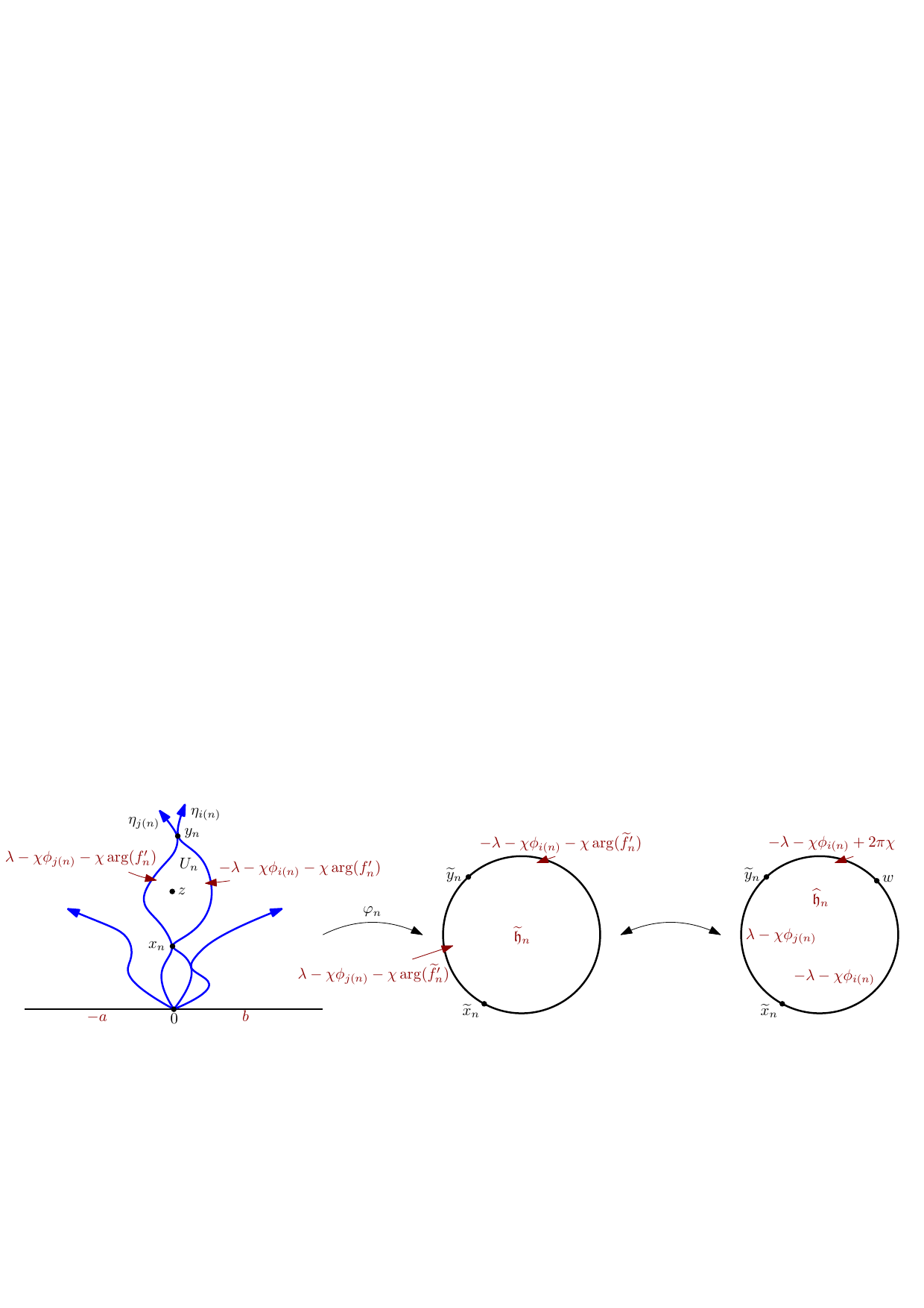}
\caption{On the left we show the finite set of flow lines $\F_n(\tht_1, \tht_2)$ along with the component $U_n$ defined in the proof of Lemma~\ref{lem:form_of_the_boundary}. We include the boundary conditions of $h_2$ restricted to $U_n$. Here, $f_n \colon U_n \to \h$ is a conformal map sending $x_n$ to $0$ and $y_n$ to $\infty$. In the central figure, we show $\wt{x}_n, \wt{y}_n$ and the boundary conditions of $\wt{\Fh}_2^n$ (up to an integer multiple of $2\pi\chi$ which is constant on $\D$ but may depend on $n$). The map $\wt{f}_n \colon \D \to \h$ sends $\wt{x}_n$ to $0$ and $\wt{y}_n$ to $\infty$, and satisfies $\wt{f}_n = f_n \circ \ff_n$. The term $\chi\arg(\wt{f}_n')$ has a jump of $2\pi\chi$ at $\wt{y}_n$. On the right, we depict the same situation, but now we include the point $w$ and show instead the boundary conditions of $\wh{\Fh}_2^n := \wt{\Fh}_2^n + \chi\arg\psi'$ (up to an integer multiple of $2\pi\chi$) in the case that $\wt{y}_n$ lies on the clockwise arc from $\wt{x}_n$ to $w$. This makes it easier to identify the jump in boundary conditions at $\wt{y}_n$. In the case that $\wt{y}_n$ lies on the counterclockwise arc from $\wt{x}_n$ to $w$, which we have not depicted, the boundary conditions of $\wh\Fh_n$ are instead given by. In this case, the boundary conditions of $\wh\Fh_n$ are given (up to an integer multiple of $2\pi\chi$) by $\lambda-\chi \phi_{j(n)}$ on the (clockwise) arc from $\wt x_n$ to $w$, by $\lambda-\chi \phi_{j(n)} - 2\pi \chi$ on the arc from $w$ to $\wt y_n$, and by $-\lambda-\chi \phi_{i(n)}$ on the arc from $\wt y_n$ to $\wt x_n$. In the proof of Lemma~\ref{lem:form_of_the_boundary} we use the local uniform convergence of $\wt{\Fh}_2^n$ to show that the points $\wt{x}_n, \wt{y}_n$ must converge, and using this identify the limit $\wt{\Fh}_2$.}
\label{fig:un_boundary_conditions}
\end{figure}

Let $\fan_n =\fan_n(\theta_1,\theta_2) =  \cup_{j=1}^n \eta_j$ and let~$U_n$ be the connected component of $\h \setminus \fan_n$ which contains $z$. Let $\phi_{i(n)}$ be the maximal angle among $(\phi_k)_{k \leq n}$ for which $z$ is on the left side of $\eta_k$. Let $\phi_{j(n)}$ be the corresponding minimal angle with $z$ on the right of $\eta_{j(n)}$. Since $\eta_1 = \R_+$ and $\eta_2 = \R_-$, $i(n)$ and $j(n)$ are both well-defined as soon as $n \geq 2$, and by the Hausdorff convergence of flow lines (Proposition~\ref{prop:bounded_hausdorff}), a.s. for large enough $n$ we will have that $i(n) \neq 1, j(n) \neq 2$, so that $\eta_{i(n)}, \eta_{j(n)}$ are actual flow lines (and not just $\R_+$ or $\R_-$).
Then there exist $x_n,y_n \in \partial U_n$ distinct which are the opening and closing points of the pocket $U_n$ respectively, so that the counterclockwise (resp.\ clockwise) boundary arc of $U_n$ from $x_n$ to $y_n$ is part of the flow line $\eta_{i(n)}$ (resp.\ $\eta_{j(n)}$).
By the monotonicity of flow lines in their angle, we note that $\phi_{i(n)}$ is increasing in $n$ and therefore converges to some value $\phi$, where necessarily $\phi \leq \theta$. If $\phi < \theta$ then there exists k such that $\phi < \phi_k < \theta$. Suppose $n \geq k$. Then $z$ is on the left side of $\eta_k$, and by the definition of $\eta_{i(n)}$ as the flow line of maximal angle $\phi_j$ with $j \leq n$ for which $z$ is on the left side of $\eta_j$, we must have $\phi_{i(n)} \geq \phi_k > \phi$, a contradiction. We can perform an analogous argument for $\phi_{j(n)}$ to show that $\lim_n \phi_{i(n)} = \lim_n \phi_{j(n)} = \theta = \theta'$. Our setup is depicted in Figure~\ref{fig:un_boundary_conditions}.

For each $n \in \N$, we can write $h|_{\h \setminus \fan_n}$ as $h = h_1^n + h_2^n$ where $h_2^n$ is a distribution which is harmonic in $\h \setminus \fan_n$ and the conditional law of $h_1^n$ given $\CF_n = \sigma(h_2^n ,  \fan_n)$ is that of a $\text{GFF}$ with zero boundary conditions in $\h \setminus \fan_n$.  Also,  we can write $h|_{\h \setminus \fan(\theta_1,\theta_2)}$ as $h = h_1 + h_2$ where $h_2$ is a distribution which is harmonic in $\h \setminus \fan(\theta_1,\theta_2)$ and the conditional law of $h_1$ given $\mathcal{F} = \sigma(h_2,\fan(\theta_1,\theta_2))$ is that of a GFF with zero boundary conditions in $\h \setminus \fan(\theta_1,\theta_2)$. Then \cite[Proposition~6.5]{ms2016imag1} implies that $h_2^n \to h_2$ locally uniformly as $n \to \infty$ a.s.
Let $\Fh_n = h_2^n|_{U_n}$ and let $\Fh = h_2|_U$. In the remainder of the proof we will use the fact that $\Fh_n \to \Fh$ locally uniformly on $U$ and the information we have on the boundary conditions of $\Fh_n$ to describe the boundary conditions of $\Fh$. This is a quite technical argument and the details can be skipped on a first reading. Much of the following is depicted in Figure~\ref{fig:un_boundary_conditions}.
First, let $f_n \colon U_n \to \h$ be a conformal map (chosen in a measurable way) which sends $x_n$ to $0$ and $y_n$ to $\infty$. By \cite[Proposition 6.1]{ms2016imag1}, $\Fh^n = h_2^n|_{U_n}$ is the harmonic function on $U_n$ with boundary values given by $-\la - \chi\phi_{i(n)} - \chi\arg f_n'$ (resp.\ $\la - \chi\phi_{j(n)} - \chi\arg f_n'$) on the counterclockwise (resp.\ clockwise) arc of $\del U_n$ from $x_n$ to $y_n$.
As noted in \cite{ms2016imag1}, the branch of the argument must be chosen so that the boundary values $\Fh_n$ agree with those of the conditional law of $\eta_{\phi_{i(n)}}$ (equivalently $\eta_{j(n)})$) on a segment of $\del U_n$ which agrees with $\eta_{i(n)}$.
In the following argument, often the boundary conditions we specify will be correct up to a choice of branch of $\arg$, or equivalently up to an integer multiple of $2\pi\chi$. We will point out whenever this is relevant, since it does have an affect on our proof.
Note that $\arg f_n'\pmod{2\pi\chi}$ is measurable with respect to $\ss(U_n, x_n, y_n)$, but that the branch of argument we choose is \emph{not} measurable with respect to $\ss(U_n, x_n, y_n)$, since it depends on the behavior of the flow lines $\eta_1, \dots, \eta_n$ outside $U_n$.

For each $n \in \N$, let $\ff_n \colon U_n \to \D$ be the unique conformal map with $\ff_n(z) = 0$ and $\ff_n'(z) > 0$ and let $\ff \colon U \to \D$ be the unique conformal map with $\ff(z) = 0$ and $\ff'(z) > 0$.
The results of \cite[Section~1]{pommerenke1992boundary} imply that $U_n$ converges to $U$ in the Caratheodory kernel sense as seen from $z$ as $n \to \infty$ a.s.\ (see \cite[Section~1]{pommerenke1992boundary} for the definition of Caratheodory kernel convergence).
In particular,  we have that $\varphi_n \to \varphi$ locally uniformly as $n \to \infty$ a.s.
Define $\wt \Fh_n = \Fh_n \circ \ff_n\nv - \chi \arg_0(\ff_n\nv)'$ where $\arg_0$ is the fixed branch of $\arg$ chosen such that $\arg_0 (\ff_n\nv)'(0) = 0$ (this can be done since the derivative is positive here).
Similarly, define $\wt\Fh = \Fh \circ \ff\nv - \chi\arg_0(\ff\nv)'$ where $\arg_0$ is the branch with $\arg_0(\ff\nv)'(0) = 0$.
Since $\Fh_n, \ff_n\nv$ and $(\ff_n\nv)'$ converge locally uniformly to $\Fh, \ff\nv$ and $(\ff\nv)'$, respectively, and since we have fixed the branches of $\arg$ in the same way, we have that $\wt\Fh_n \to \wt\Fh$ locally uniformly as $n \to \infty$.

Define $\wt x_n = \ff_n(x_n), \wt y_n = \ff_n(y_n)$ and let $\wt f_n = f_n \circ \ff_n\nv$ so that $\wt f_n \colon \D \to \h$ and $\wt f_n$ sends $\wt x_n$ to $0$ and $\wt y_n$ to $\infty$. Then $\arg (\wt f_n)' = \arg(f_n' \circ \ff_n\nv) + \arg_0(\ff_n\nv)'$, where the branch of the argument on the left-hand side is determined by the choices of branch on the right, which we have already made (so there is no new choice here).
Then $\wt\Fh_n$ is the unique harmonic function on $\D$ with boundary conditions (up to an integer multiple of $2\pi\chi$ which is constant on $\D$ but may depend on $n$) given by
\begin{align*}
    -\la - \chi\phi_{i(n)} - \chi\arg (\wt f_n)' &\quad\text{on $\del \D_R$,}\\
    +\la - \chi\phi_{j(n)} - \chi\arg(\wt f_n)' &\quad\text{on $\del \D_L$,}
\end{align*}
where $\del \D_R$ (resp.\ $\del \D_L$) is the counterclockwise (resp.\ clockwise) arc of $\del \D$ from $\wt x_n$ to $\wt y_n$. These boundary conditions are depicted in Figure~\ref{fig:un_boundary_conditions}.
In Step 2 we will show that it is a.s.\ the case that both of the sequences $(\wt{x}_n)$ and $(\wt{y}_n)$ converge.
Then,  in Step 3,  we will identify the boundary conditions of $\Fh$ and prove the main claim of the lemma.

\noindent{\it Step 2.  Convergence of $(\wt{x}_n)$ and $(\wt{y}_n)$.} First we will show that a.s.\ $(\wt{x}_n)$ converges to a (random) point $\wt{x} \in \del \D$. The idea is to use the fact that $\wt{\Fh}_n \to \wt{\Fh}_2$ locally uniformly and that the boundary conditions of $\wt{\Fh}_n$ have a discontinuity (which we can describe explicitly) at $\wt{x}_n$ to show that $(\wt{x}_n)$ must converge. Suppose that with positive probability $(\wt{x}_n)$ does not converge.  Then with positive probability,  there exist $d>0,\,  x^\mu, x^\la, y^\mu, y^\la \in \partial \D$ and subsequences $(\wt{x}_{\mu_n}),  (\wt{x}_{\lambda_n})$ such that $\wt{x}_{\mu_n} \to x^\mu,  \wt{x}_{\lambda_n} \to x^\la, \wt{y}_{\mu_n} \to y^\mu$,  and $\wt{y}_{\lambda_n} \to y^\la$ as $n \to \infty$,  and $|x^\mu - x^\la|\geq d$.
We fix $w \notin \{x^\mu, x^\la, y^\mu, y^\la\}$ and let $\psi$ be the conformal transformation mapping $\D$ onto $\h$ such that $\psi(0) = i$ and $\psi(w) = \infty$. 
Let $\arg_0 \psi'$ be a fixed branch of $\arg\psi'$ (the one where $\arg(\psi'(0)) \in (-\pi, \pi]$, say) and define\footnote{Note that $\wh\Fh_n$ is defined on $\D$, not $\h$, so $(\D, \wt\Fh_n)$ and $(\D, \wh\Fh_n)$ are not the same \emph{imaginary surface} in the sense of \cite{ms2016imag1}. We introduce $\wh\Fh_n$ because its boundary conditions are simpler than those of $\wt\Fh_n$.} $\wh\Fh_n = \wt\Fh_n + \chi\arg\psi',\,\wh\Fh = \wt\Fh + \chi\arg\psi'$.
We immediately have that $\wh\Fh_n \to \wh\Fh$ locally uniformly as $n \to \infty$.
Both $f_n \circ \ff_n\nv$ and $\psi$ are conformal maps from $\D$ to $\h$ sending, respectively, $\wt{y}_n$ and $w$ to $\infty$. It can be shown that $\arg(\wt f_n)' - \arg \psi'$ has discontinuities on, $\del \D$, of magnitude $2\pi$ at $\wt{y}_n$ and $w$, and that the boundary conditions of $\wh\Fh_n$ depend on the cyclic order of $\wt x_n, \wt y_n$ and $w$.
In the case that $\wt y_n$ lies on the clockwise arc from $\wt x_n$ to $w$, the boundary conditions (up to a multiple of $2\pi\chi$) of $\wh\Fh_n$ are given by $\lambda- \phi_{j(n)} \chi$ on the (clockwise) arc from $\wt x_n$ to $\wt y_n$, by $-\la - \phi_{i(n)}\chi + 2\pi\chi$ on the arc from $\wt y_n$ to $w$, and by $-\lambda- \phi_{i(n)}\chi$ on the arc from $w$ to $\wt x_n$.
This case is depicted in Figure~\ref{fig:un_boundary_conditions}.
Otherwise, $\wt y_n$ lies on the counterclockwise arc from $\wt x_n$ to $w$, and the boundary conditions are given by $\lambda- \phi_{j(n)}\chi$ on the (clockwise) arc from $\wt x_n$ to $w$, by $\lambda- \phi_{j(n)}\chi - 2\pi \chi$ on the arc from $w$ to $\wt y_n$, and by $-\lambda- \phi_{i(n)}\chi$ on the arc from $\wt y_n$ to $\wt x_n$.
We emphasize that these boundary conditions are correct only up to an integer multiple $2\pi\chi k_n$, with $k_n \in \Z$, coming from the chosen branch of $\arg f_n'$. The quantity $k_n$ is global in the sense that it is constant on $\ov\D$, but it may change in $n$.

Suppose we are in the case that $x^\la\neq y^\la$. By possibly taking $d>0$ to be smaller,  we can also assume that $\dist(w,\{\wt{x}_{\mu_n},\wt{y}_{\mu_n},\wt{x}_{\lambda_n},\wt{y}_{\lambda_n}\}) \geq d$ and that $\dist(\wt{x}_{\la_n}, \{\wt{x}_{\mu_n}, \wt{y}_{\la_n}\}) \geq d$ for all $n$ sufficiently large.
Then the boundary conditions (up to a multiple of $2\pi\chi$) of $\wh{\Fh}_{\lambda_n}$ on $B(\wt{x}_{\lambda_n},d) \cap \partial \D$ are given by $\lambda- \phi_{j(\lambda_n)}\chi$ (resp.\ $-\lambda- \phi_{i(\lambda_n)}\chi$) on the left\footnote{By the \emph{left} side of $\wt x_n$, we mean the arc of $\del \D \cap B(\wt x_n, d)$ on the clockwise side of $\wt x_n$.} (resp.\ right) side of $\wt{x}_{\lambda_n}$. 
If $\wt{y}_{\mu_n} \in B(\wt{x}_{\lambda_n}, d) \cap \del \D$ then the boundary conditions (up to a multiple of $2\pi\chi$) of $\wh{\Fh}_{\mu_n}$ on $B(\wt{x}_{\lambda_n},d) \cap \partial \D$ are given by either $-\lambda- \phi_{i(\mu_n)}\chi+2\pi \chi$ to the left of $\wt{y}_{\mu_n}$ and $\lambda- \phi_{j(\mu_n)}\chi$ to its right, or by $-\lambda- \phi_{i(\mu_n)}\chi$ to the left of $\wt{y}_{\mu_n}$ and $\lambda-\phi_{j(\mu_n)}\chi - 2\pi \chi$ to its right (note that since we are only specifying the boundary conditions up to multiples of $2\pi\chi$, these two possibilities are effectively the same).
If $\wt{y}_{\mu_n}$ is not on this arc, then the boundary values of $\wh{\Fh}_{\mu_n}$ are constant on $B(\wt{x}_{\lambda_n}, d) \cap \del \D$ and are given by one of the four values just listed.
Since $\phi_{i(n)}, \phi_{j(n)} \to \tht$, we can assume $\la_1, \mu_1$ are large enough that all of the angles $\phi_{i(\la_n)}, \phi_{j(\la_n)}, \phi_{i(\mu_n)}, \phi_{j(\mu_n)}$ lie in $(\tht - \DD\tht, \tht + \DD\tht)$ for some small $\DD\tht > 0$.
In any of the cases where we have constant boundary values, it can be checked that there must always be some arc $I_n$ of $B(\wt{x}_{\lambda_n},d) \cap \del \D$ of length at least $d/2$ on which the absolute value of the difference between the boundary values of $\wh{\Fh}_{\mu_n}$ and $\wh{\Fh}_{\la_n}$ is at least $\min(2\pi\chi, 2\la) - 2\chi\DD\tht$. In the case where the boundary values change, by checking various cases we see that there must be an arc $I_n$ of $B(\wt{x}_{\lambda_n},d) \cap \del \D$ of length at least $d/2$ on which the absolute value of the difference between the boundary values is at least $\min(\la, \pi\chi) - 2\DD\tht \chi$. These differences are at least $\chi$ for a small enough choice of $\DD\tht$.
We note that in determining the difference in boundary conditions above, we have accounted for the fact that the boundary conditions of $\wh\Fh_{\la_n}, \wh\Fh_{\la_n}$ have been described only up to multiples of $2\pi\chi$, and that the above statements are true even if we alter $\wh\Fh_{\la_n} - \wh\Fh_{\mu_n}$ by $2\pi\DD K \chi$ for some $\DD K \in \Z$. In particular, these statements are true for the actual values of $\wh\Fh_{\la_n}$ and $\wh\Fh_{\mu_n}$, not just for a particular (possibly incorrect) choice of multiple of $2\pi\chi$.

Fix $\dd \in (0, d/8)$, let $a_n$ be the midpoint of the above arc $I_n$, and let $b_n$ be the point on the segment $[0, a_n]$ such that $|a_n - b_n| = \dd$. It follows that in every case we have that the absolute value of the difference of the boundary values on $B(a_n,d/8) \cap \partial \D$ between $\wh{\Fh}_{\mu_n}$ and $\wh{\Fh}_{\lambda_n}$ is at least $\chi > 0$ for all $n$ sufficiently large.
The Beurling estimate implies that there exists a universal constant $C$ so that the probability that a Brownian motion starting from $b_n$ exits $\D$ in $\C \setminus B(a_n,d/8)$ is at most $C(8\delta/d)^{1/2}$.  Also,  there exists $M<\infty$ depending only on $\kappa$ and $\theta$ so that $||\wh{\Fh}_n||_{\infty} \leq M$ for all $n$.  It follows that
\begin{align*}
|\wh{\Fh}_{\mu_n}(b_n)-\wh{\Fh}_{\lambda_n}(b_n)| \geq \chi - MC(8\delta/d)^{1/2} \geq \frac{\chi}{2} > 0 
\end{align*}
for all $n$ sufficiently large if we choose $\delta$ such that $MC(8\delta/d)^{1/2} < \chi/2$.  But this is a contradiction since 
\begin{align*}
\sup_{z \in \overline{B(0,1-\delta)}} |\wh{\Fh}_{\mu_n}(z)-\wh{\Fh}_{\lambda_n}(z)| \to 0\,\,\text{as}\,\,n \to \infty.
\end{align*}
In the case that $x^\mu \neq y^\mu$ an analogous argument gives a contradiction.

Next,  we assume that $x^\la = y^\la$ and $x^\mu = y^\mu$.  Then, either $\wt{y}_{\lambda_n}$ lies in the counterclockwise arc of $\partial \D$ from $\wt{x}_{\lambda_n}$ to $\wt{x}_{\mu_n}$ for infinitely many values of $n$ or $\wt{y}_{\lambda_n}$ lies in the clockwise arc of $\partial \D$ from $\wt{x}_{\lambda_n}$ to $\wt{x}_{\mu_n}$ for infinitely many values of $n$. Suppose that the former case holds.  Then,  we can assume that $\wt{y}_{\lambda_n}$ lies in the counterclockwise arc of $\partial \D$ from $\wt{x}_{\lambda_n}$ to $\wt{x}_{\mu_n}$ for all $n$.
Let $d>0$ be such that $|x^\la - x^\mu|\geq d$ and $\dist(w,\{x^\la, x^\mu\}) \geq d$.  There exists $n_0$ so that $|\wt{x}_{\lambda_n}-\wt{y}_{\lambda_n}|\leq d/100,  |\wt{x}_{\mu_n}-\wt{y}_{\mu_n}|\leq d/100,  |\wt{x}_{\lambda_n}-x^\la|\leq d/100$,  and $|\wt{x}_{\mu_n}-x^\mu|\leq d/100$,  for all $n \geq n_0$.
The boundary conditions (up to a multiple of $2\pi\chi$) of $\wh\Fh_{\la_n}$ on $B(x^\la, d/2)$ are given by $\la - \phi_{j(\la_n)}\chi$ on the left side of $\wt x_{\la_n}$ and by $\la - \phi_{j(\la_n)}\chi-2\pi\chi$ on the right side of $\wt y_{\la_n}$. The boundary conditions of $\wh\Fh_{\mu_n}$ are constant in this region, so there must exist an arc of $\del \D$ of length at least $d/4$ on which the difference between the boundary values of $\wh\Fh_{\la_n}$ and $\wh\Fh_{\mu_n}$ is at least $\chi$ for all $n \geq n_0$, yielding a contradiction as before. If $\wt{y}_{\lambda_n}$ lies in the clockwise arc of $\partial \D$ from $\wt{x}_{\lambda_n}$ to $\wt{x}_{\mu_n}$ for infinitely many values of $n$,  then a similar argument leads to a contradiction.  This proves the convergence of $(\wt{x}_n)$. 

Now we prove the convergence of $(\wt{y}_n)$ to a point $\wt{y}$ using Lemma~\ref{lem:fan_symmetry}.  Let $\phi$ be as in Lemma~\ref{lem:fan_symmetry} and set $\wt{\fan} = \phi(\fan(\theta_1,\theta_2))$ and $\wt{\fan}_n = \phi(\fan_n)$ for all $n$.  Then we have that $\wt{U}_n = \phi(U_n)$ is the connected component of $\h \setminus \wt{\fan}_n$ containing $\wt{z} = \phi(z)$ for all $n$.  Moreover,  we have that $\phi(y_n)$ (resp.\ $\phi(x_n)$) is the opening (resp.\ closing) point of $\wt{U}_n$.  Let $\wt{\varphi}_n$ be the conformal transformation mapping $\wt{U}_n$ onto $\D$ such that $\wt{\varphi}_n(\wt{z}) = 0$ and $\wt{\varphi}_n'(\wt{z}) > 0$ and note that $\wt{\varphi}_n = e^{-2i\arg(z)} \varphi_n^{-1} \circ \phi^{-1}|_{\wt{U}_n}$.  Combing Lemma~\ref{lem:fan_symmetry} with the a.s.\ convergence of $(\wt{x}_n)$ above,  we obtain that the sequence $(\wt{\varphi}_n(\phi(y_n)))$ converges to some point on $\partial \D$.  But since $\wt{\varphi}_n(\phi(y_n)) = e^{-2i\arg(z)} \wt{y}_n$,  it follows that $(\wt{y}_n)$ converges a.s.

\noindent{\it Step 3. Identifying the boundary conditions of $\Fh$.} 
We define $x = x(U) = \ff\nv(\wt{x}), y = y(U) = \ff\nv(\wt{y})$, both of which are defined as prime ends in $\del U$. Suppose first that $\wt{x} \neq \wt{y}$ so that $x(U)$ and $y(U)$ are distinct prime ends in $\del U$.
In this case, by considering $\wt\Fh_n(0)$, we see that the correct branch of $\arg(\wt f_n)'$ is eventually the same for all sufficiently large $n$, in the sense that the limit $\arg(\wt f_n)'(z)$ exists for all $z \in \D$ and does not depend on $z$ (if this were not the case, $\wt\Fh_n$ would not converge locally uniformly).
Note that $\arg(\wt f_n)'\pmod{2\pi\chi}$ is actually determined only by $\wt y_n$ (the point mapped to $\infty$), and not on $\wt x_n$ or indeed on the implicit scale factor present in the choice of $\wt f_n'$.
Then we can identify $\wt{\Fh}$ (using the explicit descriptions of $\wt{\Fh}_n$) as the harmonic function with boundary conditions given by $\la - \tht\chi - \chi\arg\wt{f}'$ on the clockwise arc from $\wt{x}$ to $\wt{y}$, and by $-\la - \tht\chi - \chi\arg\wt{f}'$ on the counterclockwise arc, where $\wt f \colon \D \to \h$ is a conformal map with $\wt f(\wt{x}) = 0, \wt f(\wt{y}) = \infty$ (compare to Figure~\ref{fig:un_boundary_conditions}).
The corresponding boundary conditions for $\Fh$ are given by $\la - \tht\chi - \chi \arg f'$ on the counterclockwise arc from $x$ to $y$, and by $-\la - \tht\chi - \chi \arg f'$ on the clockwise arc, where $f = \wt f \circ \ff$ is a conformal map sending $x$ to $0$ and $y$ to $\infty$. The correct branch of $\arg f'$ here is determined by the branches of $\arg f_n'$ (via $\arg(\wt f_n)'$), and is measurable with respect to the flow lines $\eta_1, \dots, \eta_n$, but not with respect to $U$ on its own.

It remains then to prove the result when $\wt{x} = \wt{y}$. The issue that can arise here is if, as $\wt{x}_n$ and $\wt y_n$ converge to $\wt{x}$, the side of $\wt{x}_n$ that $\wt{y}_n$ lies on changes at arbitrarily high values of $n$.
Away from any small region around $\wt x$, for sufficiently large $n$ the boundary conditions of $\wh\Fh_n$ (up to a multiple of $2\pi\chi$) are given by $\la - \phi_{j(n)}\chi - \chi\arg(\wt f_n)'$ (resp.\ $-\la - \phi_{i(n)}\chi - \chi\arg(\wt f_n)'$) if $\wt y_n$ is to the left (resp.\ right) of $\wt x_n$.
Since $\wt\Fh_n$ converges locally uniformly as $n \to \infty$, we would like to conclude that $\wt y_n$ must eventually always lie on the same side of $\wt x_n$. However, since the boundary values of $\wh\Fh_n$ are given by the above only up to a multiple of $2\pi\chi$ which may depend on $n$, it is actually not possible to do so at this stage.
In particular, suppose $\wt y_n$ lies to the left of $\wt x_n$ and $\wt y_m$ lies to the right of $\wt x_m$ (for large enough $n$ that these points are very close to $\wt x$ and that $\phi_{i(n)}, \phi_{j(n)}, \phi_{i(m)}, \phi_{j(m)}$ are close to $\tht$). Then, at any point of $\del \D$ far away from $\wt x$, the difference $\wh\Fh_n - \wh\Fh_m$ is given by $2\la  -(\phi_{j(n)} - \phi_{i(m)}) \chi - \chi(\arg(\wt f_n)' - \arg(\wt f_m)')$.
The final term here is of the form $2\pi \DD K \chi$ where  $\DD K = -(\arg(\wt f_n)' - \arg(\wt f_m)') \in \Z$, and in particular may not be $0$. If $2\la + 2\pi\DD K \chi$ is bounded away from $0$ for all $\DD K \in \Z$, then for large enough $n, m$ (so that the difference in angles is small) we can obtain a contradiction to the locally uniform convergence of $\wt\Fh_n$ using similar methods to Step 2.
However, for $\kk = 2, 3$ and more generally $\kk = 4 - 2/\DD K$ for $\DD K \in \N$, we have $2\la + 2\pi \DD K \chi = 0$, and that we cannot rule out at the present that the side of $\wt x_n$ that $\wt y_n$ lies on may change at arbitrarily high values of $n$ (however, in Lemma~\ref{lem:component_jordan_domain} we will actually show that a.s.\ $\wt x \neq \wt y$, so this whole case never actually occurs).

We will now identify the boundary conditions of $\wt\Fh$ and $\Fh$ in the case $\wt x = \wt y$. Fix $\wt{v} \in \del\D$ with $\wt{v} \neq 
\wt x$ and suppose in the following that $n$ is large enough that $\wt x_n, \wt y_n \in B(\wt x, \dist(\wt x, \wt v)/2)$.
Define $\wt g \colon \D \to \h$ to be a conformal map sending $\wt v$ to $0$ and $\wt y_n$ to $\infty$. Up to a choice of branch, $\arg(\wt g_n)'$ does not depend on $\wt v$ and in fact, $\arg(\wt g_n)' = \arg(\wt f_n)'$ on $\del D \sm \{\wt y_n\}$.
Suppose now that $\wt y_n$ lies to the left of $\wt x_n$ for all sufficiently large $n$. Then from the locally uniform convergence of $\wt\Fh_n$, the branch of $\arg(\wt f_n)'$ must eventually stabilize in the sense that $\lim_{n \to \infty} \arg(\wt f_n)'(z)$ exists for all $z \in \D$ and does not depend on $z$.
If the branch of $\arg(\wt g_n)'$ is chosen such that $\arg(\wt g_n)' = \arg(\wt f_n)'$ on $\del \D \sm \{\wt y_n\}$, then we have that $\arg(\wt g_n)' \to \arg(\wt g)'$ where $\wt g \colon \D \to \h$ is a conformal map sending $\wt v$ to $0$ and $\wt y$ to $\infty$.
Therefore, the boundary conditions of $\wt\Fh$ are given by $-\la - \tht\chi - \chi\arg(\wt g)'$ on all of $\del \D \sm \{\wt y\}$ where the branch of the argument is chosen depending on the branches of $\arg(\wt g_n)'$. It follows that the boundary conditions of $\Fh$ on $\del U$ are given by $-\la - \tht \chi - \chi\arg g'$, where $g = \wt g \circ \ff$. Note that these boundary conditions are actually unchanged if we replace $g$ by any conformal map $g_1 \colon U \to \h$ which sends $y$ to $\infty$.
Similarly, in the case where $\wt y_n$ eventually always lies to the right of $\wt x_n$, we can conclude that the boundary conditions of $\Fh$ are given by $\la - \tht \chi - \chi\arg g'$, where $g$ is defined as before.

It remains finally to identify $\wt\Fh$ and $\Fh$ in the case that the cyclic order of $\wt x_n, \wt y_n, \wt v$ changes for arbitrarily high values of $n$, where $\wt v$ is defined as above. Let $\wt g_n, \wt g$ and $g$ be as above, and note that if we restrict to the subsequence $(R_n)$ of integers where $\wt y_{R_n}$ lies to the right of $\wt x_{R_n}$ for all $n$, then, by the locally uniform convergence of $\wt\Fh_n$, the branch of $\arg(\wt f_{R_n})'$ must again stabilize, meaning we have $\arg(\wt g_{R_n})' \to \arg(\wt g)'$ for the correct choices of branches of the argument as in the previous case.
We conclude again that the boundary conditions of $\wt\Fh$ are given by $\la - \tht \chi - \chi\arg(\wt g)'$ on $\del \D \sm \{\wt y\}$, and hence that the boundary conditions of $\Fh$ are given by $\la - \tht \chi - \chi\arg g'$ on $\del U$.
Note that we could have instead determined the boundary conditions of $\wt\Fh$ and $\Fh$ using the subsequence $(L_n)$ where $\wt y_{L_n}$ lies to the left of $\wt x_{L_n}$. The resulting boundary conditions for $\Fh$ are then given by $-\la - \tht \chi - \chi\arg_1 g'$, where the branch of $\arg_1 g'$ is chosen in such a way that these boundary conditions \emph{are the same} as those given by $-\la - \tht \chi - \chi\arg_1 g'$ on $\del U$.
We emphasize that the branches $\arg g'$ and $\arg_1 g'$ must differ by $2\la$ since $\Fh_{R_n}$ and $\Fh_{L_n}$ must converge locally uniformly to the same harmonic function, ensuring that we do not have an issue in this pathological case.
Therefore we have identified the boundary conditions of $\Fh$ in every case, completing the proof of the lemma.
\end{proof}

We emphasize that in Lemma~\ref{lem:form_of_the_boundary} we have not shown that $x(U),y(U) \in \partial U$ are distinct so that $\partial U$ has two non-degenerate boundary arcs. The first step in showing this is the following lemma.
We will make use of the following notation.  Let $\gamma \colon [0,1] \to \ol{\h}$ be a simple curve with $\gamma(0) \in (-\infty,0)$, $\gamma(1) \in (0,\infty)$, and $\gamma(t) \in \h$ for each $t \in (0,1)$.  Let $\fan(\theta_1,\theta_2;\gamma)$ be the closure of the union of the flow lines of $h$ from $0$ to $\infty$ with rational angles in $[\theta_1,\theta_2]$ and stopped upon first hitting $\gamma$.  
This is a local set by Lemma~\ref{lem:union_of_local_sets}.
Let $\Fh$ be a distribution on $\h$ which is harmonic in $\h \setminus \fan(\theta_1,\theta_2;\gamma)$ so that we can write $h = h^0 + \Fh$ where given $\CF_\gamma = \sigma(\Fh, \fan(\theta_1,\theta_2;\gamma))$ we have that $h^0$ is a GFF with zero boundary conditions in $\h \setminus \fan(\theta_1,\theta_2;\gamma)$.  The next lemma describes the boundary conditions of $h$ when restricted to the bounded connected components of $\h \setminus (\fan(\theta_1,\theta_2;\gamma) \cup \gamma)$ whose boundaries intersect $\gamma$.

\begin{lemma}
\label{lem:flow_line_representation}
Suppose that we have the setup described just above.  Suppose that $V$ is a bounded connected component of $\h \setminus (\fan(\theta_1,\theta_2;\gamma) \cup \gamma)$ with $\partial V \cap \fan(\theta_1,\theta_2;\gamma) \neq \emptyset$.  We will view $\partial V$ as a collection of prime ends in $V$.  Then there exists $z(V) \in \partial V$ and $\theta(V) \in \R$ so that the boundary conditions for the conditional law of $h$ given $\CF_\gamma$ on $\partial V \cap \fan(\theta_1,\theta_2;\gamma)$ are given by the right (resp.\ left) side of a flow line of angle $\theta(V)$ starting from $z(V)$ and going in the clockwise (resp.\ counterclockwise) direction until hitting $\gamma$.
\end{lemma}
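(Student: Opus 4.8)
The plan is to mirror the proof of Lemma~\ref{lem:form_of_the_boundary} via a finite-flow-line approximation, the only structural change being that every flow line is now stopped when it first hits $\gamma$, so that the ``closing point'' $y(U)$ of that argument is replaced by an arc of $\gamma$ and only the single ``opening point'' $z(V)$ remains.

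First I would reduce to a fixed reference point: it suffices to prove the assertion for $V=V(z)$, the connected component of $\h\setminus(\fan(\theta_1,\theta_2;\gamma)\cup\gamma)$ containing $z$, for each fixed $z$ with rational coordinates, and then take a countable union, since every bounded component has nonempty interior and hence contains such a point, and by Lemma~\ref{lem:fan_does_not_contain_point} we have $z\notin\fan(\theta_1,\theta_2;\gamma)$ a.s.\ (this set being contained in $\fan(\theta_1,\theta_2)$). Fix such a $z$, let $(\phi_n)$ enumerate the rational angles of $[\theta_1,\theta_2]$ together with $\theta_1,\theta_2$ (with the conventions $\eta_{\theta_1}=\R_+$, $\eta_{\theta_2}=\R_-$), let $\eta_n=\eta_{\phi_n}$, and let $\eta_n^\gamma$ be $\eta_n$ run until it first hits $\gamma$. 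Since $\gamma$ is deterministic each $\eta_n^\gamma$ is a local set, hence so is $\fan_n:=\cup_{j\le n}\eta_j^\gamma$, and $\fan(\theta_1,\theta_2;\gamma)$ is local by Lemma~\ref{lem:union_of_local_sets}. Decomposing $h=h_1^n+h_2^n$ on $\h\setminus\fan_n$ and $h=h^0+\Fh$ on $\h\setminus\fan(\theta_1,\theta_2;\gamma)$ as in the statement, \cite[Proposition~6.5]{ms2016imag1} gives $h_2^n\to\Fh$ locally uniformly on $\h\setminus\fan(\theta_1,\theta_2;\gamma)$ a.s., exactly as in Lemma~\ref{lem:form_of_the_boundary}.

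Next I would identify the angle and the finite-stage data. The stopped flow lines are monotone in their angle (the left, resp.\ right, component of $D_\gamma\setminus\eta_\phi^\gamma$, where $D_\gamma$ is the bounded component of $\h\setminus\gamma$, is nondecreasing, resp.\ nonincreasing, in $\phi$), so the angles with $z$ to the left of $\eta_\phi^\gamma$ form a down-set, those with $z$ to the right an up-set, and their common boundary value $\theta=\theta(V)$ is well defined, just as in Lemma~\ref{lem:form_of_the_boundary}. Let $\phi_{i(n)}\uparrow\theta$ (resp.\ $\phi_{j(n)}\downarrow\theta$) be the largest (resp.\ smallest) angle among $\phi_1,\dots,\phi_n$ with $z$ on the left (resp.\ right) of the corresponding stopped flow line; no angle $\phi_k$ with $k\le n$ lies strictly between them, so the flow line interaction rules \cite[Theorem~1.7]{ms2017ig4} show that the flow-line part of the boundary of $W_n$ --- the component of $\h\setminus\fan_n$ containing $z$ --- consists of an arc of $\eta_{i(n)}^\gamma$ and an arc of $\eta_{j(n)}^\gamma$ meeting at a single point $z_n$ (the last place these two curves touch before the slice containing $z$ opens up to $\gamma$), and that along these arcs the conditional boundary values of $h$ given $\sigma(h_2^n,\fan_n)$ are those of the left side of a flow line of angle $\phi_{i(n)}$ and of the right side of a flow line of angle $\phi_{j(n)}$ respectively, up to the coordinate change $-\chi\arg f_n'$ for a conformal map $f_n\colon W_n\to\h$ sending $z_n$ to $0$ and a fixed point of the $\gamma$-arc of $\partial W_n$ to $\infty$ (cf.\ \cite[Proposition~6.1]{ms2016imag1}), with the branch of $\arg f_n'$ pinned down by matching the known boundary data along $\eta_{i(n)}$, equivalently along $\eta_{j(n)}$.

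Finally I would pass to the limit, as in Lemma~\ref{lem:form_of_the_boundary}: mapping $W_n$ and $W$ (the component of $\h\setminus\fan(\theta_1,\theta_2;\gamma)$ containing $z$) conformally onto $\D$ fixing $z$, the $W_n$ converge to $W$ in the Carath\'eodory sense seen from $z$, these maps and their derivatives converge locally uniformly, hence so do the pulled-back harmonic functions; then the Beurling-estimate argument of Step~2 of the proof of Lemma~\ref{lem:form_of_the_boundary} --- applied here to the single marked flow-line point $z_n$, whose image in $\D$ carries a jump in the boundary data --- forces the image of $z_n$ to converge, defining a prime end $z(V)\in\partial V$, and the locally uniform convergence then identifies $\Fh$ on $W$ (hence on $V$) as the harmonic function whose boundary values on $\partial V\cap\fan(\theta_1,\theta_2;\gamma)$ are $-\lambda-\theta\chi-\chi\arg g'$ on the counterclockwise flow-line arc issuing from $z(V)$ and $\lambda-\theta\chi-\chi\arg g'$ on the clockwise one, where $g$ maps the relevant domain onto $\h$ sending $z(V)$ to $0$ and a point of $\gamma$ to $\infty$ and the branch of $\arg g'$ is inherited from the $\eta_n$ --- which is the stated description. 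The main obstacle, exactly as in Lemma~\ref{lem:form_of_the_boundary}, is bookkeeping the globally defined but $n$-dependent branch of $\arg f_n'$, now together with tracking how $\gamma$ cuts $\partial W_n$ and handling the degenerate configurations in which $z_n$ collides in the limit with the endpoints of the $\gamma$-arc; these are dealt with by the same case analysis (and, at the special values $\kappa=4-2/k$ with $k\in\N$, by carrying both admissible branches, which differ by $2\lambda$) used there.
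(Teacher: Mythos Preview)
Your overall strategy---approximate by finitely many stopped flow lines, use the increasing-local-set convergence $h_2^n\to\Fh$, and read off the limiting boundary data by tracking marked boundary points---is exactly the paper's. There are, however, two substantive issues in your execution.

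First, there is a confusion about the domain. You take $W_n$ to be the component of $\h\setminus\fan_n$ containing $z$; since the stopped flow lines never separate $z$ from $\infty$, this is the unbounded component $G_n$, whose boundary is made of pieces of $\partial\h$ and of the stopped fan and contains no arc of $\gamma$ at all---so ``a fixed point of the $\gamma$-arc of $\partial W_n$'' has no meaning. The paper does work in this same unbounded component $G_n$, but normalises differently: it uses as reference a rational point $\gamma(t)$ on the $\gamma$-side of $\partial V$ (mapped to $i$) together with $\infty$ (fixed), rather than an interior point $z$ of $V$. This makes the pushed-forward boundary data piecewise constant on the interval of $\R$ corresponding to $\partial V\cap\fan(\theta_1,\theta_2;\gamma)$, which is what the comparison argument needs.

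Second, you track only the single opening point $z_n$, but the paper must also track the two tips $y_n,z_n$ where $\eta_{j(n)}^\gamma,\eta_{i(n)}^\gamma$ first hit $\gamma$: these delimit the portion of $\partial G_n$ relevant to $\partial V$, and without knowing that their images $\tilde y_n,\tilde z_n$ converge you cannot locate where the flow-line part of $\partial V$ ends and the $\gamma$-part begins. The convergence of $\tilde y_n,\tilde z_n$ is \emph{not} obtained from the jump-in-boundary-data Beurling argument of Lemma~\ref{lem:form_of_the_boundary} (there is no jump at those points), but from a separate harmonic-measure argument exploiting the Hausdorff convergence of $\fan_n(\theta_1,\theta_2;\gamma)$ to the full stopped fan. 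Only after $\tilde y,\tilde z$ are fixed does the jump argument for the opening point $\tilde x_n$ run as you describe; your proposal omits this additional step.
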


In this case, by flow line boundary conditions, we mean that the boundary conditions for $h$ given $\CF_\gg$ are $-\la - \tht \chi - \chi\arg \psi'$ on the arc of $\del G$ from $z(V)$ to $\wh z$, and $\la - \tht \chi - \chi\arg \psi'$ on the arc of $\del G$ from $z(V)$ to $\wh y$, where $\wh y, \wh z$ are the prime ends at $y$ and $z$ respectively obtained by approaching $y$ or $z$ from the unbounded component of $\h \sm \gg$, and where the branch of $\arg \psi'$ is chosen such that $\lim_{z \to \infty} \arg \psi'(z) = 0$. Here,  $G$ is the unbounded connected component of $\h \setminus \fan(\theta_1,\theta_2)$ and $y$ (resp.  $z$) is the leftmost (resp.  rightmost) point on $\gamma \cap \partial V$ lying to the left (resp.  right) of $\gamma(t)$ (see also Figures~\ref{fig:gamma_Vn} and~\ref{fig:fntgamma}).

\begin{proof}

\noindent{\it Step 1.  Overview and setup.} Let $t \in (0,1) \cap \Q$ be fixed and note that $\gg(t) \notin \F(\tht_1, \tht_2 ;\gg)$ a.s. We work on this event and let $V$ be the bounded connected component of $\h \sm \fan(\theta_1,\theta_2;\gamma)$ which has $\gg(t)$ on its boundary. We will prove the claim of the lemma for this component $V$.
Let $(\phi_n)$ be the fixed enumeration of $(\Q \cap [\tht_1, \tht_2]) \cup \{\tht_1, \tht_2\}$ from Lemma~\ref{lem:form_of_the_boundary} and for each $n \in \N,  1 \leq j \leq n$,  we let $\eta_j$ be the flow line of $h$ from $0$ to $\infty$ with angle $\phi_j$,  stopped at the first time that it hits $\gamma$.  We then set $\fan_n(\theta_1,\theta_2;\gamma) = \cup_{j=1}^n \eta_j$.  Let also $\psi_n$ be the conformal transformation mapping the unbounded connected component $G_n$ of $\h \setminus \fan_n(\theta_1,\theta_2;\gamma)$ onto $\h$ so that $\psi_n(\gamma(t)) = i$ and $\psi_n(\infty) = \infty$.  
Note that $\fan_n(\theta_1,\theta_2;\gamma)$ converges in the Hausdorff sense to $\fan(\theta_1,\theta_2;\gamma)$ as $n \to \infty$,  and $G_n$ converges in the Caratheodory kernel sense to $G$,  where $G$ is the unbounded connected component of $\h \setminus \fan(\theta_1,\theta_2;\gamma)$.  Also,  for each $n \in \N$,  we have that $\fan_n(\theta_1,\theta_2;\gamma)$ is a local set for $h$ and so $h = h_n^0 + \Fh_n$,  where $h_n^0$ is a zero boundary $\text{GFF}$ on $\h \setminus \fan_n(\theta_1,\theta_2;\gamma)$ and $\Fh_n$ is harmonic on $\h \setminus \fan_n(\theta_1,\theta_2;\gamma)$. 
Similarly,  Lemma~\ref{lem:union_of_local_sets} implies that $\fan(\theta_1,\theta_2;\gamma)$ is a local set for $h$ and so we have that $h = h^0 + \Fh$,  where $h^0$ is a zero boundary $\text{GFF}$ on $\h \setminus \fan(\theta_1,\theta_2;\gamma)$ and $\Fh$ is harmonic on $\h \setminus \fan(\theta_1,\theta_2;\gamma)$.  Since $(\fan_n(\theta_1,\theta_2;\gamma))_n$ is an increasing sequence of local sets,  it follows from \cite[Proposition~6.5]{ms2016imag1} that $\Fh_n \to \Fh$ locally uniformly as $n \to \infty$ a.s. Moreover,  we have that $\psi_n \to \psi$ locally uniformly as $n \to \infty$ a.s.,  where $\psi$ is the conformal transformation mapping $G$ onto $\h$ such that $\psi(\gamma(t)) = i$ and $\psi(\infty) = \infty$.  We set $\wt{h}_n = h \circ \psi_n^{-1} - \chi \arg(\psi_n^{-1})',  \wt{\Fh}_n = \Fh_n \circ \psi_n^{-1} - \chi \arg(\psi_n^{-1})',  \wt{h} = h \circ \psi^{-1} - \chi \arg(\psi^{-1})'$,  and $\wt{\Fh} = \Fh \circ \psi^{-1} - \chi \arg(\psi^{-1})'$.  

Let $V_n$ be the bounded connected component of $\h \setminus (\fan_n(\theta_1,\theta_2;\gamma) \cup \gamma)$ whose boundary contains $\gamma(t)$.  
Then,  there exist unique $1 \leq i(n),  j(n)\leq n$ so that 
$\phi_{i(n)}$ is the maximal angle among $(\phi_k)_{k \leq n}$ for which $\eta_{i(n)}$ first hits $\gg$ to the right of $\gg(t)$, and $\phi_{j(n)}$ is the minimal angle with $\eta_{j(n)}$ hitting $\gg$ to the left of $\gg(t)$ (see Figure~\ref{fig:gamma_Vn}).
Note that since $\eta_{\tht_1} := \R_+$ and $\eta_{\tht_2} = \R_-$, we have $\eta_1 = [0, \gg(1)]$ and $\eta_2 = [\gg(0), 0]$. In particular, $i(n), j(n)$ are well-defined for $n \geq 2$ and, as in Lemma~\ref{lem:form_of_the_boundary}, a.s. for large enough $n$ we will have that $\eta_{i(n)}, \eta_{j(n)}$ are actual flow lines (and not segments of $\R$).
Then $\partial V_n$ consists of the following three arcs.  The first arc is a clockwise arc of $\eta_{j(n)}$ from $x_n$ to $y_n$ (where $x_n$ is the last intersection point between $\eta_{i(n)}$ and $\eta_{j(n)}$ before hitting $\gamma$ for the first time and $y_n$ is the first point of $\gamma$ that $\eta_{j(n)}$ hits),  the second arc is a counterclockwise arc of $\eta_{i(n)}$ from $x_n$ to $z_n$ (where $z_n$ is the first point of $\gamma$ that $\eta_{i(n)}$ hits),  and the third arc is the clockwise arc of $\gamma$ connecting $y_n$ to $z_n$.  
Arguing as in Lemma~\ref{lem:form_of_the_boundary} we can show that $(\phi_{i(n)})$ and $(\phi_{j(n)})$ converge to a common limit $\tht$, which we refer to as $\tht(V) \equiv \tht(\gg, t)$. Furthermore, by definition $y_{n+1}$ lies on the arc of $\gg$ from $y_n$ to $\gg(t)$ (often we have $y_n = y_{n+1}$) so $y := \lim_n  y_n$ must exist and lie on $\gg$. Analogously we can define $z = \lim_n z_n$.
In Step 2,  we will show that the sequences $(\psi_n(x_n)),  (\psi_n(y_n))$,  and $(\psi_n(z_n))$ converge to points $\wt{x}, \wt{y},$ and $\wt{z}$ on $\R$ respectively,  using the local uniform convergence of $\Fh_n$ to $\Fh$.  Then,  using this,  in Step 3 we will identify the boundary conditions of $\Fh$ and hence prove the lemma.

\begin{figure}[t]
\includegraphics[scale=0.8]{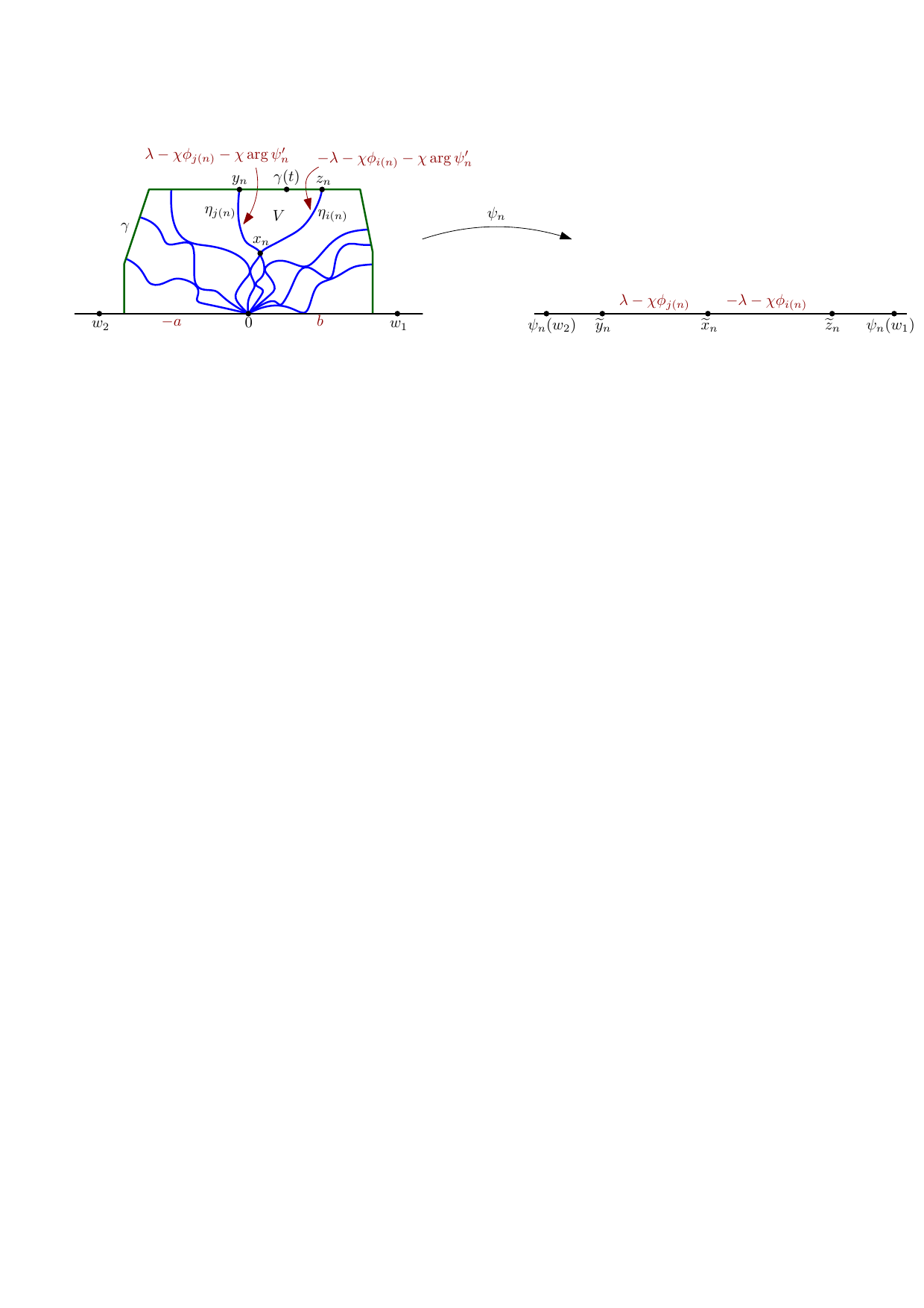}
\caption{On the left we depict the setup described in Lemma~\ref{lem:flow_line_representation} including the curve $\gg$ and the set $\F(\tht_1, \tht_2; \gg)$, along with the flow lines $\eta_{i(n)}, \eta_{j(n)}$ and the points $x_n, y_n$ and $z_n$. We show the boundary conditions of $\Fh_n$ on the left side of $\eta_{i(n)}$ and the right side of $\eta_{j(n)}$. The conformal map $\psi_n$ maps $G_n$, the unbounded connected component of $\h \sm \F_n(\tht_1, \tht_2; \gg)$, onto $\h$. On the right we show the boundary conditions of $\wt{\Fh}_n$ on $[\wt{y}_n, \wt{x}_n]$ and $[\wt{x}_n, \wt{z}_n]$. We use a harmonic measure argument to show that $(\wt{y}_n)$ and $(\wt{z}_n$) converge, and use the local uniform convergence of $(\wt{\Fh}_n)$ to show the convergence of $(\wt{x}_n)$ and identify the boundary conditions of $\wt{\Fh}$.}
\label{fig:gamma_Vn} 
\end{figure}

\noindent{\it Step 2.  Convergence of $\psi_n(x_n),  \psi_n(y_n)$,  and $\psi_n(z_n)$.} Set $\wt{x}_n = \psi_n(x_n),  \wt{y}_n = \psi_n(y_n)$,  and $\wt{z}_n = \psi_n(z_n)$,  for each $n \in \N$.  We will show that a.s.\ there exist $\wt{x},\wt{y},\wt{z} \in \R$ so that $\wt{x}_n \to \wt{x}$,  $\wt{y}_n \to \wt{y}$ and $\wt{z}_n \to \wt{z}$ as $n \to \infty$.  First,  we will show the a.s.\ convergence of $(\wt{y}_n)$ and $(\wt{z}_n)$.  Let $p_n$ (resp.\ $q_n$) be the harmonic measure in $G_n$ as seen from $\gamma(t)$ of the clockwise (resp.\ counterclockwise) arc of $\partial G_n$ from $y_n$ (resp.\ $z_n$) to $\infty$.  Then it suffices to show that both $(p_n)$ and $(q_n)$ converge as $n \to \infty$.  Fix $\epsilon>0$. 
Then,  a.s.\ there exists $n_0 \in \N$ such that for all $n \geq n_0$,  we have that the probability that a Brownian motion $B$ starting from $\gamma(t)$ exits $G_n$ on the arc of $\del G_n$ from $y_n$ to $y_{n_0}$ (i.e.\ $B$ exits $G_n$ either by hitting the left side of $\eta_{j(n)}$, the right side of $\eta_{j(n_0)}$, or either side of a flow line $\eta_k$ where $k \leq n$ and $\phi_{j(n)} < \phi_k < \phi_{j(n_0)}$) is at most $\epsilon$. 
For $n \geq m \geq n_0$, we claim that the probability that $B$ exits $G_n$ on the clockwise arc of $\partial G_n$ from $y_{n_0}$ to $\infty$ and does not exit $G_m$ on the clockwise arc of $\partial G_m$ from $y_m$ to $\infty$ is at most $\epsilon$ if we take $n$ and $m$ sufficiently large.
Indeed,  this follows because the Hausdorff distance (with respect to the spherical metric) between $\F_n(\tht_1,\tht_2;\gg)$ and $\F_m(\tht_1, \tht_2;\gg)$ tends to $0$ as $n , m \to \infty$ and so if the Brownian motion exits $G_n$ on the clockwise arc of $\partial G_n$ from $y_{n_0}$ to $\infty$, but does not exit $\del G_m$ on the clockwise arc from $y_m$ to $\infty$ then it will have to travel macroscopic distance without hitting the clockwise arc of $\partial G_m$ from $y_m$ to $\infty$.  But the latter probability can be made arbitrarily small by the Beurling estimate.  It follows that $p_n \leq p_m + 2\epsilon$ for all $n \geq m \geq n_0$,  and so $\limsup_{n \to \infty} p_n \leq 2\epsilon + \liminf_{n \to \infty} p_n$.  Since $\epsilon>0$ was arbitrary,  we obtain that $(p_n)$ converges a.s.\ and a similar argument shows the a.s.\ convergence of $(q_n)$.  This shows the a.s.\ convergence of $(\wt{y}_n)$ and $(\wt{z}_n)$,  and let $\wt{y},\wt{z}$ be their limits respectively.

Let $\wh{y}$ be the unique prime end of $\del G$ corresponding to approaching the point $y$ from the unbounded component of $\h\sm\gg$. We now show that $\wh y = \psi\nv(\wt y)$ by an argument similar to the above. Let $p$ be the probability that a Brownian motion $B$ started from $\gg(t)$ exits $G$ on the arc of $\del G$ from $\wh{y}$ to $-\infty$. Fix $\eps > 0$ and $n > n_0$ with $n_0$ large enough that the probability that $B$ exits $G$ on the arc of $\del G$ from $\wh{y}$ to (the prime end corresponding to) $y_{n_0}$ is at most $\eps$.
Then the probability that $B$ exits $G$ on the arc from $y_{n_0}$ to $-\infty$ and does not exit $G_n$ on the arc from $y_n$ to $-\infty$ is at most $\eps$ for $n$ large enough by the Beurling estimate, as before, since $\F_n(\tht_1,\tht_2;\gg)\to \F(\tht_1,\tht_2;\gg)$ in the Hausdorff sense.
Similarly, the probability that $B$ exits $G$ on the arc from $\wh y$ to $+\infty$ and that $B$ does not exit $G_n$ on the arc from $y_n$ to $+\infty$ is at most $\eps$ for large enough $n$.
Therefore, we conclude that $\lim_{n\to\infty} p_n - 2\eps \leq p \leq \lim_{n\to\infty} p_n + 2\eps$, which holds for all $\eps > 0$. Therefore $p = \lim_{n\to\infty} p_n$ and $\psi\nv(\wt{y}) = \wh{y}$. Similarly we can show that $\psi\nv(\wt{z}) = \wh{z}$, where $\wh{z}$ is the prime end corresponding to $z$.

Next,  we show the convergence of $(\wt{x}_n)$.  We let $w_1 = \gg(1) + 1$ and $w_2 = \gg(0) - 1$.
Then,  a.s.\ there exists $p>0$ so that for all $n \in \N$,  the harmonic measure of each of the intervals $(-\infty, \psi_n(w_2)],  (\psi_n(w_2),\psi_n(x_n)],  (\psi_n(x_n),\psi_n(w_1)]$,  and $(\psi_n(w_1),\infty)$ in $\h$ as seen from $i$ is at least $p$.  Thus there exists $M>0$ depending only on $p$ so that $-M \leq \psi_n(w_2) < \psi_n(w_1) \leq M$ for all $n$.  Suppose that $(\wt{x}_n)$ is not convergent.  
Then there exist subsequences $(\wt{x}_{\mu_n}),  (\wt{x}_{\lambda_n})$ and points $x^\mu, x^\la \in \R$,  and $\epsilon_0 > 0$ so that $\wt{x}_{\mu_n} \to x^\mu,  \wt{x}_{\lambda_n} \to x^\la$ as $n \to \infty$,  and $|\wt{x}_{\mu_n}-\wt{x}_{\lambda_n}| \geq \epsilon_0$,  for all $n$.  We can also assume that $\wt{x}_{\mu_n} < \wt{x}_{\lambda_n}$ and $[\wt{x}_n -\epsilon_0 ,  \wt{x}_n + \epsilon_0] \subseteq [\psi_n(w_2) ,  \psi_n(w_1)]$ for all $n$. 
Note that $\wt{y}_n \leq \wt{x}_n \leq \wt{z}_n$ for all $n$,  and so since $\wt{y}_n \to \wt{y}$ and $\wt{z}_n \to \wt{z}$ as $n \to \infty$,  a.s.\ there exists $n_0 \in \N$ so that $[a_n ,  b_n] \subseteq [\wt{x}_{\mu_n},\wt{z}_{\mu_n}] \cap [\wt{y}_{\lambda_n},\wt{x}_{\lambda_n}]$ for all $n \geq n_0$,  where $a_n = \wt{x}_{\mu_n}+ \epsilon_0/3$ and $b_n = \wt{x}_{\mu_n}+2\epsilon_0/3$.  Then the boundary values of $\wt{\Fh}_{\mu_n}$ (resp.\ $\wt{\Fh}_{\lambda_n}$) on $[a_n,b_n]$ are given by $-\lambda -  \phi_{i(\mu_n)} \chi$ (resp.\  $\lambda- \phi_{j(\lambda_n)}\chi$). 
Since $\phi_{i(n)}$ and $\phi_{j(n)}$ both converge to $\theta$, by possibly taking $n_0$ to be larger we can assume that there exists $\delta_0>0$ depending only on $\lambda$ and $\chi$ such that the absolute value of the difference of the boundary values of $\wt{\Fh}_{\mu_n}$ and $\wt{\Fh}_{\lambda_n}$ on $[a_n,b_n]$ is at least $\delta_0$. 
Fix $\delta \in (0,\min\{\delta_0,\epsilon_0 / 200\})$ and set $c_n = (a_n+b_n)/2+i \delta$.  Then $c_n \in K_{\delta}$ for all $n \geq n_0$,  where $K_{\delta} = \{z \in \h : \im(z) \geq \delta,  \re(z) \in [-M,M]\}$.  Since $K_{\delta} \subseteq \h$ is compact,  we have that $\sup_{z \in K_{\delta}} |\wt{\Fh}_{\mu_n}(z) - \wt{\Fh}_{\lambda_n}(z)| \to 0$ as $n \to \infty$.  Also,  the Beurling estimate implies that a Brownian motion starting from $c_n$ exits $\h$ on $\R \setminus [a_n,b_n]$ is at most $\lesssim (\delta / \epsilon_0)^{1/2}$,  where the implicit constant is universal.  Moreover,  we have that $\|\wt{\Fh}_n\|_{\infty} \lesssim 1$,  for all $n$,  where the implicit constant depends only on $a,b,\lambda,\chi,\theta_1$,  and $\theta_2$.  Combining,  we obtain that we can choose $\delta>0$ sufficiently small such that $\sup_{z \in K_{\delta}} |\wt{\Fh}_{\mu_n}(z)-\wt{\Fh}_{\lambda_n}(z)| \geq \delta_0 / 2$ for all $n \geq n_0$,  but this contradicts the local uniform convergence of $\wt{\Fh}_n$ to $\wt{\Fh}$.  Therefore,  it is a.s.\ the case that there exists $\wt{x} \in \R$ such that $\wt{x}_n \to \wt{x}$ as $n \to \infty$.
We define $z(V)$ to be the prime end of $\del G$, $\psi\nv(\wt x)$.

\noindent{\it Step 3.  Conclusion of the proof.}
Note that the boundary values of $\wt{\Fh}_n$ on $[\wt{y}_n,\wt{x}_n]$ (resp.\ $[\wt{x}_n,\wt{z}_n]$) are given by $\lambda -  \phi_{j(n)}\chi$ (resp.\ $-\lambda- \phi_{i(n)}\chi$) and so since $\wt{\Fh}_n \to \wt{\Fh}$ locally uniformly and $\wt{x}_n \to \wt{x},  \wt{y}_n \to \wt{y}$,  and $\wt{z}_n \to \wt{z}$ as $n \to \infty$,  we obtain that the boundary values of $\wt{\Fh}$ on $[\wt{y},\wt{x}]$ (resp.\ $[\wt{x},\wt{z}]$) are given by $\lambda-  \theta\chi$ (resp.\ $-\lambda- \theta\chi$).  
Note that either (but not both) of the intervals $[\wt{y}, \wt{x}]$ and $[\wt{x}, \wt{z}]$ may be a single point.
Since $\wt\Fh = \Fh \circ \psi\nv - \chi\arg(\psi\nv)'$, we can identify the boundary conditions of $\wt\Fh$ on the (possibly degenerate) arcs of $\del G$ (viewed as a set of prime ends) from $\wh{y}$ to $z(V)$, and from $z(V)$ to $\wh z$ and see that they are exactly the boundary conditions of the right and left sides of a flow line of angle $\tht$.

We have now shown that for a fixed curve $\gg$ and $t \in [0,1]$, on the almost sure event that $\gg(t) \notin \F(\tht_1, \tht_2; \gg)$, that the boundary conditions of $\Fh$ on the arc of $\del G$ from $\wh y$ to $z(V)$ (resp.\ from $z(V)$ to $\wh z$) are given by those of the right (resp.\ left) side of a flow line of angle $\tht(V)$.
Since for every bounded connected component $V$ of $\h\sm(\F(\tht_1, \tht_2; \gg) \cup \gg)$ there exists $t \in [0,1] \cap \Q$ with $\gg(t) \in \del V \cap \gg$, the lemma statement follows.
Finally we remark that if $z(V)$ is in the bounded component of $\h\sm\gg$, then $z(V), \wh y$ and $\wh z$ are distinct prime ends on $\del G$.
\end{proof}

Fix $t \in (0,1)$ and suppose that we have the setup of the statement of Lemma~\ref{lem:flow_line_representation}. We let $\theta$ be the angle associated with the connected component $V$ containing $\gamma(t)$ on its boundary.  Let 
\[ \fan(t;\gamma) = \cap_{\substack{\phi_1 < \theta < \phi_2\\ \phi_1,\phi_2 \in \Q}} \fan(\phi_1, \phi_2;\gamma). \]

In Lemma~\ref{lem:locality}, we will show that $\fan(t;\gamma)$ is a local set for $h$,  and then in Lemma~\ref{lem:boundary_conditions},  we will identify the boundary conditions of the field $h|_{\h \setminus \fan(t;\gamma)}$.

\begin{lemma}
\label{lem:locality}
Suppose that we have the setup described just above and $t \in (0,1)$ is fixed.  Then $\fan(t;\gamma)$ is a local set for $h$.
\end{lemma}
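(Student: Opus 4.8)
\textbf{Proof plan for Lemma~\ref{lem:locality}.}

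The plan is to show that $\fan(t;\gamma)$ is a countable decreasing intersection of local sets, and then invoke the standard fact (see \cite[Lemma~3.10 and the surrounding discussion]{ms2016imag1} or \cite[Lemma~6.2]{schramm2013contour}) that a decreasing limit of local sets, suitably coupled, is again local. First I would recall that for each fixed pair of rationals $\phi_1 < \phi_2$, the set $\fan(\phi_1,\phi_2;\gamma)$ is a local set for $h$: this was already asserted in the setup preceding Lemma~\ref{lem:flow_line_representation} (it is a consequence of Lemma~\ref{lem:union_of_local_sets}, since each flow line stopped on first hitting $\gamma$ is a local set and $\fan(\phi_1,\phi_2;\gamma)$ is the closure of a countable union of such sets). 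Next I would enumerate the rational pairs $(\phi_1,\phi_2)$ with $\phi_1 < \theta < \phi_2$ as a sequence and note that the family $\{\fan(\phi_1,\phi_2;\gamma)\}$ is downward directed: if $\phi_1 \le \phi_1' < \theta < \phi_2' \le \phi_2$ then $\fan(\phi_1',\phi_2';\gamma) \subseteq \fan(\phi_1,\phi_2;\gamma)$, simply because the former is built from a subcollection of the flow lines building the latter. Hence $\fan(t;\gamma)$ is realized as a genuine decreasing intersection along a sequence.

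The one subtlety — and I expect this to be the main obstacle — is that $\theta = \theta(V)$ is itself a random variable (measurable with respect to the flow lines used to build the fan), so the indexing set $\{(\phi_1,\phi_2) \in \Q^2 : \phi_1 < \theta < \phi_2\}$ is random, and one must be careful that ``decreasing limit of local sets is local'' is being applied correctly. I would handle this by working with the full countable collection $\{\fan(\phi_1,\phi_2;\gamma) : \phi_1,\phi_2 \in \Q,\ \phi_1 < \phi_2\}$ of local sets indexed by a \emph{deterministic} countable set, observing that the $\sigma$-algebra generated by all of them together with $h$ contains $\theta$ (since $\theta$ is determined by which side of each rational-angle flow line the component $V$ lies on, which is encoded in these sets), and then noting that $\fan(t;\gamma)$ is a deterministic measurable function of this collection together with $\theta$. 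More concretely, I would use the characterization of local sets via the property that $h$ restricted to the complement, conditioned on the set and the restriction of $h$ to it, has the law of a zero-boundary GFF plus a harmonic function: one checks this for $\fan(t;\gamma)$ by approximating with $\fan(\phi_1^{(k)},\phi_2^{(k)};\gamma)$ for a sequence $\phi_1^{(k)} \uparrow \theta$, $\phi_2^{(k)} \downarrow \theta$ of rationals, using that these approximations are local, that they decrease to $\fan(t;\gamma)$, and that $\Fh_{\phi_1^{(k)},\phi_2^{(k)}}$ converges (by \cite[Proposition~6.5]{ms2016imag1}, which applies to increasing sequences of local sets — here one applies it along a fixed increasing sub-enumeration and then takes the relevant limit, or alternatively invokes the decreasing-limit version in \cite[Section~6]{ms2016imag1}).

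Thus the key steps, in order, are: (1) recall $\fan(\phi_1,\phi_2;\gamma)$ is local for each fixed rational pair (from Lemma~\ref{lem:union_of_local_sets}); (2) establish monotonicity of the family in $(\phi_1,\phi_2)$ so that $\fan(t;\gamma)$ is an honest decreasing intersection along a sequence of rationals pinching down to $\theta$; (3) argue that although $\theta$ is random, the whole construction is measurable with respect to the deterministic countable family of local sets together with the field, so that one may apply the stability of the class of local sets under (monotone) limits; and (4) conclude via the harmonic-function characterization of locality, using the local uniform convergence $\Fh_n \to \Fh$ of the associated harmonic functions. The genuinely delicate point is step (3): making sure that the limiting object $\fan(t;\gamma)$, with its random threshold $\theta$, still satisfies the defining conditional-independence property of a local set; I would resolve this by first conditioning on (equivalently, working on the event determined by) the value of $\theta$ — more precisely on the finitely many ``sidedness'' events for the rational-angle flow lines that pin down the relevant rational neighborhood of $\theta$ — on which $\fan(t;\gamma)$ agrees with a fixed decreasing intersection of deterministically-indexed local sets, and then patching these events together.
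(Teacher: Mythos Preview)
Your high-level plan --- write $\fan(t;\gamma)$ as a decreasing intersection of local sets and invoke Lemma~\ref{lem:intersection_of_local_sets} --- is exactly the paper's approach, and you correctly identify the real issue: the intersection is indexed by the random angle $\theta$. But your proposed resolution in step~(3) has a genuine gap. Conditioning on the ``sidedness'' events that pin down a rational neighbourhood of $\theta$ does not interact well with the local-set criterion: for an open set $U$ that meets the bounded component of $\h\setminus\gamma$, the event that $\gamma(t)$ lies to the left of a given flow line $\eta_\phi$ (stopped on $\gamma$) is \emph{not} in general determined by the projection of $h$ onto $H_{\mathrm{harm}}(U)$, because $\eta_\phi$ may have to enter $U$ before reaching $\gamma$. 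So on the events you patch over, the set you compare to is a deterministic local set, but the partition itself is not $\sigma(h_2)$-measurable, and the conditional-independence property does not transfer. The appeal to \cite[Proposition~6.5]{ms2016imag1} is also in the wrong direction (that result is for increasing sequences); the decreasing version is Lemma~\ref{lem:intersection_of_local_sets}, which requires each approximant to already be local and $h$-measurable --- precisely the point in question.

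The paper resolves this differently and more concretely. It indexes by the deterministic enumeration step $n$: with $(\phi_m)$ the fixed enumeration of rational angles, it sets $\fan_n(t;\gamma)=\fan(\phi_{i(n)},\phi_{j(n)};\gamma)$, where $i(n),j(n)$ are the (random) indices among $1,\dots,n$ of the two flow lines bounding the pocket containing $\gamma(t)$. Then $\fan(t;\gamma)=\cap_n \fan_n(t;\gamma)$ is a decreasing intersection over a deterministic index, and Lemma~\ref{lem:intersection_of_local_sets} reduces the problem to showing each $\fan_n(t;\gamma)$ is local, which in turn reduces to showing $\eta_{i(n)}\cup\eta_{j(n)}$ is local. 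The key idea --- missing from your outline --- is that for any open $U$ one can decide the event $\{(\eta_{i(n)}\cup\eta_{j(n)})\cap U\neq\emptyset\}$ using only the flow lines $\wt\eta_1,\dots,\wt\eta_n$ stopped upon first hitting $\gamma$ \emph{or} $U$: the event holds iff the pocket $\wt V_n$ of $\h\setminus(\gamma\cup\wt\eta_1\cup\cdots\cup\wt\eta_n)$ with $\gamma(t)$ on its boundary is not bounded by $\gamma$ and two flow lines with consecutive angles among $\phi_1,\dots,\phi_n$. Since the $\wt\eta_j$ are determined by the projection of $h$ onto $H_{\mathrm{harm}}(U)$, the criterion of \cite[Lemma~3.9]{schramm2013contour} is verified directly. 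This stopping-on-$U$ trick is what replaces your patching argument.
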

\begin{proof}
To prove the claim of the lemma , we will use \cite[Lemma~3.9]{schramm2013contour}.   Suppose that $(\phi_n)$ is the enumeration of $(\Q \cap [\theta_1,\theta_2]) \cap \{\tht_1, \tht_2\}$ as before.  For each $j$, we let $\eta_j$ be the flow line of $h$ from $0$ to $\infty$ in $\h$ with angle $\phi_j$, stopped upon hitting $\gamma$.  For each $n$, we let $\fan_n(t; \gamma)$ be given by $\fan(\phi_{i(n)},\phi_{j(n)};\gamma)$ where $\phi_{i(n)}, \phi_{j(n)}$ are, as in Lemma~\ref{lem:flow_line_representation}, the angles of the two flow lines among those with angles in $\phi_1,\ldots,\phi_n$ which make up the right and left boundaries respectively of the bounded connected component of  $\h \setminus (\gamma \cup \eta_1 \cup \cdots \cup \eta_n)$ whose boundary contains $\gamma(t)$.
In other words, the boundary of the bounded connected component of $\h \setminus (\gamma \cup \eta_1 \cup \cdots \cup \eta_n)$ with $\gamma(t)$ on its boundary consists of a segment of $\gamma$, $\eta_{i(n)}$, and $\eta_{j(n)}$.
See Figure~\ref{fig:fntgamma} for a partial representation of $\F_n(t;\gg)$.
Note that $\fan(t,\gamma) = \cap_n \fan_n(t;\gamma)$.  Since $(\fan_n(t; \gamma))_n$ is a countable and decreasing sequence of closed sets which are determined by the field,  by combining with Lemma~\ref{lem:intersection_of_local_sets}, we obtain that it suffices to show that $\fan_n(t;\gamma)$ is a local set for the field for each $n$.  Fix an open set $U \subseteq \h$.  We need to show that the event that $\fan_n(t;\gamma) \cap U \neq \emptyset$ is conditionally independent of the projection of the field onto the functions which are supported in $U$ given the projection onto those functions which are harmonic in $U$.  Let $V_n$ be the bounded connected component of $\h \setminus (\gamma \cup \eta_1 \cup \cdots \cup \eta_n)$ with $\gamma(t)$ on its boundary.  Then there exist $1 \leq i(n) , j(n) \leq n$ so that $\partial V_n$ can be written as the union of a segment of $\gamma$ as well as segments of $\eta_{i(n)}$ and $\eta_{j(n)}$.

We claim that $\eta_{i(n)} \cup \eta_{j(n)}$ is local for $h$.  Upon proving this, it will follow that $\fan_n(t;\gamma)$ is local for $h$ since $\fan_n(t;\gamma)$ is given by the closure of the union of the flow lines of the conditioned $\text{GFF}$ given $\eta_{i(n)} \cup \eta_{j(n)}$ with rational angles in $[\phi_{i(n)},\phi_{j(n)}]$,  stopped at the first time that they hit $\gamma$.  To see that $\eta_{i(n)} \cup \eta_{j(n)}$ is local, for each $j$, we let $\wt{\eta}_j$ be the flow line of $h$ from $0$ to $\infty$ in $\h$ stopped upon either first hitting $\gamma$ or $U$.  Let $\wt{V}_n$ be the bounded connected component of $\h \setminus (\gamma \cup \wt{\eta}_1 \cup \cdots \cup \wt{\eta}_n)$ with $\gamma(t)$ on its boundary.  
Then we have that $(\eta_{i(n)} \cup \eta_{j(n)}) \cap U \neq \emptyset$ if and only if $\partial \wt{V}_n$ does not consist of part of $\gamma$ and parts of $\eta_i,  \eta_j$ so that $\phi_i,\phi_j$ are consecutive numbers in the enumeration $(\phi_m)$.  Hence we obtain that the event $\{(\eta_{i(n)} \cup \eta_{j(n)}) \cap U \neq \emptyset\}$ is determined by $\wt{\eta}_1,\cdots,\wt{\eta}_n$.  But the latter collection of curves is determined by the projection of $h$ onto the functions which are harmonic in $U$.  It follows that $\{(\eta_{i(n)} \cup \eta_{j(n)}) \cap U \neq \emptyset\}$ is a.s.\ determined by the projection of $h$ onto the space of harmonic functions in $U$ and so \cite[Lemma~3.9]{schramm2013contour} implies that $\eta_{i(n)} \cup \eta_{j(n)}$ is local for $h$. This proves the claim hence the lemma.
\end{proof}

\begin{lemma}
\label{lem:boundary_conditions}
Let $\wh{\psi}$ be the unique conformal transformation from the unbounded connected component of $\h \setminus \fan(t;\gamma)$ onto $\h$ which fixes $\infty$ and sends $\gamma(t)$ to $i$.  On the event that $z(V)$ is in the bounded connected component of $\h \setminus \gamma$, there exist $x_1 < x_2 < x_3 < x_4 < x_5$ so that the field $h \circ \wh{\psi}^{-1} - \chi \arg (\wh{\psi}^{-1})'$ has boundary conditions given by:
\begin{align*}
&-a \quad\text{in}\quad (-\infty,x_1],\quad
-\lambda - \theta \chi \quad\text{in}\quad (x_1,x_2],\quad
\lambda - \theta \chi \quad\text{in}\quad (x_2,x_3]\\
&-\lambda - \theta \chi \quad\text{in}\quad (x_3,x_4],\quad
\lambda - \theta \chi \quad\text{in}\quad (x_4,x_5],\quad
b \quad\text{in}\quad (x_5,\infty).
\end{align*}
\end{lemma}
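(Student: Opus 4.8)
The plan is to build the boundary conditions of $h \circ \wh\psi^{-1} - \chi\arg(\wh\psi^{-1})'$ by combining Lemma~\ref{lem:flow_line_representation} (which describes the field on the bounded component $V$ containing $\gamma(t)$ on its boundary) with the known boundary conditions of $h$ on $\partial\h$ together with the standard boundary behaviour of flow lines along $\gamma$. First I would recall the setup: $\fan(t;\gamma)$ is the decreasing intersection of the local sets $\fan(\phi_1,\phi_2;\gamma)$ over rationals $\phi_1<\theta<\phi_2$, and by Lemma~\ref{lem:locality} it is itself local, so $h$ restricted to its unbounded complement $G$ decomposes as a zero-boundary GFF plus a harmonic function $\Fh$ whose boundary data we must pin down. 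The unbounded component $G$ of $\h\setminus\fan(t;\gamma)$ has boundary consisting of: part of $\R_-$ (where $h$ has boundary value $-a$), then a portion of the left side of the ``left'' flow-line boundary of $V$ reached along $\gamma$, then a segment of $\gamma$ itself near $\gamma(t)$, then a portion of the right side of the ``right'' flow-line boundary, then part of $\R_+$ (where $h$ has boundary value $b$); moreover, as $\phi_1\uparrow\theta$ and $\phi_2\downarrow\theta$ the two flow-line boundaries degenerate onto $\fan(t;\gamma)$ and each contributes \emph{both} a right-side and a left-side boundary arc, because a single flow line of angle $\theta$ traced and then its reverse gives the two sides.

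The key point is that when $z(V)$ lies in the bounded component of $\h\setminus\gamma$, Lemma~\ref{lem:flow_line_representation} gives that the boundary conditions along $\partial V\cap\fan(t;\gamma)$ are those of the right (resp.\ left) side of a flow line of angle $\theta$ starting from $z(V)$ and going clockwise (resp.\ counterclockwise) to $\gamma$; in terms of the uniformizing map $\wh\psi$, the arc $\partial G$ splits at the images of the ``pinch'' point $z(V)$ and the two endpoints $y,z$ of the flow-line boundary on $\gamma$, and the term $-\chi\arg(\wh\psi^{-1})'$ has already been subtracted so the remaining data is piecewise constant. Concretely I would take $\wh\psi$ mapping $G$ to $\h$ with $\gamma(t)\mapsto i$ and $\infty\mapsto\infty$; then $\R_-$ maps to $(-\infty,x_1]$ with value $-a$, the counterclockwise (left-side) arc of the degenerate flow-line boundary from $-\infty$ to $z(V)$ maps to $(x_1,x_2]$ with value $-\lambda-\theta\chi$ (this is the left side of an angle-$\theta$ flow line: $\lambda\cdot(\text{left})=-\lambda$ relative to a $0$-angle line, shifted by $-\theta\chi$), then the portion from $z(V)$ around to $\gamma$ on the other side maps to $(x_2,x_3]$ with value $\lambda-\theta\chi$ (right side of an angle-$\theta$ flow line), then $\gamma(t)=i$ sits on $(x_3,x_4]$; wait — more carefully, the piece of $\partial G$ corresponding to $\gamma$ near $\gamma(t)$ carries no prescribed value yet, so the five intervals arise as: $\R_-$; left side of flow line up to the $\gamma$-hitting point $y$; a degenerate point at $z(V)$ where left becomes right — actually $z(V)$ may be interior to $\gamma$'s bounded side so the arc from $\wh y$ to $z(V)$ carries $\lambda-\theta\chi$ (right) — I would organize the five intervals as $(-\infty,x_1]\mapsto -a$ on $\R_-$, $(x_1,x_2]\mapsto -\lambda-\theta\chi$, $(x_2,x_3]\mapsto\lambda-\theta\chi$, $(x_3,x_4]\mapsto-\lambda-\theta\chi$, $(x_4,x_5]\mapsto\lambda-\theta\chi$, $(x_5,\infty)\mapsto b$ on $\R_+$, with $\gamma(t)$ landing in the interior of one of the middle intervals, and then verify this reading against Lemma~\ref{lem:flow_line_representation} directly.

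The main obstacle I expect is bookkeeping of the branch of $\arg(\wh\psi^{-1})'$ and making sure the flow-line boundary data from Lemma~\ref{lem:flow_line_representation} (which is stated in terms of $\psi$ uniformizing $G$ in that lemma, with its own conventions) is correctly transferred to the slightly different domain and map $\wh\psi$ here — in particular checking that the $-\chi\arg(\wh\psi^{-1})'$ correction exactly cancels the winding so that the five pieces really are constant with the asserted values $\pm\lambda-\theta\chi$, and that the two sides (left, value $-\lambda-\theta\chi$; right, value $\lambda-\theta\chi$) appear in the claimed cyclic order. The argument itself is then short: take a decreasing sequence $\phi_1^{(k)}\uparrow\theta$, $\phi_2^{(k)}\downarrow\theta$, apply \cite[Proposition~6.5]{ms2016imag1} to get $\Fh_{k}\to\Fh$ locally uniformly (as in Lemmas~\ref{lem:form_of_the_boundary} and~\ref{lem:flow_line_representation}), apply Lemma~\ref{lem:flow_line_representation} to each $\fan(\phi_1^{(k)},\phi_2^{(k)};\gamma)$ to get the flow-line boundary data along the degenerating two-sided boundary of the relevant component, use that the angles $\phi_i^{(k)}$ converge to $\theta$ so the piecewise-constant values converge to $\pm\lambda-\theta\chi$, and use the convergence of the uniformizing maps and of the pinch/hitting points (established exactly as in Step~2 of Lemma~\ref{lem:flow_line_representation}) to conclude that the limiting data on $\partial(\text{unbounded component of }\h\setminus\fan(t;\gamma))$ has the asserted form, with $x_1,\dots,x_5$ the images under $\wh\psi$ of the two endpoints of $\R_\pm$, the point $z(V)$, and the two $\gamma$-hitting points $\wh y,\wh z$.
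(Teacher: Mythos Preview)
Your overall strategy matches the paper's: approximate $\fan(t;\gamma)$ by the decreasing sequence $\fan(\phi_1^{(k)},\phi_2^{(k)};\gamma)$, read off the boundary data at each stage, and pass to the limit using convergence of the uniformizing maps and of the marked points. There are, however, two genuine gaps in your plan that the paper handles explicitly.

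First, your appeal to \cite[Proposition~6.5]{ms2016imag1} is misplaced. That result (as used in Lemmas~\ref{lem:form_of_the_boundary} and~\ref{lem:flow_line_representation}) gives $\Fh_n\to\Fh$ for an \emph{increasing} sequence of local sets, whereas here $\fan(\phi_1^{(k)},\phi_2^{(k)};\gamma)$ is \emph{decreasing} in $k$. The paper does not invoke Proposition~6.5 at this step. Instead it shows directly that the transferred harmonic functions $\wh\Fh_n\circ\wh\psi_n^{-1}-\chi\arg(\wh\psi_n^{-1})'$ converge locally uniformly by controlling their boundary data on $\R$, and separately argues (via Carath\'eodory convergence $\wh\psi_n\to\wh\psi$) that the limit is $\wh\Fh\circ\wh\psi^{-1}-\chi\arg(\wh\psi^{-1})'$.

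Second, you assume the boundary data at stage $n$ is already piecewise constant with the values $\pm\lambda-\phi^{(k)}\chi$, but this is not true on all of $\partial\wh G_n$. The boundary of $\wh G_n$ contains, in addition to the left side of $\eta_{j(n)}$ and the right side of $\eta_{i(n)}$ and the arcs of $\partial V$, two ``ragged'' pieces between the stage-$n$ hitting points $y_n,z_n$ and the limiting points $y,z$; along these pieces the data is not a single flow-line value. The paper deals with this by (i) proving, via a separate harmonic-measure/Beurling argument tailored to the \emph{increasing} domains $\wh G_n$ (not identical to Step~2 of Lemma~\ref{lem:flow_line_representation}, which treats decreasing domains), that the images $[x_2^n,y_2^n]$ and $[y_4^n,x_4^n]$ of these pieces shrink to points; and (ii) observing that applying Lemma~\ref{lem:flow_line_representation} at a dense set of parameters on $\gamma$ gives a uniform bound on the data there, so the shrinking intervals do not contribute in the limit. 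You should incorporate both of these points.
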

\begin{proof}
Suppose that we have the setup of the proof of Lemma~\ref{lem:locality}.  For each $n \in \N$,  we let $\wh{G}_n$ be the unbounded connected component of $\h \setminus \fan_n(t;\gamma)$ and let $\wh{\psi}_n$ be the conformal transformation mapping $\wh{G}_n$ onto $\h$ such that $\wh{\psi}_n(\gamma(t)) = i$ and $\wh{\psi}_n(\infty) = \infty$.  Similarly,  we let $\wh{G}$ be the unbounded connected component of $\h \setminus \fan(t;\gamma)$ and let $\wh{\psi}$ be the conformal transformation mapping $\wh{G}$ onto $\h$ such that $\wh{\psi}(\gamma(t)) = i$ and $\wh{\psi}(\infty) = \infty$.  Then we have that $\wh{\psi}_n \to \wh{\psi}$ locally uniformly as $n \to \infty$ a.s. We let $\wh{h}_n^0$ be a zero boundary $\text{GFF}$ on $\wh{G}_n$ and let $\wh{\Fh}_n$ be a harmonic function on $\wh{G}_n$ such that $h|_{\wh{G}_n} = \wh{h}_n^0 + \wh{\Fh}_n$.  Also,  since $\fan(t;\gamma)$ is a local set for $h$ (Lemma~\ref{lem:locality}),  we can write $h|_{\wh{G}} = \wh{h}^0 + \wh{\Fh}$,  where $\wh{h}^0$ (resp.\ $\wh{\Fh}$) is a zero boundary $\text{GFF}$ (resp.\ harmonic function) in $\wh{G}$.
We define $\eta_{i(n)}, \eta_{j(n)}, z_n$ and $y_n$ as in Lemma~\ref{lem:flow_line_representation}, and as before let $y = \lim_n y_n, z = \lim_n x_z$ with $\wh y, \wh z$. We have assumed that the prime end $z(V) \in \del V$ is in the bounded connected component of $\h \sm\gg$. Let $u_n$ (resp.\ $v_n$) be the rightmost (resp.\ leftmost) point on $\R_+ \cap \eta_{i(n)}$ (resp.\ $\R_- \cap \eta_{j(n)}$). Let $x_1^n$ and $x_5^n$ be the images of $v_n$ and $u_n$ under $\wh\psi_n$, let $x_2^n, x_4^n$ be the images of (the unique prime ends corresponding to) $y_n,z_n$, let $y_2^n, y_4^n$ be the images of $\wh{y},\wh{z}$ and finally let $x_3^n$ be the image of $z(V)$. See Figure~\ref{fig:fntgamma} for a partial representation of this setup.

Let $J_n$ (resp.\ $I_n$) be the clockwise (resp.\ counterclockwise) arc of $\gamma$ from $y_n$ to $y$ (resp.\ from $z_n$ to $z$), and let $\wh{p}_n$ (resp.\ $\wh{q}_n$) be the probability that a Brownian motion starting from $\gamma(t)$ hits $I_n$ (resp.\ $J_n$) before exiting $\wh{G}_n$.  Since $y_n \to y$ and $z_n \to z$ as $n \to \infty$ a.s., we obtain that $\wh{p}_n \to 0$ and $\wh{q}_n \to 0$ as $n \to \infty$ a.s.
Let $p_n$ (resp.\ $q_n$) be the harmonic measure of the left (resp.\ right) side of $\eta_{j(n)}$ (resp.\ $\eta_{i(n)}$) in $\wh{G}_n$ as seen from $\gamma(t)$.   Next we claim that the sequences $(p_n)$ and $(q_n)$ converge a.s.  Indeed,  fix $\epsilon>0$.  Then,  there exists $n_0 \in \N$ such that $\wh{p}_n + \wh{q}_n < \epsilon$ for all $n \geq n_0$. 
By possibly taking $n_0$ to be larger,  we can assume that the Hausdorff distance with respect to the Euclidean metric between $\eta_{j(n)}$ and $\eta_{j(m)}$ (resp.\ $\eta_{i(n)}$ and $\eta_{i(m)}$) stopped upon first hitting $\gamma$ is at most $\epsilon$,  for all $n,m \geq n_0$. 
Moreover,  there a.s.\ exists $\delta > 0$ such that the distance between the clockwise arc of $\gamma$ connecting $z$ to $\R_+$ and the counterclockwise arc of $\gamma$ from $y$ to $\R_-$ is at least $\delta$.  Fix $m>n\geq n_0$.  Then,  the Beurling estimate implies that the probability that a Brownian motion starting from $\gamma(t)$ hits the right side of $\eta_{i(n)}$ and then exits $\wh{G}_m$ on the left side of $\eta_{j(m)}$ is at most $\lesssim (\epsilon/\delta)^{1/2}$,  where the implicit constant is universal.
Also,  if the Brownian motion exits $\wh{G}_m$ on the left side of $\eta_{j(m)}$ without hitting $I_n \cup J_n$ or the right side of $\eta_{i(n)}$,  we have that it exits $\wh{G}_n$ on the left side of $\eta_{j(n)}$.  It follows that $p_m \leq p_n + \epsilon + C(\epsilon/\delta)^{1/2}$ for all $m>n\geq n_0$,  where $C<\infty$ is a universal constant,  which implies that $\limsup_{m \to \infty} p_m \leq p_n + \epsilon + C(\epsilon / \delta)^{1/2}$ for all $n \leq n_0$ and so $\limsup_{m \to \infty} p_m \leq \liminf_{n \to \infty} p_n + \epsilon + C(\epsilon / \delta)^{1/2}$.
Since $\epsilon>0$ was arbitrary,  we obtain that there exists $p>0$ such that $p_n \to p$ as $n \to \infty$.  Similarly,  there exists $q>0$ such that $q_n \to q$ as $n \to \infty$.
Furthermore,  the harmonic measures of $\R_- \cap \partial \wh{G}_n,  \R_+ \cap \partial \wh{G}_n$,  and the clockwise (resp.\ counterclockwise) arc of $\partial V$ from $z(V)$ to $\wh{y}$ (resp.\ $\wh z$),  all converge as $n \to \infty$ a.s.,  since they are increasing functions in $n$.

\begin{figure}[t]
\includegraphics[scale=0.8]{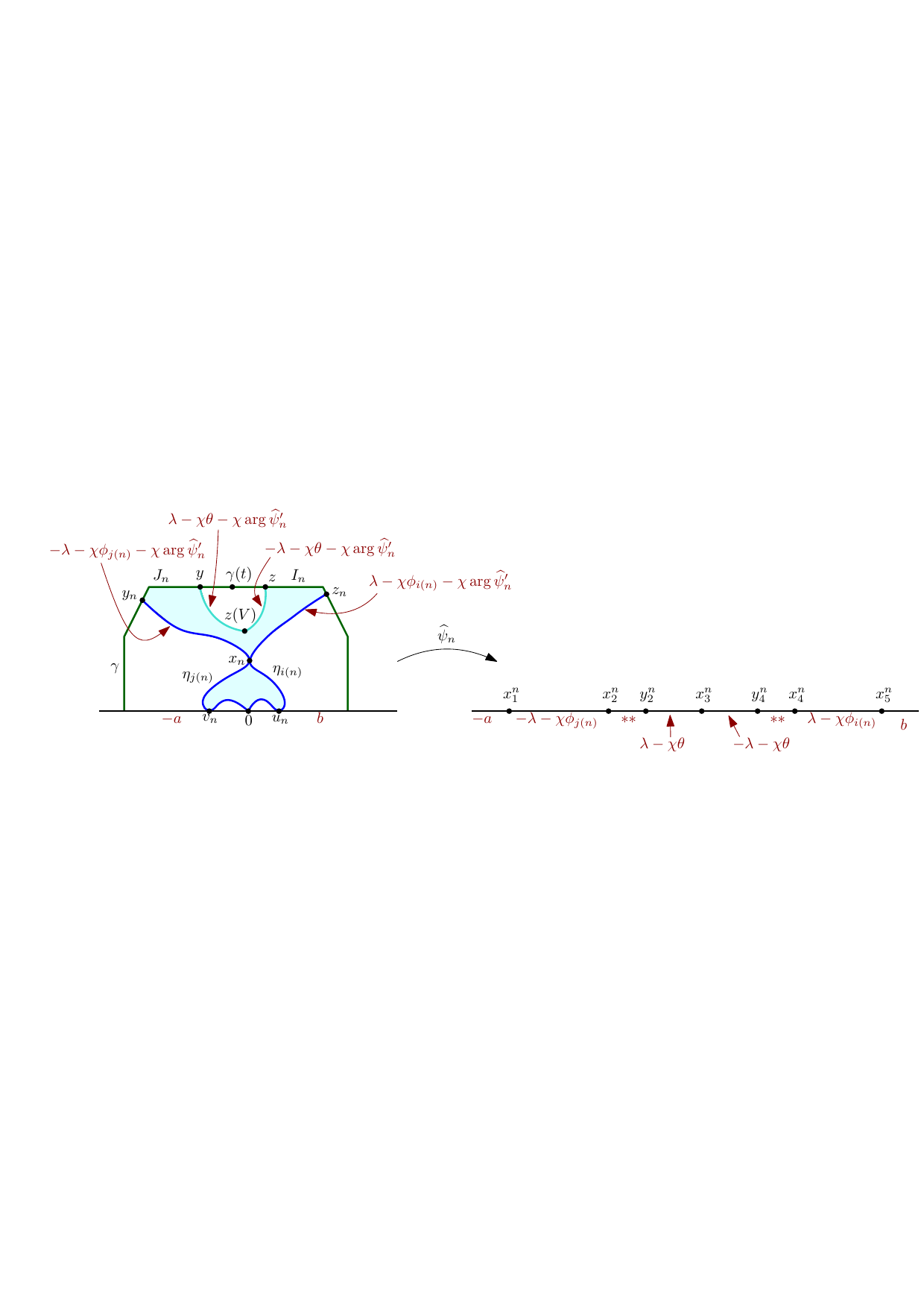}
\caption{\textbf{Left.} This is a schematic depiction of $\F_n(t;\gg)$. Here, $\eta_{i(n)}, \eta_{j(n)}, x_n$ and $y_n$ are as in Figure~\ref{fig:gamma_Vn}. We also include $y = \lim_n y_n, z = \lim_n z_n$ and the prime end $z(V) \in \del V$. $\F_n(t;\gg)$ is contained in the (closed) shaded region. We do not try to depict it since our information about its precise behavior is limited at this point. We emphasize that we do not know how complicated its boundary is in the region near $y,x$ and $z$ (the light blue line), and that we have not yet shown that this boundary is a simple curve. We include also some of the boundary conditions of $\wh\Fh_n$. \textbf{Right.} $\wh\psi_n$ is a conformal map from the unbounded component of $\h \sm \F_n(t;\gg)$ to $\h$ sending $\gg(t)$ to $i$ and fixing $\infty$. We identify the points $x_1^n, x_2^n, y_2^n, x_3^n, y_4^n, x_4^n$ and $x_5^n$ as the images of the seven points (more precisely, prime ends) $v_n, y_n, y, x, z, z_n$ and $u_n$ (defined in the proof of Lemma~\ref{lem:boundary_conditions}). We include the boundary conditions of $\wh{\Fh}_n \circ \wh{\psi}_n^{-1} - \chi \arg(\wh{\psi}_n^{-1})'$. The boundary conditions on intervals marked $**$ may change, but are uniformly bounded in $n$. The convergence of the points $x_j^n$ and $y_j^n$ allows us to determine the boundary conditions of the limiting harmonic function $\wh{\Fh} \circ \wh{\psi}^{-1} - \chi \arg(\wh{\psi}^{-1})'$.}
\label{fig:fntgamma}
\end{figure}

The results of the previous paragraph imply that there a.s.\ exist $x_1 < x_2 \leq x_3 \leq x_4 < x_5$ such that $x_j^n \to x_j$ as $n \to \infty$ for all $1 \leq j \leq 5$ and that $y_j^n \to x_j$ for $j = 2,4$.  We note that on the event that $z(V)$ is in the bounded connected component of $\h \setminus \gamma$,  we have that $x_2 < x_3 <x_4$. 
Also, by a harmonic measure argument similar to the one in Lemma~\ref{lem:flow_line_representation}, we can identify $\wh\psi\nv(x_2)$ (resp.\ $\wh\psi\nv(x_4)$) with the (unique) prime end of $\del \wh{G}$ corresponding to $y$ (resp.\ $z$).
Using Lemma \ref{lem:flow_line_representation} and \cite[Lemma 3.8]{ms2016imag1} we can explicitly describe the boundary conditions of the harmonic functions $\wh{\Fh}_n \circ \wh{\psi}_n^{-1} - \chi \arg(\wh{\psi}_n^{-1})'$ (except on the intervals $[x_2^n, y_2^n]$ and $[y_4^n, x_4^n]$, but by applying Lemma~\ref{lem:flow_line_representation} to a dense set of points on $\gg$, we know the boundary conditions on these intervals are bounded uniformly in $n$). These boundary conditions are depicted in Figure~\ref{fig:fntgamma}.  Finally, since $x_j^n \to x_j$ for each $j$ and $y_j^n \to x_j$ for $j = 2,4$, it follows that these harmonic functions converge locally uniformly as $n \to \infty$
to the harmonic function on $\h$ with boundary conditions given by $-a$ in $(-\infty,x_1]$,  $-\lambda- \theta \chi$ in $(x_1,x_2]$,  $\lambda- \theta \chi$ in $(x_2,x_3]$,  $-\lambda-\theta\chi$ in $(x_3,x_4]$,  $\lambda-\theta\chi$ in $(x_4,x_5]$,  and $b$ in $(x_5,\infty)$ (on the event that $z(V)$ lies in the bounded connected component of $\h \setminus \gamma$).
On the other hand,  we have that $\wh{\Fh}_n \circ \wh{\psi}_n^{-1} - \chi \arg(\wh{\psi}_n^{-1})'$ converges locally uniformly to $\wh{\Fh} \circ \wh{\psi}^{-1} - \chi \arg(\wh{\psi}^{-1})'$ as $n \to \infty$ a.s.  Combining,  we obtain that $\wh{\Fh} \circ \wh{\psi}^{-1} - \chi \arg(\wh{\psi}^{-1})'$ has the required boundary conditions and so this completes the proof of the lemma. 
\end{proof}

Next,  we assume that we have the setup of Lemma~\ref{lem:flow_line_representation}.  Then,  we will show in the following lemma that there exists a connected component $U$ of $\h \setminus \fan(\theta_1,\theta_2)$ which agrees with $\partial V$ near $z(V)$.  This will imply that $x(U) \neq y(U)$ and so the two boundary arcs of $\partial U$ described in Lemma~\ref{lem:form_of_the_boundary} are non-degenerate.  
We also show that $\del U \sm \del V$ is contained in flow lines of a conditional field.
We remark that $U$ may not contain $\gg(t)$.
Figure~\ref{fig:ft_gamma} depicts $\F(t;\gg)$ and these conditional flow lines.

\begin{figure}[t]
\includegraphics[scale=0.8]{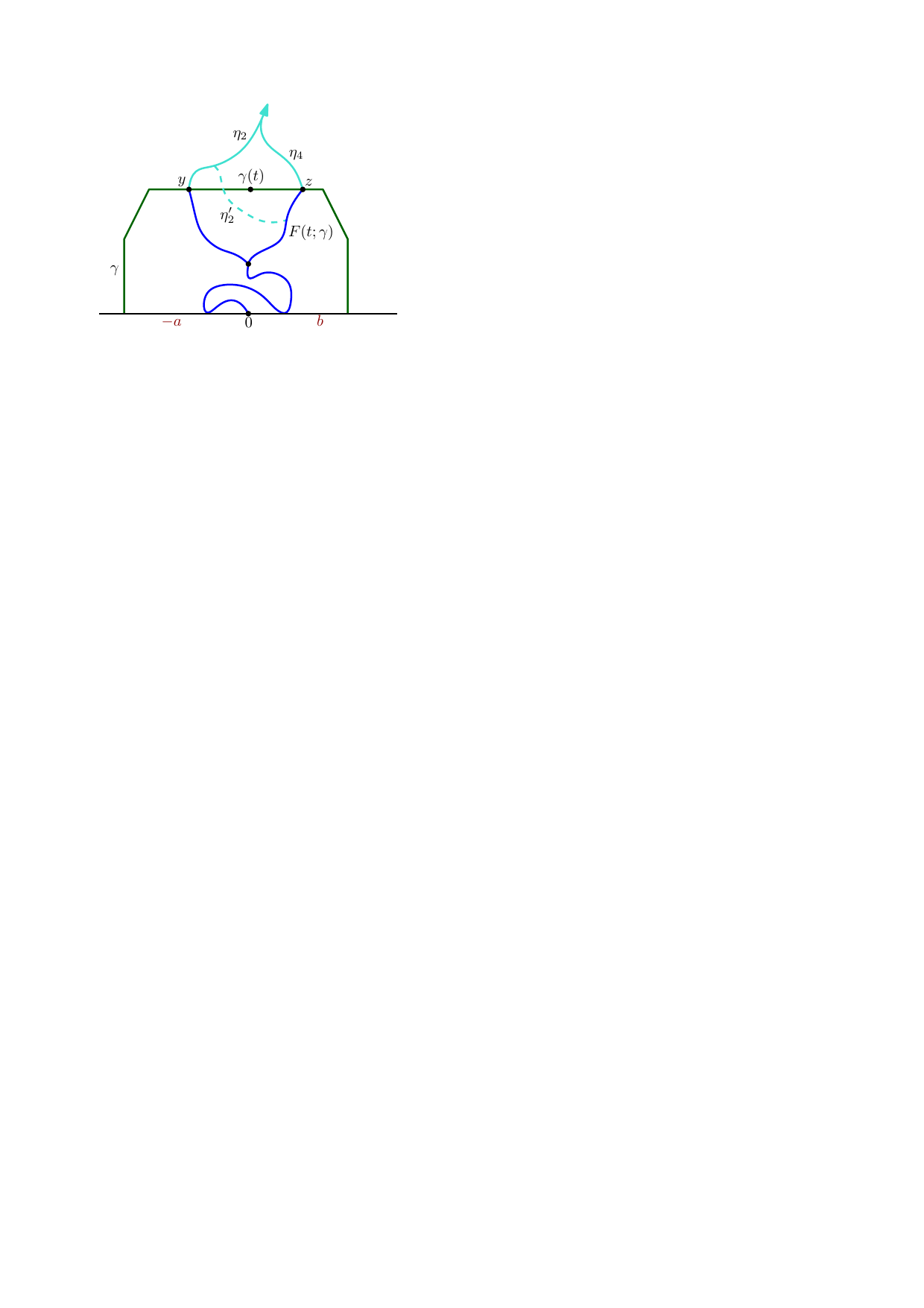}
\caption{The flow lines $\eta_2$ and $\eta_4$ are defined as in Lemma~\ref{lem:v_determines_a_component_u} and merge before either of them merge with $V$. The dashed line $\eta_2'$ is an alternate realization of $\eta_2$ in the case where $\eta_2'$ merges with $\del V$. In either case, $\gg(t)$ may or may not be contained in $U$, but $\del U \sm \del V$ will always be contained in $\eta_2 \cup \eta_4$.}
\label{fig:ft_gamma}
\end{figure}

\begin{lemma}\label{lem:v_determines_a_component_u}
Suppose that $V$ is a bounded connected component of $\h \setminus (\fan(\theta_1,\theta_2;\gamma) \cup \gamma)$ with $\partial V \cap \fan(\theta_1,\theta_2;\gamma) \neq \emptyset$ as in Lemma~\ref{lem:flow_line_representation}, and that $z(V)$ in the bounded connected component of $\h \setminus \gamma$.
\begin{enumerate}[(i)]
\item\label{it:component_contained} There exists a unique connected component $U$ of $\h \setminus \fan(\theta_1,\theta_2)$ so that $\partial U$ agrees with $\partial V \cap \fan(\theta_1,\theta_2;\gamma)$ near $z(V)$.
\item\label{it:rest_of_boundary}
We have that $\partial U \setminus \partial V$ is contained in the union of the flow lines of angle $\theta$ starting from the two prime ends corresponding to $x_2$ and $x_4$ from Lemma~\ref{lem:boundary_conditions} in $\h \setminus \fan(t;\gamma)$.
\item\label{it:marked_points_distinct} We have that $x(U) \neq y(U)$.
\end{enumerate}
\end{lemma}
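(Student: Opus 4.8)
The plan is to realize the two boundary arcs of $\partial V$ meeting at $z(V)$, together with two flow lines of a conditional GFF that continue them past $\gamma$, as the boundary of the component $U$, and then to read off \textup{(i)--(iii)} from the resulting local picture of $\fan(\theta_1,\theta_2)$ near $z(V)$. I would work throughout on the a.s.\ events of Lemmas~\ref{lem:flow_line_representation}, \ref{lem:locality}, \ref{lem:boundary_conditions}, Proposition~\ref{prop:locally_connected} and Lemma~\ref{lem:fan_does_not_contain_point}, and note first that $\theta:=\theta(V)\in(\theta_1,\theta_2)$: in the reduced setting $\theta_1=-(b+\lambda)/\chi$, $\theta_2=(a+\lambda)/\chi$, the flow lines $\eta_{i(n)},\eta_{j(n)}$ bounding the approximating pockets are genuine flow lines for all large $n$, with angles $\phi_{i(n)}\uparrow\theta$ and $\phi_{j(n)}\downarrow\theta$ strictly between $\theta_1$ and $\theta_2$, so flow lines of angle $\theta$ are defined. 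Since $\fan(t;\gamma)$ is a local set (Lemma~\ref{lem:locality}) write $h=\wh h^0+\wh\Fh$ on the unbounded component $\wh G$ of $\h\setminus\fan(t;\gamma)$; on the event that $z(V)$ lies in the bounded component of $\h\setminus\gamma$, Lemma~\ref{lem:boundary_conditions} gives that $h\circ\wh\psi^{-1}-\chi\arg(\wh\psi^{-1})'$ has boundary data $-a,\,-\lambda-\theta\chi,\,\lambda-\theta\chi,\,-\lambda-\theta\chi,\,\lambda-\theta\chi,\,b$ with breakpoints $x_1<x_2<x_3<x_4<x_5$, where $\wh\psi^{-1}(x_2),\wh\psi^{-1}(x_3),\wh\psi^{-1}(x_4)$ are the prime ends of $\wh G$ at $y$, $z(V)$, $z$. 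Since the data jumps up by $2\lambda$ at $x_2$ and at $x_4$ — exactly as at the tip of a flow line of angle $\theta$, with the two adjacent constant pieces giving force points of weight $\ge 0$ — I would let $\eta_2$ (resp.\ $\eta_4$) be the flow line of angle $\theta$ of $h|_{\wh G}$ started from the prime end at $y$ (resp.\ $z$), which is determined by $h$ and $\fan(t;\gamma)$ by \cite{ms2016imag1}.

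The heart of the argument is a local description of the fan near $z(V)$: for all sufficiently small $r>0$ there is a deterministic $\delta>0$ so that the ball $B=B(z(V),r)$ can a.s.\ be taken disjoint from $\gamma$ and from every flow line $\eta_\phi$ of $h$ with $\phi\in\Q\cap[\theta_1,\theta_2]$ and $|\phi-\theta|\ge\delta$ (using that $z(V)$ lies on the ``$\theta$-side'' of each such flow line), and then $\fan(\theta_1,\theta_2)\cap B=(\fan(t;\gamma)\cup\eta_2\cup\eta_4)\cap B$; shrinking $r$ further so that $\eta_2,\eta_4$ avoid $B$, this becomes $\fan(\theta_1,\theta_2)\cap B=\fan(t;\gamma)\cap B$, which near $z(V)$ consists precisely of the two (simple) flow-line arcs of $\partial V$ emanating from $z(V)$ — here the local connectedness from Proposition~\ref{prop:locally_connected} and Remark~\ref{rem:locally_connected} is used to make sense of these boundary arcs. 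To prove the identity one shows, via the conditional flow-line picture of \cite{ms2016imag1} relative to the local set $\fan(t;\gamma)$, that a flow line of $h$ of rational angle $\phi\approx\theta$, restricted to $\wh G$, is a flow line of $h|_{\wh G}$ of angle $\phi$ entering $\wh G$ near the prime end at $y$ or at $z$; by the continuity results of Section~\ref{sec:hausdorff_convergence} together with the merging and interaction rules of \cite{ms2016imag1,ms2017ig4} these converge to $\eta_4$, $\eta_2$ as $\phi\to\theta$ and cannot cross the flow-line arcs of $\partial V$, and since $\fan(\theta_1,\theta_2)$ is closed and $B$ avoids all far-from-$\theta$ flow lines the reverse inclusion follows as well.

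Granting the local description, \textup{(i)} is immediate: there is a unique component $U$ of $\h\setminus\fan(\theta_1,\theta_2)$ that near $z(V)$ lies on the side of the two $\partial V$-arcs opposite $V$, and $\partial U$ agrees there with $\partial V\cap\fan(\theta_1,\theta_2;\gamma)$. For \textup{(ii)}, tracing $\partial U$ away from $z(V)$ it follows one arc of $\partial V$ up to $y$ and then continues along $\eta_2$, and symmetrically the other arc up to $z$ and then along $\eta_4$; by the merging of equal-angle flow lines, $\eta_2$ and $\eta_4$ coalesce — with one another, or one into an arc of $\partial V$ (the dashed alternative in Figure~\ref{fig:ft_gamma}) — so in all cases $\partial U\setminus\partial V\subseteq\eta_2\cup\eta_4$. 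For \textup{(iii)}, the last sentence of Lemma~\ref{lem:flow_line_representation} gives that $z(V),\wh y,\wh z$ are distinct prime ends, so the two arcs of $\partial V$ at $z(V)$, hence also the two arcs of $\partial U$ at $z(V)$, are nondegenerate; since $\partial U\setminus\partial V\subseteq\eta_2\cup\eta_4$ carries the same flow-line-of-angle-$\theta$ boundary data as these arcs, $\partial U$ is the union of two flow-line arcs of angle $\theta$ (the ``left'' and the ``right'' one) meeting at the two distinct prime ends $z(V)$ and the coalescence point of $\eta_2,\eta_4$, and comparing with Lemma~\ref{lem:form_of_the_boundary} these are exactly $x(U)$ and $y(U)$; hence $x(U)\ne y(U)$.

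The main obstacle is the local description of $\fan(\theta_1,\theta_2)$ near $z(V)$ in the second paragraph. The delicate points are: choosing the separating scale $\delta$ between ``near-$\theta$'' and ``far-from-$\theta$'' angles uniformly, so that $B$ really captures all of $\fan(\theta_1,\theta_2)\cap B$; using the Miller--Sheffield conditional flow-line theory to identify the continuations past $\gamma$ of flow lines of $h$ of angle $\approx\theta$ with flow lines of $h|_{\wh G}$, and checking compatibility of the conditionings on $\fan(t;\gamma)$ and on $\fan(\theta_1,\theta_2;\gamma)$ so that the arcs of $\partial V$ are genuine continuations of $\eta_2$ and $\eta_4$; and the Hausdorff convergence of these conditional flow lines as $\phi\to\theta$, for which one invokes the continuity of $\SLE_\kappa(\underline\rho)$ in $\underline\rho$ from Section~\ref{sec:hausdorff_convergence} together with the flow-line merging rules.
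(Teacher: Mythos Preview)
Your plan --- define the flow lines $\eta_2,\eta_4$ of angle $\theta$ from the prime ends at $y,z$ in $\wh G$ and argue that, together with the arcs of $\partial V$, they bound $U$ --- matches the paper's, and your treatment of \textup{(ii)} and \textup{(iii)} is essentially the paper's once \textup{(i)} is available. The difference, and the gap, is in how \textup{(i)} is obtained. The paper defines $\wt U$ as the component of $\h\setminus(\fan(t;\gamma)\cup\eta_2\cup\eta_4)$ with $z(V)$ on its boundary and proves two things: (a) no flow line of $h$ of any rational angle enters $\wt U$, by a short case analysis (does $\eta_2$ merge with $\eta_4$ or with an arc of $\partial V$?) using only the crossing rules of \cite{ms2016imag1,ms2017ig4}; and (b) $\eta_2\cup\eta_4\subseteq\fan(\theta_1,\theta_2)$, since by Proposition~\ref{prop:bounded_hausdorff} every neighborhood of a point on $\eta_2\cup\eta_4$ is entered by some flow line of rational angle close to $\theta$. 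Neither step needs any information about the structure of $\fan(\theta_1,\theta_2)$ at the random point $z(V)$.

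Your route instead goes through a local identity $\fan(\theta_1,\theta_2)\cap B=(\fan(t;\gamma)\cup\eta_2\cup\eta_4)\cap B$ in a small ball $B$ about $z(V)$, and this step is not established. The claim that $B$ can be taken disjoint from every $\eta_\phi$ with $|\phi-\theta|\ge\delta$ is not justified: $z(V)$ is a \emph{random} point (a limit of intersection points of flow lines whose angles tend to $\theta$), and ``$z(V)$ lies on the $\theta$-side of $\eta_\phi$'' places it only in the closure of a complementary region of $\eta_\phi$, which neither excludes $z(V)\in\eta_\phi$ nor gives a uniform positive lower bound on $\dist(z(V),\eta_\phi)$ over all such $\phi$. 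For the near-$\theta$ flow lines, Hausdorff convergence as $\phi\to\theta$ says the curves are globally close to a limit, but does not confine their intersection with $B$ to $\fan(t;\gamma)\cup\eta_2\cup\eta_4$. The paper's argument sidesteps both issues: the interaction rules already keep every flow line out of the \emph{open} set $\wt U$, with no control needed at $z(V)$ itself. A smaller point: your ``tracing $\partial U$'' argument for \textup{(ii)} presupposes $\partial U$ is a curve, which is proved only later (Lemma~\ref{lem:component_jordan_domain}) using the present lemma; the paper's version of \textup{(ii)} is the set-theoretic containment $\partial\wt U\setminus\partial V\subseteq\eta_2\cup\eta_4$, which follows directly from the definition of $\wt U$.
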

\begin{proof}
We begin by proving (\ref{it:component_contained}) and (\ref{it:rest_of_boundary}).
Let $\eta_2$ (resp.\ $\eta_4$) be the flow line of $h|_{\h \setminus \fan(t;\gamma)}$ of angle $\theta$ starting from $\wh{\psi}^{-1}(x_2)$ (resp.\ $\wh{\psi}^{-1}(x_4)$).
Then we have that $\eta_2$ has the law of an $\SLE_{\kappa}(\rho^L;\rho^{1,R},\rho^{2,R},\rho^{3,R})$ process in the unbounded connected component $\wh{G}$ of $\h \setminus \fan(t;\gamma)$ with the force points located at $\wh{\psi}^{-1}(x_1),\wh{\psi}^{-1}(x_3),\wh{\psi}^{-1}(x_4)$,  and $\wh{\psi}^{-1}(x_5)$ respectively,  where 
\[\rho^L = -1+\frac{a - \tht \chi}{\lambda}, \quad  \rho^{1,R} = -2, \quad \rho^{2,R} = 2, \quad  \rho^{3,R} = -1+\frac{b + \tht \chi}{\lambda}.\]
Similarly,  we have that $\eta_4$ has the law of an $\SLE_{\kappa}(\rho^{3,L},\rho^{2,L},\rho^{1,L};\rho^R)$ process in $\wh{G}$ with the force points located at $\wh{\psi}^{-1}(x_1),\wh{\psi}^{-1}(x_2),\wh{\psi}^{-1}(x_3)$,  and $\wh{\psi}^{-1}(x_5)$ respectively,  and $\rho^R = \rho^{3,R},  \rho^{1,L} = -2,  \rho^{2,L} = 2$,  and $\rho^{3,L} = \rho^L$.  
It follows from the proofs of \cite[Lemma~2.1, Theorem~3.1]{mw2017intersections} and \cite[Lemma~15]{dubedat2007duality} that $\eta_2$ (resp.\ $\eta_4$) does not hit the clockwise (resp.\ counterclockwise) arc of $\partial V$ from $z(V)$ to $\psi\nv(x_2)$ (resp.\ $\psi\nv(x_4)$) except for at its starting point.  
Moreover,  either $\eta_2$ merges with the counterclockwise arc of $\partial V$ from $z(V)$ to $\gamma$ or it merges with $\eta_4$.  Similarly,  either $\eta_4$ merges with the clockwise arc of $\partial V$ from $z(V)$ to $\gamma$ or it merges with $\eta_2$.  If $\eta_2$ and $\eta_4$ merge before either of them merges with $\partial V$,  we have that both of them do not intersect $\partial V$ except for their starting points. 
Some of these possibilities are shown in Figure~\ref{fig:ft_gamma}.
Let $\wt{U}$ be the connected component of $\h \setminus (\fan(t;\gamma) \cup \eta_2 \cup \eta_4)$ whose boundary contains $z(V)$. 
In any of these cases, neither $\eta_2$ nor $\eta_4$ can hit $\del V$ at $z(V)$, meaning that $\del \wt{U}$ agrees with $\del V$ near $z(V)$. Next we will prove that $\wt{U}$ is a connected component of $\h \sm \F(\tht_1, \tht_2)$ which agrees with $V$ near $z(V)$, and that the boundary of $\wt{U}$ excluding $\del V$ is contained in $\eta_2 \cup \eta_4$, thus proving (\ref{it:component_contained}) and (\ref{it:rest_of_boundary}).

We will first show that $\fan(\theta_1,\theta_2) \cap \wt{U} = \emptyset$ a.s.  This will imply that $\wt{U} \subseteq U$ for some connected component $U$ of $\h \setminus \fan(\theta_1,\theta_2)$.  Suppose that $\eta_2$ merges with the counterclockwise arc of $\partial V$ from $z(V)$ to $\gamma$.  Then $\partial \wt{U}$ consists of part of $\partial V$ and the part of $\eta_2$ up until it merges with the counterclockwise arc of $\partial V$ from $z(V)$ to $\gamma$.  
Then the flow line interaction rules imply that for all $\phi \in \Q \cap [\theta,\theta_2]$,  we have that the flow line of $h$ of angle $\phi$ does not cross $\eta_2$ from left to right and so it does not enter $\wt{U}$. 
We note that the flow line of $h$ with angle $\tht$ (in the unbounded component of $\h\sm\F(t;\gg)$) can be sampled by first sampling $\F(t;\gamma)$ and then sampling the flow line of angle $\tht$ of the field $h\circ\wh\psi\nv - \chi\arg(\wh\psi\nv)'$ introduced in Lemma~\ref{lem:boundary_conditions}. This follows from the martingale characterization in \cite[Theorem~2.4]{ms2016imag1} and is similar to the argument in, for example, \cite[Lemmas~4.7, 7.1]{ms2016imag1}.
Also,  for all $\phi \in \Q \cap [\theta_1,\theta]$,  the flow line of $h$ with angle $\phi$ cannot enter $\wt{U}$ since it can only do so by crossing $\eta_2$ from left to right,  but then it has to exit $\wt{U}$ and it can only do so by crossing $\eta_2$ again.  But the latter cannot happen and so $\fan(\theta_1,\theta_2) \cap \wt{U} = \emptyset$.  
Analogously, $\fan(\theta_1,\theta_2) \cap \wt{U} = \emptyset$ if $\eta_4$ merges with the clockwise arc of $\partial V$ from $z(V)$ to $\gamma$.
Finally,  if $\eta_2$ and $\eta_4$ merge,  we have that $\partial \wt{U}$ consists of part of $\partial V$ and the parts of $\eta_2$ and $\eta_4$ up until they merge.  
Then,  for all $\phi \in \Q \cap [\theta_1,\theta]$,  the $\phi$-angle flow line of $h$ cannot enter $\wt{U}$ since it can only do so by crossing $\eta_4$ from right to left and the latter cannot happen.  
Similarly,  for all $\phi \in \Q \cap [\theta,\theta_2]$,  the $\phi$-angle flow line of $h$ cannot enter $\wt{U}$ since it can only do so by crossing $\eta_2$ from left to right,  and the latter cannot happen.  Combining the above observations,   we obtain that $\wt{U} \cap \fan(\theta_1,\theta_2) = \emptyset$ in every case and so $\wt{U} \subseteq U$ for some connected component $U$ of $\h \setminus \fan(\theta_1,\theta_2)$.

To show that $\wt{U} = U$,  suppose that $\wt{U} \neq U$.  Then there exists $w \in U \cap (\eta_2 \cup \eta_4)$.  
Suppose without loss of generality that $w \in \eta_2$. Then for all $\eps > 0$, using Proposition~\ref{prop:bounded_hausdorff}, there a.s.\ exists $\phi \in \Q \cap [\theta,\theta_2]$ such that the $\phi$-angle flow line of $h$ enters $B(w, \eps)$. This is a contradiction since $U$ is open and hence there exists some $\eps > 0$ such that $F(\tht_1, \tht_2)$ does not intersect $B(w,\eps)$, so $\wt{U} = U$, completing the proof of (\ref{it:component_contained}) and (\ref{it:rest_of_boundary}).

Finally, (\ref{it:marked_points_distinct}) follows from \cite[Lemma~3.8]{ms2016imag1} and the fact that we have explicit descriptions of the local set boundary conditions of $\del U$ and $\del V$ from Lemmas~\ref{lem:form_of_the_boundary} and~\ref{lem:flow_line_representation}. In particular, we see that $x(U) = z(V)$, but that $y(U)$ cannot be equal to this prime end by comparing the boundary conditions.
\end{proof}

Suppose that we have $z \in \h$ fixed and let $\theta = \theta(z)$ be the angle corresponding to the connected component $U$ of $\h \setminus \fan(\theta_1,\theta_2)$ containing $z$ as in Lemma~\ref{lem:form_of_the_boundary}, which exists a.s.  Set
\[ \fan(z) = \cap_{\substack{\phi_1 < \theta < \phi_2\\ \phi_1,\phi_2 \in \Q}} \fan(\phi_1,\phi_2).\]
Let $\CP$ be the (countable) set of simple piecewise linear paths $\gamma \colon [0,1] \to \ol{\h}$ with $\gamma(0) \in (-\infty,0) \cap \Q$, $\gamma(1) \in (0,\infty) \cap \Q$, and $\gamma(t) \in \h$ for each $t \in (0,1)$, and which consist of finitely many line segments and where all intersection points of distinct line segments have rational coordinates. We parameterize each path by arc length divided by its total arc length (so that $\gg$ is defined on $[0,1]$).
Let $\gamma$ be a path in $\CP$.  
Then,  we say that $U$ is discovered by $(\gamma, t)$ if $U$ is the unique connected component of $\h\sm\F(\tht_1, \tht_2)$ so that $\del U$ agrees with $\del V$ near $z(V)$, as in Lemma~\ref{lem:v_determines_a_component_u}, where $V$ is the bounded connected component of $\h \setminus (\fan(\theta_1,\theta_2;\gamma) \cup \gamma)$ whose boundary contains $\gamma(t)$.
Recall that $\tht(V) \equiv \tht(\gg,t)$ is defined in Lemma~\ref{lem:flow_line_representation} as the infimum of angles $\phi \in [\tht_1, \tht_2]$ for which $\eta_\phi$ first hits $\gg$ to the left of $\gg(t)$.
We will show in the following lemma that it is a.s.\ the case that every connected component of $\h \setminus \fan(\theta_1,\theta_2)$ can be discovered using the fixed and countable collection of paths $\CP$.  In particular,  combining with Lemma~\ref{lem:fan_symmetry},  this will imply that the two boundary arcs of any connected component $U$ of $\h \setminus \fan(\theta_1,\theta_2)$ are non-degenerate and that $\partial U$ can be represented as parts of flow lines of a conditional $\text{GFF}$.

\begin{lemma}\label{lem:discover_component}
Suppose that $z \in \h \cap \Q^2$ is fixed.  Then a.s.\ there exists a path $\gamma \in \CP$ and $t \in [0,1] \cap \Q$ so that the connected component $U$ of $\h \setminus \fan(\theta_1,\theta_2)$ containing $z$ is discovered by $(\gamma,t)$.
\end{lemma}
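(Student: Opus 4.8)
The plan is to fix $z \in \h \cap \Q^2$ and work on the a.s.\ event $\Omega_0$ on which: $z \notin \F(\tht_1,\tht_2)$ and $\F(\tht_1,\tht_2)$ has zero Lebesgue measure (Lemma~\ref{lem:fan_does_not_contain_point}); every flow line $\eta_\phi$, $\phi \in \Q \cap [\tht_1,\tht_2]$, is a well-defined simple curve obeying the interaction rules of \cite{ms2016imag1,ms2017ig4}; $\phi \mapsto \eta_\phi$ is continuous in the bounded Hausdorff metric (Proposition~\ref{prop:bounded_hausdorff}); and the component $U \ni z$ of $\h \sm \F(\tht_1,\tht_2)$ has a well-defined angle $\theta = \theta(U) \in (\tht_1,\tht_2)$ with opening and closing prime ends $x(U)\neq y(U)$ and boundary conditions as in Lemmas~\ref{lem:form_of_the_boundary} and~\ref{lem:v_determines_a_component_u}. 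Since $\CP \times (\Q \cap [0,1])$ is countable and, for each fixed $(\gamma,t)$, the event ``$(\gamma,t)$ discovers $U$'' is $\sigma(\F(\tht_1,\tht_2))$-measurable, it suffices to show that on $\Omega_0$ there always exist $\gamma \in \CP$ and $t \in [0,1]\cap\Q$ for which the bounded component $V = V(\gamma,t)$ of $\h \sm (\fan(\theta_1,\theta_2;\gamma) \cup \gamma)$ with $\gamma(t)\in\partial V$ satisfies $\partial V \cap \fan(\theta_1,\theta_2;\gamma)\neq\emptyset$, has $z(V)$ lying in the bounded component of $\h \sm \gamma$, and has $\partial V$ agreeing with $\partial U$ near the prime end $z(V) = x(U)$. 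Indeed, Lemma~\ref{lem:v_determines_a_component_u}(\ref{it:component_contained}) then produces a unique component of $\h\sm\F(\tht_1,\tht_2)$ whose boundary agrees with $\partial V$ near $z(V)$, and since $\partial U$ also has this property the uniqueness forces that component to be $U$, i.e.\ $(\gamma,t)$ discovers $U$.

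Next I would describe the local picture at $x(U)$. Let $p \in \ol\h$ be the point underlying the prime end $x(U)$; this is the pinch point where the clockwise and counterclockwise arcs of $\partial U$ (the right and left sides of the angle-$\theta$ flow line, by Lemma~\ref{lem:form_of_the_boundary}) meet, and on the side of $p$ away from $U$ these two arcs continue as a single merged flow line. Fixing $\delta \in (0, \tfrac12\dist(z,p))$ and using Proposition~\ref{prop:bounded_hausdorff} — which gives that $\eta_\phi$ converges to the right (resp.\ left) boundary arc of $\partial U$ as $\phi \uparrow \theta$ (resp.\ $\phi \downarrow \theta$), hence comes within $\delta/10$ of $p$ — together with the monotonicity and merging rules, one obtains rational $\phi_- < \theta < \phi_+$ so close to $\theta$ that inside $B(p,\delta)$ the curves $\eta_{\phi_-}$ and $\eta_{\phi_+}$ shadow the two arcs of $\partial U$ up to distance $\delta/10$, enter $B(p,\delta/2)$, and coincide with the merged structure on the far side of $p$. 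In particular $B(p,\delta)\sm\F(\tht_1,\tht_2)$ has a component abutting $p$ from inside $U$ which is a shrinking wedge bounded by the two arcs of $\partial U$, and there is an open corridor of positive width from $\R$ into $U$ and around $p$ that meets $\F(\tht_1,\tht_2)$ only where we choose. (If $p \in \R$, in particular $p=0$, one argues the same way, interpreting ``$z(V)$ in the bounded component of $\h\sm\gamma$'' via accessibility.)

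I would then take $\gamma \in \CP$ running from a rational point of $\R_-$, through $\h$, into $U$, around $p$ so as to separate $p$ from $z$ within $U$ while keeping $p$ in the bounded component of $\h\sm\gamma$, and back out to a rational point of $\R_+$; since this stays inside an open corridor of positive width it can be realized with finitely many line segments having rational endpoints. For this $\gamma$ one checks that every flow line $\eta_\phi$ with rational $\phi$ in a two-sided neighborhood of $\theta$, stopped at $\gamma$, terminates inside $B(p,\delta)$ near $p$ — those with $\phi<\theta$ meeting $\gamma$ on one side of the dip and those with $\phi>\theta$ on the other — so the bounded component $V=V(\gamma,t)$ of $\h\sm(\fan(\theta_1,\theta_2;\gamma)\cup\gamma)$ with $\gamma(t)$ on its boundary is exactly the wedge at $p$ cut off by these stopped flow lines and by $\gamma$. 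The defining property of $\theta(V)$ in Lemma~\ref{lem:flow_line_representation} then gives $\theta(V)=\theta$; a harmonic-measure argument as in Step~2 of Lemma~\ref{lem:flow_line_representation}, now using that the relevant flow lines are stopped right at $p$, shows that the opening point $z(V)=\lim_n x_n$ of $V$ equals $p=x(U)$, with $\partial V$ agreeing with $\partial U$ near this prime end, and with $z(V)$ in the bounded component of $\h\sm\gamma$ by construction. Choosing any rational $t$ with $\gamma(t)$ on the positive-length arc of $\gamma$ lying on $\partial V$ completes the argument, and since this holds for every realization in $\Omega_0$ the stated a.s.\ event has probability one.

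The main obstacle is precisely the construction in the third step: showing that a genuinely piecewise-linear, rationally specified crosscut $\gamma$ can be threaded from $\R$ to a neighborhood of the pinch point $x(U)$ without being cut off in the wrong way by the (accumulating) family of flow lines comprising $\F(\tht_1,\tht_2)$, and that once $\gamma$ is in place the complementary component it carves out really is the wedge at $x(U)$ carrying angle $\theta$ and opening point $x(U)$. This is where Proposition~\ref{prop:bounded_hausdorff}, the flow-line interaction and monotonicity rules of \cite{ms2016imag1,ms2017ig4}, and the local boundary structure from Lemmas~\ref{lem:form_of_the_boundary} and~\ref{lem:v_determines_a_component_u} must be combined with care.
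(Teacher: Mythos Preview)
Your proposal has a circularity problem at its foundation. You assume at the outset that $U$ has distinct opening and closing prime ends $x(U)\neq y(U)$ and that near the underlying point $p$ the two boundary arcs ``continue as a single merged flow line'' on the far side. But in the paper's logical order, $x(U)\neq y(U)$ is only established in Lemma~\ref{lem:v_determines_a_component_u}(\ref{it:marked_points_distinct}) for components that \emph{have already been discovered} by some $(\gamma,t)$, and the full split--merge description of $\fan(z)$ you rely on is Lemma~\ref{lem:component_jordan_domain}, which is proved \emph{after} (and using) Lemma~\ref{lem:discover_component}. Indeed, the proof of Lemma~\ref{lem:form_of_the_boundary} explicitly leaves open the possibility $\wt x=\wt y$, and the whole point of Lemma~\ref{lem:discover_component} is to feed every component through Lemma~\ref{lem:v_determines_a_component_u} so that this degeneracy can be ruled out. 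Your argument needs the pinch-point geometry at $p$ to thread $\gamma$ and to identify $z(V)=x(U)$; without it, you have no point $p$ to aim for and no merged structure to shadow.

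The paper sidesteps this by never appealing to the structure of $\partial U$ near $x(U)$. Instead it works with the set $\fan(z)=\bigcap_{\phi_1<\theta<\phi_2}\fan(\phi_1,\phi_2)$, proves directly that $\fan(z)$ equals its left and right boundaries $F_L\cup F_R$ (using that $\fan(z)$ is locally connected, Remark~\ref{rem:locally_connected}, so these boundaries are continuous curves), and then chooses $\gamma$ so that the \emph{initial segments} $F_L^s\cup F_R^s$ up to their first hits of $\partial U$ lie in the bounded component of $\h\setminus\gamma$ while $z$ lies in the unbounded one. A short topological argument shows $z$ is in the unbounded component of $\C\setminus(F_L^s\cup F_R^s)$, so such a $\gamma$ exists. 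This yields $\theta(\gamma,t)=\theta(U)$ for a suitable rational $t$, and then Lemma~\ref{lem:v_determines_a_component_u} identifies the discovered component with $U$. None of this requires knowing in advance that $x(U)$ is a genuine pinch point.

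A secondary issue: your invocation of Proposition~\ref{prop:bounded_hausdorff} to say $\eta_\phi$ converges to the boundary arcs of $\partial U$ as $\phi\to\theta$ is not what that proposition says; it gives convergence to $\R_+$ for a fixed GFF. To transport this to convergence toward $\partial U$ you would need to condition on flow lines bounding $U$ and apply the proposition inside each pocket, which again presupposes the structure you are trying to establish.
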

\begin{proof}
\textit{Step 1. $U$ is the connected component of $\h\sm\F(z)$ containing $z$.}
For $\phi_1 < \tht < \phi_2$, the connected component of $\h\sm\F(\phi_1,\phi_2)$ containing $z$ can be seen to be equal to $U$, since by its definition $U$ is determined only by flow lines with angle in $(\phi_1, \phi_2)$. It follows from the definition of $\F(z)$ that the connected component of $\h\sm\F(z)$ containing $z$ is equal to $U$.

\textit{Step 2. Choosing a suitable curve $\gg \in \CP$.}
Now, we will show that for any point $z \in \h \cap \Q^2$, there a.s.\ exist $\gg \in \CP$ and $t \in [0,1] \cap \Q^2$ such that $\tht(\gg, t) = \tht(U)$ and that $U$ intersects the unbounded component of $\h\sm\gg$. We will explain how this completes the proof in Step 3.

First, we show that $\F(z)$ is equal to its left and right boundaries, in a way that we will now make precise.
Let $W$ be the connected component of $\C \setminus \fan(z)$ containing $-i$.  Then,  $W$ is a simply connected set.
We claim that $\fan(z) = \partial W$.
Indeed,  suppose that this does not hold.  Then,  there exists $w \in \fan(z)$ and $\epsilon>0$ such that $\overline{B(w,\epsilon)} \subseteq \C \setminus \overline{W}$.
Let $W_L$ (resp.\ $W_R$) be the union of the connected components of $\h\sm\F(z)$ whose boundaries contain non-trivial intervals of $\R_-$ (resp.\ $\R_+$).
There exists $\phi \in \Q$ such that $\phi \neq \theta$ and the flow line of $h$ of angle $\phi$ intersects $B(w,\epsilon/2)$.   Suppose that $\phi > \theta$.  Then the flow line interaction rules imply that the $\phi$-angle flow line lies in $\overline{W}_L$ and so $B(w,\epsilon/2) \cap \overline{W}_L \neq \emptyset$ which implies that $B(w,\epsilon/2) \cap \overline{W} \neq \emptyset$,  and that is a contradiction.  Suppose that $\phi < \theta$.  Again,  the flow line interaction rules imply that $\eta_{\phi}$ lies in $\overline{W}_R$ and so $B(w,\epsilon/2) \cap \overline{W}_R \neq \emptyset$ which leads to a contradiction.  It follows that $\fan(z) = \partial W$.

Since $W$ is simply connected, there exists a unique conformal map $f \colon W \to \h$ such that $f(0)=0,f(\infty) = \infty$,  and $f(x)=1$,  where $x$ is the last point in the boundary of the connected component of $\h \setminus \eta_{\theta_2}$ containing $-1$ that $\eta_{\theta_2}$ hits.  
Note that $f$ extends to a homeomorphism mapping $\partial W$ onto $\partial \h$,  where the points on $\partial W$ are seen as prime ends. We define the left side $F_L$ (resp.\ right side $F_R$) of $\fan(z)$ to be given by $f^{-1}([0,\infty))$ (resp.\ $f^{-1}((-\infty,0])$). 
By Proposition~\ref{prop:locally_connected} and Remark~\ref{rem:locally_connected}, $\F(z)$ and $\del W$ are locally connected, so by \cite[Theorem~2.1]{pommerenke1992boundary} the map $f$ extends to a continuous map (with respect to the spherical metric) from $\ov{W}$ to $\ov{\h}$. It follows that the left and right boundaries $F_L$ and $F_R$ are (possibly non-simple) curves, naturally parameterized by the map $f$. In this way, $\F(z)$ is exactly the union of its left and right boundaries.
By Step 1, $\del U$ is contained in $F_L \cup F_R$, and since $\del U$ is locally connected, its boundary can be parameterized by a (possibly non-simple) curve \cite[Theorem~2.1]{pommerenke1992boundary}.

\begin{figure}[t]
\includegraphics[scale=0.8]{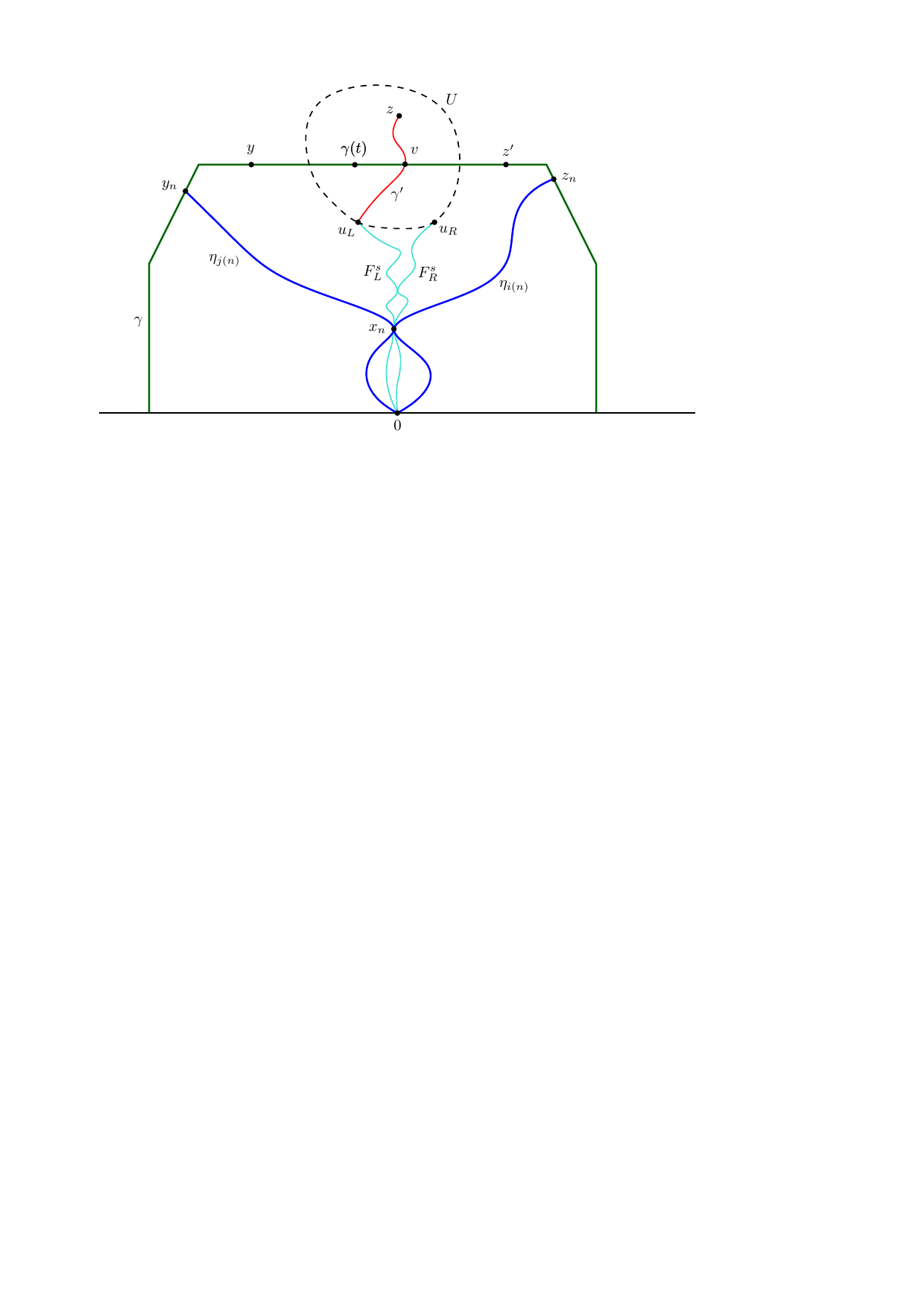}
\caption{Here we depict the setup of Lemma~\ref{lem:discover_component}. This figure is not meant to represent the reality and is included mainly to explain our notation. It is (deliberately) misleading in the sense that $\del U, u_L$ and $u_R$ cannot appear in the configuration shown. In fact, as we will eventually show, $F_L^s$ and $F_L^r$ are equal, and $u_L = u_R$. We choose this layout to emphasize that we have not proven the above at this stage, and so we have to allow for situations where we do not know the configuration of $u_L, u_R$ and $\del U$.} 
\label{fig:discovering_u}
\end{figure}

Next, suppose we have the notation of Lemma~\ref{lem:form_of_the_boundary} (see Figure~\ref{fig:un_boundary_conditions}).
Since $\del U \subseteq F_L \cup F_R$, at least one of $F_L, F_R$ intersects $\del U$. Suppose without loss of generality (for the remainder of this step) that $F_L$ intersects $\del U$ (while allowing that $F_R$ may also do so). Let $u_L$ be the first point at which $F_L$ intersects $\del U$ and let $F_L^s$ be $F_L$ stopped the first time it hits $u_L$. Let $u_R$ be the first point at which $F_R$ intersects $\del U$, if such a point exists, and set $u_R = 0$ otherwise. Let $F_R^s$ be $F_R$ stopped the first time it hits $u_R$.
We claim that $z$ lies in the unbounded connected component of $\C \sm (F_L^s \cup F_R^s)$. Indeed, suppose that $z$ lies in a bounded component $B$ of this set. Then there must exist a point $u \in \del U \sm (F_L^s \cup F_R^s)$, and $\eps > 0$ such that $B(u,\eps)$ is contained in $B$.
Since $u \in F_L \cup F_R$, it is in $\overline{W_L} \cup \ov{W_R}$, meaning that $B \cap (W_L \cup W_R)$ is not empty. But every point in $W_L \cup W_R$ is in $W$, which is an unbounded connected component of $\C \sm \F(z)$. Since $F_L^s \cup F_R^s \subseteq \F(z)$, such a point cannot be in a bounded component of $\C \sm (F_L^s \cup F_R^s)$, yielding a contradiction and proving the claim.
Since $F_L^s \cap F_R^s$ is bounded and $z$ is in the unbounded connected component of its complement, there exists a path $\gg \in \CP$ such that $F_L^s \cup F_R^s$ is in the bounded connected component of $\h\sm\gg$ and that $z$ is in the unbounded component.

To complete this step we show that there exists $t \in [0,1] \cap \Q$ such that $\tht(\gg, t) = \tht(U)$. Since $\del U$ is locally connected, the unique conformal map $g \colon \D \to U$ with $g(0) = z, g'(0) > 0$ extends to a continuous map from $\ov\D \to \ov{U}$ (see \cite[Theorem~2.1]{pommerenke1992boundary}), and there exists a simple path $\gg'$ in $U \cup \{u_L\}$ from $u_L$ to $z$. Let $v \in U$ be the first time this path hits $\gg$.
Let $\gg^*$ be the (not necessarily simple) curve obtained by concatenating $F_L^s$ and $\gg'$ which starts at $0$, ends at $z$ and hits $\gg$ for the first time at $v$.
For each $n$, $\F(z)$ and hence also $\gg^*$ lie between $\eta_{i(n)}$ and $\eta_{j(n)}$ (in the sense that they are contained in the closure of the region between these two curves). Let $z_n$ (resp.\ $y_n$) be the first point at which $\eta_{i(n)}$ (resp.\ $\eta_{j(n)}$) hits $\gg$.
Then $v$ must lie on the clockwise arc of $\gg$ from $y_n$ to $z_n$. Let $y = \lim_{n \to \infty} y_n$ and $z' = \lim_{n \to \infty} z_n$. Since $U$ is open and $v \in U$, $v$ must lie on the (necessarily non-empty) clockwise arc from $y$ to $z'$, and we can choose $t \in [0,1] \cap \Q$ such that $\gg(t)$ also lies on this arc.
If $\phi > \tht(U)$, then $\phi > \phi_{j(n)}$ for some $n$, and hence by the flow line interaction rules $\eta_\phi$ must hit $\gg$ for the first time to the left of $y_n$, and thus to the left of $y$. Analogously, if $\phi < \tht(U)$, then $\eta_\phi$ hits $\gg$ to the right of $z'$.
It follows that $\tht(U) = \tht(\gg,t)$.

\textit{Step 3. Completing the proof.}
Suppose that $\gg \in \CP, t \in [0,1] \cap \Q$, that $\tht(U) = \tht(\gg,t)$ and $z \in U$ is in the unbounded connected component of $\h\sm\gg$. By the previous step such a pair $(\gg,t)$ exists a.s. Let $V$ be the unique bounded component of $\h \sm (\F(\tht_1, \tht_2;\gg) \cup \gg)$ with $\gg(t)$ on its boundary. To complete the proof of the lemma, we will prove that $U$ is discovered by $(\gg, t)$ a.s.
By the arguments used to prove (\ref{it:rest_of_boundary}) in  Lemma~\ref{lem:v_determines_a_component_u}, the set $\F(z)\sm\F(t;\gg)$ is the union of the two flow lines, $\eta_2$ and $\eta_4$, of angle $\tht$ started from the prime ends $\psi\nv(x_2)$ and $\psi\nv(x_4)$ respectively. In particular, since $z$ is in a bounded component of $\h\sm\F(z)$ and $z$ is in the unbounded component of $\h\sm\gg$, the only way this can occur is if $U$ is equal to the region bounded by $\del V, \eta_2$ and $\eta_4$ (or perhaps one of these flow lines if one of them merges into $\del V$, see Figure~\ref{fig:ft_gamma}). This means exactly that $U$ is discovered by $(\gg,t)$.
\end{proof}

As a consequence of Lemmas~\ref{lem:discover_component} and ~\ref{lem:fan_symmetry}, we will now show that for all $z \in \h$, a.s. $\F(z)$ has the form of a simple curve which splits into two curves which in turn merge back into a single curve.
From this we can deduce that a.s.,  every connected component of $\h \setminus \fan(\theta_1,\theta_2)$ is a Jordan domain with distinct marked boundary points.  We state and prove this in the following lemma.

\begin{lemma}\label{lem:component_jordan_domain}
Fix $z \in \h$ and let $U$ be the connected component of $\h \setminus \fan(\theta_1,\theta_2)$ containing $z$. Then $\F(z)$ is a.s. the union of a simple curve $\eta_a$ from $0$ to $x(U)$, two simple curves $\eta_L$ and $\eta_R$ from $x(U)$ to $y(U)$ which form, respectively, the clockwise and counterclockwise boundary arcs of $\del U$ from $x(U)$ to $y(U)$, and a simple curve $\eta_b$ from $y(U)$ to $\infty$. The curves $\eta_a, \eta_L, \eta_R$ and $\eta_b$ do not intersect except at $x(U)$ and $y(U)$, and the boundary conditions of $h$ given $\F(z)$ are those of the left (resp.\ right) side of a flow line of angle $\tht(U)$ on the left (resp.\ right) side of each of these curves.
\end{lemma}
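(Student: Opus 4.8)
The plan is to combine Lemma~\ref{lem:discover_component}, Lemma~\ref{lem:v_determines_a_component_u}, Lemma~\ref{lem:fan_symmetry} and Lemma~\ref{lem:boundary_conditions} to describe $\F(z)$ globally. First I would fix $z \in \h \cap \Q^2$ (the general case follows since the events in question concern a fixed point and both the fan and the component $U$ are translation-invariant in the obvious way, or simply by noting $\F(z)$ only depends on a rational point in the same component). Recall from Step~2 of the proof of Lemma~\ref{lem:discover_component} that $\F(z) = F_L \cup F_R$, where $F_L = f^{-1}([0,\infty))$ and $F_R = f^{-1}((-\infty,0])$ are the left and right boundaries of $\F(z)$, each a (possibly non-simple) curve; moreover $\eta_{\tht_1} \equiv \R_+$ and $\eta_{\tht_2} \equiv \R_-$ are the left and right boundaries of $\F(\tht_1,\tht_2)$ in $\h$, and $U$ is the connected component of $\h \setminus \F(z)$ containing $z$. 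By Lemma~\ref{lem:discover_component}, a.s.\ there exist $\gg \in \CP$ and $t \in [0,1]\cap\Q$ so that $U$ is discovered by $(\gg,t)$; let $V$ be the bounded component of $\h \setminus (\F(\tht_1,\tht_2;\gg)\cup\gg)$ with $\gg(t) \in \partial V$, and let $x(U) = z(V)$ (using Lemma~\ref{lem:v_determines_a_component_u}\eqref{it:marked_points_distinct}, $x(U) \neq y(U)$). By Lemma~\ref{lem:v_determines_a_component_u}\eqref{it:rest_of_boundary} together with the description in Lemma~\ref{lem:boundary_conditions}, $\F(z) \setminus \F(t;\gg)$ is the union of the two flow lines $\eta_2,\eta_4$ of angle $\tht = \tht(U)$ started from the prime ends $\wh\psi^{-1}(x_2), \wh\psi^{-1}(x_4)$, and $\partial U \setminus \partial V \subseteq \eta_2\cup\eta_4$; as shown there, $\eta_2$ (resp.\ $\eta_4$) is an $\SLE_\kappa(\rho)$ process which hits neither the relevant boundary arc of $\partial V$ (except at its starting point) nor $\partial\h$, and either $\eta_2$ and $\eta_4$ merge into each other or one merges into $\partial V$, so that in every case the union of the two arcs of $\partial V$ from $z(V)$ to $\gg$ together with $\eta_2\cup\eta_4$ bounds exactly $U$. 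This already shows that $\partial U$ consists of two simple arcs $\eta_L$ (clockwise) and $\eta_R$ (counterclockwise) from $x(U)$ to $y(U)$ meeting only at these two points, and that the boundary conditions of $h$ given $\F(z)$ along $\eta_L$ (resp.\ $\eta_R$) are those of the left (resp.\ right) side of a flow line of angle $\tht(U)$; the simplicity of $\eta_L,\eta_R$ follows because they are concatenations of boundary arcs of $\partial V$ (which are arcs of flow lines of $h$, hence simple for $\kappa\in(0,4)$) and of $\SLE_\kappa(\rho)$ arcs with $\rho$-values keeping them simple and non-self-intersecting, using the flow line interaction/monotonicity rules of \cite{ms2016imag1,ms2017ig4}.

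Next I would identify $\eta_a$, the part of $\F(z)$ from $0$ to $x(U)$. The point is that $\F(z)$ lies between $\eta_\phi$ for $\phi$ slightly above and slightly below $\tht(U)$ (this is essentially the definition of $\F(z)$ as the decreasing intersection), so the portion of $\F(z)$ "below" the pocket $U$ is sandwiched between two flow lines whose angles converge to $\tht(U)$; by the flow line merging phenomenon \cite[Theorem~1.5]{ms2016imag1} these two flow lines agree up until they disconnect $z$, so their common part is a single simple curve $\eta_a$ from $0$ to $x(U)$, with left/right flow line boundary conditions of angle $\tht(U)$. Concretely, one can take $\eta_a$ to be the flow line of angle $\tht(U)$ of $h$ stopped at the first time it disconnects $z$ from $\infty$: for any rational $\phi_1 < \tht(U) < \phi_2$, $\eta_{\phi_1}$ and $\eta_{\phi_2}$ agree with this curve up to the disconnection time by the flow line interaction rules, hence so does their intersection over all such rational pairs, which is contained in $\F(z)$. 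The same argument in the time-reversed picture — here is where I would invoke Lemma~\ref{lem:fan_symmetry} — gives the curve $\eta_b$ from $y(U)$ to $\infty$: applying $\phi(w) = -1/w$, the fan $\F(\tht_1,\tht_2)$ maps to a fan of the same type (with shifted angles and boundary data), $U$ maps to a component $\wt U$ of the image fan's complement with $\phi(y(U))$ now playing the role of the opening point and $\phi(x(U))$ the closing point, and the portion of the image fan from $\phi(y(U))$ down to $0$ is, by the argument just given, a single simple curve; pulling back by $\phi^{-1}$ shows the portion of $\F(z)$ from $y(U)$ to $\infty$ is a single simple curve $\eta_b$ with the asserted boundary conditions. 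Finally I would check that $\eta_a, \eta_L, \eta_R, \eta_b$ are pairwise disjoint except at $x(U)$ and $y(U)$: $\eta_L,\eta_R$ meet only at their two endpoints by the above; $\eta_a$ meets $\eta_L\cup\eta_R$ only at $x(U)$ because $\eta_a$ is traced before the disconnection time while $\eta_L,\eta_R$ are traced after, and flow lines of the same angle do not cross and here actually bound a pocket; similarly for $\eta_b$ at $y(U)$; and $\eta_a\cap\eta_b = \emptyset$ since $x(U) \neq y(U)$ and a flow line of a fixed angle does not return to an earlier point after tracing out a pocket (again the flow line interaction rules and the fact that $\F(z)$ has measure zero by Lemma~\ref{lem:fan_does_not_contain_point} rule out degenerations). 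The boundary conditions statement is then just the concatenation of the four pieces, each of which has been identified as carrying left/right flow-line-of-angle-$\tht(U)$ boundary data (from Lemmas~\ref{lem:form_of_the_boundary}, \ref{lem:flow_line_representation}, \ref{lem:boundary_conditions}, and the characterization of $\eta_a,\eta_b$).

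The main obstacle I anticipate is being careful about prime ends and about the possibility that one of $\eta_2,\eta_4$ merges into $\partial V$ rather than into the other — this is why Lemma~\ref{lem:v_determines_a_component_u} was stated allowing all these cases, and one must check in each case that $\partial U$ still decomposes into exactly the two simple arcs $\eta_L, \eta_R$ and that the full set $\F(z)$ is exactly $\eta_a \cup \eta_L \cup \eta_R \cup \eta_b$ with no extra pieces. In particular one must rule out that $\F(z)$ contains points not on any of these four curves, which follows from Step~2 of Lemma~\ref{lem:discover_component} (that $\F(z)$ equals its left and right boundaries $F_L\cup F_R$) combined with the explicit identification of $F_L, F_R$ as the concatenations $\eta_a \cup (\text{one of } \eta_L,\eta_R) \cup \eta_b$. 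A second, more technical point is confirming that $\eta_a$ and $\eta_b$ really are \emph{simple}: this uses that $\SLE_\kappa$ is simple for $\kappa\in(0,4)$ and that two flow lines of the same angle that bound a pocket trace disjoint simple arcs before their common starting segment ends, which is part of \cite[Theorem~1.5]{ms2016imag1} and its whole-plane analogue in \cite{ms2017ig4}; here the local absolute continuity between the fan's field and the whole-plane GFF (as used repeatedly above, e.g.\ via \cite[Lemma~4.1]{mq2020geodesics}) lets us transfer those statements. Once these points are handled, the lemma follows by assembling the pieces.
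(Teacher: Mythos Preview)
Your framework is sound: invoking Lemmas~\ref{lem:discover_component}, \ref{lem:v_determines_a_component_u}, \ref{lem:boundary_conditions} and reversal symmetry is exactly how the paper proceeds. The description of $\partial U$ as two simple arcs $\eta_L,\eta_R$ via $\eta_2,\eta_4$ and $\partial V$ is essentially correct. The gap is in your identification of $\eta_a$ (and hence, by your use of reversibility, of $\eta_b$).

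You write that ``by the flow line merging phenomenon \cite[Theorem~1.5]{ms2016imag1} these two flow lines agree up until they disconnect $z$'' and that ``$\eta_{\phi_1}$ and $\eta_{\phi_2}$ agree with this curve up to the disconnection time.'' This is false: for $\phi_1<\phi_2$ the flow lines $\eta_{\phi_1},\eta_{\phi_2}$ of $h$ from $0$ are distinct simple curves that may bounce off each other but never coincide on any interval; \cite[Theorem~1.5]{ms2016imag1} gives merging only for flow lines of the \emph{same} angle. Relatedly, ``the flow line of angle $\theta(U)$ of $h$'' is not a priori meaningful, since $\theta(U)$ is random and flow lines are defined for deterministic angles. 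So the argument does not yet show that the portion of $\fan(z)$ before $x(U)$ is a single simple curve rather than some thicker set.

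The paper fills exactly this gap by a bootstrapping step you omit. After obtaining $y'$ (the point where $\eta_2,\eta_4$ merge, or where one of them merges with $\partial V$), one notes $y'\neq u_L$ a.s.\ and chooses a \emph{new} path $\wt\gamma\in\CP$ with $F_L^s$ in the bounded component of $\h\setminus\wt\gamma$ and $y'$ in the unbounded component. Rerunning Step~3 of Lemma~\ref{lem:discover_component} for $\wt\gamma$, the flow lines $\eta_2(\wt\gamma),\eta_4(\wt\gamma)$ of the conditional field given $\fan(t;\wt\gamma)$ now cannot merge with $\partial V(\wt\gamma)$ and must merge with each other; past the merging point they continue as a \emph{single} flow line of the conditional GFF to $\infty$, which is therefore a simple curve. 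This gives $\eta_b$. Reversibility (Lemma~\ref{lem:fan_symmetry}) with an analogous path $\wh\gamma$ then yields $\eta_a$. The point is that $\eta_a,\eta_b$ are realized as flow lines of a \emph{conditional} GFF (for which the angle $\theta(U)$ is measurable with respect to the conditioning), not of $h$ itself; your attempt to identify $\eta_a$ directly as a flow line of $h$ cannot work without this device.
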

\begin{proof}
The proof of this argument is effectively a bootstrapping of the argument used in Lemma~\ref{lem:discover_component}, and we will only mention here the details that we need to change. Suppose that we have $\gg$ as in Step 3 of the proof of Lemma~\ref{lem:discover_component}.
By Lemma~\ref{lem:boundary_conditions} the boundary conditions of $h$ given $\F(t;\gg)$ are such that $\eta_2$ and $\eta_4$ do not hit any fixed point on $F(t;\gg)$ a.s.\ and in particular do not hit $u_L \in \del U$ a.s.
As explained in the proof of Lemma~\ref{lem:v_determines_a_component_u} either $\eta_2$ and $\eta_4$ merge, or exactly one of $\eta_2$ or $\eta_4$ merges with $\del V$. Define $y' \equiv y'(U)$ to be this merging point.
Now $y'(U) \neq u_L$ a.s.\ so it follows that there exists $\wt{\gg} \in \CP$ such that $F_s^L(\wt{\gg})$ is in the bounded component of $\h\sm\wt\gg$ and that $y'$ is in the unbounded component.
If we now repeat the argument of Step 3 in Lemma~\ref{lem:discover_component}, we find that in this case the flow lines $\eta_2(\wt\gg)$ and $\eta_4(\wt\gg)$ a.s.\ do not merge with $\del V(\wt{\gg})$ and therefore must merge together to create the pocket $U(\wt\gg)$, which as before must be equal to $U$.
This ensures that the form of $F(z)\sm\F(t;\gg)$ consists of two flow lines of the same angle which merge and then go on to $\infty$.
To obtain information about the start of the flow line, we use reversibility and draw a new path $\wh\gg$ which disconnects $y'(U)$ from $0$ and does not intersect $\wt\gg$. By considering $\wh\F(t;\gg')$, the analogue of $\F(t;\wh\gg)$ except we look at the time reversal of the flow lines from $\infty$ to $0$ until they hit $\wh\gg$, we can conclude that the form of $\F(z)\sm\wh\F(\wh\gg)$ consists of two simple curves which merge together.
By combining these observations, we can deduce the structure of $\F(z)$. Finally, the form of the boundary conditions then can be read off using Lemma~\ref{lem:boundary_conditions} and \cite[Lemma~3.8]{ms2016imag1}, which allows us to identify that $y(U)$ and $y'(U)$ are equal, and that $x(U) = u_L = u_R$. 
\end{proof}

\subsection{Interaction of components in the complement of the fan}

We now recall some results from \cite{mw2017intersections}.  Suppose that
\[ 0 < \Delta \theta < \frac{\kappa \pi}{4-\kappa}\]
and let
\[ \rho(\Delta \theta) = \frac{1}{\pi}(\Delta \theta)\left(2 - \frac{\kappa}{2}\right) -2.\]
Then by \cite[Theorem~1.5]{mw2017intersections},  the Hausdorff dimension of the intersection of flow lines with angle difference $\Delta \theta$ is given by
\begin{equation}
\label{eqn:flow_line_intersection_dimension}
2 - \frac{1}{2\kappa} \left( \rho + \frac{\kappa}{2} + 2\right) \left( \rho - \frac{\kappa}{2} + 6\right).
\end{equation}
We in particular note that the above formula determines $\Delta \theta$. We recall also from \cite[Theorem~1.5]{ms2016imag1} that $\tht_c = \pi\kk/(4-\kk)$ is the critical angle difference in the sense that two flow lines of angle difference $\Delta \tht$ may intersect (without crossing) only when $\DD\tht < \tht_c$.

Suppose that $\eta$ is a flow line of a \text{GFF} which has the law of an $\SLE_{\kappa}(\rho)$ process in $\h$ from $0$ to $\infty$ with the force point located at $0^+$ and such that $\rho \in (-2,\frac{\kappa}{2}-2)$.
Then it is a.s.\ the case that for all $0<x_1<x_2$,  on the event that $\eta \cap [x_1, x_2] \neq \emptyset$, the Hausdorff dimension of $\eta \cap [x_1,x_2]$ is equal to \cite[Theorem~1.6]{mw2017intersections}
\begin{equation}
\label{eqn:boundary_flow_line_dimension}
1- \frac{1}{\kappa} \left(\rho+2 \right)\left(\rho + 4 - \frac{\kappa}{2}\right).
\end{equation}
In particular, from the dimension we can recover $\rho$ (or, equivalently, the angle of the flow line). 
In the following it is worth keeping in mind that $(2 - \kk/2)/\pi = \chi/\la$.

Let us now record some consequences of the above and Lemmas~\ref{lem:boundary_conditions}-\ref{lem:discover_component}.

\begin{lemma}
\label{lem:component_boundary_intersection}
Fix $z,w \in \h$ distinct points and let $U_1$ (resp.\ $U_2$) be the connected component of $\h \setminus \fan(\theta_1,\theta_2)$ containing $z$ (resp.\ $w$).  Then we have that:
\begin{enumerate}[(i)]
\item\label{it:intersection_side} On the event that $\partial U_1 \cap \partial U_2 \neq \emptyset$ and $U_1 \neq U_2$, we have that the left (but not the right) side of $\partial U_1$ intersects the right (but not the left) side of $\partial U_2$ or vice-versa. 
\item\label{it:boundary_intersection} $\partial U_1$, $\partial U_2$ can only intersect if their angle difference $|\tht(U_1) - \tht(U_2)|$ is below the critical angle.
\item\label{it:intersection_dimension} On the event that $\partial U_1 \cap \partial U_2 \neq \emptyset$, we have that the Hausdorff dimension of $\partial U_1 \cap \partial U_2 \neq \emptyset$ is a.s.\ equal to~\eqref{eqn:flow_line_intersection_dimension} with $\rho = \frac{1}{\pi}|\theta(U_1)-\theta(U_2)|\left(2-\frac{\kappa}{2}\right) - 2$.
\item\label{it:boundary_intersection_dimension} On the event that $\partial U_1 \cap \R_+ \neq \emptyset$ and that the right boundary of $U_1$ does not contain any open interval of $\R_+$, we have that the Hausdorff dimension of $\partial U_1 \cap \R_+$ is equal to~\eqref{eqn:boundary_flow_line_dimension} with $\rho = -1 +\frac{b + \theta(U_1)\chi}{\lambda}$.
\end{enumerate}
\end{lemma}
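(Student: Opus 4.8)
The plan is to deduce all four parts from the key structural result that for each fixed point $z$, the set $\fan(z)$ is a.s.\ a simple curve that splits into two simple arcs (forming the boundary of the component $U$ containing $z$) which then merge (Lemma~\ref{lem:component_jordan_domain}), together with the fact established in Lemmas~\ref{lem:discover_component} and~\ref{lem:v_determines_a_component_u} that every component can be discovered by a countable family of paths and that $\partial U \setminus \partial V$ is given by flow lines of a conditional GFF. The point is that once we know $\partial U$ is locally given by flow lines of angle $\theta(U)$ of a conditional field (with the appropriate branch/boundary conditions), the interaction between $\partial U_1$ and $\partial U_2$ is governed by the flow line interaction rules of \cite{ms2016imag1,ms2017ig4} and the dimension results of \cite{mw2017intersections} quoted above.

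For \eqref{it:intersection_side}: first pass to a pair of rational points $z,w$ (by countability and the fact that every component contains such a point, since $\fan(\theta_1,\theta_2)$ has zero Lebesgue measure by Lemma~\ref{lem:fan_does_not_contain_point}). Then, using Lemma~\ref{lem:discover_component}, discover $U_1$ by some $(\gamma, t) \in \CP \times (\Q \cap [0,1])$; on this event $\partial U_1$ agrees near $z(V) = x(U_1)$ with $\partial V$, and by Lemma~\ref{lem:component_jordan_domain} the clockwise (resp.\ counterclockwise) arc of $\partial U_1$ is a flow line of angle $\theta(U_1)$ whose left (resp.\ right) side carries boundary value $\lambda - \theta(U_1)\chi$ (resp.\ $-\lambda - \theta(U_1)\chi$), modulo $2\pi\chi$. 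The interior of $U_1$ sees the ``$\lambda$ side'' of the clockwise arc and the ``$-\lambda$ side'' of the counterclockwise arc. If $\partial U_1 \cap \partial U_2 \neq \emptyset$, a point in the intersection lies on a flow-line segment of angle $\theta(U_1)$ and also on one of angle $\theta(U_2)$; by the monotonicity of flow lines in their angle, the flow lines forming $\partial U_2$ either both lie weakly to the left of those forming $\partial U_1$ (if $\theta(U_2) > \theta(U_1)$) or weakly to the right (if $\theta(U_2) < \theta(U_1)$); the case $\theta(U_1) = \theta(U_2)$ with $U_1 \neq U_2$ cannot contribute an intersection since flow lines of equal angle from the relevant points would have to merge, forcing $U_1 = U_2$. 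Hence exactly one of ``left side of $\partial U_1$ meets right side of $\partial U_2$'' or vice versa holds, and the same-side intersections are excluded, giving \eqref{it:intersection_side}.

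Parts \eqref{it:boundary_intersection} and \eqref{it:intersection_dimension} then follow by applying the flow line interaction rules and \cite[Theorem~1.5]{mw2017intersections} directly to these conditional flow lines: conditionally on $\fan(t;\gamma)$ (or more precisely on the $\sigma$-algebra generated by enough of the fan), the relevant boundary arcs of $U_1$ and $U_2$ are flow lines of angles $\theta(U_1)$ and $\theta(U_2)$ of a single GFF, so their intersection has Hausdorff dimension given by \eqref{eqn:flow_line_intersection_dimension} with $\Delta\theta = |\theta(U_1) - \theta(U_2)|$ and $\rho = \rho(\Delta\theta) = \frac{1}{\pi}|\theta(U_1)-\theta(U_2)|(2 - \kappa/2) - 2$, and can be nonempty only when $\Delta\theta < \theta_c = \pi\kappa/(4-\kappa)$. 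One must check that the conditioning does not affect the local dimension computation --- this is handled by absolute continuity of the conditional field with respect to an unconditioned GFF away from the boundary, exactly as in the proofs of \cite{mw2017intersections}, and by Lemma~\ref{lem:component_boundary_intersection}'s reduction to countably many discovering paths so that the a.s.\ statements can be combined. Part \eqref{it:boundary_intersection_dimension} is analogous but uses \cite[Theorem~1.6]{mw2017intersections} instead: the right boundary of $U_1$, where it meets $\R_+$, is a flow line of angle $\theta(U_1)$ which, as an $\SLE_\kappa(\rho)$-type curve seen from the real line, has $\rho = -1 + (b + \theta(U_1)\chi)/\lambda$ (this is the weight of the force point at $0^+$ read off from the boundary data on $\R_+$, which is $b$), so on the event that it hits $\R_+$ without tracing an interval, the intersection dimension is \eqref{eqn:boundary_flow_line_dimension} with that value of $\rho$.

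The main obstacle is the bookkeeping needed to legitimately transfer the dimension and interaction statements from the \emph{conditional} flow lines (flow lines of $h$ given $\fan(t;\gamma)$, or given a finite collection of flow lines) to the \emph{actual} boundary arcs $\partial U_i$, simultaneously for all pairs of components. This requires: (a) the countable discovery mechanism of Lemma~\ref{lem:discover_component} to reduce to a.s.\ statements over a countable index set; (b) the explicit boundary-condition identifications of Lemmas~\ref{lem:form_of_the_boundary}, \ref{lem:flow_line_representation} and~\ref{lem:boundary_conditions}, including tracking the global $2\pi\chi$ branch ambiguity, so that the conditional field genuinely has the boundary data of a GFF whose flow lines of angles $\theta(U_1), \theta(U_2)$ are $\partial U_1, \partial U_2$; and (c) a local absolute continuity argument (as in \cite[Lemma~4.1]{mq2020geodesics} and the proofs in \cite{mw2017intersections}) to see that conditioning on the already-drawn part of the fan does not change the almost-sure Hausdorff dimension of the relevant intersection sets. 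Once these are in place, \eqref{it:boundary_intersection}, \eqref{it:intersection_dimension} and \eqref{it:boundary_intersection_dimension} are immediate quotations of \cite{mw2017intersections} and \eqref{it:intersection_side} is immediate from the flow line interaction rules.
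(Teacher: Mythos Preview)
Your overall strategy matches the paper's: represent $\partial U_1$ and $\partial U_2$ as flow lines of a conditional GFF via the discovery mechanism, then read off \eqref{it:intersection_side}--\eqref{it:intersection_dimension} from the flow line interaction rules and \cite[Theorem~1.5]{mw2017intersections}, and \eqref{it:boundary_intersection_dimension} from \cite[Theorem~1.6]{mw2017intersections}.

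There is, however, a genuine gap in your sketch. To compare $\partial U_1$ with $\partial U_2$ using \cite{mw2017intersections}, you need both boundaries to be flow lines of the \emph{same} conditional field. You only discover $U_1$ by some $(\gamma,t)$ and then speak vaguely of ``the $\sigma$-algebra generated by enough of the fan''. The paper makes this precise by choosing a single $\gamma\in\CP$ with $\gamma(s)\in U_1$ and $\gamma(t)\in U_2$ for rational $s<t$, such that $\fan(s;\gamma)$ and $\fan(t;\gamma)$ each capture $x(U_j)$ before hitting $\gamma$; the relevant arcs of $\partial U_1$ and $\partial U_2$ away from $B(x(U_j),\epsilon)$ are then flow lines $\eta_{u_1},\eta_{v_1},\eta_{u_2},\eta_{v_2}$ of the field conditioned on $\fan(s;\gamma)\cup\fan(t;\gamma)$, and one lets $\epsilon\to 0$ over the countable family of such $\gamma$. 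Without this simultaneous setup your appeal to the dimension formula is not justified. (Also, your sentence ``by Lemma~\ref{lem:component_boundary_intersection}'s reduction\ldots'' is a self-reference to the lemma you are proving.)

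For \eqref{it:boundary_intersection_dimension}, you correctly identify $\rho$, but ``analogous'' undersells what is needed: conditionally on $\fan(s;\gamma)$, the curve $\eta_{v_1}$ is an $\SLE_\kappa(\rho^{3,L},\rho^{2,L},\rho^{1,L};\rho^R)$ with several force points, not a one-force-point $\SLE_\kappa(\rho^R)$, so \cite[Theorem~1.6]{mw2017intersections} does not apply directly. The paper handles this by a stopping-time decomposition: define successive hitting times $\tau_n$ of $[x,y]\subset(1,\infty)$ and exit times $\sigma_n$ of $B(\eta(\tau_n),\delta)$, and use \cite[Lemma~2.7]{mw2017intersections} to see that on $\{\tau_n<\infty\}$ the law of $\eta|_{[\tau_n,\sigma_n]}$ is mutually absolutely continuous with a plain $\SLE_\kappa(\rho^R)$, whence the dimension of each piece $\eta([\tau_n,\sigma_n])\cap[x,y]$ is~\eqref{eqn:boundary_flow_line_dimension}. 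Your generic ``local absolute continuity as in \cite{mq2020geodesics}'' does not supply this reduction.
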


\begin{figure}[t]
\includegraphics[scale=0.8]{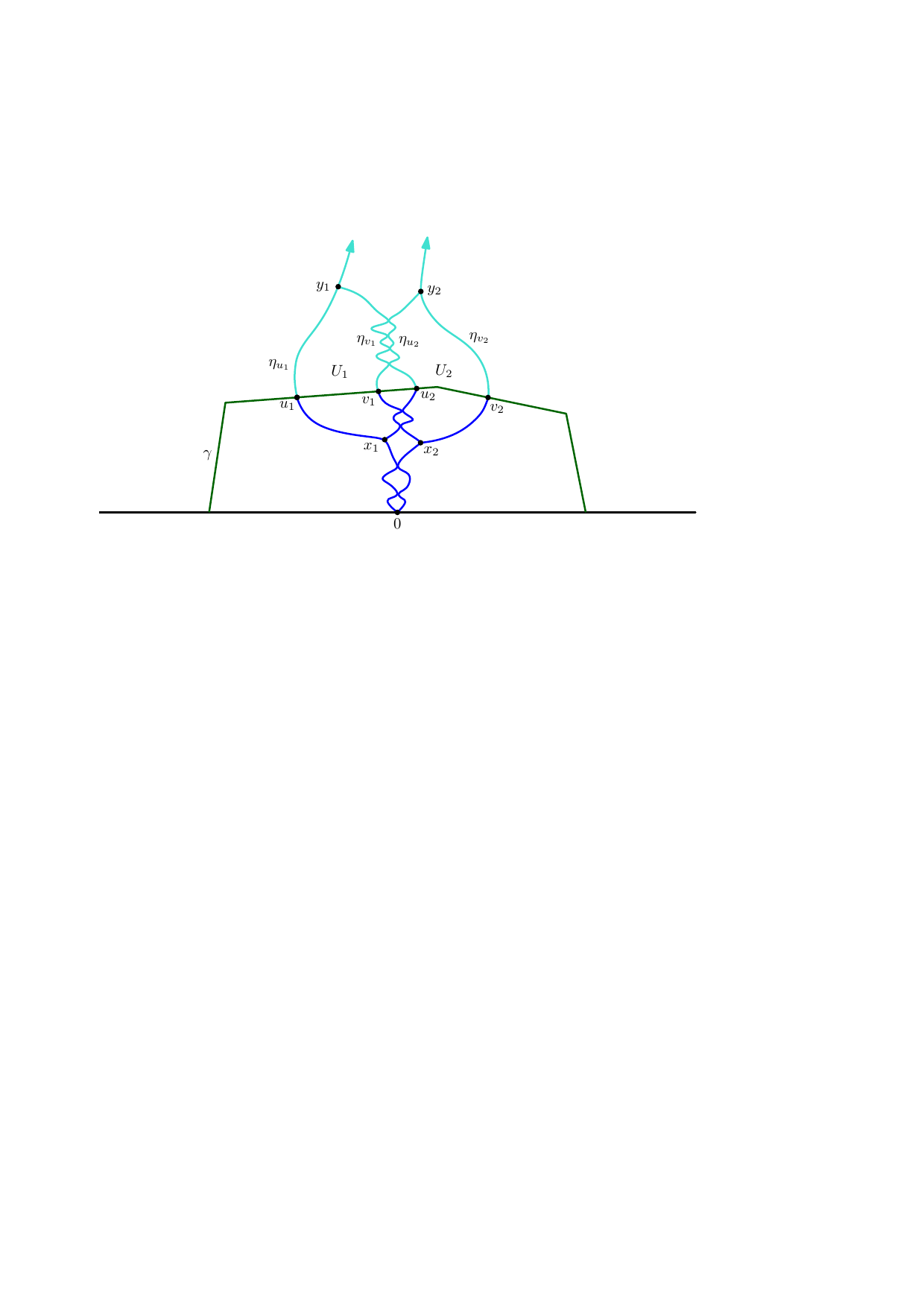}
\caption{This figure corresponds to parts (\ref{it:intersection_side}), (\ref{it:boundary_intersection}) and (\ref{it:intersection_dimension}) of Lemma~\ref{lem:component_boundary_intersection}. Since we know $\eta_{u_1}, \eta_{v_1}, \eta_{u_2}, \eta_{v_2}$ are flow lines of a GFF, we can deduce that (\ref{it:intersection_side}), (\ref{it:boundary_intersection}) and (\ref{it:intersection_dimension}) hold on the parts of $\del U_1, \del U_2$ that are traced after $F(\gg(s))$ and $F(\gg(t))$ leave the bounded component of $\h\sm\gg$, respectively. By varying $\gg$, we can prove the result for all of $\del U_1, \del U_2$. }
\label{fig:component_intersections}
\end{figure}

\begin{proof}

\noindent{\it Step 1.  Proof of (\ref{it:intersection_side}),  (\ref{it:boundary_intersection}),  and (\ref{it:intersection_dimension}).}  Suppose that we are working on the event that $U_1 \neq U_2$ and that $\partial U_1 \cap \partial U_2 \neq \emptyset$.  
By the proofs of Lemmas~\ref{lem:discover_component} and~\ref{lem:component_jordan_domain}, we know that if $z \in U_1$, then $F(z)$ is a simple curve until it hits $x(U_1)$, then ``splits" into two curves and traces $\del U_1$. These two curves merge at $y(U_1)$, after which $F(z)$ is again a simple curve until it reaches $\infty$. Since $F(z)$ depends on $z$ only through $\tht_1 \equiv \tht(z) \equiv \tht(U_1)$, it follows that $\tht(U_1) \neq \tht(U_2)$ if $U_1 \neq U_2$.
Therefore, we can assume that $\theta(U_1) > \theta(U_2)$. 
Fix $\eps > 0$ small enough that $|y(U_j)-x(U_j)| > \epsilon$ for $j=1,2$.
We let $\gamma$ be a simple path in $\overline{\h}$ as in Lemma~\ref{lem:discover_component} such that $\gamma(s) \in U_1, \gamma(t) \in U_2$ for some $s,t \in \Q \cap (0,1)$ with $s<t$ and that $F(\gg(s))$ (resp.\ $F(\gg(t))$) does not hit $\gg$ before it hits $x(U_1)$ (resp.\ $x(U_2)$).
We also assume that $\partial U_j \setminus B(x(U_j) ,  \epsilon)$ is contained in the unbounded connected component of $\overline{\h} \setminus \gamma$ for $j=1,2$. We note that we can always choose such a path for all $\epsilon>0$ sufficiently small. 
The following setup is depicted in Figure~\ref{fig:component_intersections}.
We let $u_1$ (resp.\ $v_1$) be the point on $\fan(s;\gamma)$ which is hit first (resp.\ last) by $\gamma$.  Similarly,  we let $u_2$ (resp.\ $v_2$) be the point on $\fan(t;\gamma)$ which is hit first (resp.\ last) by $\gamma$.  Moreover,  we let $\eta_{u_1}$ (resp.\ $\eta_{v_1}$) be the flow line starting from $u_1$ (resp.\ $v_1$) with angle $\theta(U_1)$ of the conditional field given $\fan(s;\gamma)$.  Then we have that $\eta_{u_1}$ and $\eta_{v_1}$ merge upon hitting $y(U_1)$ and then evolve towards $\infty$.  
Also,  it follows from Lemma~\ref{lem:v_determines_a_component_u} that $\partial U_1 \setminus B(x(U_1),\epsilon)$ is contained in $\eta_{u_1} \cup \eta_{v_1}$.  Similarly,  we let $\eta_{u_2}$ (resp.\ $\eta_{v_2}$) be the flow line starting from $u_2$ (resp.\ $v_2$) with angle $\theta(U_2)$ of the conditional field given $\fan(t;\gamma)$.  Again,  we have that $\eta_{u_2}$ and $\eta_{v_2}$ merge upon hitting $y(U_2)$ and then evolve towards $\infty$.  Also,  $\partial U_2 \setminus B(x(U_2),\epsilon)$ is contained in $\eta_{u_2} \cup \eta_{v_2}$.  Moreover,  we have that $(\partial U_1 \setminus B(x(U_1),\epsilon)) \cap (\partial U_2 \setminus B(x(U_2),\epsilon)) \subseteq \eta_{v_1} \cap \eta_{u_2}$,  since the flow line interaction rules imply that $\eta_{v_1}$ cannot cross $\eta_{u_2}$ when they are both restricted to the unbounded connected component of $\h \setminus (\fan(s;\gamma) \cup \fan(t;\gamma))$.  Furthermore,  the flow line interaction rules imply that $|\theta(U_1) - \theta(U_2)| \leq \theta_c$ and by the proof of Lemma~\ref{lem:v_determines_a_component_u} we have that the left (resp.\ right) side of $\partial U_j$ contained in $\partial U_j \setminus B(x(U_j),\epsilon)$ is contained in the part of $\eta_{u_j}$ (resp.\ $\eta_{v_j}$) stopped at the first time that it hits $y(U_j)$ for $j=1,2$.  It follows that $(\partial U_1 \setminus B(x(U_1),\epsilon)) \cap (\partial U_2 \setminus B(x(U_2),\epsilon))$ is contained in the intersection of the right side of $\partial U_1$ with the left side of $\partial U_2$.  Furthermore,  we have that any fixed segment of the right (resp.\ left) side of $\partial U_1$ (resp.\ $\partial U_2$) is contained in $\partial U_1 \setminus B(x(U_1),\epsilon)$ (resp.\ $\partial U_2 \setminus B(x(U_2),\epsilon)$) for all $\epsilon>0$ sufficiently small,  and so it follows by \cite[Theorem~1.5]{mw2017intersections} that the Hausdorff dimension of $(\partial U_1 \setminus B(x(U_1),\epsilon)) \cap (\partial U_2 \setminus B(x(U_2),\epsilon))$ is given by \eqref{eqn:flow_line_intersection_dimension} with $\rho = \frac{1}{\pi}|\theta(U_1)-\theta(U_2)|\left(2-\frac{\kappa}{2}\right) - 2$.  Claims (\ref{it:intersection_side}),  (\ref{it:boundary_intersection}),  and (\ref{it:intersection_dimension}) then follow since we can always choose such a path $\gamma$ (among a countable and fixed set of choices) for all $\epsilon>0$ sufficiently small.

\noindent{\it Step 2.  Proof of (\ref{it:boundary_intersection_dimension}).} Suppose that we have the setup of the previous paragraph.  First,  we note that Lemma~\ref{lem:boundary_conditions} implies that conditionally on $\fan(s;\gamma)$,  the curve $\eta_{v_1}$ has the law of an $\SLE_{\kappa}(\rho^{3,L},\rho^{2,L},\rho^{1,L};\rho^R)$ process from $v_1$ to $\infty$ in the unbounded connected component of $\h \setminus \fan(s;\gamma)$,  where
\[\rho^{3,L} = -1+\frac{a - \tht(U_1)\chi}{\lambda}, \quad \rho^{2,L} = 2,\quad  \rho^{1,L} = -2,\quad \rho^R = -1+\frac{b + \tht(U_1)\chi}
{\lambda},\]
and the force points are respectively at the leftmost point on $\F(s;\gg) \cap \R_-, u_1, x(U_1)$ and the rightmost point on $F(s;\gg) \cap \R_+$.
Hence,  by conformally mapping the unbounded connected component of $\h \setminus \fan(s;\gamma)$ onto $\h$,  in order to prove (\ref{it:boundary_intersection_dimension}),  it suffices to prove that the following is true.  Fix $-\infty< w_3<w_2<w_1<0$ and let $\eta$ be an $\SLE_{\kappa}(\rho^{3,L},\rho^{2,L},\rho^{1,L};\rho^R)$ process in $\h$ from $0$ to $\infty$ with the force points located at $w_3,w_2,w_1$,  and $1$ respectively.  
Then,  a.s.,  for all $x,y \in \Q \cap (1,\infty)$ with $x<y$,  we have that on the event that $\eta$ does not merge with $[w_2,w_1]$ before hitting $[x,y]$,  we have that the Hausdorff dimension of $\eta \cap [x,y]$ is given by \eqref{eqn:boundary_flow_line_dimension} with $\rho = \rho^R$.
To prove the latter claim,  we fix $x,y$ as above and consider sequences of stopping times $(\tau_n)$ and $(\sigma_n)$ as follows.  We set $\tau_1 = \inf\{t \geq 0 : \eta(t) \in [x,y]\}$,  $\sigma_1 = \inf\{t \geq \tau_1 : |\eta(t)-\eta(\tau_1)| \geq \delta\}$,  and inductively we define $\tau_{n+1} = \inf\{t \geq \sigma_n : \eta(t) \in [x,y]\}$,  $\sigma_{n+1} = \inf\{t \geq \tau_{n+1} : |\eta(t)-\eta(\tau_{n+1})| \geq \delta\}$,  where $\delta>0$ is fixed and such that $1<x-\delta$. 
It follows from \cite[Lemma~2.7]{mw2017intersections} that for all $n \in \N$,  if we condition on the event that $\tau_n < \infty$,  we have that the law of $\eta|_{[\tau_n,\sigma_n]}$ is mutually absolutely continuous with respect to the law of an $\SLE_{\kappa}(\rho^R)$ process in $\h$ from $\eta(\tau_n)$ to $\infty$ with the force point located at $\eta(\tau_n)^+$ and stopped at the first time that it exits $B(\eta(\tau_n),\delta)$.  It then follows from \cite[Theorem~1.6]{mw2017intersections} that the Hausdorff dimension of $\eta([\tau_n,\sigma_n]) \cap [x,y]$ is given by \eqref{eqn:boundary_flow_line_dimension} with $\rho = \rho^R$ a.s.\ on the event that $\tau_n < \infty$,  for all $n \in \N$,  and so the claim follows.  This completes the proof of the lemma.
\end{proof}

We let $\fan_c(z)$ be the part of the left and right boundaries of $\fan(z)$ up until $z$ is disconnected from $\infty$.  We also let $U(z)$ be the connected component of $\h \setminus \fan(\theta_1,\theta_2)$ containing $z$.  We show in the next lemma that $\fan_c(z)$ is a local set for $h$ and identify the boundary values of $h$ restricted to the unbounded connected component of $\h \setminus \fan_c(z)$. 

\begin{lemma}
\label{lem:locality_of_pocket_up_to_hitting}
We have that $\fan_c(z)$ is a local set for $h$.  Let $U = U(z)$ and let $y = y(z)$ be the closing point for $U$.  Let $\varphi$ be the unique conformal map from the unbounded connected component of $\h \setminus \fan_c(z)$ to $\h$ which takes $y, \infty$ and the the rightmost point of $\F_c(z) \cap \R_+$ to $0, \infty$ and $1$, respectively.
Then there exists $x_1 < x_2 < x_3$ so that the boundary conditions for $h \circ \varphi^{-1} - \chi \arg( \varphi^{-1})'$ are given by $-a$ in $(-\infty,x_1]$, $-\lambda - \theta \chi$ in $(x_1,x_2]$, $\lambda - \theta \chi$ in $(x_2,x_3]$, and $b$ in $(x_3,\infty)$.
\end{lemma}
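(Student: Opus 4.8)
The plan is to deduce Lemma~\ref{lem:locality_of_pocket_up_to_hitting} from the already-established description of $\F(t;\gamma)$ and its boundary conditions (Lemmas~\ref{lem:locality} and~\ref{lem:boundary_conditions}), together with the structure theorem for $\F(z)$ from Lemma~\ref{lem:component_jordan_domain}, by realizing $\F_c(z)$ as a suitable limit of sets of the form $\F(t;\gamma)$. First I would fix $z \in \h$ (it suffices to treat rational $z$ and then argue density as in the other lemmas, or handle general $z$ directly using the a.s.\ continuity statements already available), let $\theta = \theta(z) = \theta(U)$, and recall from Lemma~\ref{lem:component_jordan_domain} that $\F(z)$ is a.s.\ of the form: a simple curve $\eta_a$ from $0$ to $x(U)$, the two arcs $\eta_L,\eta_R$ of $\partial U$ from $x(U)$ to $y(U)$, and a simple curve $\eta_b$ from $y(U)$ to $\infty$. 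Since $\F_c(z)$ is by definition the portion of the left and right boundaries of $\F(z)$ traced up until $z$ is disconnected from $\infty$ — which happens exactly when the two boundary arcs of $U$ close up at $y(U)$ — we get that a.s.\ $\F_c(z) = \eta_a \cup \eta_L \cup \eta_R$, i.e.\ $\F_c(z) = \partial U \cup \{$the initial segment $\eta_a$ of $\F(z)$ up to $x(U)\}$. The unbounded component of $\h \setminus \F_c(z)$ is then the union of $U$, $\eta_b$, and the region ``outside'' $\partial U$; in particular $y = y(U)$ is a boundary prime end.

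The key step is locality. I would prove it via \cite[Lemma~3.9]{schramm2013contour} by exhibiting $\F_c(z)$ as an increasing/decreasing limit of local sets and invoking Lemmas~\ref{lem:union_of_local_sets}, \ref{lem:intersection_of_local_sets} (the union/intersection stability of local sets referenced elsewhere in the paper). Concretely: by Lemma~\ref{lem:discover_component} there exist (countably many, fixed) $\gamma \in \CP$ and $t \in [0,1]\cap\Q$ with $\theta(\gamma,t) = \theta(U)$ and $U$ discovered by $(\gamma,t)$; for such a pair, $\F(t;\gamma)$ is local (Lemma~\ref{lem:locality}), and $\F(z) \setminus \F(t;\gamma)$ consists, by the proof of Lemma~\ref{lem:v_determines_a_component_u}\eqref{it:rest_of_boundary}, of the two flow lines $\eta_2,\eta_4$ of angle $\theta$ of the conditional GFF $h \circ \wh\psi^{-1} - \chi\arg(\wh\psi^{-1})'$, started from $\wh\psi^{-1}(x_2)$ and $\wh\psi^{-1}(x_4)$. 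A flow line of a conditional GFF given a local set is again local for $h$ (this is the standard fact used repeatedly in \cite{ms2016imag1}, and is exactly how it is used in the proof of Lemma~\ref{lem:v_determines_a_component_u}), and $\eta_2 \cup \eta_4$ stopped at their merge point $y(U)$ is a local set. Hence $\F_c(z)$, being the union of the local set $\F(t;\gamma)$ with these stopped conditional flow lines (intersected appropriately to remove the part of $\F(t;\gamma)$ separated from $0$ by $\gamma$, which one can again write as a countable decreasing intersection over refinements of $\gamma$), is local for $h$. The only subtlety is that $\F(t;\gamma)$ contains extra material — the part of the fan lying in the bounded component of $\h\setminus\gamma$ and near $z(V)$ — so one should instead take a decreasing limit over a sequence of curves $\gamma_n \in \CP$ that ``tighten'' around $\partial U$, as in Step~2 of Lemma~\ref{lem:discover_component}, so that $\cap_n \F_c^{(n)}$ recovers exactly $\F_c(z)$; then locality follows from Lemma~\ref{lem:intersection_of_local_sets}.

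With locality in hand, the boundary-conditions statement is essentially a special case of Lemma~\ref{lem:boundary_conditions}. Conditioning on $\F_c(z)$ we write $h|_{\wh G} = \wh h^0 + \wh\Fh$ on the unbounded component $\wh G$, and the boundary values of $\wh\Fh \circ \varphi^{-1} - \chi\arg(\varphi^{-1})'$ are determined by reading off the flow-line boundary conditions along $\partial \wh G$: along $\R_-$ they are $-a$; along $\R_+$ they are $b$; and along $\partial U$ itself — which by Lemma~\ref{lem:component_jordan_domain} is the right side of $\eta_R$ (a flow line of angle $\theta$) followed, past $y(U)$... wait, but $\F_c(z)$ stops at $y(U)$, so from $\varphi$'s image of $\partial \h$ we see, reading counterclockwise from $y$: the arc of $\partial U$ that is the counterclockwise boundary (carrying boundary values $-\lambda - \theta\chi - \chi\arg\varphi'$, the left side of a flow line of angle $\theta$) on $(x_1,x_2]$, then at $x(U)$ a jump to $\lambda - \theta\chi - \chi\arg\varphi'$ (the right side of a flow line of angle $\theta$) on $(x_2,x_3]$, then the jump to $b$ at the point $\varphi$ maps from $\R_+$. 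That these three ``internal'' intervals $(-\infty,x_1], (x_1,x_2], (x_2,x_3], (x_3,\infty)$ with the stated constant values $-a, -\lambda-\theta\chi, \lambda-\theta\chi, b$ exhaust $\partial\h$ follows from the convergence of the relevant marked points (the images of $0$, $x(U)$, $y(U)=0$ under $\varphi$, the endpoints of $\F_c(z)\cap\R_\pm$) — which one gets by the same harmonic-measure/Beurling argument used in Lemmas~\ref{lem:flow_line_representation} and~\ref{lem:boundary_conditions}, applied to the approximating sequence $\F_c^{(n)}(z)$. The normalization of $\varphi$ (sending $y,\infty$, and the rightmost point of $\F_c(z)\cap\R_+$ to $0,\infty,1$) just fixes the map uniquely and pins $x_3$ to be the image of $1$; the branch of $\arg(\varphi^{-1})'$ is fixed by requiring $\arg(\varphi^{-1})'\to 0$ at $\infty$, exactly as in Lemma~\ref{lem:boundary_conditions}.

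I expect the main obstacle to be the locality argument — specifically, carefully setting up the approximating family so that it is simultaneously (a) made of genuinely local sets (requiring that each step be either a fan stopped on a rational curve, or a flow line of a conditional GFF given such), and (b) monotone (decreasing) with intersection exactly $\F_c(z)$, so that Lemma~\ref{lem:intersection_of_local_sets} applies. The bookkeeping of ``which part of $\F(t;\gamma)$ to discard'' (the part separated from $0$ by $\gamma$, and the portion near $z(V)$ that isn't on $\partial U$) is the fiddly part; once that is done, everything else is a direct transcription of the harmonic-function convergence and boundary-value identification already carried out in Lemmas~\ref{lem:flow_line_representation}--\ref{lem:component_jordan_domain}.
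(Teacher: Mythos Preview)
Your overall plan is sound and your identification of $\F_c(z)$ as $\eta_a\cup\partial U$ via Lemma~\ref{lem:component_jordan_domain} is exactly the right starting point, but both halves of your argument take a longer route than the paper's, and your locality step is tangled in a non-issue.

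For locality, the paper dispatches it in one line: the argument of Lemma~\ref{lem:locality} applies verbatim (finite-level approximants, the Schramm--Sheffield criterion, then Lemma~\ref{lem:intersection_of_local_sets}). Your approach (a) --- writing $\F_c(z)$ as $\F(t;\gamma)$ together with the conditional flow lines $\eta_2,\eta_4$ stopped at their merge point $y(U)$ --- actually works cleanly as stated, because the ``extra material'' you worry about is not there. Once Lemma~\ref{lem:component_jordan_domain} is in hand you know $\F(t;\gamma)\subseteq\F(z)=\eta_a\cup\partial U\cup\eta_b$, and for $\gamma$ chosen (as in Lemma~\ref{lem:discover_component}) so that $x(U)$ is hit before $\gamma$ and $y(U)$ lies in the unbounded component of $\h\setminus\gamma$, you get $\F(t;\gamma)\subseteq\F_c(z)$ directly; this is precisely the inclusion the paper records. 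Your fallback (b) with tightening curves $\gamma_n$ is therefore unnecessary, and in any case underspecified: the family $\F(t_n;\gamma_n)$ is not obviously monotone and its intersection is not obviously $\F_c(z)$.

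For the boundary conditions, the paper's route is shorter than your approximating-sequence plan. It uses exactly the inclusion $\F(t;\gamma)\subseteq\F_c(z)$ together with the consistency of boundary values for nested local sets \cite[Proposition~3.8]{ms2016imag1} to transfer the boundary data of Lemma~\ref{lem:boundary_conditions} from $\F(t;\gamma)$ to the corresponding portion of the left/right boundary of $\F_c(z)$, and then varies $\gamma$ (as in the proof of Lemma~\ref{lem:component_boundary_intersection}) to sweep out all of $\F_c(z)$. No new harmonic-measure or Carath\'eodory-convergence arguments are needed at this stage; those were already absorbed into Lemmas~\ref{lem:flow_line_representation}--\ref{lem:boundary_conditions}. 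Your version would also work, but it re-runs machinery that has already been packaged.
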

\begin{proof}

It follows from the same argument used to prove Lemma~\ref{lem:locality} that $\fan_c(z)$ is a local set for $h$.  We now focus on the proof of the second claim of the lemma.  It follows from Lemma~\ref{lem:discover_component} that we can choose a path $\gamma$ as in the statement of Lemma~\ref{lem:discover_component} such that $\gamma(t) \in U$ for some $t \in \Q \cap (0,1)$, that $F(\gg(t))$ hits $x(U)$ before hitting $\gg$, and that $\fan(t;\gamma) \subseteq \fan_c(z)$.  Let $\Fh$ (resp.\ $\Fh_c$) be the harmonic extension to $\h \setminus \fan(t;\gamma)$ (resp.\ $\h \setminus \fan_c(z)$) of the boundary values of $h$ on $\partial (\h \setminus \fan(z;\gamma))$ (resp.\ $\partial (\h \setminus \fan_c(z))$).  It then follows by combining the form of the boundary conditions of $h$ on $\fan(z;\gamma)$ (Lemma~\ref{lem:boundary_conditions}) with \cite[Proposition~3.8]{ms2016imag1} that the boundary conditions of $h$ on the part of the left (resp.\ right) boundary of $\F_c(z)$ lying on $\fan(t;\gamma)$ are given by $-\lambda-\theta\chi$ (resp.\ $\lambda-\theta\chi$).  Since we can discover all of $\fan_c(z)$ by varying the path $\gamma$ as in the proof of Lemma~\ref{lem:component_boundary_intersection},  we obtain that the boundary conditions of $h \circ \varphi^{-1} - \chi \arg(\varphi^{-1})'$ have the desired form.
\end{proof}

We are now ready to show that the marked points of each connected component of $\h \setminus \fan(\theta_1,\theta_2)$ are a.s.\ determined by $\fan(\theta_1,\theta_2)$.

\begin{lemma}
\label{lem:two_points_measurable}
Fix $z \in \h$ and let $U$ be the connected component of $\h \setminus \fan(\theta_1,\theta_2)$ containing $z$.  Then $\{x(U), y(U)\}$ are a.s.\ determined by $\fan(\theta_1,\theta_2)$.
\end{lemma}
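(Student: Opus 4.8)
The plan is to show that $x(U)$ and $y(U)$ can be distinguished from all other points of $\partial U$ using only metric/dimension-theoretic information that is encoded in $\fan(\theta_1,\theta_2)$ as a set, together with the combinatorial structure of the adjacency graph of complementary components. The key observation is that by Lemma~\ref{lem:component_jordan_domain}, $\partial U$ decomposes into the two arcs $\eta_L$ and $\eta_R$ from $x(U)$ to $y(U)$, which are the right and left sides of a flow line of angle $\theta(U)$, and by Lemma~\ref{lem:discover_component} and Lemma~\ref{lem:component_boundary_intersection} the way $\partial U$ meets its neighbours is governed by flow line interaction. So the strategy is: first argue that $\theta(U)$ is a.s.\ determined by $\fan(\theta_1,\theta_2)$, and then argue that, given $\theta(U)$, the pair $\{x(U),y(U)\}$ is singled out as the unique pair of prime ends of $\partial U$ at which the ``two sides'' $\eta_L$ and $\eta_R$ meet — a property detectable from $\fan(\theta_1,\theta_2)$ because near $x(U)$ (and near $y(U)$) the set $\fan(\theta_1,\theta_2)$ looks like two flow lines of the same angle splitting apart (resp.\ merging), whereas at any other boundary prime end it is locally a single flow line (with possibly other components attached on one side).

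Concretely, I would proceed as follows. \textbf{Step 1: $\theta(U)$ is measurable with respect to $\fan(\theta_1,\theta_2)$.} For this I use Lemma~\ref{lem:component_boundary_intersection}(\ref{it:boundary_intersection_dimension}) when $\partial U$ meets $\R_+$ in a set with no interval, and (\ref{it:intersection_dimension}) in general: for two distinct components $U_1,U_2$ with $\partial U_1 \cap \partial U_2 \ne \emptyset$, the Hausdorff dimension of $\partial U_1 \cap \partial U_2$ determines $|\theta(U_1)-\theta(U_2)|$ via \eqref{eqn:flow_line_intersection_dimension}. Combined with Lemma~\ref{lem:discover_component} (every component is discovered, hence has a well-defined angle) and the ordering of angles coming from the flow-line monotonicity, one can reconstruct the function $U \mapsto \theta(U)$ up to a global additive constant and a global reflection; the two reference components touching $\R_+$ and $\R_-$ (whose boundary conditions are pinned, since $\tht_1 = -(b+\la)/\chi$, $\tht_2 = (a+\la)/\chi$) then fix the constant and orientation. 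This uses that $\fan(\theta_1,\theta_2)$ has Lebesgue measure zero (Lemma~\ref{lem:fan_does_not_contain_point}) and the a.s.\ validity of the dimension formulas simultaneously over a countable collection of rational base points, so that the dimension of every relevant intersection is an a.s.-determined function of $\fan(\theta_1,\theta_2)$.

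\textbf{Step 2: singling out $\{x(U),y(U)\}$ given $\theta(U)$.} Fix a prime end $w \in \partial U$. If $w \notin \{x(U),y(U)\}$, then by Lemma~\ref{lem:component_jordan_domain} $w$ lies in the relative interior of $\eta_L$ or $\eta_R$, i.e.\ on one side of a single simple flow line arc; in a neighbourhood of $w$ the set $\fan(\theta_1,\theta_2)$ together with the adjacency structure looks like a single flow line of angle $\theta(U)$, possibly with further components glued on the opposite side whose angles differ from $\theta(U)$ by amounts below the critical angle. At $x(U)$ (and $y(U)$), by Lemma~\ref{lem:component_jordan_domain} and the proof of Lemma~\ref{lem:discover_component}, the set $\fan(\theta_1,\theta_2)$ is locally the union of the arc $\eta_a$ (resp.\ $\eta_b$) and the two arcs $\eta_L,\eta_R$ emanating from that point: three distinct flow-line arcs meeting at a single point. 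I would make this distinction rigorous by using the conditional-field description: by Lemma~\ref{lem:locality_of_pocket_up_to_hitting} and Lemma~\ref{lem:boundary_conditions}, $\fan_c(z)$ and $\fan(t;\gamma)$ are local sets with explicitly known boundary data, and the points $x(U),y(U)$ are exactly the two points where the harmonic boundary function has a jump from the ``$+\lambda$ side'' to the ``$-\lambda$ side'' of the same angle $\theta(U)$ on $\partial U$ (equivalently, where $\eta_L$ and $\eta_R$ touch); every other point of $\partial U$ is an interior point of one of these two arcs. Since this configuration can be detected by looking, for each rational path $\gamma \in \CP$ and rational $t$, at whether $U$ is discovered by $(\gamma,t)$ and where the corresponding $\eta_2,\eta_4$ of Lemma~\ref{lem:v_determines_a_component_u} merge — and all of $\eta_2 \cup \eta_4$ and $\partial V$ are themselves subsets of $\fan(\theta_1,\theta_2)$ — the pair $\{x(U),y(U)\}$ is a measurable function of $\fan(\theta_1,\theta_2)$ (and $z$, but then of $\fan(\theta_1,\theta_2)$ alone since the component $U$ is determined by $z$ and the fan). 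Running this over a countable dense set of $z \in \h \cap \Q^2$ covers every component a.s.

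\textbf{Main obstacle.} The delicate point is Step 2: ruling out pathological prime ends of $\partial U$ that could ``look like'' $x(U)$ or $y(U)$ — for instance double points of $\eta_L$ or $\eta_R$, or points where $\partial U$ is touched from the outside by many other component boundaries, creating a branch-point appearance. Here I would lean on Lemma~\ref{lem:component_jordan_domain} (which already tells us $\eta_L,\eta_R$ are \emph{simple} and meet only at $x(U),y(U)$) and on Lemma~\ref{lem:component_boundary_intersection}(\ref{it:intersection_side}): any external contact of $\partial U$ is a left-side-to-right-side contact with a component of strictly different angle, so the flow-line of angle $\theta(U)$ picture is never mimicked at such a point. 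The remaining subtlety is to phrase ``$\fan(\theta_1,\theta_2)$ locally looks like three flow-line arcs of the right angles meeting at a point'' as an honest Borel function of the closed set $\fan(\theta_1,\theta_2)$; I expect this to follow by combining the explicit conditional boundary data of Lemmas~\ref{lem:boundary_conditions} and~\ref{lem:locality_of_pocket_up_to_hitting} with the discovery procedure of Lemma~\ref{lem:discover_component}, exactly as the marked points were produced there, so that ``$w \in \{x(U),y(U)\}$'' becomes equivalent to ``there exist $\gamma \in \CP$, $t \in \Q$ discovering $U$ with $w$ the associated opening/closing prime end,'' which is manifestly $\sigma(\fan(\theta_1,\theta_2))$-measurable.
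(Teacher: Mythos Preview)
Your proposal has two genuine gaps, and it misses the main construction of the paper's proof.

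\textbf{Step 1 is circular.} The Hausdorff dimension of $\partial U_1 \cap \partial U_2$ only gives $|\theta(U_1)-\theta(U_2)|$, not its sign. A chain of absolute differences does not determine the individual angles up to a global shift and reflection: already with three components, $|\theta_1-\theta_2|=|\theta_2-\theta_3|=1$ is consistent with $\theta_1-\theta_3\in\{0,\pm 2\}$. In the paper the sign is recovered \emph{after} Lemma~\ref{lem:two_points_measurable}, precisely because once you know $\{x(U),y(U)\}$ you know which boundary arc is clockwise and which is counterclockwise, and then Lemma~\ref{lem:component_boundary_intersection}(\ref{it:intersection_side}) fixes the sign. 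So deducing $\theta(U)$ first, as you try to do, presupposes the lemma you are proving.

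\textbf{Step 2 is not $\sigma(\fan(\theta_1,\theta_2))$-measurable as stated.} The discovery procedure of Lemmas~\ref{lem:flow_line_representation}--\ref{lem:discover_component} is built from $\fan(\theta_1,\theta_2;\gamma)$, $\fan(t;\gamma)$, and the conditional flow lines $\eta_2,\eta_4$. These objects are defined via the flow lines of $h$ (in particular via their parameterisation and the stopping upon hitting $\gamma$), not from the closed set $\fan(\theta_1,\theta_2)$. That $\eta_2\cup\eta_4\subseteq\fan(\theta_1,\theta_2)$ does not make them $\sigma(\fan(\theta_1,\theta_2))$-measurable: you would need to pick them out of the fan, which is exactly the content of Theorem~\ref{thm:fan_determines_flow_lines}. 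Your proposed equivalence ``$w\in\{x(U),y(U)\}$ iff there exist $\gamma,t$ discovering $U$ with $w$ the associated marked point'' therefore uses information beyond the fan.

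\textbf{What the paper actually does.} The paper gives a characterisation of $\{x(U),y(U)\}$ that is genuinely phrased in terms of the fan as a set: they are the only points $p\in\partial U$ around which one can find (infinitely many) pairs of components $U_L,U_R$ with $\partial U_L$ meeting one side of $\partial U$, $\partial U_R$ meeting the other, $\partial U_L\cap\partial U_R\neq\emptyset$, the three components together surrounding $p$, and the angle-difference inequalities $|\theta(U)-\theta(U_L)|<|\theta(U_L)-\theta(U_R)|$ (and symmetrically). All ingredients here --- adjacency, ``surrounding'', and the absolute angle differences via Hausdorff dimension --- are fan-measurable. The substantial work, which your proposal does not address, is \emph{existence}: the paper maps out near $y(U)$ using Lemma~\ref{lem:locality_of_pocket_up_to_hitting}, defines explicit flow-line events $E_k,G_k$ on dyadic annuli, shows they have positive probability, and uses scale-invariance/ergodicity (Corollary~\ref{cor:annulus_events_happen_io}) to get infinitely many such $U_L,U_R$. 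Your ``three arcs meeting'' intuition is morally in this direction, but it needs to be replaced by this component-level characterisation and the accompanying existence argument.
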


We emphasize that the statement of Lemma~\ref{lem:two_points_measurable} implies that the pair of points $\{x(U), y(U)\}$ is determined by $\fan(\theta_1,\theta_2)$ which means that the two marked arcs of $\partial U$ are determined but not which one is the clockwise (resp.\ counterclockwise) arc.

\begin{proof}[Proof of Lemma~\ref{lem:two_points_measurable}]
\noindent{\it Step 1.  Overview and setup.} Suppose that $U$ is the connected component of $\h \setminus \fan(\theta_1,\theta_2)$ containing $z$.
We are going to show that $x(U)$, $y(U)$ are singled out among points in $\partial U$ by the property that there exist infinitely many connected components $U_L$, $U_R$ of $\h \setminus \fan(\theta_1,\theta_2)$ so that $U_L$ (resp.\ $U_R$) intersects the left (resp.\ right) side of $\partial U$ and $\partial U_L \cap \partial U_R \neq \emptyset$ and $U$, $U_L$, $U_R$ together surround $y(U)$ (or $x(U)$) and we have that $|\theta(U) - \theta(U_L)| < | \theta(U_L) - \theta(U_R)|$ and the same holds with the roles of $U_L$, $U_R$ swapped.  
We note that the way that boundaries of distinct connected components of $\h \setminus \fan(\theta_1,\theta_2)$ interact (Lemma~\ref{lem:component_boundary_intersection}) implies that the marked points $x(U),y(U)$ are the only points on $\partial U$ for which there exist components $U_L,U_R$ as above.
Since the Hausdorff dimension of the set of intersection points of the boundaries of two distinct connected components of $\h \setminus \fan(\theta_1,\theta_2)$ determines the absolute value of their angle difference (Lemma~\ref{lem:component_boundary_intersection}),  we obtain that the latter is a.s.\ determined by $\fan(\theta_1,\theta_2)$ and so the above event is also determined by $\fan(\theta_1,\theta_2)$.  

To complete the proof, it suffices to show that there are a.s.\ infinitely many such components $U_L, U_R$ for each connected component $U$ of $\h \setminus \fan(\theta_1,\theta_2)$.  By the reversal symmetry of the $\SLE$ fan (Lemma~\ref{lem:fan_symmetry}),  it suffices to show that this is a.s.\ the case for the closing point $y(U)$ of $U$. 
We let $\varphi$ be as in the statement of Lemma~\ref{lem:locality_of_pocket_up_to_hitting} meaning that the boundary conditions for the field $\wt{h} = h \circ \varphi^{-1} - \chi \arg (\varphi^{-1})'$ are as described in the statement of Lemma~\ref{lem:locality_of_pocket_up_to_hitting}. 
For all $k \in \N$, let $A_k$ be the annulus  $B(0, 2^{-k}) \sm B(0, 2^{-(k+1)})$, and fix $k_0 \in \N$ sufficiently large and assume that we are working on the event that the boundary conditions of $\wt{h}$ on $[-2^{-k_0 + 1}, 2^{-k_0 + 1}]$ are given by $-\lambda - \theta\chi$ on $(-2^{-k_0 + 1},0]$ and $\lambda-\theta\chi$ in $(0,2^{-k_0 + 1}]$, where $\theta = \theta(U)$.
We note that we can find $k_0 \in \N$ with the above property a.s. and that in this case, by \cite[Proposition~3.4]{ms2016imag1}, the law of $\wt{h}$ restricted to $A_{k_0}$ is absolutely continuous with respect to the law of the restriction to $A_{k_0}$ of a GFF $\wh h$ on $\h$ with boundary conditions given by $-\la - \tht \chi$ on $\R_-$ and $\la - \tht \chi$ on $\R_+$.

\textit{Step 2. Defining the events $E_k$ and $G_k$.}
For each $k \geq 1$,  we let $\eta_k^L$ (resp.\ $\eta_k^R$) be the flow line of $\wh{h}$ of angle $\theta+\epsilon$ (resp.\ $\theta-\epsilon$) starting from $x_k^L = -3\cdot 2^{-(k+2)}$ (resp.\ $x_k^R = 3\cdot 2^{-(k+2)}$),  and stopped at the first time that it exits $A_k$,  and we let $G_k \equiv G_k(\wh{h})$ be the event that $\eta_k^L$ and $\eta_k^R$ intersect and form a component which disconnects $0$ from $\infty$. 
We note that $\eta_k^L$ has the law of an $\SLE_{\kappa}(\rho^L;\rho^{1,R},\rho^{2,R})$ process in $\h$ from $x_k^L$ to $\infty$ with the force points located at $(x_k^L)^-,(x_k^L)^+$,  and $0$ respectively,  and $\rho^L = \frac{\epsilon}{\pi}\left(\frac{\kappa}{2}-2\right),  \rho^{1,R} = -2 + \frac{\epsilon}{\pi}\left(2 - \frac{\kappa}{2}\right)$,  and $\rho^{1,R}+\rho^{2,R} = \frac{\epsilon}{\pi}\left(2-\frac{\kappa}{2}\right)$.  
Similarly,  $\eta_k^R$ has the law of an $\SLE_{\kappa}(\rho^{2,L},\rho^{1,L};\rho^R)$ process in $\h$ from $x^R$ to $\infty$ with the force points located at $0,  (x^R)^-$,  and $(x^R)^+$ respectively,  where $\rho^R = \rho^L,  \rho^{1,L} = \rho^{1,R}$,  and $\rho^{2,L} = \rho^{2,R}$.
Since $\eta_k^L$ and $\eta_k^R$ intersect a.s.,  it follows by combining Lemmas 2.3 and 2.5 in \cite{mw2017intersections} that $\p(G_k) > 0$.
By Corollary~\ref{cor:annulus_events_happen_io}, it follows that $G_k$ occurs infinitely often a.s. By absolute continuity, the event $G_k(\wt{h})$, which is defined in the same way as $G_k(\wh h)$ but with (flow lines of) $\wh{h}$ replaced by (flow lines of) $\wt{h}$, also occurs infinitely often for the field $\wt h$.

\begin{figure}[ht!]
\begin{center}
\includegraphics[scale=0.85]{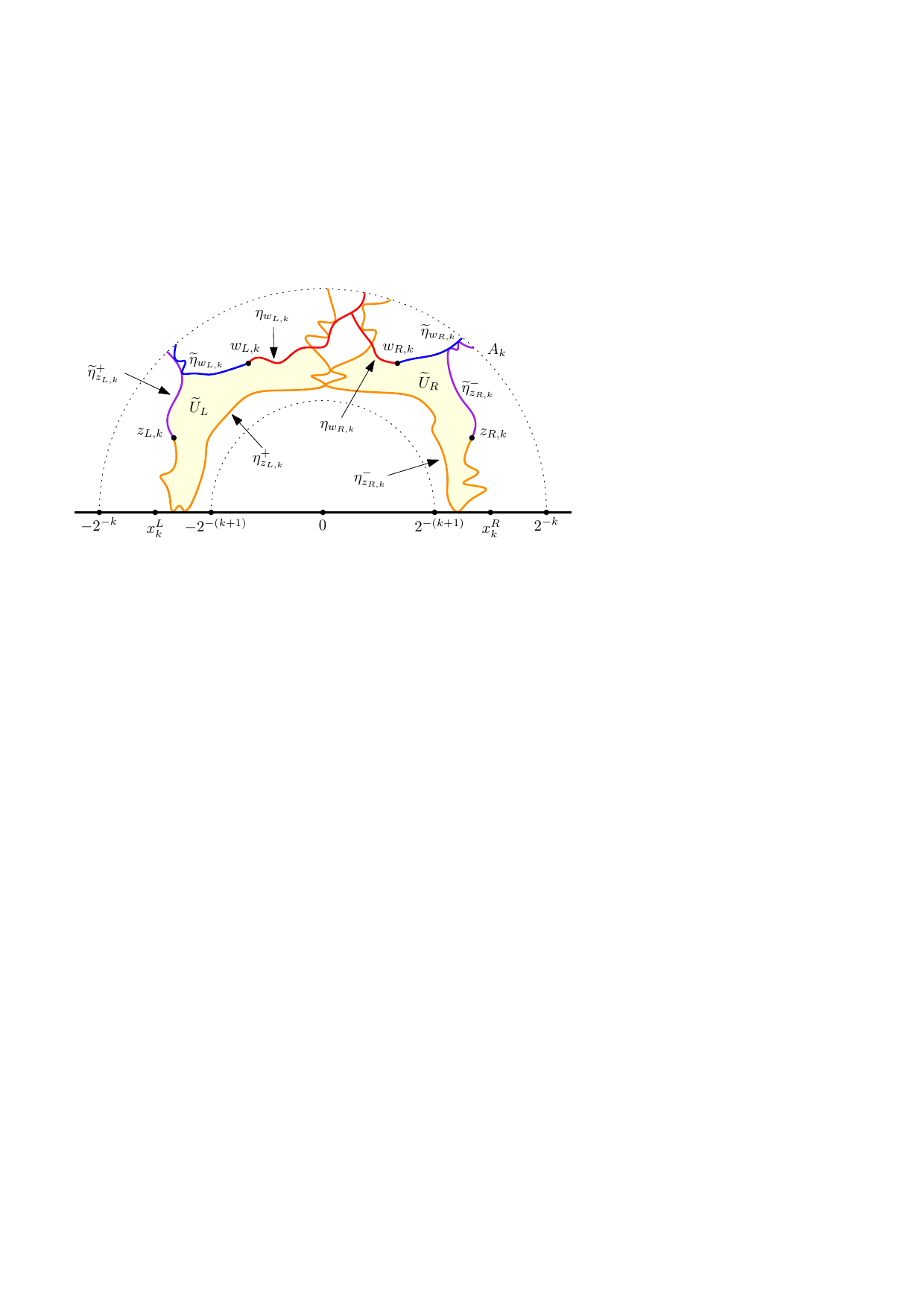}
\end{center}
\caption{\label{fig:flow_line_event} Illustration of the event $E_k$ from the proof of Lemma~\ref{lem:two_points_measurable} (in the $\kk \in (0,2]$ case). Flow lines colored red have angle $\tht$, flow lines colored orange have angle $\tht \pm \eps$, flow lines colored blue have angle $\tht + \pi$, flow lines colored purple have angle $\tht+\pi\pm\eps$. We show that $\ff\nv(\wt U_L)$ and $\ff\nv(\wt U_R)$ are contained in distinct components $U_L, U_R$ of $\h\sm\F(\tht_1, \tht_2)$ which intersect in the way described in Step 1. The event $G_k$ is not depicted for simplicity, but we include the points $x_k^L, x_k^R$.}
\end{figure}

We are now going to define an event $E_k \equiv E_k(\wh h)$ for each $k$ so that if the corresponding event $E_k(\wt h)$ for the field $\wt{h}$ occurs for infinitely many $k$ then there exist infinitely many components $U_L$, $U_R$ as described in Step 1.
As for the event $E_k(\wt{h})$, we will prove that $E_k(\wh h)$ occurs a.s. and use absolute continuity to deduce that $E_k(\wt h)$ also occurs a.s.
Before we define the event, we first need to introduce some notation.  Let $z_L = e^{i 7 \pi/8}$, $w_L = e^{i 5 \pi/8}$, $z_R = e^{i \pi/8}$, and $w_R = e^{i 3\pi /8}$. For each $k$ and $q \in \{L,R\}$, we let $z_{q,k}  = 3\cdot2^{-(k+2)} z_q$, $w_{q,k} = 3\cdot2^{-(k+2)} w_q$.  For each $w \in \h$, we let $\eta_w$ (resp.\ $\wt{\eta}_w$) be the flow line of $\wt{h}$ with angle $\theta$ (resp.\ $\theta+\pi$) starting from $w$. 
Fix $\epsilon > 0$ small. 
Let $\eta_w^+$ (resp.\ $\eta_w^-$) to be the flow line of $\wt{h}$ with angle $\theta+\epsilon$ (resp.\ $\theta-\epsilon$) starting from $w$. 
Finally, let $\wt{\eta}_w^+$ (resp.\ $\wt\eta_w^-$) to be the flow line of $\wt{h}$ with angle $\theta+\pi+\epsilon$ (resp.\ $\theta+\pi-\epsilon$) starting from $w$.
Then we let $E_k$ be the event that the following occur (see Figure~\ref{fig:flow_line_event} for an illustration):
\begin{itemize}
\item $\wt{\eta}_{z_{L,k}}^+$ and $\wt{\eta}_{w_{L,k}}$ intersect before either flow line leaves $A_k$
\item $\wt{\eta}_{z_{R,k}}^-$ and $\wt{\eta}_{w_{R,k}}$ intersect before either leaves $A_k$
\item $\eta_{w_{L,k}}$ merges into the left side of $\eta_{w_{R,k}}$ before either leaves $A_k$

\item $\eta_{z_{L,k}}^+$ intersects $(-\infty,0)$ before leaving $A_k$
\item $\eta_{z_{R,k}}^-$ intersects $(0,\infty)$ before leaving $A_k$
\item $\eta_{z_{L,k}}^+$ intersects $\eta_{z_{R,k}}^-$ before either leaves $A_k$.
\end{itemize}
We note that $E_k$ is determined by $\wt{h}|_{A_k}$ and therefore 
by applying the same argument as for $G_k$ except using  Lemmas 3.8 and 3.9 of \cite{ms2017ig4} in  addition to Lemmas 2.3 and 2.5 of \cite{mw2017intersections},  we obtain that the event $E_1$ occurs with positive probability and hence that $E_k$ occurs for infinitely many values of $k$ a.s.
As before, we conclude that $G_k(\wt h)$ also occurs infinitely often a.s.

\textit{Step 3. Conclusion of the proof.}
To complete the proof we will make repeated use of the flow line interaction rules described in \cite[Theorem~1.7]{ms2017ig4}.
Suppose that $K$ is large enough that $\ff\nv(-2^{-K}), \ff\nv(2^{-K})$ are not in $\R$, that $G_{K}(\wt h)$ occurs and that $E_k(\wt h)$ occurs for some $k > K$. By the above, this occurs a.s. for infinitely many such pairs $(K, k)$ (we may actually keep $K$ fixed).
Let $\wt{U}_L$ (resp.\ $\wt{U}_R$) be the pocket bounded by $\eta_{z_{L,k}}^+,  \wt{\eta}_{z_{L,k}}^+,  \wt{\eta}_{w_{L,k}},\eta_{w_{L,k}}$ (resp.\ $\eta_{z_{R,k}}^-,  \wt{\eta}_{z_{R,k}}^-,\wt{\eta}_{w_{R,k}},\eta_{w_{R,k}}$) as shown in Figure~\ref{fig:flow_line_event}.
When $\kk \in (0,2]$, $\eta_{w_{L,k}}$ and $\wt{\eta}_{w_{L,k}}$ (and the other pairs of flow lines starting from $z_{L,k}, w_{R,k}, z_{R,k}$) do not intersect, meaning that the pocket $\wt{U}_L$ is connected, and similarly for $\wt{U}_R$. When $\kk \in (2,4)$ the topology can be more complicated, and then we include in the event $E_k$ that the final intersection between $\eta_{w_{L,k}}$ and $\wt{\eta}_{w_{L,k}}$ occurs in some small ball around $z_{L,k}$, and that the same occurs for each of the other three pairs of flow lines started from the same points.
Arguing as above, $E_k$ still occurs infinitely often a.s.\ and in this case we redefine $\wt{U}_L, \wt{U}_R$ to be the connected components (of the complement in $\h$ of our collection of flow lines) containing points far away from these starting points.
In this case, the analogue of Figure~\ref{fig:flow_line_event} looks similar to the $\kk \in (0,2]$ case, except in some small balls around the points $z_{L,k}, w_{L,k}, z_{R,k}, w_{R,k}$.

We claim that none of the flow lines in $\varphi(\fan(\theta_1,\theta_2))$ enters $\wt{U}_L \cup \wt{U}_R$. 
Indeed,  suppose that a flow line of angle $\phi \in [\theta_1,\theta_2]$ enters $\wt{U}_L$.  We first prove that since $G_{K}$ occurs,  we have that $\phi \in [\theta-\epsilon,\theta+\epsilon]$. 
Indeed, let $V_{K}$ be the corresponding component formed by $\eta_K^L$ and $\eta_K^R$.  We claim that a flow line in $\fan(\theta_1,\theta_2)$ with angle larger (resp.\ smaller) than $\theta+\epsilon$ (resp.\ $\theta-\epsilon$) does not enter $\varphi^{-1}(V_K)$.  
First we note that if $\eta$ is the flow line of $\wt{h}$ with angle $\theta$ from $0$ to $\infty$,  then $\eta$ is a.s.\ well-defined (since $\fan_z(z)$ is a local set for $h$ by Lemma~\ref{lem:locality}) and hits $\infty$ before hitting its continuation threshold.  Also,  the flow line interaction rules imply that $\eta$ exits $V_K$ at the first intersection point of $\eta_K^L$ and $\eta_K^R$ and stays to the right (resp.\ left) of $\eta_K^L$ (resp.\ $\eta_K^R$).  Note also that a flow line of $h$ with angle larger than $\theta+\epsilon$ cannot cross $\varphi^{-1}(\eta)$ and so the only way that such a flow line can enter $\varphi^{-1}(V_K)$ is by crossing $\varphi^{-1}(\eta_K^L)$ from left to right but this cannot happen due to the flow line interaction rules.  Similarly,  a flow line of $h$ with angle at most $\theta-\epsilon$ cannot cross $\eta$ from right to left and so the only way that such a flow line enters $\varphi^{-1}(V_K)$ is by crossing $\varphi^{-1}(\eta_K^R)$ from right to left.  But the latter does not occur due to the flow line interaction rules.  In particular,  we have that there is no flow line in $\fan(\theta_1,\theta_2)$ with angle larger (resp.\ smaller) than $\theta+\epsilon$ (resp.\ $\theta-\epsilon$) which enters $\varphi^{-1}(A_k)$ for all $k' > K$.

Suppose that $\phi \in [\theta,\theta+\epsilon]$ and let $\eta_\phi$ be the image under $\ff$ of the flow line of this angle.
The flow line interaction rules imply that $\eta_\phi$ cannot cross $\eta_{w_{L,k}}$ from left to right. 
Similarly, it cannot cross $\eta^+_{z_{L,k}}$ from right to left. Since $\phi - (\theta+\pi+\epsilon) \in (-2\pi, -\pi)$, again using \cite[Theorem~1.7]{ms2017ig4}, $\eta_\phi$ cannot cross $\wt{\eta}_{z_{L,k}}^+$ from left to right. Similarly, it cannot cross $\wt\eta_{w_{L,k}}$ from right to left. 
It follows that $\eta_\phi$ cannot enter $\wt U_L$.
Suppose that $\phi \in [\theta-\epsilon,\theta)$.  Then,  in order for the flow line to enter $\wt{U}_L$,  it has to cross $\eta$ (which stays to the left (resp.\ right) of $\eta_{z_{R,k}}^-$ (resp.\ $\eta_{z_{L,k}}^+$)) from right to left and this cannot happen. 
It follows that no flow line in $\ff(\F(\tht_1, \tht_2))$ enters $\wt U_L$. By symmetry, we see that no flow line in $\ff(\F(\tht_1, \tht_2))$ enters $\wt U_R$, either.

Therefore,  we conclude that $\fan(\theta_1,\theta_2) \cap (\varphi^{-1}(\wt{U}_L) \cup \varphi^{-1}(\wt{U}_R)) = \emptyset$,  and so there exist connected components $U_L,U_R$ of $\h \setminus \fan(\theta_1,\theta_2)$ such that $\varphi^{-1}(\wt{U}_L) \subseteq U_L$ and $\varphi^{-1}(\wt{U}_R) \subseteq U_R$. 
Since $\varphi^{-1}(\eta) \subseteq \fan(\theta_1,\theta_2)$ and $\varphi^{-1}(\wt{U}_L)$ (resp.\ $\varphi^{-1}(\wt{U}_R)$) lies to the left (resp.\ right) of $\varphi^{-1}(\eta)$,  we obtain that $U_L \neq U_R$.  
Since the above holds for infinitely many pairs $(K,k)$, it remains only to show that there are infinitely many distinct components $U_L = U_L(K,k), U_R = U_R(K,k)$ (i.e.\ that $\wt U_L(K,k), \wt U_R(K,k)$ do not give rise to the same components $U_L, U_R$ for infinitely many pairs $(K,k)$).
But this can be ruled out by using Proposition~\ref{prop:bounded_hausdorff} to show that a.s.\ there exists $\phi \in (\tht, \tht+\ee)$ so that the flow line of $h$ with angle $\phi$ started from $0$ will disconnect $\ff\nv(\wt U_L)$ from some neighborhood of $y(U)$ (since this flow line will not hit $y(U)$ a.s.).
This completes the proof of the lemma.
\end{proof}

Now we are ready to prove Theorem~\ref{thm:fan_determines_flow_lines}.  It will be a consequence of Lemmas~\ref{lem:component_boundary_intersection},  ~\ref{lem:two_points_measurable} and the fact that the graph of connected components of $\h \setminus \fan(\theta_1,\theta_2)$ is connected a.s.

\begin{proof}[Proof of Theorem~\ref{thm:fan_determines_flow_lines}]
As explained at the beginning of this section, it suffices to show that $\tht(U)$ as defined in Lemma~\ref{lem:boundary_conditions} exists and is determined by $\F(\tht_1, \tht_2)$ a.s.
Fix such a component $U$ and let $\theta = \theta(U)$.
There exists a finite chain of components $U_1,\ldots, U_n$ so that $\partial U_i \cap \partial U_{i+1} \neq \emptyset$ for each $1 \leq i \leq n-1$, $\partial U_n \cap \partial \h \neq \emptyset$, and $U_1 = U$.  For each $1 \leq i \leq n$ we let $\theta_i = \theta(U_i)$.  Lemma~\ref{lem:component_boundary_intersection} implies that $\fan(\theta_1,\theta_2)$ determines $|\theta_{j+1} - \theta_j|$ for each $1 \leq j \leq n-1$.  Moreover,  Lemma~\ref{lem:component_boundary_intersection} implies that $\fan(\theta_1,\theta_2)$ determines $\theta_n$ and Lemma~\ref{lem:two_points_measurable} implies that it determines the marked points of $U_n$ as well,  which implies that it also determines which arc of $\partial U_n$ is the clockwise (resp.\ counterclockwise) arc.  Since the counterclockwise arc of a component boundary can only intersect the clockwise arc of another components boundary and vice-versa (Lemma~\ref{lem:component_boundary_intersection}) and the marked points of each component are determined by $\fan(\theta_1,\theta_2)$ (Lemma~\ref{lem:two_points_measurable}),  it follows that the clockwise and counterclockwise arcs of each of the $\partial U_j$ is determined.  This, in turn implies that $(\theta_{j+1} - \theta_j)$ for each $1 \leq j \leq n-1$ is determined.  Since
\[ \theta = \theta_1 =  \sum_{j=1}^{n-1} (\theta_j - \theta_{j+1}) + \theta_n,\]
it follows that $\fan(\theta_1,\theta_2)$ determines $\theta$.  The result follows by combining with Lemma~\ref{lem:form_of_the_boundary} since $U$ was an arbitrary complementary component.
\end{proof}

\appendix

\section{Local set lemmas}

\begin{lemma}\label{lem:intersection_of_local_sets}
Suppose that $h$ is a \text{GFF} on a domain $D \subsetneq \C$ and $(A_n)$ is a sequence of local sets of $h$ so that $A_n$ is a.s.\ determined by $h$ and $A_{n+1} \subseteq A_n$ for each $n$.  Then $A = \cap_n A_n$ is local for $h$.
\end{lemma}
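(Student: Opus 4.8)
The plan is to use the characterization of local sets from \cite[Lemma~3.9]{schramm2013contour}, which says that a random closed set $A$ (coupled with $h$ and determined by $h$) is local if and only if for every deterministic open set $U \subseteq D$, the event $\{A \cap U \neq \emptyset\}$ is a.s.\ determined by the projection of $h$ onto the subspace of functions in $H_0(D)$ that are harmonic on $U$. So fix a deterministic open $U \subseteq D$; I want to show that $\{A \cap U \neq \emptyset\}$ is measurable with respect to $\sigma(h_U^{\mathrm{harm}})$, where $h_U^{\mathrm{harm}}$ denotes the harmonic projection.

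First I would observe that since $A_n$ is decreasing and each $A_n$ is closed, $A = \cap_n A_n$ is closed, and $A$ is determined by $h$ since each $A_n$ is. The key topological point is that, for a \emph{decreasing} sequence of compact (or, after intersecting with a fixed compact exhaustion, locally compact) sets, $A \cap \overline{U'} \neq \emptyset$ for a compactly contained $U' \Subset U$ forces $A_n \cap \overline{U'} \neq \emptyset$ for all $n$, but the converse can fail. The clean statement I would use is: writing $U = \bigcup_m U_m$ with $U_m \Subset U_{m+1} \Subset U$ open and $\overline{U_m}$ compact, one has
\[
\{A \cap U \neq \emptyset\} = \bigcup_m \{A \cap \overline{U_m} \neq \emptyset\} = \bigcup_m \bigcap_n \{A_n \cap \overline{U_m} \neq \emptyset\},
\]
where the second equality uses that the $\overline{U_m}$ are compact and the $A_n$ are closed and nested (a nested intersection of nonempty compact sets is nonempty). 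Each event $\{A_n \cap \overline{U_m} \neq \emptyset\}$ is, up to a null set, of the form $\{A_n \cap V \neq \emptyset\}$ for a slightly larger open $V$ with $\overline{U_m} \subseteq V \Subset U$ — here one has to be mildly careful, approximating $\overline{U_m}$ from outside by open sets $V \downarrow \overline{U_m}$ and using $\{A_n \cap \overline{U_m} \neq \emptyset\} = \bigcap_V \{A_n \cap V \neq \emptyset\}$, which again holds because $A_n$ is closed and the sets are nested/compact. Since $A_n$ is local, $\{A_n \cap V \neq \emptyset\}$ is $\sigma(h_V^{\mathrm{harm}})$-measurable, and $h_V^{\mathrm{harm}}$ is measurable with respect to $h_U^{\mathrm{harm}}$ whenever $V \subseteq U$. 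Therefore each building-block event is $\sigma(h_U^{\mathrm{harm}})$-measurable, and hence so is the countable combination $\{A \cap U \neq \emptyset\}$.

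I expect the main obstacle to be the topological bookkeeping in the two displayed identities above — specifically, making rigorous that the intersection over $n$ of $\{A_n \cap \overline{U_m} \neq \emptyset\}$ really equals $\{A \cap \overline{U_m} \neq \emptyset\}$ (this is exactly where compactness of $\overline{U_m}$ and the nestedness $A_{n+1} \subseteq A_n$ are essential; it is false for increasing sequences or noncompact sets), and that $\{A_n \cap \overline{U_m} \neq \emptyset\}$ can be written as a countable intersection of events of the form $\{A_n \cap V \neq \emptyset\}$ with $V$ open and compactly contained in $U$. Both are standard facts about closed sets and compactness, but they need to be invoked carefully so that every event in sight genuinely lies in $\sigma(h_U^{\mathrm{harm}})$. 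Once these are in place, the conclusion follows immediately from \cite[Lemma~3.9]{schramm2013contour}. (This is the argument already used in the proof of Lemma~\ref{lem:locality} to pass from $\fan_n(t;\gamma)$ to $\fan(t;\gamma)$, so I would present it at that level of detail.)
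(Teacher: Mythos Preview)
Your approach has a genuine error in the measurability step. You claim that ``$h_V^{\mathrm{harm}}$ is measurable with respect to $h_U^{\mathrm{harm}}$ whenever $V \subseteq U$,'' but the inclusion goes the other way: if $V \subseteq U$ then $H_0^1(V) \subseteq H_0^1(U)$, hence $H_{\mathrm{harm}}(V) \supseteq H_{\mathrm{harm}}(U)$, so $\sigma(h_U^{\mathrm{harm}}) \subseteq \sigma(h_V^{\mathrm{harm}})$, not the reverse. Concretely, $h_V^{\mathrm{harm}}$ encodes the field on all of $D \setminus V$, in particular on $U \setminus V$, and this is not seen by $h_U^{\mathrm{harm}}$. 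Consequently the building-block events $\{A_n \cap \overline{U_m} \neq \emptyset\}$ produced by your decomposition are \emph{not} in general $\sigma(h_U^{\mathrm{harm}})$-measurable: whether a local set hits a specific compact subset of $U$ depends on the field inside $U$. (Take all $A_n$ equal to a single flow line to see this directly.)

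The paper's proof circumvents this by working with the complementary event and an open exhaustion $U_m \uparrow B$ with $\overline{U_m}\cap D \subseteq B$: one writes $\{A \cap B = \emptyset\} = \cap_m \cup_n \{A_n \cap U_m = \emptyset\}$, and now each event $\{A_n \cap U_m = \emptyset\}$ is, by locality of $A_n$ together with the hypothesis that $A_n$ is determined by $h$, measurable with respect to $\CF_{(D\setminus U_m)^+}$. These $\sigma$-algebras \emph{decrease} as $m$ grows, and the essential extra input is the continuity statement $\cap_m \CF_{(D\setminus U_m)^+} = \CF_{(D\setminus B)^+}$ from \cite[Proposition~3.2]{ms2016imag1}, which is what allows the passage to the limit. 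Once the inclusion is oriented correctly, one sees that some such continuity-of-$\sigma$-algebras result is unavoidable; it is precisely this ingredient that your argument is missing.
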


\begin{proof}
We will prove the claim of the lemma in the case that $D$ is bounded since then the result will follow for general domains using the conformal invariance of the $\text{GFF}$.  We are going to use the characterization of local sets given in \cite[Lemma~3.9]{schramm2013contour}.  Fix $B \subseteq D$ an open set.  We note that for all $n$,  the event $\{A_n \cap B = \emptyset\}$ is a.s.\  determined by $h$ and \cite[Lemma~3.9]{schramm2013contour} implies that conditionally on the projection of $h$ onto $H_{\text{harm}}(B)$,  we have that $h$ and $\one_{\{A_n \cap B =\emptyset\}}$ are independent,  since under this conditioning,  the field $h$ is determined by its projection onto $H_0^1(B)$.  Hence,  combining with \cite[Proposition~10.7]{dms2021mating},  we obtain that $\{A_n \cap B = \emptyset\}$ is a.s.\ determined by the projection of $h$ onto $H_{\text{harm}}(B)$.  Set $K = D \setminus B$ and note that $K$ is closed with respect to the relative topology on $D$.
Set also $U_m = \{z \in D : \dist(z,  K) > 1/m\}$ for all $m \in \N$,  and let $F_m = D \sm U_m$.
Note that $B = \cup_{m\geq 1} U_m$ and $\{A \cap B = \emptyset\} = \cap_{m \geq 1} \cup_{n \geq 1} \{A_n \cap U_m = \emptyset\}$. 
Define $\CF_U^{h}, \CF_{K^+}^{h}$ as in \cite{ms2016imag1}.
For all $U \subseteq D$,  we set $\CF_U^h = \sigma((h,f) : f \in C_0^{\infty}(U))$.
For all $m, n$, we can show as above that $\{A_n \cap U_m = \emptyset\} \in \CF_{F_m^+}^{h}$. 
It follows that $\{A \cap B = \emptyset\} \in \cap _{m \geq 1} \CF_{F_m^+}^h$.
Note that $\CF_{K^+}^{h} \subseteq \CF_{F_m^+}^{h}$ for all $m$ and if $F_m \subseteq V$ for an open set $V$, then $\CF_{F_m^+}^{h} \subseteq \CF_{V}^{h}$. Since every open set $V$ containing $K$ must also contain one of the $F_m$, it follows from \cite[Proposition~3.2]{ms2016imag1} that $\CF_{K^+}^h = \cap_{U \subseteq D,  U \text{open}}\CF_U^h = \cap _{m \geq 1} \CF_{F_m^+}^h$, and hence that $\{A \cap B = \emptyset\} \in \CF_{K^+}^h$.
It then follows from \cite[Lemma~3.9]{schramm2013contour} that $A$ is a local set.
\end{proof}

\begin{lemma}
\label{lem:union_of_local_sets}
Suppose that $h$ is a \text{GFF} on a domain $D \subsetneq \C$ and let $(A_n)$ be a sequence of local sets of $h$ so that $A_n \subseteq A_{n+1}$ for each $n \in \N$.  Then $A = \overline{\cup_n A_n}$ s a local set for $h$.
\end{lemma}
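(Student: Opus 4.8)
The plan is to use the characterization of local sets from \cite[Lemma~3.9]{schramm2013contour}, just as in the proof of Lemma~\ref{lem:intersection_of_local_sets}, but now in the increasing case. As before it suffices to treat the case where $D$ is bounded, the general case following by conformal invariance. Fix an open set $B \subseteq D$. We must show that the event $\{A \cap B = \emptyset\}$ is a.s.\ determined by the projection of $h$ onto $H_{\mathrm{harm}}(B)$, equivalently that it lies in $\CF_{K^+}^h$ where $K = D \setminus B$.

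The key observation is that since the $A_n$ are increasing and $A = \overline{\cup_n A_n}$, we have
\[
\{A \cap B = \emptyset\} = \bigcap_{n \geq 1} \{A_n \cap B = \emptyset\}.
\]
Indeed, if every $A_n$ is disjoint from the open set $B$ then so is $\cup_n A_n$, and hence so is its closure intersected with the open set $B$ (a point of $B$ in the closure of $\cup_n A_n$ would be a limit of points of $\cup_n A_n$ lying in the open neighborhood $B$, forcing some $A_n$ to meet $B$). Conversely if $A \cap B = \emptyset$ then certainly each $A_n \cap B = \emptyset$ since $A_n \subseteq A$. Now each $A_n$ is a local set which is a.s.\ determined by $h$, so by the argument in the proof of Lemma~\ref{lem:intersection_of_local_sets} (combining \cite[Lemma~3.9]{schramm2013contour} with \cite[Proposition~10.7]{dms2021mating}) the event $\{A_n \cap B = \emptyset\}$ is a.s.\ determined by the projection of $h$ onto $H_{\mathrm{harm}}(B)$, i.e.\ $\{A_n \cap B = \emptyset\} \in \CF_{K^+}^h$. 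Since $\CF_{K^+}^h$ is a $\sigma$-algebra, the countable intersection $\{A \cap B = \emptyset\}$ also lies in $\CF_{K^+}^h$. As $B$ was an arbitrary open subset of $D$, \cite[Lemma~3.9]{schramm2013contour} implies that $A$ is a local set for $h$.

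This case is in fact simpler than Lemma~\ref{lem:intersection_of_local_sets}: there, writing $\{A \cap B = \emptyset\}$ in terms of the $A_n$ required the nested approximation $B = \cup_m U_m$ and the identification $\CF_{K^+}^h = \cap_m \CF_{F_m^+}^h$, because taking a decreasing intersection of sets does not interact as cleanly with disjointness from an open set. Here the only mild point to be careful about is the elementary topological claim that $\overline{\cup_n A_n}$ is disjoint from $B$ precisely when every $A_n$ is, which is where openness of $B$ is used; everything else is a direct citation of the already-established facts. Hence there is no real obstacle, and the proof is a short measurability argument.
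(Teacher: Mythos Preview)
Your argument has a genuine gap: you assume that each $A_n$ is a.s.\ determined by $h$, but this hypothesis is \emph{not} part of the lemma (contrast with Lemma~\ref{lem:intersection_of_local_sets}, where it is explicitly assumed). For a local set that is not determined by $h$, the event $\{A_n \cap B = \emptyset\}$ need not lie in $\sigma(h)$ at all, so the step ``$\{A_n \cap B = \emptyset\} \in \CF_{K^+}^h$'' fails. The Schramm--Sheffield characterization only gives that $\one_{\{A_n \cap U \neq \emptyset\}}$ is \emph{conditionally independent} of the projection $h_1$ onto $H_0^1(U)$ given the harmonic projection $h_2$; this upgrades to ``determined by $h_2$'' only when the event is already $\sigma(h)$-measurable, which is precisely what you are assuming without justification.

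The paper's proof avoids this by working directly with the conditional-independence formulation: one fixes a bounded measurable functional $G$ of $h_1$ and shows
\[
\E\bigl[\one_{\{A \cap U \neq \emptyset\}}\,\one_{\{G(h_1)\in B\}} \,\big|\, h_2\bigr]
= \E\bigl[\one_{\{A \cap U \neq \emptyset\}} \,\big|\, h_2\bigr]\,\E\bigl[\one_{\{G(h_1)\in B\}} \,\big|\, h_2\bigr]
\]
by writing $\one_{\{A \cap U \neq \emptyset\}} = \lim_n \one_{\{A_n \cap U \neq \emptyset\}}$ (monotone limit, using the same topological observation you made) and passing to the limit inside the conditional expectation. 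This route never needs the $A_n$ to be measurable with respect to $h$. Your approach would be valid if the determinism hypothesis were added, and indeed the lemma is only applied in the paper to unions of flow lines, which are determined by $h$; but as a proof of the lemma as stated, the missing hypothesis is a real gap.
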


\begin{proof}
As in the proof of Lemma~\ref{lem:intersection_of_local_sets},  we are going to use \cite[Lemma~3.9]{schramm2013contour}.  Fix $U \subseteq D$ open set and let $h_1$ (resp.\ $h_2$) be the projection of $h$ onto $H_0^1(U)$ (resp.\ $H_{\text{harm}}(U)$).  Fix also measurable function $G \colon H_{\text{loc}}^{-1}(D) \to \R$ and a Borel set $B \subseteq \R$.  Then \cite[Lemma~3.9]{schramm2013contour} implies that $\one_{A_n \cap U \neq \emptyset}$ and $\one_{G(h_1) \in B}$ are conditionally independent given $h_2$,  and so we have that
\begin{align*}
\E[\one_{\{A \cap U \neq \emptyset\}} \one_{\{G(h_1) \in B\}} \giv h_2]&= \lim_{n \to \infty} \E[\one_{\{A_n \cap U \neq \emptyset\}} \one_{\{G(h_1) \in B\}} \giv h_2]\\
&= \lim_{n \to \infty} \E[\one_{\{A_n \cap U \neq \emptyset\}} \giv h_2] \E[\one_{\{G(h_1) \in B\}} \giv h_2]\\
&=\E[\one_{\{A \cap U \neq \emptyset\}} \giv h_2] \E[\one_{\{G(h_1) \in B\}} \giv h_2].
\end{align*}
Since $G,B$ were arbitrary,  we obtain that $\{A \cap U \neq \emptyset\}$ and $h_1$ are conditionally independent given $h_2$ and so \cite[Lemma~3.9]{schramm2013contour} implies that $A$ is a local set.
\end{proof}

\begin{lemma}\label{lem:gff_ergodic}
Suppose that $r > 0, r \neq 1$ and $h$ is a GFF on $\h$ with boundary conditions given by $-a$ on $\R_-$ and $b$ on $\R_+$. Let $\tht_r$ be the transformation from the space of distributions on $\h$ to itself given by $(\tht_r(h))(\phi) = h((1/r^2)\phi(\cdot/r))$. Then $\tht_r$ is ergodic and measure-preserving on $(\CD'(\h), \CF, \p)$, where $\CD'(\h)$ is the space of distributions on $\h$, $\CF$ is the $\ss$-algebra generated by the random variables $(h, \phi)$ for $\phi \in C_0^\infty(\h)$ and $\p$ is the law of $h$.
\end{lemma}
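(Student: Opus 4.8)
The plan is to establish the two claims—measure-preservation and ergodicity—separately, using the scaling invariance of the GFF for the first and the tail triviality of the GFF together with the standard ``scaling limit'' argument for the second. First I would record the basic fact that if $h$ is a GFF on $\h$ with piecewise constant boundary data $-a$ on $\R_-$ and $b$ on $\R_+$, then for any $r > 0$ the field $h(r\,\cdot)$ has the same law as $h$; this is immediate because the Green's function $G_\h(x,y) = -\log|x-y| + \log|x-\bar y|$ is scale invariant and the boundary data, being $0$-homogeneous (it only sees the sign of the real part), is preserved under $z \mapsto rz$. Translating this statement about $h(r\,\cdot)$ into the language of distributions, the pushforward of $\p$ under $\tht_r$ equals $\p$: indeed, testing $\tht_r(h)$ against $\phi$ gives $h$ tested against $r^{-2}\phi(\cdot/r)$, and $h \mapsto (h, r^{-2}\phi(\cdot/r))$ has, jointly over finitely many test functions, the same Gaussian law as $h \mapsto (h,\phi)$ by the scale invariance just noted. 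So $\tht_r$ is measure-preserving, and it is a bijection on $\CD'(\h)$ with measurable inverse $\tht_{1/r}$, hence an invertible measure-preserving transformation of $(\CD'(\h), \CF, \p)$.

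For ergodicity, the plan is to show that the $\tht_r$-invariant $\ss$-algebra is $\p$-trivial. I would argue as follows. Let $A \in \CF$ be $\tht_r$-invariant. The iterates $\tht_r^n$ for $n \to +\infty$ ``zoom out'' (rescaling by $r^n \to \infty$ if $r>1$, or $r^n \to 0$ if $r<1$; WLOG $r>1$ by replacing $r$ with $1/r$, since $\tht_{1/r} = \tht_r^{-1}$ has the same invariant sets). The key point is that for any event $A' \in \CF$ depending on $h$ only through its restriction to a bounded region $B \subseteq \h$, the event $\tht_r^{-n}(A') = \tht_r^{n}(A')$ (after adjusting for invertibility) depends on $h$ only through its restriction to $r^{-n} B$, which shrinks to a point as $n \to \infty$. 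Combining this with the fact that $\CF$ is generated by the events measurable with respect to $h|_B$ over bounded $B$, and with the \emph{tail triviality} of the GFF—the $\ss$-algebra $\cap_{\eps>0} \CF_{B(0,\eps)}^h$ is $\p$-trivial, which follows from the local independence/Markov property of the GFF (see \cite{she2007gff} or the discussion of $\CF_{K^+}^h$ in \cite{ms2016imag1,schramm2013contour} applied to $K = \{0\}$)—one concludes that $\tht_r$ is mixing along the subsequence, in particular ergodic. Concretely: for $A$ invariant and $A'$ as above, $\p[A \cap A'] = \p[\tht_r^n(A) \cap A'] = \p[A \cap \tht_r^{-n}(A')]$, and since $\tht_r^{-n}(A') \in \CF^h_{r^{-n}B}$ decreases to a tail event as $n \to \infty$, the correlation $\p[A \cap \tht_r^{-n}(A')] - \p[A]\p[\tht_r^{-n}(A')] = \p[A \cap \tht_r^{-n}(A')] - \p[A]\p[A']$ tends to $0$ by tail triviality (after checking $\p[\tht_r^{-n}(A')] = \p[A']$ by measure-preservation); hence $\p[A \cap A'] = \p[A]\p[A']$, and taking $A' \to A$ along a generating sequence gives $\p[A] = \p[A]^2$.

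The step I expect to be the main obstacle is making the tail-triviality input rigorous in the right form. Two things need care: (i) one must verify that the $\ss$-algebra $\cap_{\eps>0}\CF^h_{B(0,\eps)}$ is genuinely $\p$-trivial for the GFF with these (deterministic, piecewise constant) boundary data—this should follow from writing $h = h_0 + \mathfrak h$ where $h_0$ is the zero-boundary GFF and $\mathfrak h$ the (deterministic) harmonic extension, and then using the known tail triviality of the zero-boundary GFF, which itself is a consequence of the domain Markov property and a $0$--$1$-law argument as in \cite{she2007gff}, or can be deduced from \cite[Proposition~3.2]{ms2016imag1} together with the fact that a Gaussian field is trivial on $\cap_U \CF_U$ when that intersection is orthogonal to every test function; and (ii) one must justify the approximation ``$A' \to A$'' in the $L^2$ or probability sense, i.e.\ that bounded-support events generate $\CF$, which is standard since $C_0^\infty(\h) = \cup_B C_0^\infty(B)$ over bounded $B$. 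A clean way to package (ii) is: it suffices to prove $\p[A \cap A'] = \p[A]\p[A']$ for $A'$ ranging over a $\pi$-system generating $\CF$, and the cylinder events based on finitely many test functions of bounded support form such a $\pi$-system. I would also double-check the edge case $a = b = 0$ (constant boundary data $0$), where no new issues arise, and note the hypothesis $r \neq 1$ is only needed so that $\tht_r$ is not the identity (for which the statement would be false).
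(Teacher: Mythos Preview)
Your proposal is correct and follows essentially the same route as the paper: scale invariance of the Green's function and the boundary data gives measure-preservation, and ergodicity is obtained by approximating an invariant event by one depending on $h|_{B(0,m)\cap\h}$, iterating the scaling to push this into the germ $\sigma$-algebra at the origin, and invoking tail triviality there (the paper cites \cite[Lemma~7.2]{dms2021mating} for this, matching your point (i)). One small slip: with your convention $\tht_r h = h(r\,\cdot)$, the preimage $\tht_r^{-n}(A')$ of an event $A'\in\CF^h_B$ lies in $\CF^h_{r^{\,n}B}$, not $\CF^h_{r^{-n}B}$, so you should either take $r<1$ WLOG (as the paper does) or use $\tht_r^{\,n}(A')$ in your correlation computation---the argument is otherwise unchanged.
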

\begin{proof}
By considering the map $z \mapsto -1/z$ if necessary, we may assume that $r < 1$. The fact that $\tht_r$ is measure-preserving comes from the fact that the law of $h$ is invariant under scaling, see e.g.\ \cite[Theorem~1.57]{bp_gff}, (note that this holds if $h$ is a zero boundary GFF or if the boundary conditions are constant on each of $\R_-$ and $\R_+$).
To prove ergodicity, let $A \in \CF$ be an event that is invariant under $\tht_r$, by which we mean that $\tht_r\nv(A) = A$. By the martingale convergence theorem, $\E[\one_A \giv \ss(h|_{B(0, m) \cap \h})]$ converges a.s.\ and in $L^1$ to $\one_A$ as $m \to \infty$. Therefore, for any $\eps > 0$, we can find $m > 0$ such that if $B$ is the event $\{\E[\one_A \giv \ss(h|_{B(0, m) \cap \h})] > 1/2\}$, then $\p(A \triangle B) < \eps$, where $A \triangle B$ is the symmetric difference $(A \cap B^c) \cup (A^c \cap B)$.
Note that for any $N \in \N$,
\[\left(A \cap \tht_r^{-N}(A)\right) \triangle \left(B \cap \tht_r^{-N}(B)\right) \subseteq (A \triangle B) \cup \left(\tht_r^{-N}(A) \triangle \tht_r^{-N}(B)\right),\]
and that the two events on the right hand side have probability less than $\eps$ by scale invariance.
Therefore,
\[\p(A) = \p(A \cap \tht_r^{-N}(A)) \leq \p(B \cap \tht_r^{-N}(B)) + 2\eps\]
for all $N$.
Now, the probability on the right-hand side can be written as 
\[\p(\tht_r^{-N}(B))\p(B \giv \tht_r^{-N}(B)) = \p(B)\p(B \giv \tht_r^{-N}(B))\]
for all $N$.
Notice that since $B$ depends on $h|_{B(0, m) \cap \h}$, the event $\tht_r^{N}(B)$ depends on $h|_{B(0, mr^{-N}) \cap \h}$. The proof of \cite[Lemma~7.2]{dms2021mating} applies also to the case of the  zero-boundary GFF, meaning that $\cap_{\dd < 0} \ss(h|_{B(0,\dd) \cap \h})$ is trivial and hence that $\p(B\giv \tht_r^{-N}(B)) \to \p(B)$ as $N \to \infty$.
Therefore we can deduce that
\[\p(A) \leq \p(B)^2 + 2\eps \leq \p(A)^2 + \eps^2 + 4\eps,\]
for all $\eps > 0$ and hence that $\p(A) \in \{0,1\}$, proving ergodicity.
\end{proof}

\begin{corollary}\label{cor:annulus_events_happen_io}
Suppose that we have the setup of Lemma~\ref{lem:gff_ergodic}. Let $E \in \CF$ be an event which is measurable with respect to the restriction of $h$ to $B(0, 1) \cap \h$. For $k \in \N$, let $E_k$ be the event that $\tht_{1/2}^{-k}(E)$ holds (note that this means that the event $E$ holds for the GFF $\tht_{2^{-k}} h$ and that $E_k$ is measurable with respect to $h|_{B(0, 2^{-k}) \cap \h}$). Then the sum $(1/N)\sum_{1 \leq k \leq N} \one_{E_k} \to \p(E)$ a.s.\ as $N \to \infty$, and in particular if $\p(E) > 0$ then $E_k$ occurs infinitely often a.s.
\end{corollary}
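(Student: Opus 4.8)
The statement is an immediate consequence of the Birkhoff ergodic theorem applied to the transformation $\tht_{1/2}$ established in Lemma~\ref{lem:gff_ergodic}. The plan is as follows. First, recall from Lemma~\ref{lem:gff_ergodic} that $\tht_{1/2}$ (the scaling-by-$1/2$ map on distributions) is measure-preserving and ergodic on $(\CD'(\h), \CF, \p)$, where $\p$ is the law of the GFF $h$. The event $E$ is measurable with respect to $h|_{B(0,1)\cap\h}$, so $\one_E$ is an $L^1(\p)$ (indeed $L^\infty$) random variable. By definition $\one_{E_k} = \one_E \circ \tht_{1/2}^{k}$, i.e.\ $\one_{E_k}$ is the image of $\one_E$ under the $k$-th iterate of the measure-preserving map. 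Thus $(1/N)\sum_{k=1}^N \one_{E_k}$ is exactly the Birkhoff average of the function $\one_E$ along the orbit of $\tht_{1/2}$.

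The key step is then to invoke the Birkhoff ergodic theorem: since $\tht_{1/2}$ is ergodic and measure-preserving, the Birkhoff averages $(1/N)\sum_{k=1}^N \one_E\circ\tht_{1/2}^k$ converge a.s.\ (and in $L^1$) to the constant $\int \one_E\, d\p = \p(E)$. This gives the first assertion. For the second assertion, note that if $\p(E)>0$ then a.s.\ $(1/N)\sum_{k=1}^N \one_{E_k}\to \p(E)>0$, which forces $\sum_{k\ge 1}\one_{E_k} = \infty$ a.s., i.e.\ $E_k$ occurs for infinitely many $k$. I would also include the parenthetical verification already indicated in the statement: the event $\tht_{1/2}^{-k}(E)$ is the event that $E$ holds for the rescaled field $\tht_{2^{-k}}h$ (a direct unwinding of the definition of $\tht_r$ in Lemma~\ref{lem:gff_ergodic}, using $\tht_{1/2}^k = \tht_{2^{-k}}$), and since $E$ depends only on the restriction to $B(0,1)\cap\h$, the event $E_k$ depends only on $h|_{B(0,2^{-k})\cap\h}$.

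There is essentially no obstacle here: the corollary is a formal deduction from the previously established Lemma~\ref{lem:gff_ergodic} plus the classical Birkhoff ergodic theorem. The only minor point to be careful about is matching conventions — confirming that $\tht_{1/2}^{-k}(E)$ is the correct (rather than off-by-one or inverse) way to express ``$E$ holds at scale $2^{-k}$'', and that ergodicity is applied to $\tht_{1/2}$ (the $r<1$ case of Lemma~\ref{lem:gff_ergodic}), which is already stated there. So the proof is short: state that $\one_{E_k} = \one_E\circ\tht_{1/2}^k$, apply Birkhoff's theorem using the ergodicity from Lemma~\ref{lem:gff_ergodic}, conclude a.s.\ convergence to $\p(E)$, and observe that positivity of $\p(E)$ yields infinitely many occurrences.
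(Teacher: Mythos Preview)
Your proposal is correct and follows exactly the same approach as the paper, which simply states that the result is an immediate consequence of Birkhoff's ergodic theorem together with the ergodicity established in Lemma~\ref{lem:gff_ergodic}. Your write-up just makes explicit the identification $\one_{E_k} = \one_E \circ \tht_{1/2}^k$ and the deduction that $\p(E)>0$ forces infinitely many occurrences, which the paper leaves implicit.
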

\begin{proof}
Due to Lemma~\ref{lem:gff_ergodic}, this is an immediate consequence of Birkhoff's ergodic theorem.
\end{proof}

\bibliographystyle{abbrv}
\bibliography{references}

\end{document}